%% file: main.tex
\documentclass{article}
\usepackage{graphicx} 
\usepackage[margin=1in]{geometry}
\usepackage{amsmath, amsfonts, amssymb, amsthm}
\usepackage{graphicx}
\usepackage{hyperref}
\usepackage{color}
\usepackage{dsfont}
\usepackage{comment}
\usepackage{esint}
\usepackage[utf8]{inputenc}
\usepackage{amsmath}
\usepackage{gensymb}
\usepackage{mathrsfs}
\usepackage{bbm}
\usepackage{tikz-cd}
\usepackage{graphicx}
\usepackage{subfig}
\usepackage{authblk}

\newtheorem{theorem}{Theorem}[section]
\newtheorem*{theorem*}{Theorem}
\newtheorem{corollary}{Corollary}[theorem]

\newtheorem{lemma}[theorem]{Lemma}
\newtheorem{remark}[theorem]{Remark}
\newtheorem{proposition}{Proposition}
\newtheorem{question}{Question}
\newtheorem{example}{Example}

\newtheorem{definition}[theorem]{Definition}

\newcommand{\nl}{\newline}
\newcommand{\st}{\text{ s.t. }}

\newcommand{\Z}{\mathbb{Z}}
\newcommand{\R}{\mathbb{R}}

\newcommand{\N}{\mathbb{N}}
\newcommand{\Q}{\mathbb{Q}}
\newcommand{\C}{\mathbb{C}}

\newcommand{\tr}{\text{tr}}

\newcommand{\Ric}{\text{Ric}}

\newcommand{\eps}{\epsilon}
\newcommand{\n}{\nabla}
\newcommand{\p}{\partial}
\newcommand{\e}{\epsilon}
\newcommand{\cF}{\mathcal{F}}
\newcommand{\Ind}{\mathrm{Ind}}

\renewcommand{\P}{\mathbb{P}}

\renewcommand{\div}{\text{div}}

\newcommand{\cohom}{\mathrm{Cohom}}
\newcommand{\an}{\textnormal{An}^G}
\newcommand{\ov}[1]{\overline{#1}}
\newcommand{\ti}[1]{\tilde{#1}}

\title{Equivariant Allen Cahn Solutions and the Existence of Cohomogeneity $2$ Minimal Surfaces}

\author[1]{Rayssa Caju}
\author[2]{Pedro Gaspar}
\author[3]{Jared Marx-Kuo}
\affil[1]{\normalsize Departamento de Ingenier\'ia Matem\'atica and Centro de Modelamiento Matem\'atico (CNRS IRL 2807). Universidad de Chile, Beauchef 851, Santiago, Chile}
\affil[2]{\normalsize Facultad de Matem\'aticas, Pontificia Universidad Cat\'olica de Chile, Avenida Vicuña Mackenna 4860, Santiago, Chile}
\affil[3]{\normalsize Department of Mathematics, Rice University, Houston, TX 77005, USA}
\affil[]{rcaju@dim.uchile.cl \ ${}^2$ pedro.gaspar@uc.cl \ ${}^3$ jm307@rice.edu}

\date{}

\begin{document}

\maketitle

\begin{abstract}
We develop a regularity theory for equivariant Allen--Cahn solutions on closed Riemannian manifolds with a Lie group acting isometrically. When the cohomogeneity of the action is between $3$ and $7$, we show that a sequence of equivariant Allen--Cahn solutions with uniformly bounded energy and equivariant index converge to embedded minimal hypersurfaces with optimal regularity, meaning that the singular set is at least codimension $7$ and lies in the union of all non-principal orbits. When the cohomogeneity is $2$ and the action has no exceptional orbits, we show the same result but the minimal hypersurfaces may be immersed. As a result, any closed Riemmanian manifold with cohomogeneity $2$ Lie group action and no exceptional orbits admits a minimal hypersurface with optimal regularity. A key tool is the regularity theory of Chodosh--Mantoulidis \cite{ChodoshMantoulidisWidths}, building on the work of Wang--Wei \cite{wang2019finite}. However, we adapt their arguments to a modified Allen--Cahn equation with a drift Laplacian. We also show that appropriate index bounds hold for the limiting minimal hypersurface when it is smooth. \newline 
\indent We also extend the variational constructions of solutions of the Allen--Cahn equation of \cite{GuaracoMinmax} and \cite{GasparGuaracoClosed} by defining an equivariant mountain pass invariant, as well as the equivariant Allen--Cahn $p$-widths. This builds on the work of Gromov \cite{gromov2006dimension} and is the Allen--Cahn parallel to Wang's equivariant volume spectrum \cite{WangDensity} in the Almgren-Pitts setting \cite{marques2023morse}. We show that in the limit as $\epsilon$ tends to $0$, the equivariant Allen--Cahn $p$-widths converge to the equivariant $p$-widths, as defined by Wang \cite{WangDensity}. 

\end{abstract}

\tableofcontents

\section{Introduction} \label{section.introduction}
\indent In this work, we are interested in the construction of minimal hypersurfaces with symmetries given by a Lie group $G$, acting isometrically on a Riemannian manifold $(M^{n+1}, g)$. There is a lengthy history to the construction of such minimal surfaces - the story begins with the work of Almgren--Pitts \cite{Almgren} \cite{pitts2014existence} who laid much of the foundational theory for the existence and regularity of minimal surfaces via min--max. This program was revived by Marques--Neves \cite{MarquesNevesPositive, MarquesNevesWillmore, marques2016morse} and experienced an explosion of work by various authors in constructing minimal surfaces (as well as the related surfaces with constant/prescribed mean curvature). We mention the following references for a by no means complete list of relevant works \cite{LiokumovichMarquesNeves, irie2018density,marques2019equidistribution,zhou2020multiplicity, ambrozio2021riemannian, song2023existence, marques2023morse, li2023existence, GuthLiokumovich}. \nl 
\indent In the setting of manifolds with isometries given by a Lie group, Pitts--Rubenstein \cite{pitts1988equivariant} set forth a program to construct $G$-equivariant minimal surfaces, i.e. minimal surfaces, $\Sigma$, such that $G \cdot \Sigma = \Sigma$. This program was further expanded upon by Ketover \cite{ketover2016equivariant}, in his seminal work on the construction of equivariant minimal surfaces in closed three manifolds. Liu \cite{LiuCVPDE} furthered the program to manifolds of higher dimension and codimension, showing the existence of a single $G$-equivariant minimal surface. Wang also developed important constructions in the min-max theory of minimal surfaces to min-max theory of $G$-equivariant minimal surfaces \cite{wang2022min, WangJGA, WangIndex, WangDensity,wang2026equivariant,li2026infinite}. We also mention the recent works of Ko who establishes a min-max theorem for isotopy minimization in the G-equivariant setting \cite{ko2025regularity} for cohomogeneity $3$, the work of Ketover \cite{KetoverFBMS}, Buzano--Nguyen--Schulz \cite{BNS} and Carlotto--Franz--Schulz \cite{CFS} on the construction of equivariant free boundary minimal surfaces and self-shrinkers for the mean curvature flow, as well as the work of Wang--Wang--Zhou who also address isotopy minimization in the form of Simon--Smith min-max in the $G$-equivariant setting to show a Bernstein type theorem in round spheres \cite{wang2026equivariant}. \nl 
\indent Parallel to the theory of minimal surfaces is the theory of solutions to the Allen--Cahn equation on closed manifolds. Such solutions $u: M \to \R$ satisfy the equation 
\begin{equation} \label{ACEquation}
\eps^2 \Delta_g u = W'(u)
\end{equation}
for $W: \R \to \R$ a ``double well potential" and $\eps$, a small parameter tending to $0$. Such solutions are critical points of the energy functional 
\[
E_{\eps}(u) = \int_M \eps \frac{|\n u|^2}{2} + \frac{W(u)}{\eps}
\]
see \S \ref{sec.regularity} for more details on the restrictions of $W$. \nl 
\indent Solutions of \eqref{ACEquation} converge in various sense to minimal surfaces, $\Sigma$, in the limit that $\eps \to 0$ \cite{Modica,Sternberg}. In the context of varifolds, this was first established by Padilla--Tonegawa \cite{PadillaTonegawa} and Hutchinson--Tonegawa \cite{HutchinsonTonegawa}, expanded upon by Guaraco \cite{GuaracoMinmax} and Gaspar--Guaraco \cite{GasparGuaracoClosed, GasparGuaracoWeyl}, using the regularity theory of Wickramasekera \cite{Wickramasekera} and the work of Tonegawa--Wickramasekera \cite{TWStable}. We present a by no means exhaustive list of some developments in the geometric aspects of the theory of the Allen--Cahn including \cite{HutchinsonTonegawa, TWStable, wang2017some, smith2016bifurcation, GasparGuaracoClosed, GasparGuaracoWeyl, ChodoshMantoulidisMinimal, HiesmayrLow, gaspar2020second, mantoulidis2021allen, dey2022comparison, ChodoshMantoulidisWidths, marx2025isospectral, marx2025p, MarxKuoSarnataroStryker, FloritSimonSerra, marx2026p}. \nl 
\indent While several theorems in the theory of minimal surfaces have been \textit{reproved} with the Allen--Cahn equation, we note the recent improvements in min-max for geodesics on closed surfaces. Historically, the process of Almgren--Pitts min-max for $1$-dimensional stationary varifolds on surfaces would produce geodesic networks. These are unions of geodesic segments (potentially with multiplicity), which can be singular at junctions \cite{Pitts} (see also \cite[Remark 1.1]{marques2016morse}). In recent work of Chodosh--Mantoulidis \cite{ChodoshMantoulidisWidths}, the authors use the particular Sine--Gordon regularization to show that Allen--Cahn min-max yields unions of closed geodesics on surfaces, building off work of Liu--Wei who constructed an integrable family of Allen--Cahn solutions on $\R^2$ \cite{liu2021classification}. Since this development, the regularity afforded by the Allen--Cahn theory on surfaces has been used to prove several results related to Gromov's $p$-widths, $\{\omega_p\}$. See \cite{MarxKuoSarnataroStryker, chodosh2025p, ChenGasparBerger, marx2025isospectral, marx2025p, marx2026p} for some recent developments. \nl 
\indent Given the parallels between the Almgren--Pitts min-max theory and the Allen--Cahn theory, it is natural to develop the parallels between $G$-equivariant min-max theory and the Allen--Cahn theory of $G$-equivariant solutions. In the results below, we show that much of the theory remains similar for actions with $\text{Cohom}(G) \geq 3$, though we overcome some novel difficulties due to the lack of robustness of Allen--Cahn solutions with finite \textit{$G$-equivariant index}. \nl 
\indent We provide a novel contribution in the setting of $\text{Cohom}(G) = 2$, by using equivariant Allen--Cahn min-max to show the existence of an immersed minimal hypersurface. For groups with $\text{Cohom}(G) = 2$, the quotient manifold, $M/ G$, is a smooth surface away from a small set. Using Allen--Cahn min-max, we can construct a set of $G$-equivariant solutions which descend to solutions of an Allen--Cahn equation \textit{with drift} on $M/G$ 
\[
\eps \Delta_{g} u + \eps \langle \n \ln(\mathcal{V}), \n u \rangle - \frac{W'(u)}{\eps} = 0.
\] 
\noindent We refer the reader to the discussion in \S \ref{section.cohom.two} for further details. \nl 
\indent Despite the analytic differences, the drift term disappears under blow-up and we are able to recover the regularity result of Chodosh--Mantoulidis \cite[Proposition 3.8]{ChodoshMantoulidisWidths} via an adaptation of the regularity estimates of Wang--Wei \cite{wang2019finite} (see also Mantoulidis \cite[Theorem 4.13]{mantoulidis2021allen}). Thus, in the limit that $\eps \to 0$, we obtain a union of closed geodesics for a weighted metric on $M/G$. These geodesics then lift to a union of immersed minimal hypersurfaces with optimal regularity on $M$. \nl 
\indent When $M/G$ has no singular points, this argument is robust, so much of our work handles singularities arising in the non-principal orbits. We remark that previous literature seems to focus on specific ambient manifolds, and our result seems to be the first general construction of Cohomogeneity $2$ equivariant minimal surfaces when the action has no exceptional orbits. 

\subsection{Statement of Results}
We will consider $(M^{n+1}, g)$ a closed Riemannian manifold and $G$, a Lie group acting via isometries on $M$, and the cohomogeneity of the action, denoted by $\cohom(G)$, satisfies $2 \leq \cohom(G) \leq 7$. We let $M^{reg} \subseteq M$ denote the dense, open subset consisting of the union of all principal orbits of the action (see \S \ref{section.background} and \S \ref{section.appendix} for formal definitions). Our first result parallels the work of Guaraco \cite{GuaracoMinmax} and is a general regularity theory showing the existence of a $G$-invariant minimal surface via solutions to the Allen--Cahn equation.
\begin{theorem} \label{thm.AC.regularity}
Let $\{u_{\eps_k}\} \subseteq W^{1,2}_G(M)$ be a sequence of Allen--Cahn solutions with $E_{\eps}(u_{\eps_k}) \leq C$ and $\text{Ind}_G(u_{\eps_k}) \leq N$ for fixed $C, N > 0$. Then up to subsequence, the solutions converge in a varifold sense to a minimal $G$-invariant hypersurface (possibly with integer multiplicities) which is smooth away from a closed singular set, $\Lambda$. If $\cohom(G) \geq 3$, the hypersurface is embedded, if $\cohom(G) = 2$ and the action has no exceptional orbits, it is smoothly immersed away from the singular set. In both cases, $\mathcal{H}^{n-7}(\Lambda) < \infty$, $\Lambda \subset M \backslash M^{reg}$.
\end{theorem}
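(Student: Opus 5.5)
We will pass to the limit in the standard way and then prove regularity locally, treating the principal part $M^{reg}$ and the non-principal orbits separately. By the energy bound and Hutchinson--Tonegawa \cite{HutchinsonTonegawa}, after passing to a subsequence the energy measures $\eps_k|\n u_{\eps_k}|^2\, d\mathcal{H}^{n+1}$ converge to a stationary integral varifold $V$ with $\|V\|(M)\le C/\sigma$. Each $u_{\eps_k}$ is $G$-invariant, so $V$ is $G$-invariant. All remaining work concerns the regularity of $\operatorname{spt}\|V\|$.

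\textbf{Step 1: almost stability from equivariant index.} The bound $\Ind_G(u_{\eps_k})\le N$ lets us run the standard covering argument. Among any $N+1$ pairwise disjoint $G$-invariant open sets, at least one must carry a $u_{\eps_k}$ that is stable with respect to $G$-invariant variations supported there. Otherwise $N+1$ disjointly supported negative $G$-invariant directions would exist, and they would span an $(N+1)$-dimensional negative space. Applying this to tubes $G\cdot B_r(x)$ gives a finite $G$-invariant set of orbits $\mathcal{O}_1,\dots,\mathcal{O}_N$ with the following property: on compact subsets of the complement of these orbits, the $u_{\eps_k}$ are $G$-stable on small $G$-invariant tubes.

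The key point is that $G$-stability on $G\cdot B_r(x)$ descends to the quotient. On $M^{reg}$ we write $M^{reg}/G$ as a Riemannian manifold of dimension $m=\cohom(G)$, and $u$ as a function on the quotient. The energy becomes the weighted energy $\int_{M/G}\big(\eps|\n u|^2/2+W(u)/\eps\big)\mathcal{V}\,d\mathcal{H}^m$, where $\mathcal{V}$ is the orbit volume. Its critical points solve the drift equation displayed in the introduction, and $G$-stability becomes genuine stability for this weighted functional on the quotient.

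\textbf{Step 2: regularity on $M^{reg}$.} If $m\ge 3$, stable solutions of the weighted Allen--Cahn equation converge to a stable minimal hypersurface in $(M^{reg}/G,\mathcal{V}^{2/(m-1)}g)$. The drift is a lower-order term and disappears under blow-up, so Tonegawa--Wickramasekera \cite{TWStable} (via Wickramasekera \cite{Wickramasekera}) applies verbatim. It gives smooth embedded limits away from a set of codimension $7$ in the quotient. Since $m\le 7$, this set is in fact discrete-free, i.e. empty, inside the principal part: the singular set of a stable hypersurface in an $m$-manifold has dimension at most $m-8<0$. Lifting to $M$ gives an embedded smooth hypersurface over $M^{reg}$.

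If $m=2$, we instead adapt Wang--Wei \cite{wang2019finite} and Chodosh--Mantoulidis \cite[Proposition 3.8]{ChodoshMantoulidisWidths} to the drift Laplacian. We rescale by $\eps$, so that the drift has size $O(\eps)$ and the curvature and second-fundamental-form estimates of Wang--Wei persist with error terms. This yields convergence to a union of geodesics of the weighted metric, smooth with integer multiplicity, possibly crossing transversally, and hence immersed. I expect this to be the main obstacle. Every step of the Wang--Wei curvature decay and the Toda-system analysis must be redone with the extra $\langle\n\ln\mathcal{V},\n u\rangle$ term, and one must check that the $O(\eps)$ perturbation does not spoil the sheet-separation estimates.

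\textbf{Step 3: the non-principal orbits and the bound on $\Lambda$.} We put $\Lambda=\operatorname{sing}V$, and by Steps 1--2 we have $\Lambda\subset M\setminus M^{reg}$. The set $M\setminus M^{reg}$ is a finite union of lower-dimensional submanifolds (singular and exceptional orbit strata). When $m\ge3$ these strata have codimension $\ge 2$ in $M$, and near them we use the $G$-stability of $V$ together with a removable-singularity argument for stable minimal hypersurfaces across codimension-$2$ sets (Wickramasekera's theorem applies because a $G$-stable limit near a singular orbit is stable, by averaging test functions over $G$). This gives embeddedness, with $\mathcal{H}^{n-7}(\Lambda)<\infty$ following from Wickramasekera's dimension estimate on singular sets. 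When $m=2$ and there are no exceptional orbits, the quotient is a surface whose boundary consists of singular orbits. The geodesics then either meet the boundary orthogonally or end at isolated points corresponding to lower-dimensional orbits, and the lifts have singularities contained in those orbits. Finally, we verify $\mathcal{H}^{n-7}(\Lambda)<\infty$ using that singular orbits of a cohomogeneity-$2$ action have dimension $\le n-1$, together with a tangent cone analysis at these orbits.
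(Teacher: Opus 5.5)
Your Step 3 is where the argument fails. You establish regularity only inside $M^{reg}$, via the quotient. You then try to extend it across all of $M\setminus M^{reg}$ by a removable-singularity argument, on the grounds that for $\cohom(G)\ge 3$ the non-principal strata have codimension $\ge 2$. That claim is false, because exceptional strata can have codimension $1$. For instance, let $SO(2)\times\Z_2$ act on $S^{k+2}\subset\R^2\times\R\times\R^k$ by rotating the $\R^2$ factor and reflecting the $\R$ factor. For $2\le k\le 6$ the cohomogeneity $k+1$ lies between $3$ and $7$, and the equator $\{x_3=0\}$ consists entirely of non-principal orbits. Yet this equator is a $G$-invariant minimal hypersurface, and your Step 2 says nothing about it.

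Even a codimension-$2$ subset of $M$ is not removable. It can sit inside $\mathrm{spt}\|V\|$ as an $(n-1)$-dimensional set: two transverse hyperplanes are stable off their intersection. Wickramasekera's theorem, by contrast, needs $\mathcal{H}^{n-1}(\mathrm{sing}\,V\cap U)=0$. The $m=2$ part of Step 3 has the same defect. Individual singular orbits have dimension $\le n-2$; the bound $n-1$ you quote is for their union, and it is too weak for Wickramasekera. Moreover, ``geodesics meet the boundary orthogonally'' plus ``a tangent cone analysis'' is not an argument.

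The missing idea is that you never have to cross the non-principal set. Away from the finitely many bad orbits $\mathcal{O}_i$, $G$-stability on a $G$-invariant tube already gives stability in $M$. The reason is that the first Dirichlet eigenfunction of the Jacobi operator is simple and positive, hence $G$-invariant, so $G$-stability forces $\lambda_1\ge 0$. Averaging arbitrary test functions over $G$, as you suggest, does not work, since the second variation is indefinite.

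Tonegawa--Wickramasekera therefore applies directly in $M$, including near non-principal orbits. It gives an embedded hypersurface whose singular set has dimension $\le n-7$. That set is $G$-invariant and principal orbits have dimension $n+1-m\ge n-6$, so it misses $M^{reg}$. Only the individual orbits $\mathcal{O}_i$ remain, and your Step 2 also silently skips these for $m\ge 3$, since stability fails there.

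If $m\ge 3$, every orbit has dimension $\le n-2$, so Wickramasekera removes $\mathcal{O}_i$. The same $G$-invariance argument then rules out singular points on a principal $\mathcal{O}_i$. If $m=2$ and there are no exceptional orbits, a non-principal $\mathcal{O}_i$ is singular, hence of dimension $\le n-2$, and is removable in the same way. A principal $\mathcal{O}_i$ is precisely where the drift version of Wang--Wei and Chodosh--Mantoulidis that you describe is needed. With this structure, your passage to the weighted functional for $m\ge 3$ becomes unnecessary, and so does the claim that Tonegawa--Wickramasekera carries over ``verbatim'' to the drift equation.
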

\noindent Here, $\text{Ind}_G$ denotes the $G$-equivariant index (see Section \ref{section.background} for definition). In some sense, the regularity theory of Theorem \ref{thm.AC.regularity} is sharp, both in the dimension of the singular set and the allowance for immersed equivariant minimal surfaces when the cohomogeneity is $2$. For the former, we note that the Simon's Cone in $\R^8$ can be viewed as a cohomogeneity $2$ minimal hypersurface with respect to the natural $S^3 \times S^3$ action. Moreover, it has a singular set of dimension exactly $0 = n - 7$ (see details in \S \ref{section.examples}). For the latter, we remark that for any smooth compact Lie group $G$, we can consider $M = \Sigma \times G$ with the product metric, where $\Sigma$ admits a closed immersed geodesic. Then $G$ acts naturally on just the Lie group component of $M$ and the quotient is $\Sigma$. \nl 
\indent While Theorem \ref{thm.AC.regularity} is interesting on its own, we can apply it to a $G$-equivariant mountain pass construction of Allen--Cahn solutions to derive the following existence result:
\begin{corollary} \label{cor.AC.mountainpass}
For $(M^{n+1}, g)$ and $G$ as above, there exists a $G$-equivariant minimal surface which is smooth away from a closed singular set, $\Lambda$, satisfying $\mathcal{H}^{n-7}(\Lambda) < \infty$, $\Lambda \subset M \backslash M^{reg}$. In particular, a manifold with a $\cohom(G) = 2$ action and no exceptional orbits contains an immersed $G$-equivariant minimal hypersurface which is smooth away from said singular set.
\end{corollary}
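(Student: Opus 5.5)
The plan is to build a sequence of $G$-invariant Allen--Cahn critical points with uniform energy and $G$-equivariant index bounds, and then apply Theorem \ref{thm.AC.regularity}. The sequence comes from a mountain pass argument for $E_\eps$ restricted to the closed subspace $W^{1,2}_G(M)$ of $G$-invariant functions. By the principle of symmetric criticality, critical points of $E_\eps|_{W^{1,2}_G}$ are genuine solutions of \eqref{ACEquation}. This holds because $E_\eps$ is $G$-invariant, $G$ acts isometrically on $W^{1,2}$, and $G$ is compact (it acts isometrically on a closed manifold).

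First, fix $W$ as in \S\ref{sec.regularity}, with wells at $\pm1$. The constants $\pm 1$ are $G$-invariant strict local minima of $E_\eps$ with energy $0$. Consider paths in $W^{1,2}_G(M)$ from $-1$ to $1$, and define
\[
c_\eps = \inf_{\gamma}\max_{t\in[0,1]} E_\eps(\gamma(t)).
\]
I would verify Palais--Smale for $E_\eps$ on $W^{1,2}_G$, which follows from the usual argument after truncating $W$ to quadratic growth, with compactness of $W^{1,2}\hookrightarrow L^2$ and the bound $|u|\le 1$ by the maximum principle. For the mountain pass geometry, it suffices that $c_\eps>0$: any path must cross the invariant set $\{\int_M u=0\}$, on which $E_\eps$ is bounded below by a positive constant for fixed $\eps$. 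The mountain pass theorem with Morse index information (Ghoussoub/Fang--Ghoussoub, as in \cite{GuaracoMinmax}), applied in the Hilbert space $W^{1,2}_G$, then gives a $G$-invariant critical point $u_\eps$ with $E_\eps(u_\eps)=c_\eps$ and $\Ind_G(u_\eps)\le 1$.

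Next come the two-sided energy bounds uniform in $\eps$. For the upper bound, take any smooth $G$-invariant function $f$ with $0$ a regular value on $M^{reg}$, for instance a $G$-averaged function or the distance to a principal orbit, suitably smoothed. Compose the one-dimensional heteroclinic profile with the signed distance to the level sets $\{f=s\}$ to get a $G$-invariant path (the Guaraco sweepout) whose maximal energy is at most $C\sup_s\mathcal{H}^n(\{f=s\})+o(1)$, which is bounded independently of $\eps$. For the lower bound I would show $\liminf c_\eps>0$. Otherwise the limit varifold would be zero. An isoperimetric argument as in \cite{GuaracoMinmax} rules this out: along each path some $\gamma(t)$ has $\{\gamma(t)>0\}$ of half volume, and the Modica--Mortola inequality $E_\eps(u)\ge \int|\n \Phi(u)|$ together with the isoperimetric inequality bound the energy from below.

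Finally, apply Theorem \ref{thm.AC.regularity} to $u_{\eps_k}$ with $C=\sup c_\eps$ and $N=1$. This gives a $G$-invariant limit hypersurface, embedded if $\cohom(G)\ge 3$ and immersed if $\cohom(G)=2$ with no exceptional orbits, with singular set $\Lambda\subset M\setminus M^{reg}$ and $\mathcal{H}^{n-7}(\Lambda)<\infty$. The limit is nonempty because its mass equals $\lim c_{\eps_k}/\sigma>0$ by the energy convergence in Hutchinson--Tonegawa.

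The main obstacle I expect is the index bound in the equivariant setting. One has to check that the deformation lemmas and the Morse-theoretic estimate can be carried out inside $W^{1,2}_G$, so that the output is $\Ind_G\le1$ for the operator restricted to invariant variations, which is exactly what Theorem \ref{thm.AC.regularity} uses. The positive lower bound on $c_\eps$ independent of $\eps$ is the second delicate point, but it is a routine adaptation of the non-equivariant case.
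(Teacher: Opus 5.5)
Your proposal is correct and follows the paper's route. The paper proves the corollary by combining the equivariant mountain pass of Proposition \ref{1 parameter min-max} with Theorem \ref{thm.AC.regularity}: the mountain pass gives a $G$-invariant critical point at level $\omega^G_{AC,\eps}$ with $\mathrm{Ind}^G_\eps\le 1$, obtained from Palais--Smale, symmetric criticality and Ghoussoub's index theorem in $W^{1,2}_G(M)$, with an $\eps$-uniform upper bound from a sweepout by level sets of a $G$-equivariant Morse function. The one difference is the lower bound: the paper gets positivity and the $\eps$-uniform lower bound in one line from $\Gamma_G\subset\Gamma$, i.e.\ $\omega_{AC,\eps}\le\omega^G_{AC,\eps}$, together with Guaraco's non-equivariant bound. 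That also spares you the half-volume step, which as literally stated needs a continuous substitute such as $\int_M\gamma(t)$, since $t\mapsto\mathrm{vol}\{\gamma(t)>0\}$ can jump.
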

\noindent To the authors knowledge, Corollary \ref{cor.AC.mountainpass} is novel for this level of generality of equivariant group actions. Our work uses the phase transition methods of \cite{ChodoshMantoulidisWidths}, and the $G$-equivariant minimal surface in Corollary \ref{cor.AC.mountainpass} is obtained as the limit interface of solutions of the Allen-Cahn equation whose energy equal the Allen--Cahn mountain pass width $\omega_{AC,\e}^G \in \R$ among $G$-invariant functions. In the limit, their energy converges to the equivariant Almgren--Pitts one parameter width (see Section \ref{mountain pass} for more details). 
\begin{remark} 
If one restricts to the case of smooth quotients, then one can show the existence of an immersed, smooth $G$-equivariant minimal surfaces on $M$ by using the existence of closed geodesics on $(M/G, \overline{g})$ (where $\overline{g}$ is the Hsiang--Lawson conformal metric, see \cite{HsiangLawson}). This follows via the Lyusternik--Fet Theorem \cite{fet1952variational}, though we note that from the min-max perspective, one could apply the work of Chodosh--Mantoulidis \cite{ChodoshMantoulidisWidths} directly on $(M/G, \overline{g})$ to find Allen--Cahn solutions which limit to unions of smooth geodesics. However, as we will see in section \ref{section.cohom.two}, such Allen--Cahn solutions on $(M/G, \overline{g})$ in general do not lift to $G$-equivariant Allen--Cahn solutions on $M$. Moreover, many quotients are not smooth, and the full strength of Corollary \ref{cor.AC.mountainpass} allows for singular orbits on $M/G$ by working with $G$-equivariant Allen--Cahn solutions on $M$.
\end{remark} 
\indent The \emph{Allen--Cahn $p$-widths} are a sequence of critical values for the Allen--Cahn energy in a compact Riemannian manifold introduced in \cite{GasparGuaracoClosed} in parallel to the volume spectrum of a Riemannian manifold \cite{MarquesNevesPositive}. These geometric invariants play a central role in the phase-transitions strategy to the existence of infinitely many minimal hypersurfaces, see e.g. \cite{GasparGuaracoWeyl, ChodoshMantoulidisMinimal} and also \cite{CFSS,FloritSimonWeyl} for a related nonlocal approach.

In the equivariant setting, we can also apply Theorem \ref{thm.AC.regularity} to the cohomological classes of sets of functions associated to the \emph{$G$-invariant Allen--Cahn $p$-widths}. These are min-max critical values for the Allen-Cahn energy in spaces of $G$-invariant functions and can be regarded as the $G$-equivariant versions of the widths defined in \cite{GasparGuaracoClosed}, providing a phase-transitions counterpart of Wang's \cite{WangDensity} equivariant volume spectrum. More precisely,
\begin{definition}
The Allen--Cahn $G$-invariant $p$-widths are defined as 
\[
\omega_{p, \eps}^G(M,g) = \inf_{\Phi \in \mathcal{F}_p^G} \sup_{x \in \text{Dom}(\Phi)} E_{\eps}(\Phi(x)),
\]
where $\mathcal{F}_p^G$ is the set of all images $\Phi(X) \subset W^{1,2}(M)$, for continuous $\Z_2$-equivariant maps $\Phi\colon X\to W^{1,2}(M)$ defined on a compact $\Z_2$-space with with $\Z_2$-cohomological index $\geq p$ and such that $\Phi(x)$ is a $G$-invariant function for all $x \in X$ (see Section \ref{phase transitions spectrum} for the detailed definition).
\end{definition}

\noindent Adding onto the mountain pass construction, we can use the Allen--Cahn $G$-invariant $p$-widths to show the existence of $G$-invariant minimal surfaces:
\begin{theorem} \label{thm.p.width.reg}
For each $p$, there exist a union of $G$-invariant minimal surfaces $\{\Sigma_i^p\}_{i = 1}^{N_p}$ and integer multiplicities $\{a_i^p\}$ such that 
\[
\omega_{p,\e}^G(M,g) = \sum_{i = 1}^{N_p} a_i^p \text{Area}(\Sigma_i^p).
\]
When $\text{Cohom}(G) = 2$ and the action has no exceptional orbits, these surfaces may intersect or be immersed. When $\cohom(G) = 3$, they are embedded and disjoint. In both cases, the surfaces are smooth up to a singular set of dimension at most $n-7$.
\end{theorem}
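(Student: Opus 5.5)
The argument follows the Gaspar--Guaraco scheme for the Allen--Cahn $p$-widths, run inside the class of $G$-invariant functions, with Theorem \ref{thm.AC.regularity} supplying the regularity. (The identity in the statement is read after sending $\e\to 0$: the left-hand side should be $\lim_{\e\to 0}\omega^G_{p,\e}/2\sigma$, where $2\sigma = \int_{-1}^1\sqrt{2W}$.) The proof has four steps.

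\textbf{Step 1: uniform bounds on the widths.} We show $0 < c_p \le \omega^G_{p,\e}(M,g) \le C_p$ uniformly in $\e$ small.
\begin{itemize}
\item For the upper bound we build an explicit $p$-sweepout by $G$-invariant functions. Take a $p$-dimensional linear space of $G$-invariant smooth functions, compose it with the one-dimensional heteroclinic profile $\tanh$-type truncation, and parametrize by $\mathbb{RP}^{p-1}$ (or $S^{p-1}$ with the antipodal action). The energy is then controlled by the $G$-invariant Almgren--Pitts sweepout bound, exactly as in \cite{GasparGuaracoClosed}.
\item For the lower bound we use the cohomological index: a family of index $\ge p$ must contain functions whose nodal sets separate the mass in a nontrivial way. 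This forces energy of order the isoperimetric constant.
\end{itemize}

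\textbf{Step 2: critical points with controlled index.} For each fixed $\e$, we apply a $\Z_2$-equivariant min-max (deformation) lemma on the Hilbert manifold $W^{1,2}_G(M)$.
\begin{itemize}
\item $E_\e$ restricted to $W^{1,2}_G$ satisfies Palais--Smale, since the embedding is compact and we may truncate $W$.
\item By the principle of symmetric criticality (Palais), critical points of $E_\e|_{W^{1,2}_G}$ are genuine Allen--Cahn solutions on $M$.
\item Standard Morse-index estimates in the equivariant-index setting (as in Gaspar--Guaraco, using Marino--Prodi perturbation) give a solution $u_\e\in W^{1,2}_G$ with $E_\e(u_\e)=\omega^G_{p,\e}$ and $\Ind_G(u_\e)\le p$.
\end{itemize}

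\textbf{Step 3: pass to the limit.} Take $\e_k\to 0$.
\begin{itemize}
\item By Step 1, $E_{\e_k}(u_{\e_k})$ is uniformly bounded, and by Step 2, $\Ind_G(u_{\e_k})\le p$.
\item Monotonicity of $\omega^G_{p,\e}$ in $\e$, after normalization by $2\sigma$, gives convergence of $\omega^G_{p,\e_k}$ (or at least along a subsequence).
\item Theorem \ref{thm.AC.regularity} then gives varifold convergence to a $G$-invariant stationary integral varifold $V=\sum a_i^p|\Sigma_i^p|$. Each $\Sigma_i^p$ is smooth away from $\Lambda\subset M\setminus M^{reg}$ with $\mathcal H^{n-7}(\Lambda)<\infty$. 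It is embedded when $\cohom\ge3$, and immersed when $\cohom(G)=2$ and there are no exceptional orbits.
\item Integrality of the limit and energy convergence $E_{\e_k}(u_{\e_k})\to 2\sigma\|V\|(M)$ (Hutchinson--Tonegawa; no energy is lost since the discrepancy vanishes) give the identity $\lim\omega^G_{p,\e}/2\sigma=\sum a_i^p\,\mathrm{Area}(\Sigma_i^p)$.
\end{itemize}

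\textbf{Step 4: disjointness when $\cohom(G)\ge 3$.} Distinct embedded components are disjoint by the maximum principle on $M^{reg}$ together with the structure of the limit from Theorem \ref{thm.AC.regularity}.

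\textbf{Main obstacle.} I expect the hardest part to be Step 2: producing critical points that realize $\omega^G_{p,\e}$ with equivariant index at most $p$. This requires checking that the deformation flows can be chosen $G$-invariant and $\Z_2$-equivariant so that the cohomological index is preserved. To do this, I would first average the pseudo-gradient vector field over $G$ using Haar measure, which keeps it $G$-invariant. Then I would use oddness of $E_\e$ to make it $\Z_2$-equivariant.
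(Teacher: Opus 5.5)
Your skeleton is the one the paper uses. The theorem is assembled from part (2) of the equivariant min-max theorem in \S\ref{section.min.max} (run directly in the closed subspace $W^{1,2}_G(M)$, with Palais' symmetric criticality), $\e$-independent bounds on $\omega^G_{p,\e}$, Theorem \ref{thm.AC.regularity}, and Hutchinson--Tonegawa convergence of the energy measures to $\|V\|$. However, your Step 1 upper bound does not work as described. Since $W(\mathbb{H})=\tfrac12(\mathbb{H}')^2$, for $u=\mathbb{H}(f/\e)$ one gets $E_\e(u)=\int_M\frac{\mathbb{H}'(f/\e)^2}{2\e}\,(1+|\nabla f|^2)\gtrsim \e^{-1}\mathrm{vol}\{|f|<\e\}$. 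This is not bounded uniformly in $\e$ over the unit sphere of a linear space. Any $f$ with a critical point on $\{f=0\}$ already gives a $\log(1/\e)$ divergence in cohomogeneity $2$; for example, $x^2-y^2$ on a round $S^2$ quotient has saddles on its zero set. Replacing $f$ by the signed distance to $\{f=0\}$ does not help, because it destroys continuity in $f$. Controlling Allen--Cahn energy by areas of zero sets is exactly the hard direction of \S\ref{section.comparison} (Dey's almost-smooth sweepouts), not a free input. Note also that $\cF^G_p$ requires $\Ind_{\Z_2}\ge p+1$, so you would need $S^p$, not $S^{p-1}$. The paper instead proves $\limsup_{\e\to0}\omega^G_{p,\e}\le Cp^{1/(\ell+1)}$ by pulling the Gaspar--Guaraco cubical family $\{\mathbb{H}_\e\circ h_a\}_{a\in S^p}$ back through the bi-Lipschitz map $M/G\to K$ and the $1$-Lipschitz projection $\pi$; you should cite that. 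You also never check the hypothesis $\omega^G_{p,\e}<E_\e(0)=W(0)\mathrm{vol}(M)/\e$. Without it, min-max on the incomplete space $W^{1,2}_G(M)\setminus\{0\}$ need not produce a nonzero critical point. It follows for small $\e$ from the uniform bound, or from the equivariant Laplace spectrum.

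In Step 3, there is no monotonicity of $\e\mapsto\omega^G_{p,\e}$ to appeal to. Scaling only gives $t^{-1}\omega^G_{p,\e}\le\omega^G_{p,t\e}\le t\,\omega^G_{p,\e}$ for $t\ge1$. Either work along a subsequence, which is enough for an existence statement, or obtain the limit from Theorem \ref{comparison}, which identifies $\frac{1}{2\sigma}\lim_{\e\to0}\omega^G_{p,\e}$ with $\omega^G_p$. Finally, $E_\e$ is even, not odd. No Haar-averaging of a pseudo-gradient is needed either: $\nabla E_\e$ is already $G$-equivariant, hence tangent to $W^{1,2}_G(M)$, so the deformation can be run intrinsically there.
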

\noindent Adapting an argument of the second author \cite{gaspar2020second} we can also show weak index bounds of these minimal hypersurfaces. 
\begin{theorem} \label{thm.p.index}
Suppose that $\{u_{\eps_i}\}$ is a sequence of $G$-equivariant solutions to \eqref{ACEquation} with $G$-equivariant index bounded by $p$. Let $V = \sum_{i = 1}^{N_p} a_i^p \Sigma_i^p$ be the limit varifold of the $\{u_{\eps_i}\}$ where $\{\Sigma_i^p\}$ are immersed minimal hypersurfaces. Then
\[
\sum_{i = 1}^{N_p} \text{Ind}_G(\text{Reg}(\Sigma_i^p)) \leq p
\]
\end{theorem}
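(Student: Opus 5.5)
The plan is to adapt the argument of \cite{gaspar2020second} (in the spirit of Le's and Hiesmayr's upper index bounds) to the $G$-invariant setting. We argue by contradiction: suppose $\sum_i \text{Ind}_G(\text{Reg}(\Sigma_i^p)) \geq p+1$. Each $\text{Reg}(\Sigma_i^p)$ is the complement of a set of dimension at most $n-7$ by Theorem \ref{thm.AC.regularity}, and a capacity/cutoff argument shows the equivariant index is realised on compact subsets. We therefore obtain $p+1$ $G$-invariant normal variations $\phi_1,\dots,\phi_{p+1}$ with the following properties. Each $\phi_j$ is compactly supported in $\text{Reg}(\Sigma_{i_j}^p)\cap M^{reg}$, away from the singular set $\Lambda$ and away from intersection or self-intersection points in the $\cohom(G)=2$ case. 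The $\phi_j$ span a $(p+1)$-dimensional space on which the Jacobi form $Q_{\Sigma}$ is negative definite. For the immersed case we work on the domain of the immersion, and we choose supports near points where the immersion is an embedding and the sheets do not cross. This is possible since crossings form a lower-dimensional set: in the quotient they are transverse geodesic intersections, which are isolated.

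Next we transplant each $\phi_j$ to a $G$-invariant test function for $E_\eps''(u_{\eps_k})$. On a small $G$-invariant tubular neighbourhood $U_j$ of $\text{supp}\,\phi_j$, extend $\phi_j$ to be constant along normal geodesics; this extension is $G$-invariant because $G$ acts isometrically and preserves $\Sigma$. Set $\psi_{j,k} = \tilde\phi_j\,\eps_k|\nabla u_{\eps_k}|$, or equivalently $\tilde\phi_j\,\langle \nabla u_{\eps_k},\nu\rangle$, following \cite{gaspar2020second}. Both choices are $G$-invariant since $u_{\eps_k}$ is. The key computation is the second-variation formula of \cite{gaspar2020second}. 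Writing $\delta^2E_\eps(u)[\tilde\phi\,\eps|\nabla u|]$ and using the Bochner/Sternberg--Zumbrun identity, it equals an integral involving $|\nabla\tilde\phi|^2$, $\tilde\phi^2$ times the second-fundamental-form term $|A_{\text{level}}|^2 + \Ric(\nu,\nu)$, and an error. By varifold convergence and the convergence of the enhanced second fundamental forms of level sets, this expression tends to $\sum a_i\,Q_{\Sigma_i}(\phi)$, with $Q_{\Sigma_i}$ the Jacobi form. Here we use the local sheet structure from the regularity theory behind Theorem \ref{thm.AC.regularity}: on $U_j$, away from $\Lambda$, the solutions converge as $a_i$ graphical sheets with curvature control (the Wang--Wei/Chodosh--Mantoulidis estimates, adapted in \S\ref{section.cohom.two} for the drift equation in cohomogeneity $2$). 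This gives $\limsup_k \delta^2E_{\eps_k}(u_{\eps_k})[\psi] \le \sum_i a_i\, Q_{\Sigma_i}(\phi) < 0$ uniformly on the unit sphere of $\text{span}\{\phi_j\}$.

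Finally, $\psi\mapsto\psi_{k}$ is linear, and for large $k$ it is injective on the span, since the $\psi_{j,k}$ localize near disjoint or independent supports. This produces a $(p+1)$-dimensional space of $G$-invariant functions on which $\delta^2 E_{\eps_k}(u_{\eps_k})$ is negative definite. Hence $\text{Ind}_G(u_{\eps_k}) \ge p+1$, contradicting the hypothesis.

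The main obstacle is the convergence of the second variation in the multiplicity and immersed setting. One must control the term $\eps|\nabla u|^2$ weighted by level-set curvatures and rule out energy loss between sheets. This is where we rely on the curvature estimates of \cite{ChodoshMantoulidisWidths,wang2019finite}. In cohomogeneity $2$ the argument is carried out on $M/G$ with the drift Laplacian, and the drift contributes only lower-order terms that vanish as $\eps\to0$. If these curvature estimates are insufficient for multiplicity $>1$, the fallback is to instead use the test functions $\tilde\phi\,\langle\nabla u,\nu\rangle$ and Hiesmayr's sheet-by-sheet localization.
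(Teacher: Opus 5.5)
\textbf{Your route differs from the paper's, and as written it has two genuine gaps.}

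\textbf{The paper's argument, and the first gap.} The paper (Theorem \ref{thm.index.bound}) runs Gaspar's argument \cite{gaspar2020second} with $G$-vector fields. Its only test functions are $\langle \nabla u_{\eps_k}, X\rangle$ with $X$ an ambient $G$-vector field, so they are automatically $G$-invariant. Their second variation is computed by the second inner variation formula (Proposition \ref{prop.gaspar.var}). The limit $\eps_k\int_M T(\nabla u_{\eps_k},\nabla u_{\eps_k})\to 2\sigma\sum_j m_j\int_{\Gamma_j}T(n_j,n_j)$ needs only varifold convergence and equipartition (Proposition \ref{prop.normal.converge}). Together these give $\frac{1}{2\sigma}\lim_k \delta^2E_{\eps_k}(u_{\eps_k},X)=\delta^2V(X)+\sum_j m_j\int_{\Gamma_j}(\langle\nabla_{n_j}X,n_j\rangle^2+R(X,n_j,X,n_j))$. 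Extending $\phi\nu$ so that $X\parallel\nu$ and $\nabla_\nu X=0$ along $\Sigma$ kills the error term. No level-set curvature estimates, sheet structure or two-sidedness enter.

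Your main computation instead uses $\tilde\phi\,\eps|\nabla u|$ and the Bochner identity, and this is \emph{not} equivalent to $\tilde\phi\langle\nabla u,\nu\rangle$. The former is $G$-invariant only when $\tilde\phi$ is a $G$-invariant function. But $\Ind_G$ is computed on $G$-vector fields $\phi\nu$, whose coefficient satisfies $\phi(gx)=-\phi(x)$ wherever $g$ flips the normal. This can happen on components of even multiplicity, where $u_\eps$ has the same sign on both sides. For example, on the equator of $S^{n+1}$ under the antipodal $\Z_2$, $G$-normal fields correspond to odd $\phi$. There your test functions are inadmissible, and you would be bounding a different index. The admissible choice $\langle\nabla u,X\rangle$ is exactly the one for which Bochner does not apply and Proposition \ref{prop.gaspar.var} is needed.

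Separately, your claim that the curvature term converges to $\sum_i a_i\int|A_{\Sigma_i}|^2\phi^2$ rests on sheet-wise curvature estimates. The paper has these only in the $2$-dimensional quotient (Theorem \ref{thm.drift.stability}). For $\cohom(G)\ge 3$ the regularity comes from Tonegawa--Wickramasekera and gives no control of level sets, so at minimum you would have to prove a lower-semicontinuity statement.

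\textbf{The second gap: localization.} You restrict negative directions to $\text{Reg}(\Sigma)\cap M^{reg}$ away from crossings because the removed set is ``lower-dimensional.'' What matters is whether it has zero capacity in $\Sigma$, and it need not.
\begin{itemize}
\item In cohomogeneity $2$, a transverse crossing of geodesics in $M^{reg}/G$ lifts to a principal orbit of dimension $n-1$. That is a codimension-one subset of each branch, so forcing test functions to vanish near it is a real constraint that can lower the index.
\item For $\cohom(G)\ge 3$, mirror-type exceptional orbits are allowed. Take $G=\Z_2$ acting on $S^{n+1}$ by $x_1\mapsto -x_1$. The equator $\Sigma=\{x_{n+2}=0\}$ has $\Ind_G\ge 1$ (take $\phi\equiv 1$). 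Yet $Q(\phi)\ge 0$ for every $\phi$ supported in $\Sigma\cap M^{reg}=\Sigma\cap\{x_1\neq 0\}$, because the first Dirichlet eigenvalue of a hemisphere of $S^n$ equals $n$.
\end{itemize}
In the paper's approach the restriction to $M^{reg}$ is unnecessary, since the equivariant normal extension works on all of $\text{Reg}(\Sigma)$.

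Note also that the paper's proof only treats smooth, disjoint, embedded components. Handling immersions would require an ambient $G$-field realizing prescribed normal components on both branches at crossings, plus control of $\langle\nabla_nX,n\rangle^2+R(X,n,X,n)$ near the crossing locus. Neither your proposal nor the paper supplies this.
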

\noindent Here, $\text{Reg}(\Sigma)$ denotes the regular part of the hypersurface. Following the strategy by Dey \cite{dey2022comparison}, we also show that these equivariant Allen-Cahn widths coincide, in the limit as $\eps \to 0$, to the $G$-equivariant $p$-widths, $\{\omega_p^G\}$, as defined in Wang \cite{wang2022min}

\begin{theorem}
We have
\[
\frac{1}{2\sigma} \lim_{\eps \to 0} \omega_{p,\eps}^G(M,g) = \omega_p^G(M,g)
\]
\end{theorem}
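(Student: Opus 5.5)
We follow Dey's strategy \cite{dey2022comparison} and prove two inequalities separately, working throughout in the $G$-invariant category. Two tools are needed.

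\emph{Passing from sweepouts to functions.} For a $G$-invariant Caccioppoli set (or a mod 2 flat cycle that bounds a $G$-invariant region) we use the standard regularization
\[
u = q\!\left(\frac{d_\pm}{\eps}\right),
\]
where $d_\pm$ is the signed distance and $q$ is a truncated heteroclinic profile. Since $G$ acts by isometries, $d_\pm$ is $G$-invariant, so $u \in W^{1,2}_G(M)$. The antipodal map $\Omega \mapsto M\setminus\Omega$ becomes $u\mapsto -u$, so $\Z_2$-equivariance is respected.

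\emph{Passing from functions to sweepouts.} In the other direction we use level sets of $\Phi = H\circ u$, with $H(t)=\int_0^t\sqrt{2W}$. The coarea formula gives $\int_{\R}\mathcal{H}^n(\{\Phi=s\})\,ds \le E_\eps(u)$. The superlevel sets $\{u>t\}$ are automatically $G$-invariant.

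\textbf{Step 1 (upper bound).} We aim to show $\limsup_{\eps\to 0}\frac{1}{2\sigma}\omega^G_{p,\eps}\le \omega^G_p$.
\begin{itemize}
\item Fix $\delta>0$ and a $G$-equivariant $p$-sweepout $\Phi\colon X\to \mathcal{Z}_n^G(M;\Z_2)$ in the sense of Wang, with $\sup\mathbf{M}(\Phi)\le\omega^G_p+\delta$.
\item Following Dey, pass to the double cover $\tilde X\to X$ whose points are $G$-invariant Caccioppoli sets $\Omega$ with $\partial\Omega=\Phi(x)$. Then $\tilde X$ is a free $\Z_2$-space. The condition $\Phi^*(\bar\lambda^p)\neq 0$ translates into $\Z_2$-cohomological index $\ge p$.
\item Apply the regularization map $\Omega\mapsto u_{\Omega,\eps}$. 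Because of possible mass concentration, we first discretize and interpolate (Almgren/Marques--Neves style) so that $\Phi$ has no concentration of mass and is continuous in the $\mathbf{F}$-metric.
\item Then $E_\eps(u_{\Omega,\eps})\le 2\sigma\,\mathbf{M}(\partial\Omega)+o(1)$ uniformly. Here we use Dey's estimate via the tubular volume bound $\mathcal{H}^n(\{d=s\})$, which holds for arbitrary Caccioppoli sets once $\eps$ is small relative to the discretization.
\item Since the interpolation can be done $G$-equivariantly (the isometric action preserves the relevant discretizations, as in Wang's work), the resulting family lies in $\mathcal{F}^G_p$. This yields the upper bound.
\end{itemize}

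\textbf{Step 2 (lower bound).} We aim to show $\liminf_{\eps\to 0}\frac{1}{2\sigma}\omega^G_{p,\eps}\ge \omega^G_p$.
\begin{itemize}
\item Take $h\colon X\to W^{1,2}_G(M)$ in $\mathcal{F}^G_p$ with $\sup E_\eps\le\omega^G_{p,\eps}+\eps$.
\item Following Dey, consider $x\mapsto\partial\{h(x)>t\}$ for suitable $t$. Because the coarea selection of $t$ is not continuous in $x$, replace it by a discretized family in the flat topology. Specifically, on a fine cell decomposition of $X$, choose for each vertex a good level $t$ with $\mathbf{M}(\partial\{u>t\})\le \frac{1}{2\sigma}E_\eps(u)+o(1)$ (averaging in $t$ over $\Phi$-values). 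Then use Almgren's isomorphism / interpolation to produce a continuous map into $\mathcal{Z}_n^G(M;\Z_2)$ with no mass concentration.
\item The $\Z_2$-index of the domain gives the sweepout condition: the antipodal symmetry $u\mapsto -u$ sends $\{u>t\}$ to a complement up to the choice $t\mapsto -t$, and the coarea estimate is symmetric in $t$ since $W$ is even.
\item Combining this with the upper bound from Step 1 yields the theorem.
\end{itemize}

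\textbf{Main obstacle.} The delicate part is Step 2: making the discretization and interpolation fully $G$-equivariant while keeping the mass bound within $o(1)$ and the flat distance small between adjacent cells. To do this, we would invoke Wang's $G$-equivariant version of the Almgren--Pitts interpolation lemmas \cite{WangDensity, wang2022min}, which guarantee that interpolations between nearby $G$-invariant cycles can be taken among $G$-invariant cycles. We would then check that Dey's vertex-wise level-set selection only uses $G$-invariant sets, which is automatic since each $h(x)$ is $G$-invariant.
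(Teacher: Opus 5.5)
\textbf{There is a genuine gap, and it is in Step 1 (the upper bound).} Your Step 2, the lower bound, is essentially the paper's proof of Proposition \ref{comparison bound 1}. One correction there: absence of mass concentration on orbits is a \emph{hypothesis} of Wang's interpolation, not something it produces. The paper secures it by showing that for a finite family of $G$-invariant functions almost every level is good, since orbits are $\mathcal H^n$-null when $\cohom(G)\ge 2$. You must therefore pick your vertex levels among these.

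Step 1 is the hard direction; it is exactly Dey's contribution, not Step 2. Your claim that $E_\eps(q(d_\pm/\eps))\le 2\sigma\,\mathbf M(\partial\Omega)+o(1)$ "holds for arbitrary Caccioppoli sets" is false.
\begin{itemize}
\item The topological boundary of a Caccioppoli set can have positive volume. Take, for example, a union of tiny balls centred at a dense sequence with summable perimeters.
\item In that case $q(d_\pm/\eps)=q(0)$ on a set of positive measure, so $E_\eps\gtrsim\eps^{-1}$.
\item In general, nothing controls $\mathcal H^n(\{d_\pm=s\})$ by the perimeter unless the boundary is regular, and certainly not uniformly over a family.
\item Making the cycles $\mathbf F$- or mass-continuous by Almgren--Marques--Neves interpolation does not regularize boundaries, so it does not help.
\item Moreover, $\Omega\mapsto q(d_\Omega/\eps)$ is not continuous from the flat, $\mathbf F$ or mass topology into $W^{1,2}$. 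Your family therefore need not be a compact symmetric subset of $W^{1,2}_G(M)\setminus\{0\}$, i.e.\ it need not lie in $\cF^G_p$ at all.
\end{itemize}

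What is missing is the paper's (Dey's) two-stage construction, carried out $G$-equivariantly.

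First, build the almost smooth \emph{discrete} sweepout of Proposition \ref{almost smooth sweepout}.
\begin{itemize}
\item Replace each vertex set $\tilde\Phi(x)$ by a smooth $G$-invariant approximation (Lemma \ref{smooth_approx}).
\item Cut and paste these along thin $G$-tubes. The radii avoid $\Theta$, so the tube boundaries are smooth and lie in $M^{reg}$, and they are chosen transverse to everything else.
\item The outcome is that every $\Psi(x)$ is smooth away from a fixed finite union $\mathcal S$ of codimension-$2$ submanifolds satisfying $\mathcal H^n(\{\mathrm{dist}(\cdot,\mathcal S)=\rho\})\le C_{\mathcal S}\rho$.
\item Each $\Psi(x)$ has area at most $\mathbf L(\Pi)+O(\eta)$.
\item Adjacent vertices differ only inside a $G$-tube where everything has area $O(\eta)$.
\end{itemize}
Only to these vertex sets is the distance regularization applied.

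Second, the extension over the cells is not obtained by regularizing an interpolated family of cycles. Instead, Dey's map $\theta(u,v,w)$ combines the vertex functions with $w=\tilde{\mathbb H}_\eps\circ d_t$, where $d_t$ is the signed distance to the level sets of a $G$-equivariant Morse function. Those level sets also have area $O(\eta)$ in the relevant tubes. The subadditivity \eqref{energy theta} then gives $E_\eps\le 2\sigma(\mathbf L(\Pi)+C\eta)$ on all of $\tilde X$.

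Without these two ingredients, Step 1 does not go through.
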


\noindent Furthermore, we recover a multiplicity one result for the first Allen-Cahn width, in the positive Ricci setting:
\begin{theorem} \label{thm.mult.one.ricci}
When $(M^{n+1}, g)$ satisfies $\Ric_g > 0$ and $\cohom(G) \geq 3$, then 
\[
\lim_{\e \to 0}\omega_{AC,\e}^G(M,g) = \mathrm{Area}(\Sigma_1)
\]
for $\Sigma_1$ an embedded $G$-equivariant minimal surface. When $\cohom(G) = 2$ and $M/G$ is a smooth quotient with $\Ric_{\tilde g_{M/G}} > 0$, the same result holds.
\end{theorem}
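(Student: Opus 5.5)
The plan follows the multiplicity one strategy of Chodosh--Mantoulidis and Bellettini--Wickramasekera in the Ricci positive setting, adapted to the equivariant framework. First, I take the mountain pass solutions $u_\eps$ realizing $\omega_{AC,\eps}^G$. These have $E_\eps(u_\eps)$ uniformly bounded and $\Ind_G(u_\eps)\le 1$. By Theorem \ref{thm.AC.regularity} and Corollary \ref{cor.AC.mountainpass}, a subsequence converges as varifolds to $V=\sum_i a_i\Sigma_i$, where the $\Sigma_i$ are $G$-invariant minimal hypersurfaces smooth away from $\Lambda\subset M\setminus M^{reg}$, and $\lim \omega^G_{AC,\eps}$ equals (up to the normalization $2\sigma$) $\sum a_i \mathrm{Area}(\Sigma_i)$. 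Under $\Ric_g>0$, Frankel's theorem, which holds for minimal hypersurfaces with small singular sets by the usual Ilmanen-type arguments, forces all components to intersect. When $\cohom(G)\ge3$ the limit is embedded with disjoint components, so there is exactly one connected component $\Sigma_1$, and $V=a\Sigma_1$. In the $\cohom(G)=2$ smooth quotient case, the limit descends to a union of closed geodesics on $(M/G,\tilde g_{M/G})$. The positive Ricci (Gauss) curvature of the weighted quotient metric should play the role of Frankel there, together with the fact that a geodesic of $(M/G,\tilde g)$ through a positive curvature region is unstable.

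Second, I rule out $a\ge2$. Here I would use the index/stability dichotomy. If the multiplicity is at least $2$, then the sheet analysis from the regularity theory (Wang--Wei curvature estimates and the Chodosh--Mantoulidis sheet interaction) shows that the distance between sheets yields a positive Jacobi field on the regular part of $\Sigma_1$. For $\cohom(G)\ge 3$ this can alternatively be obtained from the equivariant Allen--Cahn analysis of Theorem \ref{thm.p.index}, which gives $G$-stability of $\Sigma_1$ (or a two-sided $G$-invariant double cover) when multiplicity exceeds one. Since $\Ric>0$, every $G$-invariant closed minimal hypersurface is $G$-unstable: testing with the constant function $1$ (which is $G$-invariant) gives second variation $-\int(\Ric(\nu,\nu)+|A|^2)<0$. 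In the one-sided case, I pass to the $G$-invariant double cover. A cutoff argument near $\Lambda$, which has codimension at least $7$ and hence zero capacity, handles the singular set. This contradiction forces $a=1$.

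Third, for the cohomogeneity $2$ case, I would run the same argument on the quotient with the drift equation. The descended solutions converge, by the adapted Chodosh--Mantoulidis Proposition 3.8, to closed geodesics with multiplicity. Multiplicity greater than one would produce a positive Jacobi field for the weighted length functional, and the constant variation again contradicts $\Ric_{\tilde g}>0$. Distinct geodesics must intersect, and one then argues the limit is a single immersed closed geodesic counted once, possibly by using the area minimality of the one-parameter width: the width is at most twice the area of any geodesic, so two distinct geodesics would exceed an optimal sweepout. This is the step where I would try first the Chodosh--Mantoulidis lower bound $\omega_1\ge$ length of the shortest closed geodesic.

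I expect the main obstacle to be the multiplicity-one step. Specifically, the difficulty is transferring the Chodosh--Mantoulidis/Wang--Wei sheet-separation argument, which produces the positive Jacobi field, to the equivariant setting with only $G$-equivariant index. One must ensure that the Jacobi field is $G$-invariant, so that the instability coming from the $G$-invariant test function $1$ genuinely contradicts it. One must also control the sheets near non-principal orbits where the regularity is weaker.
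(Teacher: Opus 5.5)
\textbf{There is a genuine gap, in your Step 2.} Your Frankel step giving $V=a\Sigma_1$ is fine. The problem is the claim that $a\ge 2$ produces a positive ($G$-invariant) Jacobi field on $\Sigma_1$ or on its two-sided cover. That is the Chodosh--Mantoulidis sheet-interaction theorem. It rests on Wang--Wei curvature estimates for stable solutions and is only known in low dimensions: Chodosh--Mantoulidis prove it for $3$-manifolds, and this paper adapts the Wang--Wei machinery, with drift, only in dimension $2$ (Theorem \ref{thm.drift.stability}). For $\cohom(G)\ge 3$ the ambient dimension is arbitrary. Moreover you only control the $G$-equivariant index, so you get stability only on $G$-tubes away from finitely many orbits. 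Nothing of this kind is available. Your fallback also fails. Theorem \ref{thm.p.index} gives only $\Ind_G(\mathrm{Reg}\,\Sigma_1)\le 1$, and $\Ric_g>0$ already forces $\Ind_G\ge 1$ via the constant test function, so there is no contradiction and no information about stability of a multiplicity-two limit.

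\textbf{The missing idea is an energy comparison, not a stability argument.} Following Bellettini, for every embedded $G$-invariant minimal $\Sigma$ the paper constructs a path in $\Gamma_G$ with $\frac{1}{2\sigma}\sup_t E_\eps(\varphi(t))\le 2\mathcal{H}^n(\Sigma)-\delta$ (Theorem \ref{thm.belletini.G.invar}). The path has three ingredients:
\begin{itemize}
\item $G$-invariant (signed) distance functions to $\Sigma$;
\item an unstable direction $\tilde\phi$ supported away from a $G$-tube $D\subset\Sigma$ (Lemma \ref{lem.unstable.region}). It exists because $\Ric_g>0$ and because, when $\cohom(G)\ge 3$, orbits have codimension $\ge 2$ in $\Sigma$, so $D$ has small $2$-capacity. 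This is exactly where the cohomogeneity hypothesis enters.
\item the perturbed parabolic Allen--Cahn flow, which preserves $G$-invariance by uniqueness.
\end{itemize}
With $V=a\Sigma_1$, this immediately gives $a<2$.

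\textbf{The cohomogeneity-$2$ case has the same gap and a second one.} No positive Jacobi field from multiplicity is established for the drift equation. Your alternative, ``the width is at most twice the length of any geodesic,'' is unjustified. Even granting multiplicity one, it would not give embeddedness, which is part of the statement and fails for general closed geodesics on convex surfaces.

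The paper argues as follows. By Calabi--Cao, the Almgren--Pitts width of $(M/G,\bar g)$ equals $\ell(\gamma_0)$, where $\gamma_0$ is the shortest closed geodesic; it is simple because $K_{\bar g}>0$. By the Chodosh--Mantoulidis structure theorem, $\omega_1(M/G,\bar g)=\sum_i\tilde m_i\ell(\tilde\gamma_i)\ge\ell(\gamma_0)$, and in general $\omega_1\le\Lambda_{AP}$. Equality then forces a single geodesic of multiplicity one, which is itself a shortest closed geodesic and hence embedded. You had the lower bound in mind; the upper bound $\omega_1\le\Lambda_{AP}=\ell(\gamma_0)$ from Calabi--Cao is the input you are missing.
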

\noindent We refer the reader to Theorem \ref{weyl} for the definition of $\tilde g_{M/G}$. Theorem \ref{thm.mult.one.ricci} is analogous to work of Bellettini \cite{bellettini2024multiplicity}, and we are able to adopt his argument to the $G$-invariant setting for $\cohom(G) \geq 3$. The same argument however fails in $\cohom(G) = 2$ (see remark \ref{codim.two.remark}), but we can recover multiplicity one and embeddedness when we have the further restriction of $M/G$ being a smooth quotient with positive Ricci curvature.
\subsection{An Explanation of no exceptional orbits in cohomogeneity two}
We remark that the requirement of \textit{no exceptional orbits} in cohomogeneity $2$ arises as follows: intuitively, to conclude the regularity theory of an immersed minimal surface in cohomogeneity two, we use the Sine--Gordon potential in the Allen--Cahn regularization of minimal surfaces. Applying either a mountain-pass or a $p$-parameter min-max construction will yield $G$-equivariant Allen--Cahn solutions, $\{u_{\eps}\}$, with $G$-equivariant index bounded by $p$. Following a classic covering argument, in the limit as $\eps \to 0$, the ``index accumulates" in at most $p$ orbits, $\{\mathcal{O}_i\}$. On the union of principal orbits (which is open and dense), any solution on $(M, g)$ projects to a solution of the Allen--Cahn equation \textit{with a drift term} on $(M/G, g_{M/G})$ (see equation \eqref{eqn.weak.one} and Theorem \ref{thm.drift.stability}). This complicates the analysis significantly, and we prove an adaptation of Wang--Wei's stability estimates \cite[Theorem 3.6]{wang2019finite} (see also \cite[Theorem 4.13]{mantoulidis2021allen} for the Riemannian adaptation) for the Allen--Cahn equation with a drift Laplacian. Nonetheless, if $\mathcal{O}_i$ is a principal orbit, then we can use the regularity theory coming from the quotient manifold, which is locally smooth near the quotient of a principal orbit. Applying the adapted stability estimates of Theorem \ref{thm.drift.stability} and the argument of Chodosh--Mantoulidis \cite[Theorem 3.1]{ChodoshMantoulidisWidths}, we conclude.  \nl 
\indent If $\mathcal{O}_i$ lies in the union of singular orbits, then these are necessarily at least codimension $3$ (and hence at least codimension $2$ when viewed as a subset of the minimal surface), by which the regularity theory and $\alpha$-structural hypothesis of Tonegawa--Wickramasekera \cite{TWStable} and Wickramasekera \cite{Wickramasekera} allow us to ``smooth" over $\mathcal{O}_i$ (see a similar argument by Hiesmayr \cite[\S 4.2]{hiesmayr2018spectrum}). Thus the core difficulty lies when the index accumulates in the exceptional orbits, which are codimension $2$ in the ambient manifold and hence codimension $1$ in the minimal surface itself. At this point, the machinery of Chodosh--Mantoulidis \cite{ChodoshMantoulidisWidths} may not apply as the quotient metric may be singular at points corresponding to quotients of exceptional orbits. The classic example of this is the $S^1$ action on $S^3$ as described in the appendix \S \ref{section.appendix} - here one is concerned that the index accumulates at either of the two conical points in the orbifold quotient of $S^2$, colloquially thought of as the tips of the ``American Football."

\subsection{Paper outline}
This paper is organized as follows:
\begin{enumerate}
\item In section \S \ref{section.background}, we introduce basic min-max objects with their $G$-invariant counterparts, the Allen--Cahn theory, and a short lemma on approximating Caccioppoli sets.

\item In \S \ref{section.min.max}, we construct $G$-equivariant solutions to the Allen--Cahn equation via mountain pass and higher $p$-parameter min-max methods. The latter leads to a definition of $G$-equivariant Allen--Cahn widths, $\{\omega_{p,\eps}^G\}$, and in \S \ref{section.comparison}, we demonstrate that these widths converge to the Almgren--Pitts $G$-equivariant $p$-widths, $\{\omega_p^G\}$, inspired by work of Wang \cite{WangDensity}, the second author and Guaraco \cite{GasparGuaracoWeyl}, and Dey \cite{dey2022comparison}. We also compute $\{\omega_p^G\}$ for several pairs of group actions and manifolds, using work of \cite{ChodoshMantoulidisWidths, marx2025p, marx2026p}.


\item In \S \ref{sec.regularity}, we show the regularity of stationary varifolds arising from limits of solutions to the Allen--Cahn equation. We employ tools from Tonegawa--Wickramasekera \cite{TWStable}, propagating the regularity of stable solutions to the regularity of $G$-stable solution in $G$-invariant annuli. When $\cohom(G) \geq 3$, the proof of regularity is quite short. Using classical ideas, any sequence of solutions $\{u_{\eps_i}\}$ with uniformly bounded index and energy will converge to a $G$-invariant smooth minimal hypersurface, away from a finite number of $G$-invariant orbits. When $\cohom(G) \geq 3$, each of these orbits will be at least codimension $3$ or more, for which the regularity theory of \cite{TWStable} allows us to smooth across the singularities up to a set of codimension $7$. When $\cohom(G) \leq 7$, we can further show that the singularities lie in $M \backslash M^{reg}$ by working on the quotient directly.

\item In \S \ref{section.cohom.two}, we show regularity for the limiting varifolds when $\cohom(G) = 2$. In this setting, we handle principal orbits which correspond to singularities in our minimal surface by passing to the quotient and adapting the local theory of \cite{ChodoshMantoulidisWidths}. We can ignore singular orbits via the same arguments as before, and we assume there are no exceptional orbits.

\item In \S \ref{section.index}, we prove index bounds, i.e. if $\{u_{\eps_i}\}$ is a sequence of solutions with bounded energy and index at most $p$, then the regular part of the limiting minimal $G$-equivariant hypersurfaces will also have $G$-index at most $p$. This adapts a theorem of the second author \cite[Theorem A]{gaspar2020second}.

\item In \S \ref{section.ricci}, we prove a multiplicity one result in the positive Ricci setting. This draws from work of Bellettini \cite{bellettini2024multiplicity}, though the same arguments do not apply for cohomogeneity $2$ actions.
\end{enumerate}
\subsection{Acknowledgments}
The authors are grateful to the Center for Mathematical Modeling in Santiago, Chile where part of this work was carried out. The authors would like to thank Tongrui Wang, Christos Mantoulidis, Juncheng Wei, Costante Bellettini, Akashdeep Dey, and Renato Bettiol for fruitful conversations. The first author is supported by Fondecyt grant number 11230872 and by Centro de Modelamiento Matemático (CMM) BASAL fund FB210005 for center of excellence from ANID-Chile. The second author was supported by ANID (Agencia Nacional de Investigación y Desarrollo, Chile) FONDECYT Iniciación grant number 11230874. The third author is supported by NSF Grant 23--603.
\section{Background} \label{section.background}
Throughout the text, we assume $(M,g)$ to be a closed Riemannian $(n+1)$-dimensional manifold, and assume $G$ to be a compact Lie group acting as isometries on $M$ of cohomogeneity $\cohom(G)=l+1\geq 2$. We refer to \S \ref{section.appendix} for a more detailed introduction to group actions. We borrow the following notations from \cite[\S 2]{wang2022min}, adding $G$- in front of objects meaning they are $G$-invariant:
  \begin{itemize}
    \item a $G$-varifold $V$ satisfies $g_{\#} V=V$ for all $g\in G$; 
    \item a $G$-vector field $X$ satisfies $g_{*} X=X$ for all $g\in G$; 
    \item a $G$-map $F$ satisfies $g^{-1}\circ F\circ g=F,\forall g\in G$, (i.e. $F$ is $G$-equivariant); 
    \item a $G$-set ($G$-neighborhood) is an (open) set which is a union of orbits. 
  \end{itemize}
  We will also sometimes add a subscript or superscript `$G$' to signify $G$-invariance: 
	\begin{itemize}
		\item $\pi$: the projection $\pi:M\mapsto M/G$ defined by $p \mapsto [p]$;
		\item $B_\rho^G(p),~\overline{B}_\rho^G(p)$:
		open and closed geodesic tubes with radius $\rho$ around $G\cdot p$;
		\item $\mathfrak{X}^G(M)$: the space of $G$-vector fields on $M$; 
		\item $\an(p,s,t)$: the open tube $B_t^G(p)\setminus \overline{B}^G_s(p)$; 
		\item $T_qG\cdot p$: the tangent space of the orbit $G\cdot p$ at some point $q\in G\cdot p$; 
		\item[$\bullet$] $M^{reg}$: the union of orbits with principal orbit type. 
	\end{itemize}
We recall that a $G$-varifold is $G$-stationary in $M$ if and only if it is stationary in $M$ \cite[Lemma 2.2]{liu2021existence}. We also recall the notion of $G$-index as the maximal dimension of subspaces of $G$-vector fields, $P$, for which $\delta^2V\Big|_{P \backslash \{0\}}$ is negative definite. We notate this maximal dimension as $\text{Ind}_G(V)$. \nl 
\indent Following Hsiang--Lawson \cite{HsiangLawson} and T. Wang \cite{WangDensity}, when $\ell + 1 = \cohom(G)$, we define the conformal metric on the quotient, $M/G$, as 
\begin{equation} \label{eqn.hsiang.lawson.metric}
\overline{g} = \mathcal{V}^{2/\ell} g_{M/G}
\end{equation}
where $g_{M/G}$ is the induced metric on $M/G$ coming from the projection map, and $\mathcal{V}(p) = \text{Vol}_g(\pi^{-1}(p))$ is the fiber volume function. We refer to \cite[Section 4]{HsiangLawson} for the key analytic properties of this function. 

\subsection{Allen-Cahn energy functional} \label{assumptions AC}
For $\e>0$, we consider the \emph{Allen-Cahn energy functional}:
\begin{equation} \label{eqn.AC.energy}
    E_\e(u) = \int_M\e \frac{|\nabla u|^2}{2} + \frac{W(u)}{\e},
\end{equation}
defined for Sobolev functions $u \in W^{1,2}(M)$. Here, the function $W$ is a ``double well potential" and we refer the reader to Gaspar--Guaraco \cite{GasparGuaracoClosed, GasparGuaracoWeyl} or Hutchison--Tonegawa \cite{HutchinsonTonegawa} for the classical assumptions on $W$. In practice, the exact choice of $W$ will not matter until \S \ref{section.cohom.two}, where we will define and specify the Sine-Gordon potential. \nl 
\indent Recall that critical points of $E_\e$ are precisely the weak solutions of the \emph{Allen-Cahn equation}
    \[-\e \Delta u + \frac{1}{\e}W'(u) =0.\]

If $u \in W^{1,2}(M)$ is a critical point of $E_\e$, we will denote by $\mathrm{Ind}_\e(u;\Omega)$ and by $n_\e(u;\Omega)$ its \emph{Morse index} and its \emph{nullity} on any open set $\Omega \subset M$, respectively. Concretely, consider the bilinear form given by the second variation of $E_\e$ at $u$, that is
    \begin{align*}
        D^2E_\e(u)\colon W^{1,2}(M) \times W^{1,2}(M) \to \R, \\[2pt] \quad D^2E_\e(u)[\psi,\rho] = \int_M \e \langle \nabla \psi,\nabla \rho\rangle  + \frac{W''(u)}{\e}\psi\rho
    \end{align*}
Then for any open set $\Omega \subset M$ we write
    \begin{align*}
        \mathrm{Ind}_\e(u;\Omega) & = \max\{ \dim V \colon V  \ \text{subspace of} \ W_0^{1,2}(\Omega) \ \text{s.t.} \ DE^2_\e(u)|_{V\times V} \ \text{is negative definite}\}\\
        n_\e(u;\Omega) & = \dim \ker D^2E_\e(u) = \dim \{ v \in W_0^{1,2}(\Omega) \colon D^2E_\e(u)[v,\rho] = 0 , \forall \rho \in W^{1,2}(\Omega)\}.
    \end{align*}
For $\Omega=M$, we write $\mathrm{Ind}_\e(u)=\mathrm{Ind}_\e(u;M)$ and $n_\e(u)=n_\e(u;\Omega)$. Moreover, we recall that these numbers correspond to the number of negative Dirichlet eigenvalues and to the nullity of the linearized Allen-Cahn (or Jacobi) operator
    \[
    J_{u_\eps}\phi = -\eps\Delta \phi + \frac{W''(u_\eps)}{\eps }\phi,
    \]
acting on smooth functions $\phi \in C_0^\infty(\Omega)$.\nl
\indent Given a critical point $u_\e$ of $E_\e$, we introduce its energy density measure
    \[
    d\mu_{\eps_i}=\frac{1}{2\sigma}\left(\frac{\eps_i|\nabla u_{\eps_i}|^2}{2}+\frac{W(u_{})}{\eps_i}\right)\,d\mathcal{H}^{n+1},
    \]
and associate to it a $n$-varifold $V(u_{\eps_i})$ on $M$:
    \[
    V(u_{\eps_i})\,(\phi) = \int_{M \cup \{|\nabla u_{\eps_i}|> 0\}} \phi\left(x, I - \frac{\nabla u_{\eps_i}}{|\nabla u_{\eps_i}|}\otimes \frac{\nabla u_{\eps_i}}{|\nabla u_{\eps_i}|}\right) \,d\mu_{\eps_i}, \quad \text{for} \ \phi \in C(G_n(M))
    \]
We recall that these varifolds were introduced in \cite{PadillaTonegawa,HutchinsonTonegawa,TWStable} to study the $\e$-limit interface associated to solutions of the Allen-Cahn equation.
    
\subsection{Symmetric smooth approximations for $G$-invariant Caccioppoli sets}

We will denote by $\mathcal{C}_G(M)$ the set of Caccioppoli sets in $M$ which are $G$-invariant. The following auxiliary results ensures that any $E \in \mathcal{C}_G(M)$ can be approximated by closed $G$-invariant subsets with smooth boundary. This can be seen as a $G$-invariant adaptation of \cite[Proposition 2.10]{dey2022comparison}.

\begin{lemma} \label{smooth_approx}
Let $E \in \mathcal{C}_G(M)$ and $F=M\setminus E$. Then there exist two sequences $\{E_j\}_j$ and $\{F_j\}_j$ of $G$-invariant closed subsets of $M$ with the following properties:

\begin{enumerate}
    \item[(i)] $E_j, F_j \in \mathcal{C}_G(M)$, $[\![ E_j ]\!] + [\![ F_j ]\!] = [\![ M ]\!]$ and $M= E_j \cup F_j$ for every $j$.
    \item[(ii)] $\mathcal{H}^{n+1}(E\Delta E_j) = \|\mathbf{1}_{E_j} - \mathbf{1}_E\|_{L^1(M)} \to 0$ and $\mathcal{H}^{n+1}(F\Delta F_j) = \|\mathbf{1}_{F_j} - \mathbf{1}F\|_{L^1(M)} \to 0$.
    \item[(iii)] $\partial [\![E_j ]\!] = [\![\partial ^* E_j ]\!] = [\![ \partial ^* F_j]\!] = \partial [\![F_j]\!]$ converges to $\partial [\![E]\!] = \partial [\![ F ]\!]$ in the $\mathbf{F}$ norm.  In particular, $\partial[\![ E_j ]\!]=\partial [\![F_j]\!] \to \partial [\![ E ]\!] = \partial [\![F]\!]$ in the flat metric, and the corresponding varifolds converge in the weak topology.
    \item[(iv)] For all $j$, the sets $E_j \cap F_j$ is a smooth, embedded closed hypersurface in $M$ which coincides with the topological and the reduced boundaries of both $E_j$ and $F_j$.
    \item[(v)] For all $j$ and all $p \in E_j\cap F_j$, there exists a $G$-invariant geodesic tube $U$ around $G\cdot p$ such that $U\setminus(E_j\cap F_j)$ is the union of two disjoint, connected, $G$-invariant open sets $\mathcal{O}_1 \subset E_j$ and $\mathcal{O}_2 \subset F_j$.
    \item[(vi)] For every $p \in M$ and every $R>0$, there exists a subsequence $\{j_s\}_{s=1}^\infty$ such that
        \[
        \mathcal{H}^{n}(\partial B_t^G(p) \cap (E_{j_s} \Delta E)) \to 0 \quad \text{and} \quad \mathcal{H}^{n}(\partial B_t^G(p) \cap (F_{j_s} \Delta F)) \to 0
        \]
    for a.e. $t \in (0,R)$.
\end{enumerate}
\end{lemma}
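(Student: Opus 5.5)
The plan is to mimic Dey's proof of \cite[Proposition 2.10]{dey2022comparison}, where $\mathbf{1}_E$ is mollified and a regular sublevel set is taken, making each step $G$-equivariant by averaging over $G$ with its normalized Haar measure $dh$.

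First, produce $G$-invariant smooth functions $f_j \in C^\infty(M)$ with $0 \le f_j \le 1$, $f_j \to \mathbf{1}_E$ in $L^1$, and $\int_M |\nabla f_j| \to \|D\mathbf{1}_E\|(M) = \mathbf{M}(\partial[\![E]\!])$. Start from any smooth approximations $\tilde f_j$ with these properties (standard BV approximation on a closed manifold, e.g.\ heat-kernel smoothing) and set $f_j(x) = \int_G \tilde f_j(h\cdot x)\, dh$. Since $\mathbf{1}_E$ is $G$-invariant and $G$ acts by isometries, $\|f_j - \mathbf{1}_E\|_{L^1} \le \int_G \|\tilde f_j\circ h - \mathbf{1}_E\circ h\|_{L^1}\,dh = \|\tilde f_j - \mathbf{1}_E\|_{L^1}$. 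Likewise $\int|\nabla f_j| \le \int|\nabla \tilde f_j|$, and lower semicontinuity of total variation gives the reverse inequality in the limit. Alternatively, one can use the heat semigroup directly, which commutes with isometries and so preserves $G$-invariance.

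Second, apply the coarea formula and Sard's theorem. For a.e.\ $t\in(0,1)$, $t$ is a regular value of $f_j$, and $\{f_j = t\}$ is a smooth closed embedded hypersurface. It is $G$-invariant because $f_j$ is. Set $E_j = \{f_j \ge t_j\}$ and $F_j = \{f_j \le t_j\}$. These are closed and $G$-invariant, with $E_j\cup F_j = M$ and $E_j \cap F_j = \{f_j = t_j\}$, which gives (i) and (iv). Property (v) follows because near a regular level set $f_j - t_j$ changes sign across the hypersurface. Since $f_j$ is $G$-invariant, the sets $\{f_j>t_j\}\cap U$ and $\{f_j<t_j\}\cap U$ in a thin $G$-tube $U$ around $G\cdot p$ (built from the normal exponential map of the hypersurface, restricted to a tube about the orbit) are $G$-invariant. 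Each of these sets is connected because $G\cdot p$ is connected up to the component action, and the tube can be chosen as a product-like collar. This connectedness is the point I expect to need care: if $G$ is disconnected, I would take $U$ to be a collar of the connected hypersurface piece containing $G\cdot p$ intersected with a $G$-tube.

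Third, choose $t_j$ well, which is the main obstacle. Following Dey/Modica--Sternberg, the coarea formula gives $\int_0^1 \mathcal{H}^n(\{f_j=t\})\,dt = \int|\nabla f_j| \to \mathbf{M}(\partial[\![E]\!])$. Also $\int_0^1 \|\mathbf{1}_{\{f_j>t\}}-\mathbf{1}_E\|_{L^1}\,dt = \|f_j-\mathbf{1}_E\|_{L^1}\to 0$, and the same holds for the slices $\partial B^G_r(p)$ integrated in $r$. With these integral estimates, a Chebyshev argument picks $t_j$ so that three conditions hold simultaneously: $\mathcal{H}^{n+1}(E\Delta E_j)\to0$, which is (ii); $\limsup \mathbf{M}(\partial[\![E_j]\!]) \le \mathbf{M}(\partial[\![E]\!])$; and the slice quantities are small. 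Flat convergence then follows from $L^1$ convergence. Flat convergence combined with convergence of mass upgrades to $\mathbf{F}$-convergence and varifold convergence, by the standard fact for boundaries of Caccioppoli sets (De Giorgi / Reshetnyak continuity, as in Dey), which gives (iii). For (vi), Fubini over $t\in(0,R)$ (via the coarea formula for the distance-to-orbit function, which is Lipschitz) gives $\int_0^R \mathcal{H}^n(\partial B^G_t(p)\cap(E_j\Delta E))\,dt \le C\,\mathcal{H}^{n+1}(E_j\Delta E)\to 0$. Passing to a subsequence converging a.e.\ in $t$ then yields the claim. The same holds for $F_j$, since $F_j\Delta F$ differs from $E_j\Delta E$ only on the null set $E_j\cap F_j$.
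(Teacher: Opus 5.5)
Your proposal is correct and follows the paper's route. The paper likewise takes smooth BV approximations of $\mathbf{1}_E$ (via the heat-semigroup result of Miranda--Pallara--Paronetto--Preunkert), averages them over $G$ with Haar measure, and verifies $L^1$ and total-variation convergence exactly as you do, using isometry invariance for the upper bound and lower semicontinuity for the lower bound. It then defers the choice of regular levels and properties (i)--(v) to Dey's Proposition 2.10, which your coarea/Chebyshev level selection and Fubini slicing argument for (vi) fill in explicitly. Your caveat about connectedness in (v) when $G$, and hence $G\cdot p$, is disconnected is a genuine subtlety of the statement itself that the paper does not address, so it is not a defect of your argument relative to the paper's.
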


\begin{proof}
As in \cite{dey2022comparison}, using Miranda-Pallara-Paronetto-Preunkert \cite{MPPP}, we first approximate each $E_j$ by a sequence $\{u_{j,i}\}_i \subset C^\infty(M,[0,1])$ in the following sense: 
\[
u_{j,i} \to \mathbf{1}_{E_j} \ \text{in} \ L^1(M), \qquad \text{and} \qquad \|Du_{j,i}\|(M) \to \|D\mathbf{1}_{E_{j}}\|(M).
\]

For each $i$, consider the averaged function $v_{j,i} \colon M \to [0,1]$ given by
\[
v_{j,i}(x) = \frac{1}{|G|} \int_G \phi_{\#}u_{j,i}(x)\,dV_G(\phi) = v_{j,i}(x) = \frac{1}{|G|} \int_G u_{j,i}(\phi x)\,dV_G(\phi).
\]

Here we recall that $V_G$ is the (bi-invariant) Haar measure of $G$ and we write $|G|=V_G(G)$. Since $\mathbf{1}_{E_j}$ is $G$-invariant, we can write it as $\mathbf{1}_{E_j}(x) = \frac{1}{|G|} \int_G \phi_{\#} \mathbf{1}_{E_j}(x)\,dV_G(\phi)$, so that
\begin{align*}
\|v_{j,i}-\mathbf{1}_{E_j}\|_{L^1(M)} &\leq \frac{1}{|G|} \int_M \int_G|u_{j,i}(\phi x)-\mathbf{1}_{E_{j,i}}(\phi x)|\,dV_G(\phi) \,d\mathrm{vol}_g(x) \\
&= \frac{1}{|G|} \int_G \|\phi_{\#}u_{j,i} - \phi_{\#}\mathbf{1}_{E_j}\|_{L^1(M)}\,dV_G(\phi) \to 0.
\end{align*}
In addition, $\|Dv_{j,i}\|(M) \to \|D\mathbf{1}_{E_j}\|(M)$ as well. To see this, note that $u_{j,i}$ is smooth and $|\nabla (u_{j,i}\circ \phi)(x)| = |D\phi^{-1}(\phi x) \nabla u_{j,i}(\phi x)| = |\nabla u_{j,i}(\phi x)|$, so that

\[
\|Dv_{j,i}\|(M) = \int_M|\nabla v_{j,i} | \,d\mathrm{vol}_g \leq\frac{1}{|G|} \int_M \int_G |\nabla u_{j,i}(\phi x)|\,dV_G(\phi)\,d\mathrm{vol}_g(x) = \int_M|\nabla u_{j,i}|\,d\mathrm{vol}_g \to \|D\mathbf{1}_{E_j}\|(M).
\]

On the other hand, since $v_{j,i} \to \mathbf{1}_{E_j}$ in $L^1$, the lower semicontinuity of the total variation ensures $\liminf_i \|D v_{j,i}\|(M) \geq \|D\mathbf{1}_{E_j}\|(M).$

Now the construction of $E_j$ and $F_j$, and the proof of properties 1.-5. follow that of \cite[Proposition 2.10]{dey2022comparison} without any major changes, noting that the sets $E_j$ and $F_j$ can be constructed as regular super- and sub-level sets of the smooth, $G$-invariant approximations $v_{j,i}$, yielding $G$-invariant, closed, Caccioppoli sets.
\end{proof}

\section{Codimension 1 equivariant min-max for phase transitions} \label{section.min.max}

\indent We note that the $G$-action on $(M,g)$ induces a right group action on the space of real-valued functions $M\to \R$ by composition, which we will denote as $\phi_\# u(x) = u\circ \phi(x) = u(\phi x)$, for $\phi \in G$. We consider the subspace $W^{1,2}_G(M) \subset W^{1,2}(M)$ of $G$-invariant functions, that is, of all $u \in W^{k,p}(M)$ such that $\phi_\# u  = u$ for all $\phi \in G$. Since this is a closed subspace, it is also a Hilbert space when endowed with the restriction of the inner product from $W^{1,2}(M)$.
In this section, we will construct equivariant critical points of $E_\e$ by employing variational methods for the functional $E_\e^G = E_\e|_{W^{1,2}_G(M)\setminus \{0\}}$. Note that its domain is a free $\Z_2$-space with the antipodal action $u \mapsto (-u)$, and that it is an incomplete Hilbert manifold.

If $u \in W^{1,2}_G(M)$ is a $G$-invariant critical point of $E_\e$, we denote its $G$-\emph{equivariant index} and \emph{nullity} by $\mathrm{Ind}^G_\e(u)$ and $n_\e^G(u)$, which are computed considering $D^2E_\e(u)$ as a bilinear form in $W^{1,2}_G(M)$, that is, only among $G$-invariant functions. We will say that $u$ is \emph{$G$-stable} if $\mathrm{Ind}_\e^G(u)=0$, that is, if $DE_\e^2(u)$ is positive semidefinite among $G$-invariant functions.
    
For $c \in \R_+$ and $p \in \N$, we denote 
    \[
    K_c^G(p) = \left\{ u \in W^{1,2}_G(M)\setminus \{0\} \  \colon \ E_\e(u)=c, \ D\left(E_e|_{W^{1,2}_G(M)\setminus \{0\}}\right)(u)=0, \ \mathrm{Ind}_\e^G(u) \leq p \leq \mathrm{Ind}_\e^G(u) + n_\e^G(u)\right\}.
    \]

\subsection{Equivariant mountain pass for the Allen-Cahn energy} \label{mountain pass}

In our first existence result, we extend the construction of a mountain-pass type critical point for the Allen-Cahn energy carried out by Guaraco in \cite{GuaracoMinmax} to the equivariant setting:

\begin{proposition} \label{1 parameter min-max}
Let
    \[
    \Gamma_G = \{ \gamma \in C^{0}([-1,1], W^{1,2}_G(M)) \ \colon\ \gamma(\pm 1) = \pm1\}
    \]
and consider the mountain pass value for $E_\epsilon$ associated to $\Gamma$, that is
    \[
    \omega_{AC,\epsilon}^G(M,g) = \inf_{\gamma \in \Gamma_G} \max_{t \in [-1,1]} E_\epsilon(\gamma(t)).
    \]
Then $\omega_{AC,\epsilon}^G(M,g)>0$ for every $\e>0$ and there exists a $G$-equivariant solution $v \in C^\infty(M,g)$ of the Allen-Cahn equation on $(M,g)$ with $|v|\leq 1$ and such that
    \begin{enumerate}
        \item[(i)] $E_\epsilon(v) = \omega_{AC,\epsilon}^G(M,g)$
        \item[(ii)] The equivariant Morse index of $v$ satisfies $\mathrm{Ind}_\eps^G(v) \leq 1$.
    \end{enumerate}
In addition, we have
    \[
    0 < \liminf_{\epsilon \to 0} \omega^G_{AC,\epsilon}(M,g) \leq \limsup_{\epsilon \to 0} \omega^G_{AC,\epsilon}(M,g) <\infty
    \]
\end{proposition}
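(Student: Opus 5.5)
The plan is to run Guaraco's mountain pass argument \cite{GuaracoMinmax} inside the closed subspace $W^{1,2}_G(M)$, using the principle of symmetric criticality to pass from critical points of the restricted functional to genuine Allen--Cahn solutions.

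\emph{Step 1: critical points of the restriction are solutions.} Since $G$ acts by isometries, $E_\eps(\phi_\# u)=E_\eps(u)$ for every $\phi\in G$. The averaging map $u\mapsto \frac{1}{|G|}\int_G \phi_\# u\,dV_G(\phi)$ is the orthogonal projection onto $W^{1,2}_G(M)$ with respect to the $W^{1,2}$ inner product. For $u\in W^{1,2}_G(M)$, the gradient $\nabla E_\eps(u)$ is itself $G$-invariant, by invariance of the functional. Hence $D(E_\eps|_{W^{1,2}_G})(u)=0$ implies $DE_\eps(u)=0$, so $u$ is a weak solution of the full equation. Standard elliptic regularity, together with the truncation of $W$ outside $[-1,1]$ (as in \cite{GuaracoMinmax}), then gives $u\in C^\infty$ with $|u|\le 1$ via the maximum principle.

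\emph{Step 2: mountain pass geometry and a min-max critical point.} The functional $E_\eps$ restricted to $W^{1,2}_G(M)$ is $C^2$. With the modified potential of \cite{GuaracoMinmax} it satisfies Palais--Smale, because the PS property on $W^{1,2}(M)$ restricts to closed subspaces. The endpoints $\pm1$ are strict local minima with $E_\eps(\pm1)=0$. I would reproduce Guaraco's estimate: any path from $-1$ to $1$ must cross a function $u$ with $\int_M u=0$, or more robustly one with $\mathrm{dist}_{L^2}(u,\{\pm1\})=\delta$ for a fixed small $\delta$. Such a $u$ has $E_\eps(u)\ge c_\eps>0$ by a Poincaré/coercivity argument near the wells. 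This gives $\omega^G_{AC,\eps}>0$. The mountain pass theorem, in the form with Morse index information (Ghoussoub/Hofer type, as invoked in \cite{GuaracoMinmax}), applied in the Hilbert space $W^{1,2}_G(M)$, yields a critical point $v$ of the restriction with $E_\eps(v)=\omega^G_{AC,\eps}$ and $\mathrm{Ind}^G_\eps(v)\le 1$. Step 1 turns $v$ into a smooth solution with $|v|\le1$. The index is measured only along $G$-invariant directions, which is exactly $\mathrm{Ind}^G_\eps$.

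\emph{Step 3: uniform bounds as $\eps\to0$.} For the upper bound I will build an explicit $G$-invariant path. Take a $G$-invariant smooth function $f$, for instance the distance to a fixed orbit $G\cdot p$, suitably smoothed and averaged. Its level sets are $G$-invariant hypersurfaces, and for a.e.\ level their $\mathcal{H}^n$-measure is bounded by the coarea formula together with a uniform bound over $t$ obtained from a Morse-type choice of $f$. Set $\gamma(t)=\tanh$-profile $\eta_\eps(f - t)$, truncated and with the parameter rescaled so that the path goes from $-1$ to $1$. The standard computation of \cite{GuaracoMinmax} gives $E_\eps(\gamma(t))\le 2\sigma\,\mathcal{H}^n(\{f=t\})+o(1)\le C$ uniformly in $\eps$. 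Each $\gamma(t)$ is $G$-invariant because $f$ is.

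For the lower bound, $\liminf>0$, I expect the main obstacle. The cleanest route avoids the full comparison theory: for any path in $\Gamma_G$, pick $t$ where $\int_M \Phi(\gamma(t))$ equals half the volume, where $\Phi(s)=\int_{-1}^s\sqrt{2W}$ is the Modica--Mortola primitive. Then
\[
E_\eps(\gamma(t))\ge \int_M|\nabla(\Phi\circ\gamma(t))|,
\]
and the isoperimetric inequality on $M$ (applied to superlevel sets via coarea) bounds the right side below by a constant depending only on $(M,g)$. If the truncation makes $|\gamma(t)|\le 1$ problematic, I would first replace paths by their truncations, which lowers the energy and preserves $G$-invariance. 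This is a non-equivariant bound, so it is automatically inherited by the smaller class $\Gamma_G$.
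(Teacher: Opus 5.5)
Your Steps 1 and 2 coincide with the paper's proof: Palais--Smale on the closed subspace $W^{1,2}_G(M)$, symmetric criticality, and Ghoussoub's mountain pass with an index bound. Your fixed-$\eps$ positivity argument is also fine.

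\textbf{Gap in the lower bound.} The genuine gap is the $\eps$-uniform lower bound. After selecting $t$ with $\int_M\Phi(\gamma(t))=\tfrac12\Phi(1)\,\mathrm{vol}(M)$, you discard the potential term. You then assert that the isoperimetric inequality bounds $\int_M|\nabla(\Phi\circ\gamma(t))|$ from below. This is false. For the path of constants $\gamma(t)\equiv t$, the selected $\gamma(t)$ is a constant in $(-1,1)$, whose total variation vanishes.

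The mean constraint only guarantees that \emph{one} side of each superlevel set $\{\Phi(u)>s\}$ is large, whereas the isoperimetric inequality needs both sides large. The missing idea is a dichotomy that uses $\eps^{-1}\int_M W(u)$. Fix $\beta>0$.
\begin{itemize}
\item If $\mathrm{vol}\{|u|\le 1-\beta\}\ge\sqrt{\eps}$, the potential term alone is at least $c_\beta\,\eps^{-1/2}$.
\item Otherwise, the mean constraint forces $\{u>1-\beta\}$ and $\{u<-1+\beta\}$ each to have volume close to $\mathrm{vol}(M)/2$. Then every $\{\Phi(u)>s\}$ with $s\in(\Phi(-1+\beta),\Phi(1-\beta))$ is balanced, and isoperimetry plus coarea gives an $\eps$-independent constant.
\end{itemize}
Alternatively, do what the paper does. $\Gamma_G$ is contained in the non-equivariant path class, so $\omega_{AC,\eps}\le\omega^G_{AC,\eps}$, and Guaraco's Proposition 5.2 applies. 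You already observed this inheritance, so citing it suffices.

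\textbf{Upper bound.} This step also needs repair. The estimate $E_\eps(\gamma(t))\le 2\sigma\,\mathcal{H}^n(\{f=t\})+o(1)$ of \cite{GuaracoMinmax} is for the heteroclinic composed with a truncated signed \emph{distance} function, which has unit gradient. For $\eta_\eps(f-t)$ with $f$ smoothed or Morse, the coarea formula gives something else: a weighted average, over an $\eps$-window of levels $s$, of $\int_{\{f=s\}}(|\nabla f|+|\nabla f|^{-1})\,d\mathcal{H}^n$.

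The second term is $\int_{\{f=s\}}|\nabla f|^{-1}\,d\mathcal{H}^n=\frac{d}{ds}\mathrm{vol}\{f<s\}$. This has a logarithmic singularity at a saddle-type critical orbit whose normal space is two-dimensional, which is precisely the cohomogeneity $2$ situation. So the energy of your path can grow like $\log(1/\eps)$, and the claimed uniform bound fails.

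There are two ways to fix this.
\begin{itemize}
\item Do not smooth. The distance $d$ to a fixed orbit is already $G$-invariant with $|\nabla d|=1$ a.e. Moreover, $\mathcal{H}^n(\{d=s\})$ is uniformly bounded via the area formula for the normal exponential map of the orbit, not by any Morse property.
\item Follow the paper. Compose the truncated heteroclinic with the signed distance to $\Sigma_t=f^{-1}(t)$ for a $G$-equivariant Morse function $f$ \cite{Wasserman}, whose level sets have uniformly bounded area. Then join the endpoints linearly to the constants $\pm1$.
\end{itemize}
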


\begin{proof}
The proof follows the arguments of \cite[Proposition 4.5]{GuaracoMinmax} closely, and it mirrors the equivariant min-max construction of T. Wang \cite{WangJGA} in the Almgren-Pitts setting.

Since $W^{1,2}_G(M)$ is a closed subspace of $W^{1,2}(M)$ and $E_\e$ satisfies the Palais-Smale property (truncating the double-well potential outside a compact interval containing $[-1,1]$ in its interior, if necessary) along bounded sequences of functions in $W^{1,2}(M)$, so does its restriction to $W_G^{1,2}(M)$. In addition, the min-max values $\omega_{AC,\epsilon}(M,g)$ for $E_\e$ associated to the class of paths of (not necessarily equivariant) functions joining the constants $(-1)$ and $(+1)$ satisfy
    \begin{equation} \label{comparison equivariant values}
        \omega_{AC,\epsilon}(M,g) \leq \omega_{AC,\epsilon}^G(M,g).
    \end{equation}
This shows that $\omega_{AC,\epsilon}^G(M,g) >0 = E_\epsilon(\pm 1)$. Additionally, any critical point of $E_{\epsilon}|_{W^{1,2}_G(M)}$ is a critical point of the Allen-Cahn energy, by Palais’ principle of Symmetric Criticality \cite{Palais}, and its index is precisely the $G$-equivariant index. In fact, since the Lie group $G$ acts on $W^{1,2}(M)$ by isometries with respect to the Sobolev norm, the ($W^{1,2}-$)gradient of $E_\e$ is equivariant with respect to the $G$-action, which ensures that this gradient is tangent to $W^{1,2}_G(M)$. Hence, if $u \in W^{1,2}(M)$ is critical for the constrained energy $E_\e|_{X_G}$, then $DE_\e(u) = 0$ among all functions in $W^{1,2}(M)$.

Therefore, the existence of the solution $v$ follows from \cite[Corollary 10.5]{Ghoussoub} (see also \cite[Theorem 4.3]{GuaracoMinmax}). The lower bound for the $\epsilon$ limit of $\omega_{AC,\epsilon}^G(M,g)$ follows directly from \eqref{comparison equivariant values} and \cite[Proposition 5.2]{GuaracoMinmax}, in which $c_\epsilon=\omega_{AC,\epsilon}(M,g)$ is shown to have a positive lower bound independent of the parameter $\e$.

For the upper bound, it suffices to observe that we can use the path of functions constructed in \cite[Section 7.5]{GuaracoMinmax} using the heteroclinic solutions of the Allen-Cahn equation and the level sets of a Morse function. To see this, consider the level sets $\{\Sigma_t=f^{-1}(t)\}_{t \in [0,1]}$ of a $G$-equivariant Morse function $f \colon M \to  [0,1]$, in the sense of \cite{Wasserman}, and let $\gamma(t)$ be the composition of the heteroclinic, 1-dimensional solution of the $\epsilon$-Allen-Cahn equation with the following signed distance function to $\Sigma_t$:
    \[
    d_{t,\delta}(x) = \left\{ \begin{array}{cc} \mathrm{sign}(f(x)-t)\cdot \delta, & \text{if} \ \mathrm{dist}(x,\Sigma_t) \geq \delta,  \\ \mathrm{sign}(f(x)-t)\cdot \mathrm{dist}(x,\Sigma_t), & \text{if} \ \mathrm{dist}(x,\Sigma_t) \leq \delta,\end{array} \right.
    \]
where $\delta>0$. Then $\{\gamma(t)\}_{t \in [0,1]}$ is a path of $G$-equivariant Lipschitz functions and its energy satisfies
    \[E_\epsilon(\gamma(t)) \leq C_{\delta}\cdot \mathcal{H}^{n-1}(\Sigma_t) + o(1),\]
with respect to $\epsilon \to 0$. Here $C_\delta \to 2\sigma$ as $\delta \to 0$ and $\sigma>0$ depends only on the double-well potential $W$. By connecting $\gamma(0)$ and $\gamma(1)$ linearly to the constants $(-1)$ and $+1$ and re-parameterizing the resulting path, we construct a path in $\Gamma_G$ whose energy remains bounded from above by $2\sigma\cdot \sup_{t} \mathcal{H}^{n-1}(\Sigma_t)$, that is,
    \[
    \limsup_{\epsilon \to 0}\omega_{AC,\epsilon}^G(M,g) \leq 2\sigma\cdot \sup_{t \in [0,1]}\mathcal{H}^{n-1}(\Sigma_t). \qedhere
    \]
\end{proof}

We conclude this section with a brief discussion about the \emph{least positive energy} equivariant solutions, that is, solutions whose energy is given by
    \begin{equation} \label{least energy}
    a_\e^G=\min\{ E_\e(u) \colon u \in W_G^{1,2}(M), DE_\e(u)=0 \ \text{and} \ E_\e(u_\e)>0\}.
    \end{equation}
The existence of solutions at this energy level follows readily from the Palais-Smale property and the nondegeneracy of the double well-potential, by the same proof as in \cite[Theorem 2.1 (1)]{GasparGuaracoClosed}. It naturally implies $a_\e \leq \omega_{AC,\e}^G$. In addition, if $E_\e(v_\e)=a_\e^G$ for a $G$-unstable solution $v_\e$, then $v_\e$ can be obtained as an equivariant min-max solution. Even more so, under this assumption, there exists a continuous path $\gamma \colon [-1,1] \to W^{1,2}_G(M)$ with $\gamma(\pm 1) = \pm 1$ (the constant functions), $\gamma(0) = v_\e$ and such that
    \[\omega_{AC,\e}^G(M,g) \leq \sup_{t \in [-1,1]} E_\e(\gamma(t)) = E_\e(v_\e) = a^G_\e,\]
so we obtain $a^G_\e=\omega_{AC,\e}^G(M,g)$ in this case. The existence of such path $\gamma$ follows by the same argument of \cite[Theorem 2.1 (2)]{GasparGuaracoClosed}, using the first eigenfunction associated to $D^2E_\e(v_\e)$ in $W^{1,2}_G(M)$ and the parabolic Allen-Cahn equation, observing that its solutions remain $G$-invariant for all times provided the initial data is $G$-invariant, by the uniqueness of solutions \cite[Lemma 2.3]{GasparGuaracoClosed}. Finally, such a solution $v_\e$ must have $G$-invariant Morse index $1$, as otherwise we would be able to construct a path of $G$-invariant functions with energy strictly below $E_\e(v_\e)$, by using the eigenfunction associated to the second eigenvalue of $D^2E_\e(v_\e)$.

We summarize the discussion in the following

\begin{proposition}
For every $\e>0$, there exists a $G$-invariant solution of the Allen-Cahn equation with energy $a_\e^G$, as defined in \eqref{least energy}. Moreover, any such solution is either $G$-stable or it can be obtained via the $1$-parameter equivariant min-max, as described in Proposition \eqref{1 parameter min-max}. In the latter case, the solution has $G$-invariant Morse index $1$, and $a_\e=\omega_{AC,\e}^G(M,g)$.
\end{proposition}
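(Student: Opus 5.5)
We prove the three assertions of the proposition in order: existence at the level $a_\e^G$, the dichotomy, and the index and energy statements in the unstable case.

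\textbf{Existence.} Consider the set $S_\e$ of $G$-invariant critical points of $E_\e$ with positive energy. It is nonempty, since Proposition \ref{1 parameter min-max} produces one at level $\omega_{AC,\e}^G>0$. Take a minimizing sequence $u_k \in S_\e$ for $a_\e^G$. By the maximum principle and the truncation of $W$ we may assume $|u_k|\le 1$, so the sequence is bounded in $W^{1,2}_G(M)$. It is Palais--Smale, because each $DE_\e(u_k)=0$. Since $W^{1,2}_G(M)$ is closed, a subsequence converges strongly to a $G$-invariant critical point $u$ with $E_\e(u)=\liminf E_\e(u_k)$.

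It remains to check that $E_\e(u)>0$. Here we use the nondegeneracy of the wells, $W''(\pm1)>0$, as in \cite[Theorem 2.1 (1)]{GasparGuaracoClosed}:
\begin{itemize}
\item[(a)] For fixed $\e$, any critical point $w$ with $|w|\le 1$ and small energy is uniformly close to $\pm1$. This follows from elliptic estimates, since $\int W(w)$ small together with $C^{2,\alpha}$ bounds forces $\|w\mp1\|_\infty$ small.
\item[(b)] The constants $\pm1$ are isolated critical points, since they are strict local minima with positive definite Hessian.
\item[(c)] Hence any critical point with small positive energy is excluded, and $a_\e^G>0$ is attained.
\end{itemize}

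\textbf{Dichotomy.} Let $v_\e$ attain $a_\e^G$ and suppose it is not $G$-stable. Let $\varphi_1$ be the first eigenfunction of $J_{v_\e}$ restricted to $W^{1,2}_G(M)$. One can choose it $G$-invariant and positive: average over $G$, and note that the first eigenvalue on the invariant subspace is simple by the usual $|\varphi|$ argument within $G$-invariant functions. Then for small $s$ we have $E_\e(v_\e\pm s\varphi_1)<E_\e(v_\e)$, and $v_\e+s\varphi_1$ is a strict supersolution of the parabolic Allen--Cahn flow, while $v_\e-s\varphi_1$ is a strict subsolution.

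Run the parabolic Allen--Cahn flow starting from $v_\e\pm s\varphi_1$. By uniqueness \cite[Lemma 2.3]{GasparGuaracoClosed}, $G$-invariance of the data and $G$-invariance of the equation, the flows stay $G$-invariant. By the comparison principle they are monotone in time and the energy decreases. Each flow converges to a $G$-invariant critical point with energy strictly below $a_\e^G$. By the minimality of $a_\e^G$, such a limit must have energy $\le 0$, hence must be a constant $\pm1$: monotonicity gives $+1$ for the upward flow and $-1$ for the downward one. Actually, limits with zero energy are exactly $\pm1$, and the monotone limits are forced to be $+1$ and $-1$ respectively.

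Concatenating the two flow paths through $v_\e$ gives a path $\gamma\in\Gamma_G$ with $\max E_\e\circ\gamma=E_\e(v_\e)$. Hence $\omega^G_{AC,\e}\le a_\e^G$. The reverse inequality holds because the mountain pass solution of Proposition \ref{1 parameter min-max} is a positive-energy $G$-critical point. So $a_\e^G=\omega^G_{AC,\e}$, and $v_\e$ is realized as an equivariant min-max solution.

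\textbf{Index.} We have $\mathrm{Ind}^G_\e(v_\e)\ge1$ by instability. Suppose it were at least $2$, and take the span of the first two $G$-invariant eigenfunctions, on which $D^2E_\e(v_\e)$ is negative definite. On this two-dimensional disc one can push the path $\gamma$ off $v_\e$ near $t=0$ and deform it. This is the standard argument for mountain pass critical points, cf.\ \cite[Theorem 2.1 (2)]{GasparGuaracoClosed}: a small circle around $v_\e$ in this plane has energy below $a_\e^G$ and is connected, so the incoming and outgoing arcs of $\gamma$ can be joined along it. This yields $\gamma'\in\Gamma_G$ with $\max E_\e\circ\gamma'<a_\e^G=\omega^G_{AC,\e}$, a contradiction. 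Therefore the index equals $1$.

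\textbf{Main obstacle.} The main difficulty is justifying the flow step. One must show that the monotone parabolic flows converge to $\pm1$ and not to some other low-energy $G$-invariant solution. This is where the minimality of $a_\e^G$ and the isolation of the constants must be combined carefully. One must also check that the endpoint connection is continuous in $W^{1,2}$ at infinite time, which requires reparametrizing $[0,\infty)$ to a finite interval and using convergence of the flow in $W^{1,2}$.
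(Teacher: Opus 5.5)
Your proposal is correct and follows essentially the same route as the paper: existence of a positive-energy minimizer via Palais--Smale and the nondegeneracy of the wells as in \cite[Theorem 2.1 (1)]{GasparGuaracoClosed}, then an optimal path through $v_\e$ built from the first $G$-invariant eigenfunction and the parabolic flow (kept $G$-invariant by uniqueness), and finally index one via the second eigenfunction. One small slip in labeling: since $J_{v_\e}\varphi_1=\lambda_1\varphi_1$ with $\lambda_1<0$, the function $v_\e+s\varphi_1$ is a strict \emph{sub}solution of $\partial_t u=\e\Delta u-W'(u)/\e$, so its flow increases to $+1$, and $v_\e-s\varphi_1$ is a strict supersolution; this is in fact the behavior your argument uses.
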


\subsection{The equivariant phase transition spectrum} \label{phase transitions spectrum}

Our next goal is to extend the min-max construction to $p$-dimensional families of subsets of $W^{1,2}_G(M)$, in order to formulate a $G$-equivariant analogue of the phase transition spectrum defined in \cite{GasparGuaracoClosed,GasparGuaracoWeyl}.

Following the construction of \cite{GasparGuaracoClosed}, for each positive integer $p$, we let
    \[
    \cF^G_p(M) = \{ A \subset W^{1,2}_G(M)\setminus \{0\} \colon A \ \text{compact}, \ \text{symmetric}, \ \text{and} \ \Ind_{\Z_2}(A)\geq p+1\},
    \]
where $\Ind_{\Z_2}$ is the $\Z_2$-cohomological index of Fadell-Rabinowitz \cite{FadellRabinowitz} (see also Appendix B in \cite{GasparGuaracoClosed}). We define the \emph{$G$-equivariant phase transition spectrum of $(M,g)$} as the min-max sequence of critical values of $E_\e^G$ associated to the cohomological families $\cF_p$, that is the sequence $\{\omega^G_{p,\e}(M,g)\}_{p \in \N}$ defined by
    \[
    \omega^G_{p,\e}(M,g) = \inf_{A \in \cF^G_p(M)} \sup \{ E_\e(u) \colon u \in A\}. 
    \]

\begin{theorem}[Equivariant Min-Max Theorem for the Allen-Cahn energy] \phantom{m}\\
\noindent\emph{(1)} The $G$-equivariant phase transition spectrum satisfies 
    \[
    \omega^G_{p,\e}(M,g) \leq E_\e(0) = \frac{W(0)}{\e}\mathrm{vol}(M,g),\]
with equality for every sufficiently large $p$ depending on $(M,g)$, $W$ and $\e$.\smallskip

\noindent\emph{(2)} If $\omega^G_{p,\e}(M,g) < E_\e(0)$, then there exists a $G$-invariant, smooth solution of the Allen-Cahn equation $u \in C^\infty(M)$ with 
    \[
    |u|\leq 1, \quad E_\e(u) = \omega^G_{p,\e}(M,g), \quad \mathrm{Ind}^G_\e(u) \leq p \leq \mathrm{Ind}^G_\e(u) + n^G_\e(u).
    \]
For each fixed $p \in \N$, this holds true provided $\e$ is sufficiently small.\smallskip

\noindent\emph{(3)} If $\omega_{p,\e}^G(M,g) = \omega_{p+k,\e}^G(M,g)$ for some $p,k\in \N$, then there are infinitely many critical points of $E_\e$ at this energy level. More precisely,
    \[
    \Ind_{\Z_2}\left(\, K^G_{\omega_{p,\e}^G(M,g)}(p+k) \, \right) \geq k+1.
    \]
\end{theorem}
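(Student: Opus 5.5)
The plan is to reduce everything to the abstract min-max theory for even $C^2$ functionals on Hilbert manifolds with free $\Z_2$-actions, as used in \cite{GasparGuaracoClosed}, applied to $E_\e^G = E_\e|_{W^{1,2}_G(M)\setminus\{0\}}$. We then use Palais' principle of symmetric criticality, exactly as in the proof of Proposition \ref{1 parameter min-max}, to upgrade critical points of the restricted functional to genuine $G$-invariant solutions of the Allen--Cahn equation. The needed structural facts carry over verbatim from the non-equivariant case, because $W^{1,2}_G(M)$ is a closed, $\Z_2$-invariant subspace. Specifically: $E_\e$ is even; it satisfies Palais--Smale after truncating $W$ outside a compact interval; and its gradient is $G$-equivariant, hence tangent to $W^{1,2}_G(M)$. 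The latter also ensures that the pseudo-gradient deformations used in the abstract theory preserve $G$-invariance.

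For (1), the upper bound comes from a test set. Let $A$ be the unit sphere of a $(p+1)$-dimensional subspace of $W^{1,2}_G(M)$ spanned by $G$-invariant functions, for instance $G$-invariant eigenfunctions of the Laplacian; there are infinitely many, since $\cohom(G)\geq 2$. Then $\Ind_{\Z_2}(A)=p+1$. Rescale: $A_t=tA$ with $t\to 0$ has the same index and $\sup_{A_t}E_\e\to E_\e(0)$. Note that $W(0)$ is the local maximum, so this needs care. A cleaner route is the one in \cite{GasparGuaracoClosed}: take $A$ to be the set of functions with $E_\e\le E_\e(0)$ in the span after a radial deformation, or simply use the sublevel set argument. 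For equality at large $p$, we argue that $0$ is a nondegenerate critical point with finite Morse index $k$, namely the number of $G$-invariant eigenvalues $\lambda_j<-W''(0)/\e^2$. A neighborhood of $0$ then deformation-retracts so that $\{E_\e<E_\e(0)\}$ has $\Z_2$-index at most $k$, by the Lusternik--Schnirelmann-type argument of \cite[Theorem 3.3]{GasparGuaracoClosed}. Hence $\omega^G_{p,\e}=E_\e(0)$ for $p\ge k$. This $k$ depends on $(M,g)$, $W$ and $\e$, and in fact only on the $G$-invariant spectrum, which only makes it smaller.

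For (2), when $c=\omega^G_{p,\e}<E_\e(0)$, the level $c$ lies below the only level where the missing point $0$ could cause non-completeness trouble. Hence the deformation lemma applies on $\{E_\e\le c+\delta\}$, which stays a positive distance from $0$. The standard min-max theorem with Morse index control then gives a critical point $u\in K^G_c(p)$; Ghoussoub's \cite{Ghoussoub} index bounds for cohomological families yield $\Ind^G_\e(u)\le p\le \Ind^G_\e(u)+n^G_\e(u)$. By symmetric criticality $u$ solves the equation weakly, so elliptic regularity gives smoothness, and the maximum principle with truncation gives $|u|\le1$. For "$\e$ small", we show $\limsup_{\e\to0}\omega^G_{p,\e}<\infty$ while $E_\e(0)\to\infty$. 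The finiteness follows by constructing $p$-sweepouts by $G$-invariant functions, composing the heteroclinic profile with signed distances to level sets of $G$-invariant polynomial combinations $\sum a_i f_i$ of invariant functions, as in the upper bound of Proposition \ref{1 parameter min-max} and \cite{GasparGuaracoWeyl}.

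For (3), we argue by contradiction. Suppose $\Ind_{\Z_2}(K)\le k$ for $K=K^G_c(p+k)$. The set $K$ is compact by Palais--Smale and symmetric, so it has a symmetric neighborhood $N$ with the same index. The equivariant deformation lemma pushes $\{E_\e\le c+\delta\}\setminus N$ into $\{E_\e\le c-\delta\}$. Taking $A\in\cF^G_{p+k}$ with $\sup_A E_\e<c+\delta$, subadditivity of the index gives $\Ind_{\Z_2}(\overline{A\setminus N})\ge p+1$, so its deformed image lies in $\cF^G_p$ with energy below $c=\omega^G_p$, a contradiction. The index refinement, that the relevant critical points can be taken in $K(p+k)$ rather than all of the critical set, uses Ghoussoub's localized version. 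I expect this index refinement in (3) together with the large-$p$ equality in (1) to be the main obstacles, since both require the Morse-theoretic description near $0$ and careful index bookkeeping in the $G$-invariant Hilbert space.
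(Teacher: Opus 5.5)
Your treatment of (2) and (3) follows the paper's route: you run the Gaspar--Guaraco/Ghoussoub machinery on the closed subspace $W^{1,2}_G(M)$ and add symmetric criticality. Your first argument for $\omega^G_{p,\e}\le E_\e(0)$, rescaling a compact symmetric $A$ by $t\to 0$, is exactly the paper's and is already complete. $E_\e$ is continuous on $W^{1,2}$ and $tA\to\{0\}$ uniformly, so the fact that $0$ is a maximum of $W$ needs no extra care.

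\textbf{The gap.} The genuine gap is the equality $\omega^G_{p,\e}=E_\e(0)$ for large $p$. The Morse lemma at $0$ only describes $\{E_\e<E_\e(0)\}$ inside a small neighborhood of $0$. It gives no control on compact symmetric sets of large $\Z_2$-index lying far from $0$ with energy below $E_\e(0)$. For a general even functional, the index of $\{E<E(0)\}$ is not bounded by the Morse index of $0$. For example, an even function on $\R^2$ with a local minimum at the origin and a lower ring around it has Morse index $0$ at the origin. Yet its sublevel set contains an antipodal circle, which has index $2$. So ``a neighborhood of $0$ deformation-retracts, hence $\Ind_{\Z_2}\{E_\e<E_\e(0)\}\le k$'' is not a valid inference. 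Separately, $0$ is degenerate for a discrete set of $\e$.

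\textbf{How to close it.} What closes the argument is a global bound on all critical points. If $|u|\le 1$ is a solution, then $D^2E_\e(u)[\psi,\psi]\ge\int_M \e|\nabla\psi|^2+\e^{-1}(\min_{[-1,1]}W'')\psi^2$. Hence $\Ind^G_\e(u)+n^G_\e(u)\le N(\e):=\#\{j\colon \lambda^G_j\le -\e^{-2}\min_{[-1,1]}W''\}$, where $\lambda^G_j$ are the eigenvalues of $-\Delta$ on $G$-invariant functions. If $\omega^G_{p,\e}<E_\e(0)$, part (2) gives a solution with $p\le \Ind^G_\e(u)+n^G_\e(u)\le N(\e)$. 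So equality holds for every $p>N(\e)$. When $\min_{[-1,1]}W''=W''(0)$, $N(\e)$ is the $G$-invariant index plus nullity of $0$. Your threshold is therefore essentially right, but for this reason, not by local Morse theory. The paper's proof is shorter still. Since $\cF^G_p\subset\cF_p$, we get $c_\e(p)\le\omega^G_{p,\e}\le E_\e(0)$, and $c_\e(p)=E_\e(0)$ for large $p$ by \cite{GasparGuaracoClosed}.

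\textbf{Two smaller points.} First, for ``$\e$ small'' in (2) you do not need a sweepout construction. The $G$-invariant Morse index of $0$ tends to $\infty$ as $\e\to 0$. Small spheres in the span of the first $p+1$ $G$-invariant eigenfunctions then have energy below $E_\e(0)$. This avoids having to prove uniform area bounds and $W^{1,2}$-continuity in $a$ for your level-set sweepout. Second, your deformation argument in (3) only works at levels $c<E_\e(0)$, and you should state this hypothesis explicitly. For large $p$ one has $\omega^G_{p,\e}=\omega^G_{p+k,\e}=E_\e(0)$, while $K^G_{E_\e(0)}(p+k)=\emptyset$ once $p+k>N(\e)$. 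So (3) fails at that level.
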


\begin{proof}
The proof follows closely the proof of Theorem 3.3 in \cite{GasparGuaracoClosed}, by adapting the techniques from \cite[Chapters 9 and 10]{Ghoussoub}, together with the following straightforward observations:

\begin{itemize}
    \item If $u \in W^{1,2}_G(M)\setminus \{0\}$ is a critical point of $E_\e|_{W^{1,2}_G(M)\setminus \{0\}}$, then it is a critical point of the unconstrained energy functional $E_\e$, and hence a solution of the Allen-Cahn equation. This is also a consequence of Palais' Principle of Symmetric Criticality \cite{Palais}. 
    \item If $A \in \cF_p^G$, then $\lambda A = \{ \lambda u \colon u \in A\} \in \cF_p^G$ for any $\lambda \in \R_+$. This ensures, as in \cite{GasparGuaracoClosed}, that $\omega_{p,\e}^G(M,g) \leq E_\e(0)$.  
    \item Recall that $c_\e(p)=\omega_{p,\e}^{\mathrm{id}}(M,g)$ is the min-max critical value of $E_\e$ associated to $\Z_2$-symmetric families of compact subsets of $W^{1,2}(M)\setminus\{0\}$ with $\Ind_{\Z_2} \geq p+1$, \cite{GasparGuaracoClosed}. Trivially, we have $c_\e(p)  \leq \omega^G_{p,\e}(M,g)$, hence (1) follows from the corresponding statement in \cite{GasparGuaracoClosed}.
    \item The proof that $\omega_{p,\e}^G(M,g) < E_\e(0)$ for sufficiently small $\e>0$ can be carried out as in \cite{GasparGuaracoClosed}, replacing the Laplace spectrum of $(M,g)$ by its equivariant counterpart, but it also follows from the ($\e$-independent) upper bounds for $\omega^G_{\e,p}(M,g)$ shown below.\qedhere
\end{itemize}
\end{proof}

\subsection{Asymptotics of the equivariant phase transition spectrum}

In this section we study the asymptotic growth of the $\e$-limits of $\omega_{p,\e}(M,g)$, akin to Theorems 10 and 13 in \cite{WangJGA}. First, we give a short proof of the sublinear growth (with respect to $p$) of these limits, with an exponent determined by the cohomogeneity $(l+1)$ of the $G$-action:

\begin{theorem}
There exists $C=C(M,g,G,W)$ such that
    \[C^{-1} p^{\frac{1}{l+1}} \leq \liminf_{\e \to 0^+} \omega^G_{p,\e}(M,g) \leq \limsup_{\e \to 0^+} \omega^G_{p,\e}(M,g) \leq Cp^{\frac{1}{l+1}}.\]
\end{theorem}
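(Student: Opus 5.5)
We prove the two bounds separately, reducing each to a comparison with known estimates on the quotient $M/G$, which has dimension $l+1$.

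\textbf{Upper bound.} We construct, for each $p$, an explicit family $A_p \in \cF^G_p(M)$ whose energies are bounded independently of $\e$ by $Cp^{\frac{1}{l+1}}$.

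1. Take a $p$-dimensional linear space of $G$-invariant test functions, for instance $\mathrm{span}\{f_0,\dots,f_p\}$. Here each $f_i$ is a polynomial in a fixed $G$-equivariant (Wasserman) Morse-type map, or the pullback of functions on the quotient.
2. Compose the nonzero elements of this space with the one-dimensional heteroclinic profile applied to a truncated signed distance to their zero sets. This is the construction of Proposition \ref{1 parameter min-max}, following \cite{GasparGuaracoClosed, GasparGuaracoWeyl}.
3. The resulting symmetric compact set has $\Ind_{\Z_2} \geq p+1$, because the map from the unit sphere of the span is odd and continuous.
4. Its energy is at most $2\sigma \sup \mathcal{H}^n(\{f=0\}) + o(1)$.

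The cleanest way to make step 4 quantitative is to transfer the problem to $M/G$. One approach is to use Wang's bound $\omega^G_p \leq Cp^{1/(l+1)}$ \cite{WangJGA} together with the comparison $\limsup_\e \omega^G_{p,\e} \leq 2\sigma\,\omega_p^G$. That comparison is proved, as in Dey \cite{dey2022comparison}, by approximating $G$-invariant Caccioppoli sweepouts via Lemma \ref{smooth_approx} and inserting heteroclinic profiles. Alternatively, and more directly, we would do the following:
(a) cover $M/G$ by roughly $p$ cells of diameter $r \sim p^{-1/(l+1)}$, working on the principal part;
(b) use Guth's bend-and-cancel construction in these cells, or zero sets of $G$-invariant polynomials of degree $\sim p^{1/(l+1)}$;
(c) bound the area of each preimage hypersurface by $\int_{\{f=0\}/G} \mathcal{V}\, d\mathcal{H}^{l} \leq C \mathcal{H}^l(\{f=0\}/G)$, which holds because $\mathcal{V}$ is bounded above;
(d) use the standard quotient estimate $\mathcal{H}^l \lesssim p^{1/(l+1)}$.

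\textbf{Lower bound.} The starting point is the regularity theory and Theorem \ref{thm.p.width.reg}, but the simplest route again goes through Almgren–Pitts. By the liminf half of the Dey-type comparison, $\liminf_\e \omega^G_{p,\e} \geq 2\sigma\,\omega^G_p$. This half follows from the Modica–Mortola/BV argument: for $A \in \cF_p^G$, the map $u \mapsto \{\Phi(u)>0\}$ with $\Phi(t) = \int_0^t \sqrt{2W}$ gives a $p$-sweepout by $G$-invariant boundaries with mass $\leq E_\e(u)/(2\sigma)$, using the coarea formula. We then invoke Wang's lower bound $\omega^G_p \geq C^{-1} p^{1/(l+1)}$, which is obtained from a Lusternik–Schnirelmann/Gromov–Guth argument on $p$ disjoint $G$-tubes. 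If we prefer to avoid Wang, we argue directly. Choose $p+1$ disjoint $G$-invariant tubes $B^G_r(x_i)$ over principal orbits with $r \sim p^{-1/(l+1)}$. A $p$-sweepout must contain a member that bisects every tube, and the relative isoperimetric inequality in each tube, computed in the quotient, gives area $\gtrsim r^{l}$ per tube. Hence the total area is $\gtrsim p\, r^{l} = p^{1/(l+1)}$.

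\textbf{Main obstacle.} We expect the hardest step to be making the sweepout-to-phase-transition comparison rigorous uniformly in $\e$. In particular, the continuity in the $\mathbf{F}$-metric and the preservation of $\Z_2$-index when passing from $W^{1,2}_G$ families to $G$-invariant cycles require the interpolation machinery of \cite{dey2022comparison} adapted equivariantly. If that full comparison is deferred to \S \ref{section.comparison}, we would instead prove the direct tube/isoperimetric arguments sketched above. Both of those work solely on $M^{reg}/G$, where $\mathcal{V}$ is bounded between positive constants on compact subsets.
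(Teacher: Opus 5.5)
Your main route is correct, but it is not the paper's. You reduce both inequalities to the Almgren--Pitts side. Theorem \ref{comparison} gives $\lim_{\e\to0}\omega^G_{p,\e}=2\sigma\,\omega^G_p$, and Wang's two-sided growth bounds for $\omega^G_p$ \cite{WangJGA} (or the Weyl law \cite[Theorem 4.8]{WangDensity}) finish the argument. Theorem \ref{comparison} is proved without using the present statement, so this is not circular.

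The paper instead works directly with functions.
\begin{itemize}
\item \emph{Upper bound.} It uses the equivariant triangulations of Verona and Illman to map the \emph{whole} orbit space $M/G$ bi-Lipschitz onto a cubical complex $K$. It then pulls back the explicit odd family $\{\mathbb{H}_\e\circ h_a\}_{a\in S^p}$ of \cite{GasparGuaracoClosed} through the $1$-Lipschitz projection $\pi$. This gives an $\e$-independent bound $Cp^{1/(l+1)}$, which the paper also uses in the min-max theorem to guarantee $\omega^G_{p,\e}<E_\e(0)$ for small $\e$.
\item \emph{Lower bound.} It restricts to $\Omega=\pi^{-1}(\tilde\Omega)$ with $\tilde\Omega\subset\subset M^{reg}/G$; restriction is odd, continuous and does not increase energy. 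It then pushes invariant functions down to $\tilde\Omega$, where Fubini and the two-sided bounds on $\mathcal V$ give $E_\e(\tilde u;\tilde\Omega)\le C\,E_\e(u;\Omega)$. Finally it quotes the non-equivariant lower bound for Allen--Cahn widths of an $(l+1)$-dimensional domain.
\end{itemize}

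Your route yields more: existence of the limit and, via Wang's Weyl law, the sharp constant, which is exactly how the paper obtains Theorem \ref{weyl}. The price is that it rests on the full equivariant Gaspar--Guaraco/Dey interpolation machinery (no concentration of mass on orbits, almost-smooth sweepouts, the equivariant Almgren isomorphism) and on Wang's theorems. The paper's argument only needs the non-equivariant results of \cite{GasparGuaracoClosed,GasparGuaracoWeyl}, applied on the quotient.

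Your fallback ``direct'' arguments would not stand on their own as written.
\begin{itemize}
\item \emph{Upper bound on the principal part only.} You cannot work solely on $M^{reg}/G$. The family must consist of $G$-invariant functions on all of $M$ depending oddly on the parameter, and functions on a compact piece of the principal part cannot be extended oddly without paying for an interface (extending by $0$ costs $W(0)\mathrm{vol}(M)/\e$). This is why the paper triangulates the entire orbit space, singular strata included.
\item \emph{Continuity of your family.} The map $f\mapsto\tilde{\mathbb H}_\e\circ d_{\{f=0\}}$ is not continuous on a linear space of functions, because zero sets appear, disappear or change topology as the coefficients vary. For example, along $f+t$ with $f\ge 0$ vanishing somewhere, the zero set is empty for $t>0$ and nonempty at $t=0$. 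So your step 3 index claim fails as stated. The family in \cite{GasparGuaracoClosed} is built precisely so that $a\mapsto\mathbb{H}_\e\circ h_a$ is continuous.
\item \emph{Number of tubes.} $\Ind_{\Z_2}\ge p+1$ only forces a member bisecting $p$ tubes (via an odd map to $\R^p$), not $p+1$. This is harmless.
\item \emph{Dependence on the comparison.} The tube argument is phrased for cycles, so it still needs the $\liminf$ comparison of Proposition \ref{comparison bound 1}. There a single level such as $\{\Phi(u)>0\}$ has no mass bound; one must average over levels and then interpolate. To make the lower bound genuinely independent, you would have to redo the bisection with an odd functional of $u$ and a Modica--Mortola-type relative isoperimetric estimate, or simply push down to $\tilde\Omega$ as the paper does.
\end{itemize}
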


\begin{proof}
We first prove the sublinear upper bound by lifting the piecewise linear sweepout constructed in \cite{GasparGuaracoClosed} from the quotient $M/G$ to $M$.

As observed by Wang in \cite{WangJGA}, by the work of Verona \cite{Verona} and Illman \cite{Illman1, Illman2} on equivariant triangulations, there exists an $l$-dimensional cubical subcomplex $K$ of some $I^m$ and a bi-Lipschitz map $f \colon M/G \to K$. Therefore, by the construction of \cite[Section 4]{GasparGuaracoClosed}, which can be applied to any cubical subcomplex of $I^m$, there exists a family $\{h_a\}_{a \in S^p}$ of (nonzero) Lipschitz functions $K \to \R$ such that
\begin{enumerate}
    \item[(i)] $h_{(-a)} = -h_a$ for all $a \in S^p$;
    \item[(ii)] If $a_j$ is a sequence in $S^p$ with $a_j \to a \in S^p$, then $\mathbb{H}_\e \circ h_{a_j} \to \mathbb{H}_\e \circ h_{a}$ and $\nabla(\mathbb{H}_\e \circ h_{a_j}) \to \nabla(\mathbb{H}_\e \circ h_{a})$ a.e. on $K$.
    \item[(iii)] For every $a \in S^p$, the sum of the Allen-Cahn energies of $\mathbb{H}\circ h_a$ over all $l$-dimensional cells of $K$ is bounded from above by $Cp^{1/(l+1)}$, where $C=C(M,g,G,W)>0$ does not depend on $a$ nor on $\e>0$.
\end{enumerate}
Since the projection $\pi \colon M \to M/G$ is a $1$-Lipschitz map (see \cite[Section 8.12]{EellsFuglede}), it follows that the set $\{\mathbb{H}_\e \circ h_a \circ f \circ  \pi\  \colon \ a \in S^p\} \subset W^{1,2}_G(M)$ is in $\cF_G^p(M)$ and 
    \[
    E_\e(\mathbb{H}_\e \circ h_a \circ f \circ \pi) \leq C_1\cdot Cp^{1/l}, \ \text{for all} \ a \in S^p,
    \]
where we used \cite[Lemma A.2]{GasparGuaracoClosed} and $C_1$ depends on the Lipschitz constants of $f$ and $\pi$. Therefore $\omega_{p,\e}^G(M,g) \leq C_1C \ p^{1/(l+1)}$.

To prove the lower bound, let $\tilde \Omega$ be a compact, regular domain contained in $M^{reg}/G$ with smooth boundary, and let $\Omega = \pi^{-1}(\tilde \Omega)$. Note that $\Omega$ is $G$-invariant and that the restriction map $u \mapsto u|_{\Omega}$ defines a linear continuous map $W^{1,2}_G(M) \to W^{1,2}_G(\Omega)$. Consequently, if $A \in \cF^G_p(M)$, then $\{u|_\Omega \colon u \in A\} \in \cF^G_p(\Omega)$. Since $E_\e(u) \geq E_\e(u|_\Omega)$, this shows that $\omega_{p,\e}^G(\Omega,g) \leq \omega_{p,\e}^G(M,g)$. 

We can endow the quotient $M^{reg}/G$ with a Riemannian metric $g_{M/G}$ which makes the quotient map $\pi \colon (M^{reg},g) \to (M^{reg}/G,g_{M/G})$ a Riemannian submersion. We claim that there exists a constant $C$ depending only on $(M,g)$ and $G$ (and on the choice of $\Omega$, but independent of $(p,\e)$) such that
    \[
    \omega_{p,\e}(\tilde \Omega,g_{M/G}) \leq C\cdot \omega_{p,\e}^G(\Omega,g),
    \]
compare with Theorem 4.4 in \cite{WangDensity}. Since $\dim \tilde \Omega = l+1$, by the sublinear growth of the volume spectrum of $\omega_{p,\e}^G(\tilde \Omega,g_{M/G})$, see e.g. \cite{GasparGuaracoWeyl}, this inequality readily implies the claimed lower bound. For this purpose, we observe that for any $u \in W^{1,2}_G(\Omega)$ there exists a unique $\tilde u \in W^{1,2}(\tilde \Omega)$ such that $\tilde u \circ \pi = u$, and that the corresponding map $W^{1,2}_G(\Omega) \to W^{1,2}(\tilde \Omega)$ is linear and continuous. In fact, the existence and uniqueness of the function $\tilde u$ follows from the equivariance of $u$. The linearity is straightforward, and the continuity is then a consequence of the inequalities
    \begin{align*}
        \|u\|_{L^2(\Omega,g)}^2 &\leq \left(\sup_{y \in \tilde \Omega} \mathrm{vol}(G\cdot y,g)\right)\cdot \|\tilde u\|^2_{L^2(\tilde \Omega,g_{M/G})}, \\
        \quad \|\nabla^g u\|_{L^2(\Omega,g)}^2 &\leq \left(\sup_{y \in \tilde \Omega} \mathrm{vol}(G\cdot y, g)\right)\cdot \| \nabla^{g_{M/G}} \tilde u\|^2_{L^2(\tilde \Omega, g_{M/G})}
    \end{align*}
see e.g. \cite[Lemma 7]{ChenGasparBerger} for a proof. This proves that $u\mapsto \tilde u$ is an odd continuous map, and ensures that if $A \in \cF^G_p(\Omega)$, then $\{\tilde u \colon u \in A\} \in \cF^G_p(\tilde \Omega)$

Furthermore, the second inequality above, together with the Fubini Theorem for Riemannian submersions \cite[Chapter II, Theorem 5.6]{Sakai}, shows that
    \[E_\e(\tilde u; \tilde \Omega, g_{M/G}) \leq C\cdot E_\e(u; \Omega,g),\]
where $C$ is again the maximum volume of a $G$-orbit of a point in $\Omega$. Therefore, 
    \begin{align*}
        \omega_{p,\e}^G(\tilde \Omega, g_{M/G}) & = \inf_{A \in \cF^G_p(\tilde \Omega)} \sup_{v \in A} E_\e(v; \tilde \Omega, g_{M/G}) \leq \inf_{A \in \cF^G_p(\Omega)} \sup_{u \in A} E_\e(\tilde u; \tilde \Omega, g_{M/G})\\
        & \leq \inf_{A \in \cF^G_p(\Omega)} \sup_{u \in A} \left( C\cdot E_\e(u;\Omega, g)\right) \leq C \cdot \omega_{p,\e}^G(\Omega,g).\qedhere
    \end{align*}
\end{proof}

To conclude this section, we will compare the equivariant Allen-Cahn widths of $(M,g)$ and the widths of the quotient space $M/G$, assuming there are no nonprincipal orbits. Let
    \begin{equation} \label{volume orbits}
        \mathcal{V}\colon M^{reg}/G \to \R_{>0}, \qquad \mathcal{V}(\pi(p)) = \mathrm{vol}^g_{n-\ell}(\pi^{-1}(\pi(p)) = \mathcal{H}^{n-\ell}(G\cdot p).
    \end{equation}
For $\Omega \subset M$ we denote by $E_\e^{(\Omega,g)}$ the restriction of the Allen-Cahn energy functional to $W^{1,2}(\Omega)$, and similarly for domains $D \subset M^{reg}/G$. Inspired by \cite[Theorem 4.4]{WangDensity}, we show:

\begin{lemma} \label{lem.energy.compare}
If $\Omega\subset \! \subset M^{reg}$ is a $G$-invariant domain with (piecewise) smooth boundary and which contains principal orbits only and $\tilde g_{M/G}$ is the conformal metric $\tilde g_{M/G}=\mathcal{V}^{2/\ell}\cdot g_{M/G}$ on $M^{reg}/G$, then
    \[E_{c\epsilon}^{(\Omega/G,\tilde g_{M/G})}(u) - E_{\epsilon}^{(\Omega,g)}(u\circ \pi) = \int_{\Omega/G}\left[ \frac{\epsilon}{2} \left| \nabla^{g_{M/G}}u \right|^2 \cdot \left( \frac{c}{\mathcal V^{1/\ell}} - 1 \right) + \frac{W(u)}{\epsilon}\left( \frac{\mathcal V^{1/\ell}}{c}-1 \right)\right] \mathcal V \, d\mu_{g_{M/G}}\]
for every $u \in W^{1,2}(\Omega/G)$ and every $c>0$.
\end{lemma}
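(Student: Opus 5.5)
The identity is a direct computation. We compute each energy as an integral over $\Omega/G$ with respect to $\mathcal V\, d\mu_{g_{M/G}}$ and subtract.

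\emph{Step 1: the energy on $\Omega$.} Since $\pi\colon(\Omega,g)\to(\Omega/G,g_{M/G})$ is a Riemannian submersion (only principal orbits occur), the horizontal lift of $\nabla^{g_{M/G}}u$ is $\nabla^g(u\circ\pi)$. Hence
\[
|\nabla^g(u\circ\pi)|_g=|\nabla^{g_{M/G}}u|_{g_{M/G}}\circ\pi \quad\text{a.e.}
\]
This holds for $u$ smooth and then for $u\in W^{1,2}$ by density, using the continuity of $u\mapsto u\circ\pi$ established earlier. The integrand of $E_\epsilon^{(\Omega,g)}(u\circ\pi)$ is therefore constant along fibers. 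The Fubini theorem for Riemannian submersions \cite[Chapter II, Theorem 5.6]{Sakai} integrates over each fiber $\pi^{-1}(y)$, which has volume $\mathcal V(y)$. This gives
\[
E_\epsilon^{(\Omega,g)}(u\circ\pi)=\int_{\Omega/G}\Big[\frac{\epsilon}{2}|\nabla^{g_{M/G}}u|^2+\frac{W(u)}{\epsilon}\Big]\mathcal V\,d\mu_{g_{M/G}}.
\]

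\emph{Step 2: the energy on the quotient for the conformal metric.} Write $\tilde g=\mathcal V^{2/\ell}g_{M/G}$ on the quotient, which has dimension $\ell+1$. Then
\[
d\mu_{\tilde g}=\mathcal V^{(\ell+1)/\ell}\,d\mu_{g_{M/G}}=\mathcal V\cdot\mathcal V^{1/\ell}\,d\mu_{g_{M/G}},
\qquad
|\nabla^{\tilde g}u|^2_{\tilde g}=\mathcal V^{-2/\ell}|\nabla^{g_{M/G}}u|^2.
\]
Substituting these into $E_{c\epsilon}^{(\Omega/G,\tilde g)}$ gives
\[
E_{c\epsilon}^{(\Omega/G,\tilde g)}(u)=\int_{\Omega/G}\Big[\frac{c\epsilon}{2}\mathcal V^{-1/\ell}|\nabla^{g_{M/G}}u|^2+\frac{W(u)}{c\epsilon}\mathcal V^{1/\ell}\Big]\mathcal V\,d\mu_{g_{M/G}}.
\]

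\emph{Step 3: subtract.} Subtracting Step 1 from Step 2 and grouping the gradient terms and the potential terms gives exactly the factors $\big(\frac{c}{\mathcal V^{1/\ell}}-1\big)$ and $\big(\frac{\mathcal V^{1/\ell}}{c}-1\big)$ in the claimed identity.

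\emph{Integrability.} Since $\Omega\subset\subset M^{reg}$, the function $\mathcal V$ is smooth and bounded above and below by positive constants on $\overline{\Omega}/G$. So every integral above is finite for $u\in W^{1,2}(\Omega/G)$, and the subtraction is legitimate.

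There is no real obstacle. The only points that need care are the exponent bookkeeping for the conformal change, which uses the quotient dimension $\ell+1$, and the a.e. justification of the gradient identity for Sobolev functions, which follows by density.
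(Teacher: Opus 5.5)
Your proposal is correct and follows essentially the same route as the paper: the paper likewise records $|\nabla^{\tilde g_{M/G}}u|^2=\mathcal V^{-2/\ell}|\nabla^{g_{M/G}}u|^2$, $d\mu_{\tilde g_{M/G}}=\mathcal V^{1+\frac{1}{\ell}}\,d\mu_{g_{M/G}}$ and $|\nabla^g(u\circ\pi)|^2=|\nabla^{g_{M/G}}u|^2$, then applies Fubini for the Riemannian submersion $\pi\colon(\Omega,g)\to(\Omega/G,g_{M/G})$ and collects terms. Your density argument for Sobolev $u$ and your integrability remark are extra care that the paper leaves implicit; the integrability remark implicitly relies on the usual growth/truncation convention for $W$.
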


\begin{proof}
Note that
    \[|\nabla^{\tilde g_{M/G}}u|^2 = \frac{1}{\mathcal V^{2/\ell}}|\nabla^{g_{M/G}}u|^2, \qquad d\mu_{\tilde g_{M/G}} = \mathcal V^{1+\frac{1}{\ell}}d\mu_{g_{M/G}}, \qquad |\nabla^g (u\circ \pi)|^2 = |\nabla ^{g_{M/G}}u|^2\]
Using Fubini’s Theorem for the Riemannian submersion $\pi \colon (\Omega,g) \to (\Omega/G,g_{M/G})$, we compute
    \begin{align*}
        E_{c\epsilon}^{(\Omega/G,\tilde g_{M/G})}(u) - E_{\epsilon}^{(\Omega,g)}(u\circ \pi) & = \int_{\Omega/G} \left[\frac{c\epsilon}{2}\frac{|\nabla^{g_{M/G}}u|^2}{\mathcal V^{2/\ell}} + \frac{W(u)}{c\epsilon}\right]{\mathcal V}^{1+\frac{1}{\ell}}d\mu_{g_{M/G}} \\
        & \qquad \qquad \qquad - \int_{\Omega/G} \left[\frac{\epsilon}{2}|\nabla^{g_{M/G}}u|^2 + \frac{W(u)}{\epsilon}\right] \mathcal V\,d\mu_{g_{M/G}}\\
& = \int_{\Omega/G} \left[\frac{\epsilon}{2}{|\nabla^{g_{M/G}}u|^2}\cdot\frac{c}{\mathcal V^{1/\ell}} + \frac{W(u)}{\epsilon}\cdot\frac{1}{c/\mathcal V^{1/\ell}}\right]\mathcal Vd\mu_{g_{M/G}} \\ &\qquad \qquad \qquad - \int_{\Omega/G} \left[\frac{\epsilon}{2}|\nabla^{g_{M/G}}u|^2 + \frac{W(u)}{\epsilon}\right] \mathcal V\,d\mu_{g_{M/G}}
    \end{align*}
By collecting the terms we obtain the claimed result.
\end{proof}

\begin{corollary}\label{const.vol}
If $\Omega\subset M$ is a $G$-invariant domain with (piecewise) smooth boundary and which contains principal orbits only, then
    \[|\omega_{p,AC}{(\Omega/G,\tilde g_{M/G})} - \omega_{p,AC}^G(\Omega,g)| \leq 2\left(\frac{\max_\Omega \mathcal V^{1/\ell}}{\min_\Omega \mathcal V^{1/\ell}}-1\right)\cdot  \omega_{p,AC}^G(\Omega,g)\]
In particular, if the orbits of $G$ have constant volume, then $\omega_{p,AC}{(\Omega/G,\tilde g_{M/G})} = \omega_{p,AC}^G(\Omega,g)$
\end{corollary}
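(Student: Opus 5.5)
We plan to deduce Corollary \ref{const.vol} from Lemma \ref{lem.energy.compare} by choosing the scaling constant $c$ well and comparing min-max values over families that correspond under $u\mapsto u\circ\pi$. Set $m=\min_\Omega \mathcal V^{1/\ell}$, $M_0=\max_\Omega \mathcal V^{1/\ell}$ and $\Lambda = M_0/m$. Since $\Omega$ contains only principal orbits, $u\mapsto u\circ\pi$ is an odd linear isomorphism $W^{1,2}(\Omega/G)\to W^{1,2}_G(\Omega)$, continuous in both directions by the $L^2$ estimates recalled in the sublinear growth theorem. Hence it carries symmetric compact sets of $\Z_2$-index $\ge p+1$ to sets of the same kind in both directions, so $\mathcal F_p(\Omega/G)$ and $\mathcal F^G_p(\Omega)$ correspond bijectively. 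The widths without $\eps$ should be read as the $\eps\to 0$ limits (with the $1/2\sigma$ normalisation). The reason is that Lemma \ref{lem.energy.compare} relates $E_{c\eps}$ on the quotient to $E_\eps$ upstairs, and only in the limit is the rescaling $\eps\mapsto c\eps$ harmless.

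The pointwise step is as follows. Apply the lemma with $c=m$. Then $c/\mathcal V^{1/\ell}-1\in[\Lambda^{-1}-1,0]$ and $\mathcal V^{1/\ell}/c-1\in[0,\Lambda-1]$. Since both $\frac{\eps}{2}|\n u|^2\mathcal V$ and $\frac{W(u)}{\eps}\mathcal V$ are integrands of $E_\eps^{(\Omega,g)}(u\circ\pi)$ via Fubini, we get
\[
-(1-\Lambda^{-1})E_\eps^{(\Omega,g)}(u\circ\pi)\le E_{m\eps}^{(\Omega/G,\tilde g)}(u)-E_\eps^{(\Omega,g)}(u\circ\pi)\le (\Lambda-1)E_\eps^{(\Omega,g)}(u\circ\pi).
\]
Because $1-\Lambda^{-1}\le\Lambda-1\le 2(\Lambda-1)$, this gives $|E_{m\eps}(u)-E_\eps(u\circ\pi)|\le 2(\Lambda-1)E_\eps(u\circ\pi)$ uniformly in $u$.

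Next we pass to widths. We take sup over $A$ and then inf over the corresponding families. The map $x\mapsto (1+t)x$ and the inf/sup operations preserve these inequalities, so we obtain $|\omega_{p,m\eps}(\Omega/G,\tilde g)-\omega^G_{p,\eps}(\Omega,g)|\le 2(\Lambda-1)\,\omega^G_{p,\eps}(\Omega,g)$. For the lower side we also use $\omega\le(1+2(\Lambda-1))\omega$, and for the upper side the inequality $(1-\Lambda^{-1})\le 2(\Lambda-1)$. Finally we let $\eps\to0$. Since $m\eps\to0$ as well, the limit of the quotient widths at parameter $m\eps$ is the same as at parameter $\eps$, which is the Allen--Cahn $p$-width limit on $(\Omega/G,\tilde g)$; this yields the stated inequality.

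The main obstacle is justifying that limit. We need $\lim_{\eps\to0}\omega_{p,\eps}$ to exist, or at least that the $\liminf$/$\limsup$ are taken consistently along $\eps$ and $m\eps$ on the domain $\Omega/G$ (possibly with boundary). We would handle this by quoting the convergence of Allen--Cahn widths to Almgren--Pitts widths (Dey, and the equivariant version announced in the introduction), or by stating the inequality for both $\liminf$ and $\limsup$. The constant-volume case is then immediate: $\Lambda=1$ gives equality, and indeed $c=\mathcal V^{1/\ell}$ makes the right-hand side of Lemma \ref{lem.energy.compare} vanish identically.
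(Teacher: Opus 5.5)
Your proposal is correct and follows essentially the argument the paper intends. The paper writes no separate proof and obtains the corollary directly from Lemma \ref{lem.energy.compare}: choose the constant $c$, bound the factors $c/\mathcal V^{1/\ell}-1$ and $\mathcal V^{1/\ell}/c-1$ by the ratio $\max_\Omega\mathcal V^{1/\ell}/\min_\Omega\mathcal V^{1/\ell}$, carry the two-sided energy comparison through the odd isomorphism $u\mapsto u\circ\pi$ between admissible families, and let $\eps\to0$ (with $c\eps\to0$ as well). Your multiplicative form $\Lambda^{-1}E_\eps(u\circ\pi)\le E_{m\eps}(u)\le\Lambda E_\eps(u\circ\pi)$ passes cleanly through sup and inf, gives the sharper constant $\Lambda-1$, and avoids the possibly negative factor $1-2(\Lambda-1)$. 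The rescaling obstacle you flag is harmless: the substitution $\delta=m\eps$ identifies $\liminf$ and $\limsup$ along $m\eps$ with those along $\eps$, so existence of the limit is only needed to make sense of the notation $\omega_{p,AC}$.
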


\input{comparison}
\input{Regularity}
\input{Index_Bounds}

\input{Ricci_Pos}
\input{Appendix}
\bibliographystyle{amsalpha}
\bibliography{references}

\end{document}

%% file: comparison.tex
\section{Comparison with $G$-invariant min-max for minimal hypersurfaces} \label{section.comparison}

The goal of this section is to compare the $\e$ limits of the phase transition spectrum $\omega_{p,\e}^G(M,g)$ and the \emph{$G$-equivariant volume spectrum of} $(M,g)$, as studied in recent work by T. Wang \cite{WangDensity} and extending the results by Marques-Neves \cite{MarquesNevesWillmore,MarquesNevesPositive,marques2016morse}, based on Almgren-Pitts min-max theory for minimal hypersurfaces, to the equivariant setting. Concretely, we will show:

\begin{theorem} \label{comparison}
Under the conditions described above, the equivariant Allen-Cahn phase transition spectrum of $(M,g)$ satisfies
    \[
    \frac{1}{2\sigma} \lim_{\epsilon \to 0^+}\omega_{p,\e}^G(M,g) = \omega^G_{p}(M,g),
    \]
where $\{\omega_p^G(M,g)\}_{p \in \N}$ is the equivariant volume spectrum of $(M,g)$, cf. \cite[Section 3]{WangDensity}.
\end{theorem}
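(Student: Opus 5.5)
We follow Dey's strategy \cite{dey2022comparison}, adapted to $G$-invariant objects, and prove the two inequalities separately. Throughout we take a subsequence $\e_k \to 0$ along which $\omega^G_{p,\e_k}$ converges; it then suffices to show that every subsequential limit equals $2\sigma\,\omega^G_p$.

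\textbf{Upper bound} $\limsup_{\e\to0}\omega^G_{p,\e} \le 2\sigma\,\omega^G_p$. Fix $\delta>0$. Choose a $G$-equivariant $p$-sweepout $\Phi\colon X\to \mathcal{Z}_n^G(M;\Z_2)$ in Wang's sense \cite{WangDensity} with no concentration of mass and
\[
\sup_X \mathbf{M}(\Phi)\le \omega^G_p+\delta .
\]
We would proceed in three steps.
\begin{enumerate}
\item Following \cite{dey2022comparison}, pass to a discretization and then to a continuous map in the $\mathbf{F}$-topology. Using the boundary representation $\Phi(x)=\partial[\![E_x]\!]$, obtain a family $x\mapsto E_x\in\mathcal{C}_G(M)$ that is continuous in $L^1$ (after lifting to the double cover $\tilde X$, on which the deck involution swaps $E_x$ and $M\setminus E_x$).
\item Replace each $E_x$ by the smooth $G$-invariant approximations of Lemma \ref{smooth_approx}. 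Then set
\[
u_x=\mathbb{H}_\e\circ d_x ,
\]
where $d_x$ is the truncated signed distance to $\partial E_x$. Since $\partial E_x$ is $G$-invariant and $G$ acts by isometries, $d_x$ is $G$-invariant, and hence so is $u_x$.
\item Show that the map $x\mapsto u_x$ is $\Z_2$-equivariant and continuous into $W^{1,2}_G(M)$, and that the image of $\tilde X$ has $\Z_2$-cohomological index at least $p+1$. This follows because $\tilde X\to X$ detects $\lambda^p\neq0$, exactly as in \cite{dey2022comparison}.
\end{enumerate}
The energy estimate $E_\e(u_x)\le 2\sigma\,\mathcal{H}^n(\partial E_x)+o(1)$, together with the $\mathbf{F}$-convergence in Lemma \ref{smooth_approx}(iii), gives $\omega^G_{p,\e}\le 2\sigma(\omega^G_p+2\delta)$ for small $\e$. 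The only new ingredient beyond \cite{dey2022comparison} is $G$-invariance, which holds at each step because every operation used (averaging, distance functions, level sets) commutes with isometries.

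\textbf{Lower bound} $\liminf_{\e\to0}\omega^G_{p,\e}\ge 2\sigma\,\omega^G_p$. Take $A_\e\in\cF^G_p$ with $\sup_{A_\e}E_\e\le\omega^G_{p,\e}+\e$. We would proceed as follows.
\begin{enumerate}
\item Following Dey, use the Modica--Mortola/coarea map
\[
u\mapsto \partial[\![\{u>t\}]\!],
\]
regularized by a deformation or averaging in $t$ as in \cite[Section 4]{dey2022comparison}. This produces, from $A_\e$, a $\Z_2$-equivariant family of $G$-invariant Caccioppoli boundaries, continuous in the flat topology.
\item Bound the masses of this family by $\frac{1}{2\sigma}E_\e(u)+o(1)$. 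The coarea inequality gives
\[
\int\mathcal{H}^n(\{u=t\})\sqrt{2W(t)}\,dt\le E_\e(u),
\]
and one selects suitable levels.
\item $G$-invariance of $u$ gives $G$-invariance of every super-level set, and $\Ind_{\Z_2}(A_\e)\ge p+1$ transfers to the statement that the family is a $G$-equivariant $p$-sweepout.
\end{enumerate}

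We expect the main obstacle to be step 1 of the lower bound. The level-set map is not continuous in the flat topology, so Dey first replaces $A_\e$ by a family of functions on which level sets behave well, e.g.\ via a finite-dimensional approximation and a careful choice of levels. All of these modifications must be carried out within $W^{1,2}_G(M)$. We would do this by averaging over the Haar measure, as in the proof of Lemma \ref{smooth_approx}. Averaging is linear, odd, continuous, does not increase $E_\e$ by convexity of $|\nabla\cdot|^2$, and preserves the $\Z_2$-index. One must still check that the resulting discretized family has no mass concentration in Wang's equivariant sense, so that Wang's min-max theory applies; this uses the bounded energy density, mirroring \cite{dey2022comparison}. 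Combining the two bounds yields the theorem.
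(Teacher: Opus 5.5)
The gap is in your upper bound. You define $u_x=\mathbb{H}_\e\circ d_x$ for every $x$ in the continuum $\tilde X$ and assert two things: that $x\mapsto u_x$ is continuous into $W^{1,2}_G(M)$, and that $E_\e(u_x)\le 2\sigma\mathcal{H}^n(\partial E_x)+o(1)$ uniformly in $x$. Neither holds.

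\begin{itemize}
\item \textbf{Continuity.} The approximations of Lemma \ref{smooth_approx} are chosen set by set, so they carry no continuity in $x$. Even for a family continuous in mass, the truncated signed distance is not continuous in $W^{1,2}$. A sweepout must pass through topology changes. For instance, a component $\partial B^G_r(q)$ that shrinks away as $r\to0$ has mass tending to zero, yet $\mathbb{H}_\e\circ d_x$ converges near $G\cdot q$ to the profile of the distance to the orbit rather than to a constant.
\item \textbf{Uniformity.} The $o(1)$ term is controlled by the areas of the parallel hypersurfaces $\{d_x=\tau\}$, $|\tau|\le\sqrt\e$, i.e.\ by the curvature and normal injectivity radius of $\partial E_x$. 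These degenerate along any nontrivial sweepout, so for no fixed $\e$ is the bound uniform in $x$.
\end{itemize}

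This is exactly the difficulty Dey's construction is designed to avoid, so ``exactly as in Dey'' does not cover your Step 3.

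The missing idea is Dey's discrete-then-interpolate scheme, which the paper adapts equivariantly. The truncated heteroclinic profile is evaluated only at the finitely many vertices of $\tilde X[NI]$, on the almost smooth $G$-invariant sets of Proposition \ref{almost smooth sweepout}. Their boundaries are smooth away from a codimension-two set $\mathcal{S}$ with $\mathcal{H}^n(\{\mathrm{dist}(\cdot,\mathcal{S})=\rho\})\le C_{\mathcal{S}}\rho$; this is what makes the energy estimate uniform over the vertices. Sets at adjacent vertices differ only inside thin $G$-tubes $B^G_{r^*}(p_\alpha)$ carrying $O(\eta)$ mass. These tubes have boundaries in $M^{reg}$, are chosen transverse to everything else, and rely on no concentration of mass both for $\Phi$ and for the Morse-function sweepout. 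Continuity over cells then comes from Dey's map $\theta(u,v,w)$ together with the profiles $w_\e(t)$ of level sets of a $G$-equivariant Morse function. The subadditivity $E^{(U,g)}_\e(\theta(u,v,w))\le E^{(U,g)}_\e(u)+E^{(U,g)}_\e(v)+E^{(U,g)}_\e(w)$ on those tubes keeps the energy of the resulting continuous odd map below $2\sigma(\mathbf{L}(\Pi)+C\eta)$. This yields $\frac{1}{2\sigma}\limsup_{\e\to0}\mathbf{L}_\e(\tilde X;M,g;G)\le\mathbf{L}(\Pi)$, and without some such mechanism your upper bound is not established.

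Your lower bound, by contrast, is essentially the paper's Proposition \ref{comparison bound 1}, which follows Gaspar--Guaraco rather than Dey. The paper chooses levels with $\tilde s_{T(x)}=-\tilde s_x$ at the vertices of a fine subdivision. For finitely many functions, almost every level has no concentration of mass on orbits because $\mathcal{H}^n(G\cdot p)=0$; the paper then applies Wang's equivariant interpolation. Drop the Haar averaging there. It is unnecessary, since everything already lies in $W^{1,2}_G(M)$, and your justification is incorrect: averaging does not decrease $E_\e$, because $W$ is not convex (only the Dirichlet term is).
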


This comparison extends the previous work by the second named author and Guaraco \cite{GasparGuaracoClosed} and by A. Dey \cite{dey2022comparison}. In particular, the latter established this convergence result for a trivial action of $G=\{\mathrm{id}\}$ and provided a strong connection between the Allen-Cahn and Almgren-Pitts min-max approaches. Ultimately, this was used in concrete examples to compute certain $p$-widths using fundamental results about solutions of the Allen-Cahn equation, for instance in \cite{ChodoshMantoulidisWidths}. \nl 

\indent As a corollary of the comparison between the equivariant phase transitions and volume spectra and \cite[Theorem 4.8]{WangDensity}, we obtain the following Weyl-type law in the spirit of \cite{LiokumovichMarquesNeves,GasparGuaracoWeyl} (see also \cite{GuthLiokumovich}). Recall that the $G$ action on $(M,g)$ has cohomogeneity $(l+1) = \dim(M^{reg}/G)$.

\begin{theorem} \label{weyl}

The equivariant phase transition spectrum $\omega_{p,AC}^G(M,g) := \lim_{\epsilon \to 0}\omega^G_{p,\epsilon}(M,g)$ of $(M,g)$ satisfies:
    \[
        \lim_{p \to \infty} p^{-\frac{1}{\ell+1}}\omega_{p,AC}^G(M,g) = \tau(\ell) \cdot {\mathrm{vol}(M^{reg}/G,\tilde g_{M/G})^{\frac{1}{\ell+1}}}
    \]
where $\tau(\ell)$ is a dimensional constant.
\end{theorem}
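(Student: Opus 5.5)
The Weyl law in Theorem~\ref{weyl} will be deduced from Theorem~\ref{comparison} together with Wang's equivariant Weyl law \cite[Theorem 4.8]{WangDensity}. By Theorem~\ref{comparison}, for every fixed $p$ the limit $\omega^G_{p,AC}(M,g)=\lim_{\e\to 0}\omega^G_{p,\e}(M,g)$ exists and equals $2\sigma\,\omega^G_p(M,g)$. Wang shows that the equivariant volume spectrum satisfies
\[
\lim_{p\to\infty} p^{-\frac{1}{\ell+1}}\,\omega^G_p(M,g) = a(\ell)\,\mathrm{vol}(M^{reg}/G,\tilde g_{M/G})^{\frac{1}{\ell+1}},
\]
where $a(\ell)$ is the Liokumovich--Marques--Neves constant in dimension $\ell+1$. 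His proof is obtained by reduction to the quotient with the Hsiang--Lawson metric $\tilde g_{M/G}=\mathcal V^{2/\ell}g_{M/G}$, under which $\mathrm{Area}_g(\pi^{-1}(\gamma))=\mathrm{Area}_{\tilde g}(\gamma)$ for hypersurfaces $\gamma$ of $M^{reg}/G$. Multiplying by $2\sigma$ then gives the statement with $\tau(\ell)=2\sigma\,a(\ell)$.

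The main step is to check that Wang's normalization matches the one used here. Two points must agree: the definition of the equivariant volume spectrum (sweepouts by $G$-invariant cycles with $\Z_2$ coefficients and the relevant $p$-admissible families), and the conformal factor $\mathcal V^{2/\ell}$, which must be the same as in \eqref{eqn.hsiang.lawson.metric}. If Wang's constant is stated through the quotient widths, I would use the identity above to match volumes.

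If one prefers an argument independent of the Almgren--Pitts side, the plan is a direct phase-transition proof in the style of \cite{GasparGuaracoWeyl}. First, exhaust $M^{reg}/G$ by compact domains $\tilde\Omega$ on which $\mathcal V$ is nearly constant. Second, on each such domain apply Lemma~\ref{lem.energy.compare}, with $c$ equal to the local value of $\mathcal V^{1/\ell}$; Corollary~\ref{const.vol} then compares $\omega^G_{p,AC}$ on $\pi^{-1}(\tilde\Omega)$ with the Allen--Cahn widths of $(\tilde\Omega,\tilde g_{M/G})$, up to a factor $1+o(1)$. Third, invoke the Weyl law for Allen--Cahn widths of domains \cite{GasparGuaracoWeyl} to obtain $p^{1/(\ell+1)}$ asymptotics proportional to $\mathrm{vol}_{\tilde g}^{1/(\ell+1)}$.

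Finally, the domain estimates must be glued into global bounds. The lower bound comes from superadditivity of widths over disjoint domains, as in the Lusternik--Schnirelmann inequality. The upper bound requires showing that the non-principal strata and a thin neighborhood of them contribute $o(p^{1/(\ell+1)})$. I expect this upper bound near the singular orbits to be the main obstacle. There I would use the cubical-complex sweepouts from the proof of the sublinear growth theorem, restricted to a small neighbourhood whose $\tilde g$-volume tends to zero.
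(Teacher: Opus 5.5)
Your main argument is exactly the paper's: Theorem~\ref{comparison} gives existence of $\lim_{\e\to 0}\omega^G_{p,\e}(M,g)=2\sigma\,\omega^G_p(M,g)$. Combining it with Wang's equivariant Weyl law \cite[Theorem 4.8]{WangDensity} yields the result with $\tau(\ell)=2\sigma\,a(\ell)$, so the direct phase-transition route is not needed; its hard part near the non-principal orbits is precisely what Wang's theorem already handles.

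One correction for the normalization check you flag concerns the exponent on the volume. Under $g\mapsto\lambda^2 g$ one has $\omega^G_p\mapsto\lambda^n\omega^G_p$ and $\mathcal V\mapsto\lambda^{n-\ell}\mathcal V$, hence $\tilde g_{M/G}\mapsto\lambda^{2n/\ell}\tilde g_{M/G}$. So the volume must enter with exponent $\frac{\ell}{\ell+1}$, as in Liokumovich--Marques--Neves when $G$ is trivial. The exponent $\frac{1}{\ell+1}$ in the statement, which you also wrote into Wang's law, is a typo that is consistent only when $\ell=1$, i.e.\ in cohomogeneity two.
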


Along this section, we will use the notation introduced in introduction of \cite{GasparGuaracoClosed} regarding symmetric cubical complexes. In particular, we recall that $Q^m=[-1,1]^m$ and consider the $m$-dimensional cubical complex $Q(m,k)$ obtained by equal subdivision of each interval $[-1,1]$ by $2\cdot 3^k$ subintervals of equal length, for each $k\in \Z_{>0}$. We will denote by $\mathcal{C}_p$ the set of all connected, compact cubical complexes $\tilde X$ that have a free $\Z_2$ action $T \colon \tilde X \to \tilde X$, which have $\mathrm{ind}_{\Z_2}(\tilde X)\geq p+1$, and whose corresponding orbit space is a subcomplex of $Q(m,k)$, for some $m,z\in \Z_{>0}$. 

We can associate to each such $\tilde X \in \mathcal{C}_p$ a min-max value for the $G$-equivariant Allen-Cahn energy by letting
    \begin{equation} \label{homotopy min max AC}
        \mathbf{L}_\e(\tilde X; M,g; G) = \inf_{h \in \Gamma_G(X)}\sup_{x \in \tilde X} E_\e(h(x)),
    \end{equation}
where $\Gamma_G(\tilde X)$ is the space of all continuous, $\Z_2$-equivariant functions $h\colon \tilde X \to W^{1,2}_G(M)\setminus\{0\}$. We note that the proof of \cite[Lemma 6.2]{GasparGuaracoClosed} (see also p. 1025 in \cite{dey2022comparison}) carries over to equivariant function spaces to ensure that
    \begin{equation} \label{complexes}
        \omega_{p,\e}^G(M,g) = \inf_{\tilde X\in \mathcal{C}_p}  \mathbf{L}_\e(\tilde X; M,g; G) = \inf_{\tilde X \in \mathcal{C}_p \colon \dim(\tilde X) = p} \mathbf{L}_\e(\tilde X; M,g; G).
    \end{equation}

\subsection{From sweepouts by invariant functions to sweepouts by invariant cycles}

We will first follow closely the strategy of \cite{GasparGuaracoClosed} to prove:

\begin{proposition} \label{comparison bound 1}
    The equivariant $p$-widths for Allen-Cahn satisfy:
    \begin{equation}
        \frac{1}{2\sigma}\liminf_{\epsilon \to 0^+} \omega_{p,\e}^G(M,g) \geq \omega^G_{p}(M,g),
    \end{equation}
\end{proposition}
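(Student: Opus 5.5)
Our plan is to follow the strategy of \cite[Section 6]{GasparGuaracoClosed} and \cite{dey2022comparison}, converting families of $G$-invariant functions into families of $G$-invariant mod $2$ cycles via superlevel sets, with the same energy control. By \eqref{complexes}, for each $\e$ we choose a cubical complex $\tilde X \in \mathcal{C}_p$ of dimension $p$ and a continuous $\Z_2$-equivariant map $h\colon \tilde X \to W^{1,2}_G(M)\setminus\{0\}$ with
\[
\sup_{x\in\tilde X} E_\e(h(x)) \leq \omega^G_{p,\e}(M,g) + \e .
\]
We may assume (truncating and mollifying in a $G$-invariant way, by averaging over the Haar measure as in Lemma \ref{smooth_approx}) that each $h(x)$ is smooth, $G$-invariant and takes values in $[-1,1]$. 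The aim is to produce from $h$ a $G$-equivariant $p$-sweepout in the sense of Wang \cite{WangDensity} whose maximal mass is at most $\frac{1}{2\sigma}\omega^G_{p,\e}(M,g)+o(1)$.

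The key device is the Modica--Mortola/coarea inequality. Set $\Phi(t)=\int_0^t\sqrt{2W(s)}\,ds$. Then for every smooth $u$,
\[
\int_{-1}^{1}\mathcal H^n(\{u=t\})\,\Phi'(t)\,dt=\int_M |\nabla(\Phi\circ u)| \leq E_\e(u),
\]
and $\Phi(1)-\Phi(-1)=2\sigma$. The plan is therefore to average over level sets. For $x\in\tilde X$ and $t\in(-1,1)$, define $\partial[\![\{h(x)>t\}]\!]$; each such superlevel set is $G$-invariant because $h(x)$ is. Next, following \cite{GasparGuaracoClosed}, we build the map $x\mapsto \partial[\![\{h(x)>t(x)\}]\!]$ into the space of $G$-invariant mod $2$ flat cycles. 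Here $t$ is chosen so that the $\Phi'$-weighted average mass is at most $\frac{1}{2\sigma}E_\e(h(x))$. Two points need checking. First, the map must be $\Z_2$-equivariant: $\{-u>-t\}$ is the complement of $\{u\ge -t\}$ and has the same boundary mod $2$, so antipodal points are sent to the same cycle, giving a map on the quotient $X=\tilde X/\Z_2$. Second, it must be a $p$-sweepout: the generator $\bar\lambda\in H^1(X;\Z_2)$ of the double cover must pull back the fundamental class, i.e.\ the map must be detected by the Almgren isomorphism. As in \cite{GasparGuaracoClosed}, this holds because a loop lifting to a path from $x$ to $-x$ sweeps the set $\{h>t\}$ to its complement, and $\bar\lambda^p\neq0$ follows from $\mathrm{ind}_{\Z_2}(\tilde X)\ge p+1$.

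The main obstacle is continuity in the flat topology together with the no-concentration-of-mass condition. Level sets jump when $t$ crosses critical values, so a single fixed $t$ is not continuous in $x$. We would handle this as \cite{GasparGuaracoClosed} and \cite{dey2022comparison} do. First, discretize: restrict $h$ to the vertices of a fine refinement $Q(m,k)$. At each vertex, choose $t$ by the averaging argument so that the mass bound holds, and replace the superlevel set by a $G$-invariant smooth Caccioppoli approximation using Lemma \ref{smooth_approx}. Then verify that adjacent vertices give cycles that are close in flat norm, with fineness tending to zero as $k\to\infty$ (an $L^1$-closeness statement for $\{h(x)>t\}$ coming from the uniform continuity of $h$ in $W^{1,2}$). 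Finally, apply Wang's $G$-equivariant Almgren interpolation/discrete-to-continuous theorem \cite{WangDensity} to obtain a continuous $G$-equivariant sweepout with mass at most the discrete maximum plus $o(1)$. The properties (v)--(vi) of Lemma \ref{smooth_approx}, for $G$-tubes, are exactly what that interpolation needs to rule out mass concentration on small $G$-invariant annuli. Combining these steps gives $\omega_p^G(M,g)\le \frac{1}{2\sigma}(\omega^G_{p,\e}+\e)+o_k(1)$, and taking $\liminf_{\e\to0}$ yields the proposition.
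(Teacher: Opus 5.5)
There is a genuine gap in your fineness step. You choose the level $t(x)$ at each vertex independently, anywhere in $(-1,1)$. You then assert that superlevel sets at adjacent vertices are close because $h$ is uniformly continuous in $W^{1,2}$ and the subdivision is refined. This does not follow. Smallness of $\|h(x)-h(y)\|_{L^1}$ gives no control on $\mathcal{H}^{n+1}(\{h(x)>t(x)\}\Delta\{h(y)>t(y)\})$ when $t(x)\neq t(y)$, nor in general even for a common fixed level. Refining does not help, because the levels are re-chosen at every new vertex. Moreover, what the interpolation and the detection of $\bar\lambda^p$ require is that the lifted Caccioppoli sets at adjacent vertices be close in volume, not merely that their boundaries be flat-close.

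A sanity check shows the argument cannot be correct as written. It never uses that $\e$ is small, and the slack $\e$ in the near-optimality of $h$ is arbitrary. So it would give $\omega^G_p(M,g)\le\frac{1}{2\sigma}\omega^G_{p,\e}(M,g)$ for every $\e>0$. But $\omega^G_{p,\e}(M,g)\le W(0)\mathrm{vol}(M,g)/\e\to0$ as $\e\to\infty$, whereas $\omega^G_p(M,g)>0$. In that regime near-optimal families consist of small multiples of functions. Your averaging then picks levels with $\{h(x)>t(x)\}\in\{\emptyset,M\}$, so all boundary cycles vanish (trivially flat-close) and nothing is swept out.

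The missing idea is to restrict the levels to a band $[-1+\beta,1-\beta]$ on which $W\ge c_\beta>0$; this is why the paper fixes $\tilde\sigma<\sigma/2$ and picks $\tilde s_x\in[-\tilde\sigma,\tilde\sigma]$. The potential term of the energy then gives $\mathcal{H}^{n+1}(\{|h(x)|\le1-\beta\})\le\e\,(\omega^G_{p,\e}+\e)/c_\beta$. Hence any two admissible superlevel sets of the same $h(x)$ differ by $O_\beta(\e)$ in volume. Combine this with
$\{h(x)>0\}\Delta\{h(y)>0\}\subset\{|h(x)|\le1-\beta\}\cup\{|h(y)|\le1-\beta\}\cup\{|h(x)-h(y)|\ge2(1-\beta)\}$.
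For adjacent vertices this yields
\[
\mathcal{H}^{n+1}(\{h(x)>t(x)\}\Delta\{h(y)>t(y)\})\le C_\beta\,\e+C_\beta\,\|h(x)-h(y)\|_{L^1(M)}.
\]
So the fineness is small only if $\e$ is small \emph{and} the subdivision is fine, which is why the paper's interpolation step requires $\e<\e^*(\delta_1)$.

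The price is a weaker mass bound. Averaging over the band only gives mass at most $(\omega^G_{p,\e}+\e)/(2\sigma_\beta)$, with $2\sigma_\beta=\int_{-1+\beta}^{1-\beta}\sqrt{2W(s)}\,ds<2\sigma$ (the paper's $4\tilde\sigma$). The limits must therefore be taken in the order $\e\to0$, then $\delta_1\to0$, then $\beta\to0$. Your full-range averaging, which gives exactly $E_\e/(2\sigma)$, is precisely what destroys the fineness.

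Two secondary points. Properties (v)--(vi) of Lemma \ref{smooth_approx} are used for the reverse inequality, not here. No concentration of mass on orbits follows instead from $\mathcal{H}^n(G\cdot p)=0$ for a.e.\ level of finitely many functions, and your smoothing step is unnecessary.
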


For small $\e>0$, by \eqref{complexes}, there exist a $p$-dimensional cubical complex $\tilde X\in \mathcal{C}_p$ and a continuous odd map $h\colon \tilde X \to W_G^{1,2}(M)\setminus \{0\}$ with
    \[\sup_{x \in X} E_\e(h(x)) \leq \omega_{p,\e}^G(M,g) + \epsilon.\]
We would like to produce a \emph{$(G,p)$-sweepout} $\phi \colon \tilde X \to \mathcal{Z}_{n}^G(M;\Z_2)$, according to \cite[Definition 3.4]{WangDensity} -- namely, a continuous map with respect to the flat metric, that detects the $p$-th power of the fundamental class of $\mathcal{Z}_{n}^G(M;\Z_2)$ (see \cite[Theorem 3.1]{WangDensity}) and which satisfies the \emph{no concentration of mass on orbits condition} (as we discuss below) with $\sup_{x \in \tilde X} \mathbf{M}(\phi(x))$ bounded from above by $\sup_{x \in \tilde X} E_\epsilon(h_x)+o(1)$ with respect to $\e$. Here we write, for the sake of notation, $h_x:=h(x)$, for each $x \in \tilde X$. \nl 
\indent Following \cite{GasparGuaracoClosed}, we select suitable level sets for the renormalized functions $\tilde h_x :=\Psi \circ h_x$, where
    \[
    \Psi(t) = \int_0^t\sqrt{W(s)/2}\,ds.
    \]
We recall that the rescaling introduced by the function $\Psi$ appears in Modica’s work \cite[p 494]{Modica} and it is used to obtain a $BV$ limit, as $\epsilon \downarrow 0$, for functions with uniformly bounded energy. Concretely, if $|h_x|\leq 1$, then $\tilde h_x$ satisfies $|\tilde h_x| \leq \sigma/2$, where $\sigma = \int_{-1}^1\sqrt{W(s)/2}\,ds$, and
    \[
    \int_U |\nabla \tilde h_x| \, d\mathrm{vol}_g \leq \frac{1}{2}\int_U \left( \frac{\epsilon}{2} |\nabla h_x|^2 + \frac{1}{\epsilon}W(h_x) \right) \, d\mathrm{vol}_g
    \]
On the other hand, if we consider the measure-theoretic level sets of $\tilde h_x$, namely the total variation measures $\|\partial \{\tilde h_x > s\}\|=\|D\mathbf{1}_{\{\tilde h_x > s\}}\|$, which are well-defined for a.e. $s \in [-\sigma/2,\sigma/2]$, then by coarea formula,
    \[
    \int_{-s_0}^{s_0}\|\partial \{\tilde h_x>s\}\|(U)\,ds \leq \int_U |\nabla \tilde h_x|\leq \frac{1}{2}\int_U \left( \frac{\epsilon}{2} |\nabla h_x|^2 + \frac{1}{\epsilon}W(h_x) \right) \, d\mathrm{vol}_g = E_\e^{(U,g)}(h_x)/2
    \]
for any $s_0>0$. 

Fix $\tilde \sigma \in (0,\sigma/2)$. The argument above shows that, for every $x \in X$, there exists a $\tilde s_x \in [-\tilde \sigma, \tilde \sigma]$ for which
    \[
    2\tilde \sigma \|\partial \{\tilde h_x > \tilde s_x\}\|(M) \leq \int_{-\tilde \sigma}^{\tilde \sigma}\|\partial \{\tilde h_x>s\}\|(M)\,ds \leq \frac{1}{2} E^{(M,g)}_\epsilon(h_x) \leq \frac{1}{2}(\omega_{p,\e}^G(M,g) +\e).
    \]
Since $h_x$ is assumed to be $\Z_2$-equivariant, with respect to the action $T\colon \tilde X \to \tilde X$ (and hence $\{h_{T(x)}>s\} = \{-h_x > s\} = \{h_x < -s\}$), and the corresponding level sets have zero $\mathcal{H}^n$-measure for a.e. $s$, we can choose $\tilde s_x$ so that $\tilde s_{T(x)} = -\tilde s_x$. Moreover, since the functions $h_x$ and $\tilde h_x$ are $G$-invariant, so are their (super/sub-)level sets. Thus, we obtain a $\Z_2$-invariant map $\tilde\phi_0\colon \tilde X \to \mathcal{Z}^G_{n}(M;\Z_2)$ with mass $\leq (\omega_{p,\e}^G(M,g)+\e)/4\tilde\sigma$ given by
    \[
    \tilde\phi_0(x)=\partial [\![ h_x > s_x]\!],
    \]
where $s_x = \Psi^{-1}(\tilde s_x) \in (-1,1)$, which then descends to a map $\phi_0 \colon X \to \mathcal{Z}_n^G(M,\Z_2)$ from the orbit space of $\tilde X$. This is important in \cite[Section 6.9]{GasparGuaracoClosed} to ensure the non-triviality of the resulting map $X \to \mathcal{Z}_n^G(M,\Z_2)$.

Since the choice of $s_x$ is quite arbitrary, the map $\phi_0$ might fail to be continuous, so that the actual sweepout is constructed by discretizing the cubical complex $\tilde X$ into finer cubical divisions and by using interpolation theorems from min-max theory for the area functional, e.g. from \cite{MarquesNevesWillmore}, for its restriction to the set $X[j]_0$ of vertices of a sufficiently fine subdivision $X[j]$ of $X$.

The $G$-equivariant versions of such interpolation theorems already appear in Wang’s work, for instance in \cite{WangJGA,WangFB}. On the other hand, these results often assume the \emph{no concentration of mass on orbits} technical condition. This means that the mass of such equivariant sweepouts cannot accumulate of small $G$-invariant tubes. More precisely, we say that a map $\phi \colon A \to \mathcal{Z}_n^G(M,\Z_2)$ satisfies this condition if:
    \[
    \lim_{r \to 0} \sup \{\|\phi(x)\|(B^G_r(p)) \colon x \in A, p \in M\} = 0.
    \]

To ensure we can use the $G$-equivariant interpolation results, we will show that, for a family of finitely many Sobolev functions, almost every level set satisfies the no concentration of mass on orbits condition:

\begin{lemma}
Let $K \in W_G^{1,1}(M;[-1,1])$ be a finite set and write
    \[
    \mathcal{B}(K) = \left\{ t \in [-1,1] \colon \liminf_{r \downarrow 0}\sup \{ \|\partial \{w>t\}\|(B^G_r(p)) \colon  w \in K, p \in M \}>0 \right\}
    \]
for the set of bad level sets, namely those for which the corresponding level sets do not satisfy the no concentration of mass on orbits condition. Then $\mathcal{B}(K)$ has zero (Lebesgue) measure in $[-1,1]$.
\end{lemma}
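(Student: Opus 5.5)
Since $K$ is finite, $\mathcal{B}(K)\subset\bigcup_{w\in K}\mathcal{B}(\{w\})$: the supremum over a finite set of $w$ is a maximum, so if the liminf is positive then some single $w$ attains a positive liminf along a sequence of radii, and by pigeonhole one $w$ does so along a subsequence. It therefore suffices to treat a single $w\in W^{1,1}_G(M;[-1,1])$. The plan is to bound the concentration of the level set measures by the concentration of $|\nabla w|$, via the coarea formula, and to rule out concentration of $|\nabla w|\,d\mathrm{vol}_g$ itself on orbits.

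First, the measure $\mu=|\nabla w|\,d\mathrm{vol}_g$ is absolutely continuous with respect to $\mathcal{H}^{n+1}$. Since $M$ is compact, the function $(p,r)\mapsto \mathrm{vol}_g(B^G_r(p))$ tends to $0$ uniformly in $p$ as $r\to 0$: every tube $B^G_r(p)$ lies in the $r$-neighborhood of a single orbit, and orbits have uniformly bounded geometry (a covering argument in $M/G$ suffices). Uniform absolute continuity of the integral then gives $\sup_p\mu(B^G_r(p))\to 0$. Next, by the coarea formula, for every $G$-set $U$,
\[
\int_{-1}^{1}\|\partial\{w>t\}\|(U)\,dt=\mu(U).
\]
Set $f_r(t)=\sup_{p\in M}\|\partial\{w>t\}\|(B^G_r(p))$. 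This is monotone in $r$, so its $\liminf$ as $r\downarrow0$ equals its limit $f(t)$. The aim is to show $f(t)=0$ for a.e. $t$.

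The main obstacle is exchanging the supremum over $p$ with the integral in $t$, since the maximizing point $p$ may depend on $t$. I would handle this by showing that concentration at a level $t$ produces an atom-like object. Suppose $f(t)\ge\delta>0$ for $t$ in a set $B_\delta$ of positive measure. For each such $t$, the measure $\|\partial\{w>t\}\|$ is a finite Radon measure, and I claim it then charges some single orbit $G\cdot p_t$. Indeed, take $r_k\to0$ and points $p_k$ with $\|\partial\{w>t\}\|(B^G_{r_k}(p_k))\ge\delta$. By compactness of $M/G$, $p_k\to p_t$ in the quotient, so $B^G_{r_k}(p_k)\subset B^G_{2r_k}(p_t)$ eventually. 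Hence $\|\partial\{w>t\}\|(G\cdot p_t)\ge\delta$ by continuity from above.

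Now $G$-invariant finite-perimeter sets have perimeter measure $\mathcal{H}^n\llcorner\partial^*$ by De Giorgi. A single orbit has dimension at most $\ell'\le n$, so it can carry positive $\mathcal{H}^n$-mass only when it is an $n$-dimensional orbit, which happens only for cohomogeneity one. In our setting $\cohom(G)\ge2$, so every orbit has dimension at most $n-1$ and $\mathcal{H}^n(G\cdot p)=0$. This gives $\|\partial\{w>t\}\|(G\cdot p_t)=0$, a contradiction. So in fact $f(t)=0$ for every $t$ at which $\{w>t\}$ has finite perimeter, and by the coarea formula that is almost every $t$. The coarea identity thus serves only to guarantee finite perimeter for a.e. $t$. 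The delicate point is the limiting step from tubes around the moving $p_k$ to a tube around the fixed orbit $p_t$; this needs the tube containment $B^G_{r}(q)\subset B^G_{r+d}(p)$ whenever $\mathrm{dist}(G\cdot q,G\cdot p)<d$, which follows from the triangle inequality for orbit distance. The union over the finitely many $w\in K$ then gives $|\mathcal{B}(K)|=0$.
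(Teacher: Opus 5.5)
Your proposal is correct and follows essentially the paper's proof. The common steps are: $\{w>t\}$ has finite perimeter for a.e.\ $t$ by the coarea formula; its perimeter measure is $\mathcal{H}^n$ restricted to $\partial^*\{w>t\}$, which vanishes on each orbit since $\cohom(G)\geq 2$; continuity from above on shrinking $G$-tubes; and a finite union over $K$. Your compactness-by-contradiction argument spells out the step the paper compresses into ``by the compactness of $M$''. The preliminary discussion of $|\nabla w|\,d\mathrm{vol}_g$ on tubes is not needed.

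One small correction: $p_k\to p_t$ in $M/G$ does not give $B^G_{r_k}(p_k)\subset B^G_{2r_k}(p_t)$. The containment you state at the end does give $B^G_{r_k}(p_k)\subset B^G_{r_k+d_k}(p_t)$ with $d_k=\mathrm{dist}(G\cdot p_k,G\cdot p_t)\to 0$, which is all the argument needs.
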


\begin{proof}
Let $w \in K$. It follows from \cite[Section 5.5]{EvansGariepy} that $\mathcal{G}(w):=\{ t \in [-1,1] \colon \{w>t\} \in \mathcal{C}(M)\}$ has full Lebesgue measure. For each $t \in \mathcal{G}(w)$, it follows that the total variation measure $\|\partial\{w>t\}\|$ is a Radon measure given by the restriction of the Hausdorff measure $\mathcal{H}^n$ to the reduced boundary of the superlevel set $\{w>t\}$. This ensures that, for every $p \in M$, we have:
    \[
    \|\partial \{w>t\}\|(G\cdot p)= \lim_{r \downarrow 0}\|\partial \{w>t\}\|(B^G_r(p)),
    \]
for $\|\partial \{w>t\}\|$ is an outer regular measure. Additionally, $\mathcal{H}^n(G\cdot p)=0$ for all $p\in M$, as the $G$ action has cohomogeneity $\geq 2$, and hence $\|\partial \{w>t\}\|(G\cdot p)=0$. By the compactness of $M$, we conclude that
    \[
    \sup_{p \in M} \|\partial\{w>t\}\|(B^G_r(p)) \to 0, \qquad \text{as} \ r \downarrow 0.
    \]

Now let $\mathcal{G} = \bigcap_{w \in K}\mathcal{G}(w)$. Since $K$ is finite, this set has full Lebesgue measure in $[-1,1]$, so it suffices to show that $\mathcal{G} \subset [-1,1] \setminus \mathcal{B}(K)$. Let $t \in \mathcal{G}$. The argument above shows that for every $\epsilon>0$ and every $w \in K$, there exists a $r(w)>0$ such that
    \[
        \sup_{p \in M} \|\partial\{w>t\}\|(B^G_r(p)) < \epsilon, \qquad \forall r \in (0,r(w)).
    \]
Therefore, $r_0 = \min_{w\in K}r(w)>0$ satisfies
    \[
    \sup\{ \|\partial\{w>t\}\|(B^G_r(p)) \colon w \in K, p \in M\} < \epsilon, \qquad \forall r \in (0,r_0).
    \]
This shows that
    \[
        \lim_{r \downarrow 0}\sup\{ \|\partial\{w>t\}\|(B^G_r(p)) \colon w \in K, p \in M\}=0,
    \]
that is $t \notin \mathcal{B}(K)$, and concludes the proof.
\end{proof}

We can now adapt the construction from \cite{GasparGuaracoClosed} to the equivariant setting. We point out the main changes in the proof:
    \begin{itemize}
        \item The discrete-to-continuous interpolation theorem \cite[Theorem 6.4]{GasparGuaracoClosed} extracted from \cite[Theorem 14.1]{MarquesNevesWillmore} can be replaced by its equivariant counterpart, \cite[Theorem 3]{WangJGA}.
        \item The discussion in \cite[Section 6.5]{GasparGuaracoClosed} can be directly translated to $G$-invariant cycles based on Wang's extension of Almgren's isomorphism for relative $G$-cycles, \cite[Theorems 3.1 and 3.3]{WangDensity}.
        \item The criterion given by \cite[Proposition 6.9]{GasparGuaracoClosed} that describes whether an even map $\tilde X\to \mathcal{Z}_n(M,\Z/2)$ defined on a $\tilde X \in \mathcal{C}_p$ is a $p$-sweepout remains true for maps taking values in the space of $G$-invariant cycles. In particular, the proof of Lemma 6.8 in \cite{GasparGuaracoClosed} on the local path connectedness of $\mathcal{Z}_{n}(M;\Z/2)$ carries over to $\mathcal{Z}_{n}(M;\Z/2)$, replacing the homotopy construction \cite[Theorem 8.2]{Almgren} by \cite[Theorem 3.2]{WangDensity}.
        \item The interpolation result \cite[Theorem 6.12]{GasparGuaracoClosed} can be replaced by the equivariant interpolation constructions underlying the proof of \cite[Theorem 4.11]{WangFB}, which translates the results from \cite[Section 13]{MarquesNevesWillmore} to the equivariant setting, using a combinatorial argument with $G$-tubes instead of geodesic balls.
    \end{itemize}

We use the latter interpolation result for $\phi_0$ inductively on each cell of $X[j]$ for sufficiently large $j$, as in \cite[Section 6.8]{GasparGuaracoClosed}. This allows us to conclude that, for all small $\delta_1>0$, there exists $\e^*=\e^*(\delta_1)>0$ with the following property: if $\e \in (0,\e^*)$, then we can find a discrete map $\phi \colon X[j+k]_0 \to \mathcal{Z}_n^G(M,\Z_2)$ such that
    \begin{enumerate}
        \item[(1)] $\sup_{x,y \in X[j+k]_0} \mathcal{F}( \phi(x),  \phi(y)) \leq \xi\left( \sup_{x,y \in X[j]_0} \mathcal{F}(\phi_0(x),\phi_0(y) \right)$, where $\xi$ is a positive function with $\xi(s) \to 0$ as $s \to 0^+$.
        \item[(2)] $\sup_{x \in X[j+k]_0}\mathbf{M}(\phi(x)) \leq \frac{\omega_{p,\e}^G(M,g) + \e}{4\tilde\sigma}+p\cdot \delta_1$.
        \item[(3)] $\phi$ lifts to a map $\tilde \phi\colon \tilde X[j+k]_0 \to \mathcal{C}_G(M)$, namely $\phi(\pi(x)) = \partial \tilde \phi(x)$ for all $x \in \tilde X[j+k]_0$.
        \item[(4)] $\phi$ has fineness $\leq C_p \cdot \delta_1$, with respect to the mass norm, for some $C_p>0$ depending only on $p$ (but not on $\delta_1$ nor $\e$).
    \end{enumerate}
Therefore, by letting $\delta_1>0$ be sufficiently small, we conclude that the Almgren $G$-extension $\Phi\colon X \to \mathcal{Z}_{n}^G(M; \mathbf{M};\Z_2)$ of $\phi$, \cite[Theorem 3]{WangJGA}, is a $(G,p)$-sweepout with 
    \[
        \mathbf{M}(\Phi(x)-\Phi(y)) \leq C(M,g,G,p)\cdot \delta_1
    \]
for any $x,y \in X$ in a common $p$-cell of $X[j+k]$, and consequently
    \[
        \omega_p^G(M,g) \leq \sup_{x \in X} \mathbf{M}(\Phi(x)) \leq \sup_{x \in X[j+k]_0}\mathbf{M}(\Phi(x))+C(p,M,g,G) \cdot \delta_1 \leq \frac{\omega_{p,\e}^G(M,g) + \e}{4\tilde\sigma}+(C(p,M,g,G)+p)\cdot \delta_1
    \]
By letting $\e\downarrow 0$, then $\delta_1 \downarrow 0$, and finally $\tilde \sigma \uparrow \sigma/2$, we conclude the proof of Proposition \ref{comparison bound 1}.

\subsection{From sweepouts by invariant cycles to sweepouts by functions}

We now turn to the reverse inequality between the equivariant phase transitions spectrum and equivariant volume spectrum, by adapting the techniques of \cite{dey2022comparison}. We will also consider the equivariant min-max values and minimal hypersurfaces studied  by T. Wang in \cite{WangJGA}. Concretely, we consider a  $G$-homotopy class $\Pi \in [X,\mathcal{Z}_n^G(M;\mathbf{F};\mathbb{Z}_2)]$ defined on a cubical complex $X$ with nontrivial, cubical double cover $\tilde X \to X$, for some cubical subcomplex $\tilde X$ of $Q(m,k)$. Denote by $\mathbf{L}(\Pi)$ the corresponding min-max value for the area functional, namely:
\[
    \mathbf{L}(\Pi) = \inf_{\Phi \in \Pi} \max_{x \in X} \mathbf{M}(\Phi(x))
\]

The main goal of this section is to show the following inequality between min-max values for $\mathbf{M}$ and the min-max values for the Allen-Cahn energy defined in \eqref{homotopy min max AC}:

\begin{proposition} \label{comparison bound 2}
With the notation above, for all $\tilde X \in \mathcal{C}_p$, we have
    \[
    \frac{1}{2\sigma} \limsup_{\e \downarrow 0} \mathbf{L}_\e(\tilde X; M,g; G) \leq \mathbf{L}(\Pi).
    \]
\end{proposition}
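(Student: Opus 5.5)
We follow Dey's strategy \cite{dey2022comparison}, carried out inside the space of $G$-invariant objects. Fix $\delta>0$ and pick $\Phi\in\Pi$ with $\max_{x\in X}\mathbf{M}(\Phi(x))\le \mathbf{L}(\Pi)+\delta$. The plan is to build an odd continuous map $h\colon\tilde X\to W^{1,2}_G(M)\setminus\{0\}$ with $\sup_x E_\e(h(x))\le 2\sigma(\mathbf{L}(\Pi)+C\delta)+o(1)$ as $\e\to0$. The definition \eqref{homotopy min max AC} then gives the claim once we let $\e\downarrow0$ and then $\delta\downarrow0$.

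\emph{Step 1: discretize and lift.} By the equivariant discretization and interpolation theory (\cite[Theorem 3]{WangJGA}, \cite[Theorem 4.11]{WangFB}), after refining $X$ we may replace $\Phi$ by a map that is continuous in the $\mathbf{F}$-metric, has no concentration of mass on orbits, and has small mass-fineness on a fine subdivision $X[j]$. Via Wang's Almgren isomorphism for $G$-cycles \cite[Theorems 3.1, 3.3]{WangDensity}, the nontrivial double cover $\tilde X\to X$ lets us lift $\Phi$ on vertices to $G$-invariant Caccioppoli sets $\tilde\Phi(\tilde x)\in\mathcal{C}_G(M)$, with $\partial\tilde\Phi(\tilde x)=\Phi(x)$ and $\tilde\Phi(T\tilde x)=M\setminus\tilde\Phi(\tilde x)$. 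At each vertex we then apply Lemma \ref{smooth_approx} to replace $\tilde\Phi(\tilde x)$ by a $G$-invariant closed set with smooth boundary. Its boundary mass is within $\delta$ of the original, and its complement-pair structure is preserved, so the antipodal symmetry survives.

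\emph{Step 2: functions at vertices and interpolation over cells.} For a smooth $G$-invariant set $E$, let $u_E=\mathbb{H}_\e\circ d_E$, where $\mathbb{H}_\e$ is the truncated heteroclinic profile and $d_E$ is the signed distance to $\partial E$, cut off at a fixed small scale. Since $d_E$ is $G$-invariant, so is $u_E$, and $u_{M\setminus E}=-u_E$. As in the proof of Proposition \ref{1 parameter min-max}, $E_\e(u_E)\le 2\sigma\,\mathcal{H}^n(\partial E)+o(1)$. The real work is to extend these vertex values over the cells of $\tilde X$ continuously and oddly while keeping the energy controlled. Following Dey, we do this inductively on the skeleta. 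On an edge joining $\tilde x,\tilde y$, whose sets $E,E'$ differ by a $G$-invariant set of small perimeter-change, we take a path of the form $\max/\min$ combinations of $u_E,u_{E'}$ with intermediate distance functions. This costs energy at most $2\sigma\max(\mathbf{M}(\partial E),\mathbf{M}(\partial E'))+C\cdot\mathrm{fineness}+o(1)$. On higher cells we use Dey's cone or min–max interpolation. All these operations ($\max$, $\min$, distance, composition with odd profiles) commute with isometries, so $G$-invariance is automatic. Oddness is enforced by defining $h$ on a fundamental domain of $T$ and extending by $h(T\tilde x)=-h(\tilde x)$, which is consistent because $u_{M\setminus E}=-u_E$ and the interpolations are built symmetrically. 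Finally, $h$ avoids $0$ because every $u_E$ equals $\pm1$ on large regions.

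\emph{Main obstacle.} The key difficulty is the cell interpolation step, which needs energy bounds uniform in $\e$. In Dey's argument, the surgery controlling the energy of $\max/\min$ combinations uses small geodesic balls and a covering by them. This must be replaced by $G$-tubes $B^G_r(p)$, and the no concentration of mass on orbits condition is what guarantees that the perimeter carried in each tube is small. We would first check that Dey's estimate, which says the energy of the interpolant is at most $2\sigma$ times the mass of the larger boundary plus the mass of the symmetric difference's boundary plus $o(1)$, only uses coarea and the fact that $|\nabla d_E|=1$. Both survive $G$-averaging unchanged. What remains is to verify that the combinatorial covering argument works with tubes whose geometry near singular orbits is only uniformly bi-Lipschitz, which is where we expect technical care to be needed.
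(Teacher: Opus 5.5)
There is a genuine gap, and it sits in Step~2. Your Step~1 matches the paper. The edge estimate you assert, $2\sigma\max(\mathbf{M}(\partial E),\mathbf{M}(\partial E'))+C\cdot\mathrm{fineness}+o(1)$, does not hold for the sets you actually have at the vertices.

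The mass-fineness $\mathbf{M}(\Phi(x)-\Phi(y))<\eta$ is a property of the \emph{original} Caccioppoli sets. After you smooth each vertex independently with Lemma~\ref{smooth_approx}, the smoothings are only $\mathbf{F}$-close to the originals. So $\partial E$ and $\partial E'$ are typically nearby but disjoint surfaces, and $\mathbf{M}(\partial E-\partial E')\approx\mathbf{M}(\partial E)+\mathbf{M}(\partial E')$.

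For the $\max/\min$ interpolant $\theta(u_E,u_{E'},w)$, the only usable bound is \eqref{energy theta}: $E_\e^{(U,g)}(\theta)\le E_\e^{(U,g)}(u_E)+E_\e^{(U,g)}(u_{E'})+E_\e^{(U,g)}(w)$. Wherever $u_E\neq u_{E'}$, in particular on $E\Delta E'$, the interpolant switches along $f^{-1}(t)$. This costs on the order of $2\sigma\mathcal{H}^n(f^{-1}(t)\cap(E\Delta E'))$, and the small volume of $E\Delta E'$ controls this only on average in $t$, not uniformly.

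You cannot escape by skipping the smoothing to keep fineness. The bound $E_\e(u_E)\le 2\sigma\mathcal{H}^n(\partial E)+o(1)$ requires control of $\mathcal{H}^n(\{d_E=\tau\})$ for $|\tau|\le\sqrt\e$, and this fails for general Caccioppoli sets. Smoothness and fineness pull in opposite directions, and your proposal does not reconcile them.

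The missing ingredient is the almost smooth sweepout of Proposition~\ref{almost smooth sweepout}, which is where the real work of the paper's proof lies. It proceeds in three steps.
\begin{enumerate}
\item Cover $M$ by finitely many thin $G$-tubes $B^G_{r^*}(p_i)$, centered on a dense set of principal orbits. In these tubes $\|\Phi(x)\|$ and the level sets of a $G$-equivariant Morse function $f$ have area $<\eta$; this uses no concentration of mass on orbits, \eqref{small mass in tubes} and Lemma~\ref{sweep_annulus}.
\item Choose the radii so that the tube boundaries are smooth, transverse to every smoothed surface and to each other, carry no $\|\Phi\|$-mass, and meet $\tilde\Phi_j(x)\Delta\tilde\Phi_j(x')$ in small $\mathcal{H}^n$-measure.
\item Refine $\tilde X[N]$ to $\tilde X[NI]$ and pass from the smoothing at $x$ to the smoothing at $x'$ one tube at a time, via $\Omega(K,K';B)$.
\end{enumerate}

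This construction gives two properties at once:
\begin{itemize}
\item[(a)] consecutive vertex sets coincide outside a single tube $B^G_{r^*}(p_\alpha)$ of small area;
\item[(b)] each set is smooth away from a finite union $\mathcal{S}$ of codimension-two submanifolds with $\mathcal{H}^n(\{\mathrm{dist}(\cdot,\mathcal{S})=\rho\})\le C_{\mathcal{S}}\rho$. This yields the vertex bound $E_\e(\vartheta_\e(x))\le 2\sigma(\mathbf{L}(\Pi)+C_0\eta)$.
\end{itemize}
Property (a) is exactly what makes \eqref{energy theta} effective. Outside the tube the two vertex functions agree and $\theta(u,u,w)=u$, so $w$ contributes nothing there. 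Inside the tube all three energies are $O(\eta)$.

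Without this tube-by-tube cut-and-paste, your edge and higher-cell interpolations have no bound better than roughly the sum of the two vertex energies plus the area of a Morse level set. Your stated ``main obstacle'', the bi-Lipschitz geometry of tubes near singular orbits, is not where the difficulty lies.
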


By combining Propositions \ref{comparison bound 1} and \ref{comparison bound 2}, we can prove Theorem \ref{comparison} exactly as in \cite[Section 3.5]{dey2022comparison}. We sketch the proof here for completeness.

\begin{proof}[Proof of Theorem \ref{comparison}]
By Proposition \ref{comparison bound 1}, it suffices to show that
    \[
        \frac{1}{2\sigma}\limsup_{\e \downarrow 0} \omega_{p,\e}^G(M,g) \leq \omega_p^G(M,g).
    \]
Let $\eta>0$. There exists a map $\Phi\colon X \to \mathcal{Z}_n(M;\mathbf{F};\Z_2)$ defined on a connected $p$-dimensional cubical complex $X$ such that
    \[
    \Phi^*\colon H^p(\mathcal{Z}_n^G(M;\mathcal{F};\Z_2);\Z_2)\simeq\Z_2 \to H^p(X,\Z_2) \ \text{is nontrivial} \quad \text{and} \quad  \sup_{x \in X}\mathbf{M}(\Phi(x)) \leq \omega_p^G(M,g) + \eta.
    \]
By arguing as in \cite[p. 1025]{dey2022comparison}, we see that $X$ has a nontrivial double cover $\pi \colon \tilde X \to X$ with $\tilde X \in \mathcal{C}_p$. Therefore, by \eqref{homotopy min max AC},
    \[
        \omega_{p,\e}^G(M,g) \leq \mathbf{L}_\e(\tilde X;M,g;G).
    \]
On the other hand, if $\Pi$ is the $\mathbf{F}$-homotopy class of $\Phi$, then
    \[
        \mathbf{L}(\Pi) \leq \sup_{x \in X} \mathbf{M}(\Phi(x)) \leq \omega_p^G(M,g) + \eta.
    \]
Combining the two previous inequalities and Proposition \ref{comparison bound 2}, we conclude
    \[
        \frac{1}{2\sigma}\limsup_{\e \downarrow 0} \omega_{p,\e}^G(M,g) \leq \frac{1}{2\sigma}\limsup_{\e\downarrow 0} \mathbf{L}_\e(\tilde X;M,g;G) \leq \mathbf{L}(\Pi) \leq \omega_p^G(M,g) + \eta.
    \]
Since $\eta>0$ is arbitrary, this concludes the proof.
\end{proof}

The proof of Proposition \ref{comparison bound 2} relies on the construction of almost smooth (and almost optimal) discrete $(G,p)$-sweepouts. This is described in the next Lemma, which is a $G$-equivariant adaptation of \cite[Proposition 3.6]{dey2022comparison}, and whose proof is described in the next section.

\begin{proposition} \label{almost smooth sweepout}
For each $\eta>0$, there exist $N,I \in \Z_{>0}$, a closed $G$-invariant set $\mathcal{S} \subset M$ and a $\Z_2$-equivariant map $\Psi \colon \tilde X[NI]_0 \to \mathcal{C}_G(M)$ with the following properties. Here we write
    \[\Psi(x) = \Psi(T(x)):=\tilde\Psi(x) \cap \tilde\Psi(T(x))\]
for each $x \in \tilde X[NI]_0$.
    \begin{enumerate}
        \item[(0)] Each $\tilde\Psi(x)$ is a closed subset of $M$ with $\tilde\Psi(x) \cup \tilde\Psi(T(x))=M$.
        \item[(1)] The set $\mathcal{S}$ is the union of finitely many smooth codimension $2$ submanifolds of $M$. In particular, there exist $C_{\mathcal{S}}>0$ and $\rho_0>0$ such that 
            \[\mathcal{H}^n\left(\{x \in M \colon \mathrm{dist}(x,\mathcal{S})\}=\rho\right)\leq C_{\mathcal{S}}\cdot \rho , \qquad \text{for all} \quad \rho \in (0,\rho_0).\]
        \item[(2)] For each $x \in \tilde X[NI]_0$, the set $\Psi(x)\setminus \mathcal{S}$ is the union of finitely many $G$-invariant, embedded hypersurfaces. Moreover, for every $p \in \Psi(x)\setminus \mathcal{S}$, there exists a $G$-tube $U$ such that $U\cap \Psi(x)$ is a smooth hypersurface and $U\setminus\Psi(x) = \mathcal{O}_1 \cup \mathcal{O}_2$, for open sets $\mathcal{O}_1\subset \tilde\Psi(x)$ and $\mathcal{O}_2\subset \tilde\Psi(T(x))$.
        \item[(3)] $\mathcal{H}^n(\Psi(x)) \leq \mathbf{L}(\Pi) + O(\eta)$, for all $x \in \tilde X[NI]_0$.
        \item[(4)] If $x,y$ lie in a common $1$-cell $\alpha \in \tilde X[N]$, then the pairs $(\tilde\Psi(x),\tilde\Psi(y))$ and $(\tilde\Psi(T(x)),\tilde\Psi(T(y)))$ coincide outside of a tube $B^G_{r^*}(p_\alpha)$ for which $\mathcal{H}^n((\Psi(x)\cup\Psi(y))\cap B^G_{r^*}(p_\alpha)) \leq O(\eta)$.
    \end{enumerate}
\end{proposition}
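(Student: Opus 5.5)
We follow the strategy of \cite[Proposition 3.6]{dey2022comparison} and replace each non-equivariant ingredient by its $G$-invariant counterpart. The first step is to pick an almost optimal sweepout. We choose $\Phi \in \Pi$ with $\sup_x \mathbf{M}(\Phi(x)) \leq \mathbf{L}(\Pi)+\eta$. By Wang's equivariant discretization results \cite{WangJGA,WangDensity}, we may assume $\Phi$ is continuous in the $\mathbf{F}$-metric and has no concentration of mass on orbits. Restricting to the vertices of $\tilde X[N]$, for $N$ large, gives a discrete lift $x \mapsto E_x \in \mathcal{C}_G(M)$ with the following properties: $E_{T(x)} = M\setminus E_x$; $\partial [\![E_x]\!]=\Phi(\pi(x))$; and adjacent vertices satisfy $\mathbf{F}(\partial [\![E_x]\!],\partial [\![E_y]\!])$ small. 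The lift exists because $\Pi$ lifts to the double cover (Almgren isomorphism for $G$-cycles, \cite[Theorems 3.1, 3.3]{WangDensity}).

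The second step replaces each $E_x$ by a set with smooth boundary. We apply Lemma \ref{smooth_approx} to each of the finitely many $E_x$ and obtain closed $G$-invariant sets $\tilde\Psi(x):=E_{x,j}$ and $\tilde\Psi(T(x)):=F_{x,j}$ with the following properties:
\begin{itemize}
\item $\tilde\Psi(x)\cup\tilde\Psi(T(x))=M$;
\item the common boundary is a smooth $G$-invariant hypersurface, and locally it separates two open $G$-sets, by (iv) and (v);
\item the mass converges in $\mathbf{F}$, by (iii), so $\mathcal{H}^n(\Psi(x))\leq \mathbf{L}(\Pi)+2\eta$.
\end{itemize}
Since Lemma \ref{smooth_approx} smooths $\partial E_x$ entirely, the only real role of $\mathcal{S}$ is to accommodate the interpolation cut-offs below. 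There we need codimension-one cuts along $G$-tube boundaries $\partial B^G_r(p)$, and these are nonsmooth only along a codimension-$2$ set. We therefore take $\mathcal{S}$ to be a finite union of codimension-$2$ submanifolds. Candidates are $M\setminus M^{reg}$ near the orbits used, which has codimension $\geq 2$, together with the transverse intersections of the chosen smooth hypersurfaces with generic tube boundaries. The tube estimate in (1) then follows from the standard Weyl-tube bound for a compact smooth codimension-$2$ submanifold.

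The third step is the interpolation along edges. For adjacent vertices $x,y$ in a $1$-cell $\alpha$ of $\tilde X[N]$, the difference $E_x\Delta E_y$ has small $\mathbf{F}$-distance. Using no concentration of mass on orbits and the equivariant isoperimetric/interpolation argument of \cite[Section 13]{MarquesNevesWillmore} as adapted in \cite[Theorem 4.11]{WangFB}, we find finitely many $G$-tubes in which the change happens. We refine $\alpha$ into $I$ subintervals. At the $i$-th intermediate vertex we take the set equal to $\tilde\Psi(y)$ inside the first $i$ tubes and equal to $\tilde\Psi(x)$ outside them, choosing the radii $r$ by coarea (Lemma \ref{smooth_approx}(vi)) so that the added boundary pieces $\partial B^G_r\cap(E_x\Delta E_y)$ have mass $O(\eta)$. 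This produces (4) with a single tube $B^G_{r^*}(p_\alpha)$ per substep. It also produces (3), because the mass on each piece is bounded by that of $\Psi(x)$ or $\Psi(y)$ plus $O(\eta)$. Odd-equivariance is preserved by defining the construction on $x$ and setting the value at $T(x)$ to be the complement closure.

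The main obstacle is to guarantee that the total mass in a single small tube is $O(\eta)$ uniformly in both $x$ and the refinement. This is the no concentration of mass on orbits condition, and it has to be established for the smoothed sets uniformly in $N$. I would obtain it from the preceding lemma on bad level sets applied to the finite family, combined with $\mathbf{F}$-continuity of $\Phi$.
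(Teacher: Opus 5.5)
Your overall route matches the paper's: Dey's tube-replacement construction with $G$-invariant ingredients, using Lemma \ref{smooth_approx} for smoothing. There are genuine gaps, however.

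\textbf{The mass bound (3).} At the intermediate vertices (3) is asserted, not proved, and your ingredients do not give it. Suppose the set at an intermediate vertex is $\tilde\Psi(y)$ inside a union of tubes $U$ and $\tilde\Psi(x)$ outside. Its boundary has mass $\mathcal{H}^n(\Psi(x))+\big(\|\Psi(y)\|(U)-\|\Psi(x)\|(U)\big)$ plus the cut pieces. So you need $\|\Psi(y)\|(U)\leq\|\Psi(x)\|(U)+O(\eta)$ for every such $U$.

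Total-mass bounds plus $\mathbf{F}$-closeness of $\Phi(\pi(x)),\Phi(\pi(y))$ do not imply this. A sheet lying just outside $U$ for $x$ and just inside $U$ for $y$ costs little in $\mathbf{F}$, but the hybrid counts it twice.

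The paper gets the localized comparison from three things missing from your sketch:
\begin{itemize}
\item $\Phi$ is taken continuous in the \emph{mass} norm, with $\mathbf{M}(\Phi(x)-\Phi(y))<\eta$ on cells of $X[N]$. Then $\|\Phi(x)\|$ and $\|\Phi(y)\|$ agree up to $\eta$ on every Borel set.
\item Lemma \ref{sweep_annulus} places the cut radii in annuli where every $\Phi(x)$ and every level set of $f$ has mass $<\eta/I$, so the precise radius does not matter.
\item $r^*$ is chosen with $\|\Phi(\pi(x))\|(\partial B^G_{r^*}(p_i))=0$. This upgrades the $\mathbf{F}$-convergence of the smoothings to $\big|\|\Phi_j(x)\|(R)-\|\Phi(\pi(x))\|(R)\big|<\eta$ for all $R\in\mathscr{R}$.
\end{itemize}

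The tubes are also not chosen edge by edge from an isoperimetric argument. They are centered at a fixed family $G\cdot p_i$, $i=1,\ldots,I$, with $\{B^G_{r_0/4}(p_i)\}$ covering $M$, all fixed before $N$ from no concentration of mass for $\Phi$ itself. Smallness of the smoothed surfaces in $\overline B^G_{r_0}(p_i)$ then follows at the finitely many vertices from $\mathbf{F}$-convergence. So the obstacle you single out (uniformity in $N$) does not arise. The lemma on bad level sets concerns level sets of Sobolev functions in the opposite inequality and plays no role here.

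\textbf{Higher-dimensional cells and the set $\mathcal{S}$.} You only interpolate along edges. But $\Psi$ must be defined on all of $\tilde X[NI]_0$, including vertices interior to $k$-cells of $\tilde X[N]$ for $2\leq k\leq \dim\tilde X$. These values must be compatible with those already assigned on the faces and with $T$; the later extension of $\zeta_\e$ over cells of every dimension needs this.

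The paper, following Dey, proceeds as follows:
\begin{itemize}
\item It orders the cells as pairs $e_q,f_q=T(e_q)$ of nondecreasing dimension.
\item Cell by cell, it chooses radii $r_i(q)\in(r_1-\delta,r_1)\setminus\Theta$, transverse to all $\Phi_j(x)$ and to all previously chosen tube boundaries.
\item The radii satisfy slice bounds $\mathcal{H}^n\big(\partial B^G_{r_i(q)}(p_i)\cap(\tilde\Phi(x)\Delta\tilde\Phi(x'))\big)\leq 2^{2\dim c}\eta/I$; the factor $2^{2\dim c}$ accounts for iterated replacements in higher-dimensional cells.
\item It then iterates the replacement map $\Omega$.
\end{itemize}

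The same choice produces $\mathcal{S}$. It is exactly the union of the transverse intersections $\Phi_j(x)\cap\partial B^G_{r_i(q)}(p_i)$ and $\partial B^G_{r_i(q)}(p_i)\cap\partial B^G_{r_{i'}(q')}(p_{i'})$. These are finitely many closed smooth codimension-$2$ submanifolds, which gives (1); non-principal orbits are not put into $\mathcal{S}$.

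Your proposal to include $M\setminus M^{reg}$ in $\mathcal{S}$ does not fit (1). It is a stratified set, not a finite union of closed smooth submanifolds. With reflection-type exceptional orbits it can even have codimension-one strata, e.g.\ $\{z=0\}$ for the $\Z_2\times S^1$ action in the appendix. In that case the bound $\mathcal{H}^n(\{\mathrm{dist}(\cdot,\mathcal{S})=\rho\})\leq C_{\mathcal{S}}\rho$ fails.
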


By composing the signed distance functions to the $G$-invariant, piecewise smooth hypersurfaces $\Psi(x)$, that is
    \[
        d_x\colon M \to \R, \qquad d_x(p) = \left\{ \begin{array}{cl} \mathrm{dist}(p,\Psi(x)), & p \in \tilde\Psi(x), \\ -\mathrm{dist}(p,\Psi(x)), & p \in \tilde\Psi(T(x)), \end{array} \right.
    \]
with a real function $\tilde{\mathbb{H}}_\e$ obtained by truncating the heteroclinic solution $\mathbb{H}_\e$ of the Allen-Cahn solution in $\R$ to a constant away from $[-\sqrt{\e},\sqrt{\e}]$, we obtain a $\Z_2$-equivariant map $\vartheta_\e \colon \tilde X[NI]_0 \to W^{1,2}_G(M)\setminus \{0\}$, $\vartheta_\e(x) = \tilde{\mathbb{H}}_\e\circ d_x$. For all $x \in \tilde X[NI]_0$, a standard computation via the coarea formula and normal exponential coordinates for $\Psi(x)\setminus \mathcal{S}$ shows that the energy $E_\e(\vartheta_\e)$ can be bounded from above by 
    \[2\sigma\cdot \sup_{\tau\in [-\sqrt\e, \sqrt\e]}\mathcal{H}^n(\{d_x = \tau\}) \leq 2\sigma\cdot  C_{\mathcal{S}}\cdot \sqrt{\e} + 2\sigma\cdot (1+C_1\sqrt\e)\sup_{x \in\tilde X[NI]} \mathcal{H}^n(\Psi(x)),\]
where the second inequality follows from parts (1) and (3) from Proposition \ref{almost smooth sweepout}. We refer to \cite[Section 3.4]{dey2022comparison} for the detailed computation, and collect the properties of $\vartheta_\e$ in the proposition below, obtained by letting $\e>0$ be sufficiently small:

\begin{proposition}
    There exist $\e^*>0$ with $\e^* < K \eta$ and $C_0>0$ depending on $(M,g)$, $G$, and also on the map $\Psi$ given by Proposition \ref{almost smooth sweepout} with the following property. For each $\e \in (0,\e^*)$, there exists a $\Z_2$-equivariant map $\vartheta_\e \colon \tilde X[NI]_0 \to W^{1,2}_G(M)\setminus\{0\}$ with
        \[E_\e(\vartheta_\e(x)) \leq 2\sigma\cdot (\mathbf{L}(\Pi) + C_0\eta)\]
\end{proposition}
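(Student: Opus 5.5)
The map $\vartheta_\e$ will be the one already described before the statement: $\vartheta_\e(x)=\tilde{\mathbb{H}}_\e\circ d_x$, where $d_x$ is the signed distance to the piecewise smooth $G$-invariant hypersurface $\Psi(x)$ from Proposition \ref{almost smooth sweepout}. We need to check three things: the map lands in $W^{1,2}_G(M)\setminus\{0\}$, it is $\Z_2$-equivariant, and it satisfies the energy bound.

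\emph{Invariance and equivariance.} Since $G$ acts by isometries and $\tilde\Psi(x)$, $\tilde\Psi(T(x))$ and $\Psi(x)$ are $G$-invariant, we have $\mathrm{dist}(\phi p,\Psi(x))=\mathrm{dist}(p,\Psi(x))$ for all $\phi\in G$. Also $\phi p$ lies in the same piece $\tilde\Psi(\cdot)$ as $p$, so $d_x$ is $G$-invariant and Lipschitz. Hence $\vartheta_\e(x)\in W^{1,2}_G(M)$.

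The sign is well defined: on the overlap $\tilde\Psi(x)\cap\tilde\Psi(T(x))=\Psi(x)$ both candidate values vanish, and by (0) the two pieces cover $M$. Exchanging the roles of $x$ and $T(x)$ gives $d_{T(x)}=-d_x$. Since the truncated heteroclinic $\tilde{\mathbb{H}}_\e$ is odd, $\vartheta_\e(T(x))=-\vartheta_\e(x)$.

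Nonvanishing: if $\vartheta_\e(x)\equiv 0$, then $d_x\equiv 0$, so $\Psi(x)=M$. This contradicts (3), because $\mathcal{H}^n(M)=\infty$.

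\emph{Energy bound.} Write $\tilde{\mathbb{H}}_\e$ so that it is constant ($\pm 1$ up to an $e^{-c/\sqrt\e}$ correction, after interpolation) outside $[-\sqrt\e,\sqrt\e]$. Apart from an $O(e^{-c/\sqrt\e})$ contribution from the flat region, the energy is then supported in the slab $\{|d_x|<\sqrt\e\}$. There $|\nabla d_x|=1$ a.e., so the coarea formula gives
\[
E_\e(\vartheta_\e(x))\le \int_{-\sqrt\e}^{\sqrt\e}\Big(\frac{\e}{2}\tilde{\mathbb{H}}_\e'(\tau)^2+\frac{W(\tilde{\mathbb{H}}_\e(\tau))}{\e}\Big)\mathcal{H}^n(\{d_x=\tau\})\,d\tau+o(1),
\]
and the one-dimensional integral is at most $2\sigma+o(1)$.

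We bound $\mathcal{H}^n(\{d_x=\tau\})$ by splitting $M$ into two regions.

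\textbf{Near $\mathcal{S}$.} Inside the $\rho$-neighborhood of $\mathcal{S}$, with $\rho\approx\sqrt\e$ or a fixed small multiple chosen so the estimate closes, we use (1). Because $\mathcal{S}$ has codimension $2$, the relevant level-set area there is $O(C_{\mathcal{S}}\sqrt\e)$.

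\textbf{Away from $\mathcal{S}$.} By (2), $\Psi(x)$ is a finite union of compact smooth pieces, so it has a positive normal injectivity radius away from $\mathcal{S}$. For $|\tau|$ small relative to that radius, Fermi coordinates give
\[
\mathcal{H}^n(\{d_x=\tau\}\setminus B_\rho(\mathcal{S}))\le(1+C_1|\tau|)\,\mathcal{H}^n(\Psi(x)),
\]
where the level set $\{d_x=\tau\}$ consists of at most two parallel sheets, one on each side, each controlled by the Jacobian of the normal exponential map. By (3), $\mathcal{H}^n(\Psi(x))\le\mathbf{L}(\Pi)+O(\eta)$. The set $\tilde X[NI]_0$ is finite, so $C_1$ and $\rho$ can be chosen uniformly in $x$.

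\emph{Choosing $\e^*$.} Combining the two regions, take $\e^*$ small enough (in particular $\e^*<K\eta$) that
\[
2\sigma C_{\mathcal{S}}\sqrt\e+2\sigma C_1\sqrt\e\,(\mathbf{L}(\Pi)+1)+o(1)\le 2\sigma\eta .
\]
This yields $E_\e(\vartheta_\e(x))\le 2\sigma(\mathbf{L}(\Pi)+C_0\eta)$.

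\emph{Main obstacle.} The hardest step is the level-set estimate near the singular set $\mathcal{S}$ and near points where smooth sheets of $\Psi(x)$ meet. There the Fermi-coordinate expansion fails, and the level sets $\{d_x=\tau\}$ may pick up extra area. I would follow Dey \cite[Section 3.4]{dey2022comparison}: stay at a fixed distance from $\mathcal{S}$ so the local smooth structure is uniform, and absorb everything inside the tube using the linear-in-$\rho$ bound in (1). Because the construction is $G$-invariant, no new difficulty arises beyond checking that all choices are made uniformly over the finitely many vertices.
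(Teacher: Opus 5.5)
Your proposal is correct and follows essentially the same route as the paper: $\vartheta_\e=\tilde{\mathbb{H}}_\e\circ d_x$, the coarea formula, and the bound $\sup_{|\tau|\le\sqrt\e}\mathcal{H}^n(\{d_x=\tau\})\le C_{\mathcal{S}}\sqrt\e+(1+C_1\sqrt\e)\,\mathcal{H}^n(\Psi(x))$ from parts (1) and (3) of Proposition \ref{almost smooth sweepout}, with the detailed computation deferred to Dey exactly as the paper does. When you write out the near-$\mathcal{S}$ step, split $\{d_x=\tau\}$ by whether a nearest point of $\Psi(x)$ lies in $\mathcal{S}$ or in the smooth part, rather than by a $\rho$-neighborhood of $\mathcal{S}$: the first piece is the distance-sphere region that (1) controls, and the second is the image of the normal exponential map at time $\tau$, whose Jacobian along minimizing segments is at most $1+C_1|\tau|$, so you never need the normal injectivity radius of $\Psi(x)\setminus B_\rho(\mathcal{S})$, which can shrink like $\rho$ near corners.
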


\noindent The proof of Proposition \ref{comparison bound 2} then follows from the interpolation argument in \cite[Proposition 3.11]{dey2022comparison}. It consists of defining a continuous, $\Z_2$-equivariant map $\zeta_\e \colon \tilde X \to W^{1,2}(M)\setminus\{0\}$ inductively over the $j$-dimensional cells of $\tilde X[NI]_0$, for $j=1,\ldots, \dim\tilde X$ using the continuous deformation map
    \begin{align*}
        \theta&\colon W^{1,2}(M) \times W^{1,2}(M) \times W^{1,2}(M) \to W^{1,2}(M), \\
        \theta(u,v,w) &= \min\{\max\{u_0,-w\},\max\{u_1,w\}\}^+ + \max\{\min\{u_0,w\},\min\{u_1,-w\}\}^-
    \end{align*}
The main properties of $\theta$ are described in \cite[Proposition 3.10]{dey2022comparison}. We note here that $\theta(u,v,w) \in W^{1,2}_G(M)$ provide $u,v,w \in W^{1,2}_G(M)$, and the energy of $\theta(u,v,w)$ can be bounded from above by:
    \begin{equation} \label{energy theta}
        E^{(U,g)}_\e(\theta(u,v,w)) \leq E_\e^{(U,g)}(u) + E_\e^{(U,g)}(v) + E_\e^{(U,g)}(w)
    \end{equation}
for any open set $U \subset M$. 

To illustrate this interpolation argument, we observe that, in each $1$-cell $\alpha =\{v_t \colon t \in [0,1]\}$ of $\tilde X[NI]_1$ with vertices $v_0,v_1$, the functions $\zeta_\e$ have the form
    \[\zeta_\e(v_t) = \theta(\vartheta_\e(v_0),\vartheta_\e(v_1),w_\e(t)),\]
for $w_\e(t) = \tilde{\mathbb{H}}_\e\circ d_t$ and a signed distance function $d_t$ to the level sets $f^{-1}(t)$ of a Morse function $f\colon M \to [1/3,2/3]$ with no local (non-global) extrema, interpolated linearly to $\pm 1$ over $[0,1/3]\cup[2/3,1]$. Again, we can choose $f$ to be a $G$-equivariant Morse function \cite{Wasserman}, and the rest of the proof from \cite[Proposition 3.11]{dey2022comparison} remains unchanged. In particular, we can assume that the level sets of $f$ have area bounded by $O(\eta)$ in the $G$-tubes given by property (4) in Proposition \ref{almost smooth sweepout}, see Proposition 3.8 in \cite{dey2022comparison} and Lemma \ref{sweep_annulus} below. Therefore, for each $p$-cell $\alpha$ of $\tilde X[NI]$ and each $x \in \alpha$, there exists $v \in \tilde X[NI]_0$ and a $G$-tube $B_{r^*}^G(p_\alpha)$ such that the energy of $\zeta_\e(x)$ is bounded from above by $O(\eta)$ in $B_{r^*}^G(p_\alpha)$, and hence
    \[E_\e(\zeta_\e(x)) \leq E_\e(\vartheta_\e(v)) + E_\e^{(B^G_{r^*}(p_\alpha),g)}(\zeta_\e(x)) \leq 2\sigma\cdot(\mathbf{L}(\Pi)+C\eta),\]

using the fact that $\Psi(v)$ and $f^{-1}(t)$ have area $\leq O(\eta)$ in each of these geodesic tubes $B_{r^*}^G(p_\alpha)$ and \eqref{energy theta} to bound $E_\e^{(B^G_{r^*}(p_\alpha),g)}(\vartheta_\e(x))$. This can be ensured by the construction of the almost sweepout, as we describe in the next subsection, and finishes the proof of Proposition \ref{comparison bound 2}, as it implies that
    \[
    \frac{1}{2\sigma}\mathbf{L}_\e(\tilde X;M,g;G) \leq \frac{1}{2\sigma} \sup_{x \in \tilde X} E_\e(\zeta_\e(x))\leq \mathbf{L}(\Pi) + C\eta, \qquad \text{for all} \ \eta>0.
    \]

\subsection{Almost smooth sweepouts by $G$-tubes}

In this section, we briefly describe the construction of the almost smooth sweepout described in Proposition \ref{almost smooth sweepout} in the context of $G$-invariant functions and sets, following \cite[Section 3.3]{dey2022comparison} closely. The central idea is to consider cover $M$ by finitely many sufficiently thin, pairwise transverse, $G$-invariant tubes $\{B_{r^*}^G(p_i)\}_{i=1}^I$ with smooth boundaries, in which the areas of an almost optimal $(G,p)$-sweepout $\Phi\colon X \to \mathcal{Z}^G_n(M;\Z_2)$ and the volume of its lift $\tilde \Phi\colon \tilde X \to \mathcal{C}_G(M)$ remain controlled, as well as the areas of the level sets of the equivariant Morse function $f \colon M \to [1/3,2/3]$. Next, we approximate each $\tilde\Phi(x)$, for each vertex $x$ of a fine subdivision $\tilde X[N]$ of $\tilde X$, by smooth $G$-invariant sets $\{\tilde\Phi_j(x)\}_{j \in \N}$, using Lemma \ref{smooth_approx}. The desired almost-smooth sweepout can be roughly described as obtained by iteratively interpolating between $\tilde\Phi_j(x)$ and $\tilde\Phi_j(x')$, for adjacent vertices in $\tilde X[N]_0$, by pieces of the $G$-invariant tubes $B_{r^*}^G(p_i)$ over each cell in $\tilde X[N]$.

As a general remark, since all the building blocks in this construction are $G$-invariant, we can adapt Dey's construction to the equivariant setting following the same underlying principles. In the rest of this section we describe some of the central steps, in order to emphasize that the entire process preserves the $G$-invariance. \medskip

For each $\eta>0$, it follows from the equivariant interpolation results of \cite[Section 4.1]{WangJGA} that there exists a continuous map $\Phi\colon X \to\mathcal{Z}^G_n(M;\mathbf{M};\mathbb{Z}_2)$ with $\Phi \in \Pi$ such that
\[
\sup_{x \in X} \mathbf{M}(\Phi(x)) < \mathbf{L}(\Pi)+\eta 
\]
Then, by \cite[Theorem 9]{WangJGA} (see also \cite[Theorem 3.1]{WangDensity}), there exists a $\mathbb{Z}_2$-equivariant lift $\tilde\Phi \colon X \to \mathcal{C}_G(M)$ of $\Phi$. This means that: if $T\colon \tilde X \to \tilde X$ is the $\Z_2$ action on $\tilde X$ (by the deck transformation of the double cover $\pi\colon \tilde X \to X$), then
\[
[\![\tilde \Phi(T(x))]\!] + [\![\tilde \Phi(x)]\!] = [\![M]\!], \qquad \text{for all} \ x \in \tilde X.
\]\smallskip

\noindent\textbf{Preliminary constructions.} Let $D \subset M^{reg}/G$ be a countable set which is dense in $M^{reg}/G$, and denote by $\Theta = \{ \mathrm{dist}(G\cdot p, M\setminus M^{reg})\ \colon \ G\cdot p \in D\}$ the set of all distances between the orbits in $D$ and the union of all nonprincipal orbits. We consider the countable collection of $G$-tubes
\[
\mathcal{B}=\{B^G_t(p) \colon G\cdot p \in D, \ t \in (0,\mathrm{inj}(M,g))\cap \mathbb{Q}\setminus \Theta\}
\]
Note that the last condition on $t$ ensures that $\partial B \subset M^{reg}$ for any $B \in \mathcal{B}$.  In particular, each such $\partial B$ is a smooth $G$-equivariant submanifold, with smooth quotient $\partial B / G$ of dimension $\ell$.

As in \cite{dey2022comparison}, the construction of an almost smooth $p$-sweepout uses a $1$-sweepout by the level sets of a suitable Morse function. Since the space of equivariant Morse functions is generic among equivariant smooth functions, see e.g. \cite{Wasserman}, there exists a $G$-equivariant Morse function $f \colon M \to [1/3,2/3]$ such that the restriction of $f$ to each $B\in \mathcal{B}$ and each corresponding $\partial B$ is a $G$-equivariant Morse function.  We can also assume that the equivariant Morse function $f$ above has no orbits that are local but not global maxima or minima of $f$. This is achieved by a local smooth perturbation around critical orbits, relying on the equivariant Morse Lemma \cite[Lemma 4.1]{Wasserman}. We refer to \cite[p.1001]{dey2022comparison} for further details.

Note that $\tilde \Phi$ and the map $t \in [1/3,2/3] \mapsto \partial [\![ f_B > t]\!] \in \mathcal{C}^G(M)$ have the \emph{no concentration of mass on orbits} property, by \cite[Lemma 3.5]{WangDensity} and \cite[Lemma 14]{WangJGA}) (see also Proposition 2.1 in \cite{LiuCVPDE}). Hence, there exists $r_0 \in (0,\mathrm{inj}(M,g)) \cap \mathbb Q$  with $t_0 \notin \Theta$ such that
\begin{equation} \label{small mass in tubes}
\|\Phi(x)\| (\overline B_{r_0}^G(p))< \eta\qquad \text{and} \qquad \mathcal{H}^n(f^{-1}(t)\cap \overline B_{r_0}^G(p)) < \eta     
\end{equation}
for all $p \in M$, all $x \in X$ and all $t \in [1/3,2/3]$. In addition, for all $p \in M$, we have $B_{r_0/8}^G(p) \cap M^{reg} \neq\emptyset$, since $M\setminus M^{reg}$ has empty interior. Hence, by the density of $D$ in $ M^{reg}/G$, there is some orbit $G\cdot p_i$ in $D$ contained in $B_{r_0}^G(p)$, which then implies $G\cdot p \subset B_{r_0/4}^G(p_i)$. This shows that there exists $I \in \Z_{>0}$ and a finite subset $\{G\cdot p_i\}_{i=1}^I \subset D$ such that $\{B_{r_0/4}^G(p_i)\}_{i=1}^I$ covers $M$. We let $\overline{\mathbf{B}}_i^0 = B_{r_0}^G(p_i)$.

\begin{lemma} \label{sweep_annulus}
There exist $r_1 \in(r_0/2, 3r_0/4) \cap \mathbb{Q}\setminus \Theta$ and $\delta\in (0,r_0/8)$ such that
    \[
    \|\Phi(x)\|\left(\overline{\mathrm{An}}^G(p_i,r_1-2\delta,r_1+\delta)\right) < \frac{\eta}{I} \qquad \text{and} \qquad \mathcal{H}^n\left(f^{-1}(t)\cap \overline{\mathrm{An}}^G(p_i,r_1-2\delta,r_1+\delta)\right)<\frac{\eta}{I}
    \]
for all $x \in X$, $t \in [1/3,2/3]$ and all $i=1,\ldots, I$.
\end{lemma}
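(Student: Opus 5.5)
The plan is a pigeonhole argument over disjoint annuli, in the spirit of Dey's Proposition 3.8 in \cite{dey2022comparison}. The annuli $\overline{\mathrm{An}}^G(p_i, r-2\delta, r+\delta)$ for different radii $r$ have bounded overlap, while each mass measure is bounded uniformly. So for most choices of radius, every measure puts little mass on the annulus.

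\textbf{Step 1: the slabs.} Fix a large integer $K$ and set $\delta = r_0/(16K)$. For $k=1,\dots,K$ choose radii $r^{(k)} \in (r_0/2, 3r_0/4)$ spaced more than $3\delta$ apart, so that the closed slabs
\[
A_i^{(k)} = \overline{\mathrm{An}}^G\bigl(p_i, r^{(k)}-2\delta, r^{(k)}+\delta\bigr)
\]
are pairwise disjoint in $k$ for each fixed $i$. The interval has length $r_0/4$, so there is room: for instance $r^{(k)} = r_0/2 + 4k\delta$ up to small rational adjustments.

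The radii should also be rational and avoid $\Theta$. Since $\Theta$ is countable, this is arranged by moving each $r^{(k)}$ slightly. Disjointness survives the perturbation if the spacing has a little slack, say $4\delta$ instead of $3\delta$.

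\textbf{Step 2: uniform mass bounds.} The key input is a $k$-independent bound for each measure:
\[
\sup_{x\in X}\|\Phi(x)\|(M) \le \mathbf{L}(\Pi)+\eta, \qquad \sup_{t\in[1/3,2/3]} \mathcal{H}^n(f^{-1}(t)) \le C_f .
\]
Here $C_f<\infty$ because $f$ is Morse, so its level sets have uniformly bounded area. By disjointness, for each fixed $x$ and $i$,
\[
\sum_{k=1}^K \|\Phi(x)\|(A_i^{(k)}) \le \mathbf{L}(\Pi)+\eta,
\]
and the same holds for $f^{-1}(t)$ with bound $C_f$.

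\textbf{Step 3: making the radius uniform in $x$ and $t$ (the main obstacle).} Pigeonhole for a single $x$ is immediate, but the good $k$ must work simultaneously for all $x\in X$, all $t$, and all $i$. The plan is to use compactness and continuity to reduce to finitely many measures.

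1. $X$ is compact and $\Phi$ is continuous in the mass norm, so we can choose a finite set $x_1,\dots,x_m$ such that every $\Phi(x)$ is within mass $\eta/(4I)$ of some $\Phi(x_j)$. Then $\|\Phi(x)\|(A) \le \|\Phi(x_j)\|(A) + \eta/(4I)$ for every set $A$.

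2. For the level sets of $f$, use the smooth dependence of $f^{-1}(t)$ away from the finitely many critical levels. Near a critical level, use the equivariant Morse lemma together with the no-concentration bound \eqref{small mass in tubes}: it controls the area of $f^{-1}(t)$ in any $r_0$-tube uniformly in $t$. This yields finitely many $t_1,\dots,t_q$ with the analogous $\eta/(4I)$-approximation property on annuli.

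3. Call $k$ \emph{bad} if some $j\le m$, some $s\le q$ and some $i\le I$ give
\[
\|\Phi(x_j)\|(A_i^{(k)}) \ge \frac{\eta}{2I} \quad\text{or}\quad \mathcal{H}^n\bigl(f^{-1}(t_s)\cap A_i^{(k)}\bigr) \ge \frac{\eta}{2I}.
\]
By Step 2, for each triple there are at most $2I(\mathbf{L}(\Pi)+\eta)/\eta$ or $2IC_f/\eta$ bad values of $k$. So the total number of bad $k$ is at most
\[
mqI\cdot \frac{2I\,(\mathbf{L}(\Pi)+\eta+C_f)}{\eta}.
\]

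The difficulty is that $m$ and $q$ depend on $\delta$, since the approximation must hold on slabs of width about $\delta$. If instead approximation in mass norm is uniform over all Borel sets, as in item 1, then $m$ and $q$ do not depend on $\delta$ at all. I expect to secure this for $\Phi$ via mass continuity. For $f$ I would use the coarea bound for $f^{-1}(t)$ on annuli together with \eqref{small mass in tubes} near critical values, and this is the step I expect to require the most care.

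\textbf{Step 4: conclusion.} Take $K$ larger than the bad count in Step 3. Then some $k$ is good, and setting $r_1 = r^{(k)}$ gives all the required estimates with strict inequality $<\eta/I$.
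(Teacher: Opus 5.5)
Your argument for $\Phi$ is fine. Because $\Phi$ is continuous in the mass norm, a finite net $x_1,\dots,x_m$ gives $\|\Phi(x)\|(A)\le\|\Phi(x_j)\|(A)+\eta/(4I)$ for all Borel $A$ at once. So $m$ does not depend on $\delta$, and pigeonholing over $K$ disjoint slabs yields a good radius.

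The gap is the level-set estimate, which you flag but do not close, and it cannot be closed with the tools you list.
\begin{itemize}
\item Distinct level sets are disjoint, so $\mathbf{M}(\partial[\![f>t]\!]-\partial[\![f>s]\!])=\mathcal{H}^n(f^{-1}(t))+\mathcal{H}^n(f^{-1}(s))$. A net in $t$ that is accurate on slabs of width $3\delta$ must therefore have mesh much smaller than $\delta$ near tangencies. Then $q$ grows with $K$, and the bad count is no longer beaten by $K$.
\item The coarea formula only bounds the $t$-average of $\mathcal{H}^n(f^{-1}(t)\cap A)$.
\item \eqref{small mass in tubes} only gives $<\eta$, not $<\eta/I$.
\end{itemize}
In fact, bounded level-set area together with \eqref{small mass in tubes} cannot imply the estimate. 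An equivariant Morse function may equal $1/3+\mathrm{dist}^2(\cdot,G\cdot p_i)$ near a minimal critical orbit $G\cdot p_i$. Then, for every admissible $r_1$, the tube boundary $\partial B^G_{r_1}(p_i)$ is itself a level set lying in every annulus around it, with area of order $r_0^{\ell}$. Meanwhile $\eta/I$ is at most of order $\eta\, r_0^{\ell+1}$, because the tubes $B^G_{r_0/4}(p_i)$ cover $M$.

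The missing ingredient is the genericity built into the choice of $f$: $f$ is Morse on $\partial B$ for every $B\in\mathcal{B}$. The requirement $r_1\in\mathbb{Q}\setminus\Theta$, which you treat as cosmetic, is exactly what places $B^G_{r_1}(p_i)$ in $\mathcal{B}$ (recall $G\cdot p_i\in D$), and it is what rules out the example above. Hence $\mathcal{H}^n(f^{-1}(t)\cap\partial B^G_{r_1}(p_i))=0$ for every $t$, not just almost every $t$.

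Now argue by contradiction using compactness. Suppose $t_k\in[1/3,2/3]$ and $\delta_k\to 0$ violate the bound for some fixed $i$, and pass to a subsequence with $t_k\to t$. The map $t\mapsto\mathcal{H}^n\llcorner f^{-1}(t)$ is weakly continuous, and weak limits are upper semicontinuous on the closed annuli, which decrease to $\partial B^G_{r_1}(p_i)$. Together these force $\mathcal{H}^n(f^{-1}(t)\cap\partial B^G_{r_1}(p_i))\ge\eta/I$, a contradiction. So for this $r_1$ a single small $\delta$ works for all $t$ simultaneously, with no pigeonhole in $t$.

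Since shrinking $\delta$ preserves your $\Phi$-estimate, you can first select $r_1$ by your $\Phi$ argument and then shrink $\delta$ for $f$. The paper's proof simply cites Dey's Lemmas 3.2--3.3 together with the continuity of $x\mapsto\|\Phi(x)\|$ and $t\mapsto\mathcal{H}^n\llcorner f^{-1}(t)$. The genericity of $f$ along the tube boundaries is set up just before the lemma, and it is the ingredient your argument lacks.
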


\begin{proof}
The proof follows from the same arguments in \cite[Lemma 3.2 and Lemma 3.3]{dey2022comparison}, by observing that the Radon measure valued maps  $x \in X \mapsto \|\Phi(x)\|$ and $t \in [1/3,2/3] \mapsto \|\partial [\![ f>t]\!] \|=\mathcal{H}^n \lfloor  f^{-1}(t)$ are continuous. 
\end{proof}
\noindent In order to prove Proposition \ref{almost smooth sweepout}, we will first replace the continuous map $\Phi$ by a discrete map into $\mathcal{Z}_n^G(M,\Z_2)$ that is sufficiently fine with respect to the mass norm. By the continuity of $\Phi$ with respect to $\mathbf{M}$ and the combinatorial structure of the complex $X$, we can find a sufficiently fine cubical subdivision of $X$, namely $X[N]$ for sufficiently large $N \in \Z_{>0}$, such that
\begin{itemize}
    \item The double cover $\tilde X$ inherits a cell complex structure $\tilde X[N]$ such that the restriction of the quotient map $\pi \colon \tilde X \to X$ to each cell of $\tilde X[N]$ is a homeomorphism onto its image.
    \item If $x,y$ are vertices in the same cell of $X[N]$, then $\mathbf{M}(\Phi(x)-\Phi(y)) < \eta$.
    \item If $x,y$ are vertices in the same cell of $\tilde X[N]$, then $\mathbf{M}([\![\tilde\Phi(x)]\!]-[\![\tilde\Phi(y)]\!]) = \mathcal{H}^{n+1}(\tilde\Phi(x) \Delta \tilde\Phi(y)) < \delta\eta/I$, where $\delta>0$ is given by the previous lemma.

\end{itemize}
We may then order the cells in $\tilde X[N]$ as $\{e_q,f_q \colon q =1,\ldots Q\}$, with nondecreasing dimension with respect to $q$, and such that $T(e_q) = f_q$ and $\pi|_{e_q}$ and $\pi|_{f_q}$ are homeomorphisms onto their images.\bigskip

\noindent\textbf{Approximation by smooth sets.} By applying Lemma \ref{smooth_approx} to each pair $\{\tilde\Phi(x), \tilde\Phi(T(x))\}$ with $x \in \tilde X[N]_0$, we obtain:

\begin{lemma} \label{smooth sweep}
There exists a sequence of discrete maps $\tilde\Phi_j\colon X[N]_0 \to \mathcal{C}_G(M)$, $j \in \Z_{>0}$ with the following properties: for all $x \in X$,
\begin{enumerate}
    \item[(i)] $\tilde \Phi_j(x)$ are closed subsets of $M$ such that  $[\![\tilde\Phi_j(x)]\!] + [\![\tilde\Phi_j(T(x))]\!] = [\![M]\!]$ and  $\tilde \Phi_j(x) \cup \tilde\Phi_j(T(x))=M$, for all $j$.
    \item[(ii)] If $\Phi_j(\pi(x)):=\tilde\Phi_j(x) \cap \tilde\Phi(T(x))$, then each $\Phi_j(\pi(x))$ is a smooth hypersurface in $M$ which coincides with the topological and the reduced boundary of both $\tilde\Phi_j(x)$ and $\tilde\Phi_j(T(x))$.
    \item[(iii)] $\|\mathbf{1}_{\Phi_j(x)} - \mathbf{1}_{\tilde\Phi(x)}\|_{L^1(M)} \to 0$ and $\mathbf{F}\left([\![\Phi_j(\pi(x))]\!], \Phi(\pi(x))\right) \to 0$.
    \item[(iv)] For all $j$ and all $p \in \Phi_j(x)$, there exists a $G$-invariant tube $U$ around $G\cdot p$ such that $U \setminus \Phi_j(x)$ is the union of two disjoint, connected, $G$-invariant open sets $\mathcal{O}_1 \subset \tilde\Phi_j(x)$ and $\mathcal{O}_2\subset \tilde\Phi_j(T(x))$.
    \item[(v)] If $r_1>0$ is as in Lemma \ref{sweep_annulus} then, for all $i=1,\ldots, I$,
        \[
        \mathcal{H}^n\left(\partial B^G_t(p_i) \cap(\tilde\Phi_j(x)\Delta \tilde\Phi(x))\right) \to 0, \qquad \text{for almost every} \ t \in (0,r_1).
        \]
\end{enumerate}
\end{lemma}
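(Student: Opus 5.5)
The plan is to apply Lemma \ref{smooth_approx} to each pair $E=\tilde\Phi(x)$, $F=\tilde\Phi(T(x))$, one pair for each vertex $x$ of $\tilde X[N]_0$, and then assemble the outputs using the $\Z_2$-symmetry. The vertices of $\tilde X[N]$ come in free $T$-orbits $\{x,T(x)\}$ and there are finitely many of them. For each orbit we fix a representative $x$. Since $\tilde\Phi$ is a $\Z_2$-equivariant lift, $[\![\tilde\Phi(x)]\!]+[\![\tilde\Phi(T(x))]\!]=[\![M]\!]$, so up to a null set $F=M\setminus E$. Lemma \ref{smooth_approx} therefore produces two sequences $E_j,F_j$ of closed $G$-invariant sets. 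We set $\tilde\Phi_j(x):=E_j$ and $\tilde\Phi_j(T(x)):=F_j$. This definition is consistent, and it makes the roles of $x$ and $T(x)$ symmetric, because the lemma treats $E$ and $F$ symmetrically.

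The properties then follow item by item from Lemma \ref{smooth_approx}. Property (i) is part (i) of the lemma. Property (ii) is part (iv), with $\Phi_j(\pi(x))=E_j\cap F_j$; this depends only on $\pi(x)$ because swapping $x$ and $T(x)$ swaps $E_j$ and $F_j$. Property (iii) follows from parts (ii) and (iii), since $\partial[\![E_j]\!]\to\partial[\![E]\!]=\Phi(\pi(x))$ in the $\mathbf F$-metric. Property (iv) is part (v).

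The only step that needs care is (v), because part (vi) of the lemma gives the slice convergence only along a subsequence, and only for one centre $p$ and one radius $R$ at a time. We handle this by applying part (vi) with $R=r_1$ successively to $p=p_1,\dots,p_I$ and to every vertex pair, passing to a further subsequence each time. There are finitely many centres and finitely many vertices, so a common subsequence $\{j_s\}$ exists. Along it, for each $i$ the convergence holds outside a Lebesgue null set of $t\in(0,r_1)$, and a finite union of null sets is still null. Relabelling $j_s$ as $j$ gives (v) while keeping (i)–(iv), since those hold for every $j$ and are preserved under subsequences.

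The main obstacle, though a mild one, is this diagonal extraction together with the consistency of the symmetric choice. If part (vi) were only available for a single tube, one would instead derive it from the coarea formula. The $L^1$ convergence $\mathbf 1_{E_j}\to\mathbf 1_E$ gives $\int_0^{r_1}\mathcal H^n(\partial B_t^G(p_i)\cap(E_j\Delta E))\,dt=\mathcal H^{n+1}(B^G_{r_1}(p_i)\cap(E_j\Delta E))\to0$, using that the distance to the orbit $G\cdot p_i$ has gradient of norm one almost everywhere. Hence a subsequence converges for almost every $t$, and the same finite diagonal argument applies.
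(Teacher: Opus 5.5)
Your proposal is correct and follows the paper's proof, which consists of applying Lemma~\ref{smooth_approx} to each pair $\{\tilde\Phi(x),\tilde\Phi(T(x))\}$, $x\in\tilde X[N]_0$, with the sets assigned consistently along $T$-orbits exactly as you do. Your treatment of (v) fills in a detail the paper leaves implicit. One small point: part (vi) of Lemma~\ref{smooth_approx} is stated only for the original sequence, so your successive extractions are really justified by your coarea argument. That argument applies to any $L^1$-convergent subsequence, and summing the finitely many slice functions even yields a single common subsequence at once.
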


By induction, for each pair of cells $\{e_q,f_q\}$ in $\tilde X[N]$ (where $q =1,\ldots, Q$) and each orbit $G\cdot p_i$, where $i=1,\ldots I$, we can choose a radius $r_i(q) \in (r_1-\delta,r_1)\setminus \Theta$ such that the boundary of the tubes $\partial B_{r_i(q)}^G(p_i)$ are smooth, $G$-invariant, compact hypersurfaces with 

\begin{enumerate}
    \item[(1)] $\|\Phi(\pi(x))\|(\partial B^G_{r_i(q)}(p_i))=0$.
    \item[(2)] $\partial B_{r_i(q)}^G(p_i)$ is transverse to the smooth hypersurfaces: $\Phi_j(x)$, for all $j \in \N$ and all $x \in (e_q)_0\cup (f_q)_0$, to $\partial B^G_{r_s(q)}(p_s)$ for all $1\leq s \leq i-1$, and to $\partial B^G_{r_k({q'})}(p_k)$ for all previous cells $1\leq q'\leq q-1$ and all orbits $1 \leq k \leq I$.
    \item[(3)] For all vertices $x \in (e_q)_0 \cup (f_q)_0$ (using part (v) in Proposition \ref{smooth sweep}),
        \[
        \lim_j \mathcal{H}^n\left(\partial B_{r_i(q)}^G(p_i) \cap (\tilde \Phi_j(x) \Delta \tilde\Phi(x)) \right) = 0
        \]
    \item[(4)] If $x$ and $x'$ are both vertices in the same cell $c=e_q$ or $c=f_q$ in $\tilde X[N]$, then
        \[
            \mathcal{H}^n\left(\partial B_{r_i(q)}^G(p_i) \cap (\tilde \Phi(x) \Delta \tilde\Phi(x')) \right) \leq \frac{2^{2 \dim c}\eta}{I}
        \]
\end{enumerate}
We note that the last property can be achieved by the argument used to find $r_1$ in the proof of Lemma \ref{sweep_annulus}, together with the fineness bounds $\mathcal{H}^{n+1}(\tilde\Phi(x) \Delta \tilde\Phi(x')) < \eta$, from the choice of the subdivision $\tilde X[N]$ of $\tilde X$.

Finally, we can pick $r^*>0$ with $\max_{i,q} r_i(q) < r^* < r_1$ such that each boundary $\partial B_{r^*}^G(p_i)$ is transverse to the smooth hypersurfaces $\Phi_j(x)$, for all large $j \in \N$ and all $x \in \tilde X[N]_0$, and such that
    \[
    \|\Phi(\pi(x))\|(\partial B_{r^*}^G(p_i)) = 0, \quad \text{for all}  \ i=1,\ldots, I \ \text{and} \ x \in \tilde X[N]_0.
    \]
With those choices, we can verify that each collection of $G$-tubes $\{B_{r_i(q)}^G(p_i)\}_{i=1}^I$, for $q=1,\ldots, Q$, and the collection $\mathcal{B}^* :=\{B_{r^*}^G(p_i)\}_{i=1}^I$ cover $M$. As in \cite[p. 1005]{dey2022comparison}, we consider the collection $\mathscr{R}$ of the $G$-invariant open subsets of $M$ obtained by finite intersections and finite unions of the $G$-invariant sets in $\mathcal{B}^* \cup \{M \setminus \overline B \colon B \in \mathcal{B}^*\}$. The choice of the radius $r^*$ ensures that
\[
\|\Phi(\pi(x))\|(\partial R) \leq \sum_{i=1}^I \|\Phi(\pi(x))\|(\partial B_{r^*}^G(p_i)) = 0, \quad \text{for all} \  x \in \tilde X[N]_0 \ \text{and all} \ R \in \mathscr{R}. 
\]
Therefore, by the $\mathbf{F}$-convergence given by Lemma \ref{smooth sweep} part (iii), the equivalence between $\mathbf{F}$-convergence and weak convergence and finally \cite[Thm 1.40]{EvansGariepy} (adapted to manifolds), we have
    \begin{equation} \label{F-convergence}
    \|\Phi_j(x)\|(R) \to \|\Phi(\pi(x))\|(R) \quad \text{for all} \ x\in \tilde X[N]_0 \ \text{and} \ R \in \mathscr{R}.
    \end{equation}
\noindent The following Proposition summarizes some useful properties of the approximations $\tilde \Phi_j$ and provides the suitable smooth approximation for the discretized $(G,p)$-sweepout $\Phi$ on $\tilde X[N]_0$. Its content and its proof follows closely \cite[Prop 3.5]{dey2022comparison}

\begin{proposition}
For sufficiently large $j \in \N$, the smooth, $G$-invariant hypersurfaces $\{\tilde \Phi_j(x) \colon x \in \tilde X[N]_0\}$ satisfy:
    \begin{enumerate}
        \item[(i)] $\left| \|\Phi_j(x)\|(R) - \|\Phi(\pi(x))\|(R) \right| < \eta$, for all $R \in \mathscr{R}$.
        \item[(ii)] $\mathcal{H}^n(\Phi_j(x)) < \mathbf{L}(\Pi) + 2\eta$ 
        \item[(iii)] $\mathcal{H}^n(\Phi_j(x) \cap \overline B_{r_0}^G(p_i)) < \eta$ , for all $i =1,\ldots I$.
        \item[(iv)] $\mathcal{H}^n\left(\Phi_j(x) \cap \bigcup_{i=1}^I \overline{\mathrm{An}}^G(p_i,r_1-2\delta,r_1+\delta)\right) < \eta$
        \item[(v)] If $x$ and $x'$ are both vertices in the same cell $c=e_q$ or $c=f_q$ in $\tilde X[N]$, then
        \[
            \mathcal{H}^n\left(\partial B_{r_i(q)}^G(p_i) \cap (\tilde \Phi_j(x) \Delta \tilde\Phi_(x')) \right) \leq \frac{2^{2 \dim c}\eta}{I}
        \]
    \end{enumerate}
\end{proposition}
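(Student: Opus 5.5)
The plan is to prove each item by passing to the limit $j \to \infty$ in the convergence statements already available for the approximations $\tilde\Phi_j$. This works because the set of vertices $\tilde X[N]_0$, the family $\mathscr{R}$, the orbits $\{G\cdot p_i\}_{i=1}^I$ and the radii $r_i(q)$ are all finite. For each fixed item and vertex we will obtain a threshold $j_0$, and then take the maximum over the finitely many thresholds.

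Item (i) follows directly from \eqref{F-convergence}. The convergence $\|\Phi_j(x)\|(R) \to \|\Phi(\pi(x))\|(R)$ holds for every $R \in \mathscr{R}$ and $x \in \tilde X[N]_0$. The collection $\mathscr{R}$ is finite, since it is generated by finitely many Boolean operations on $2I$ sets. So for large $j$ the difference is below $\eta$ uniformly.

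Item (ii) is the case $R = M \in \mathscr{R}$ of item (i). Combined with $\mathbf{M}(\Phi(\pi(x))) < \mathbf{L}(\Pi) + \eta$, it gives $\mathcal{H}^n(\Phi_j(x)) < \mathbf{L}(\Pi) + 2\eta$.

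Items (iii) and (iv) need localization to sets that are not themselves in $\mathscr{R}$, namely $\overline B^G_{r_0}(p_i)$ and the closed annuli. Here I will use the $\mathbf{F}$-convergence (equivalently, weak convergence of the associated varifolds/measures) together with upper semicontinuity of weak limits on compact sets:
\[
\limsup_j \|\Phi_j(x)\|(K) \le \|\Phi(\pi(x))\|(K) \quad \text{for compact } K.
\]
Applied to $K=\overline B^G_{r_0}(p_i)$, the bound \eqref{small mass in tubes} gives $\|\Phi(\pi(x))\|(K) < \eta$, which yields (iii).

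For (iv), set $K=\bigcup_i \overline{\mathrm{An}}^G(p_i,r_1-2\delta,r_1+\delta)$. Subadditivity and Lemma \ref{sweep_annulus} give $\|\Phi(\pi(x))\|(K) < I\cdot \eta/I = \eta$. The inequality is strict, so it survives the $\limsup$. One subtlety must be checked: the convergence in Lemma \ref{smooth sweep}(iii) is flat/$\mathbf{F}$ convergence of $[\![\Phi_j]\!]$ with masses converging. I would note that $\mathbf{F}$-convergence of currents together with mass convergence gives weak convergence of the mass measures, which is what the upper semicontinuity uses.

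Item (v) is the step I expect to be the main obstacle. It compares $\tilde\Phi_j(x)\Delta\tilde\Phi_j(x')$ on the hypersurfaces $\partial B^G_{r_i(q)}(p_i)$ with the unsmoothed bound (4) chosen for the radii. By the triangle inequality for symmetric differences,
\[
\tilde\Phi_j(x)\Delta\tilde\Phi_j(x') \subset (\tilde\Phi_j(x)\Delta\tilde\Phi(x)) \cup (\tilde\Phi(x)\Delta\tilde\Phi(x')) \cup (\tilde\Phi(x')\Delta\tilde\Phi_j(x')).
\]
We intersect with $\partial B^G_{r_i(q)}(p_i)$ and take $\mathcal{H}^n$. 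The outer two terms tend to zero by property (3) of the choice of $r_i(q)$, which came from Lemma \ref{smooth sweep}(v) with radii chosen outside the null exceptional set. The middle term is at most $2^{2\dim c}\eta/I$ by property (4).

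This gives the bound $2^{2\dim c}\eta/I + o(1)$. To reach the exact stated bound rather than an $o(1)$ perturbation of it, I would either note that (4) can be arranged with strict inequality, or absorb the error into a slightly smaller constant chosen in (4). Finally, I will take $j$ larger than the maximum of all finitely many thresholds over vertices, cells $q$, and indices $i$.
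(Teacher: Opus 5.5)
Your proposal is correct and follows the same route as the paper: (i) from \eqref{F-convergence} (your remark that $\mathscr{R}$ is finite is what makes the threshold in $j$ uniform), (ii) by taking $R=M\in\mathscr{R}$, (iii)--(iv) by passing to the limit using \eqref{small mass in tubes} and Lemma \ref{sweep_annulus}, and (v) from properties (3) and (4) in the choice of the radii $r_i(q)$. Your upper semicontinuity argument on compact sets is a cleaner justification than the paper's inequality $\|\Phi_j(x)\|(\overline B)\le \|\Phi(\pi(x))\|(\overline B)+\mathbf{F}(\cdot,\cdot)$. You also fill the $o(1)$ slack in (v), which the paper leaves implicit, by arranging (4) with strict inequality; the coarea averaging over $(r_1-\delta,r_1)$ with the strict fineness bound $<\delta\eta/I$ does provide this.
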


\begin{proof}
Conclusion (i) follows readily from \eqref{F-convergence}. Using (i), $M =\bigcup \mathcal{B} \in \mathscr{R}$ and the upper bound for the mass of $\Phi$, we get
    \[\mathcal{H}^n(\Phi_j(x)) = \|\Phi_j(x)\|(M) < \|\Phi(\pi(x))\|(M) + \eta =\mathbf{M}(\Phi(\pi(x)) + \eta <  \mathbf{L}(\Pi) + 2\eta,\]
thus proving (ii). In order to show (iii), we note that
    \begin{align*}
    \limsup_{j \to \infty} \mathcal{H}^n(\Phi_j(x) \cap \overline B_{r_0}^G(p_i)) & = \limsup_{j \to \infty} \|\Phi_j(x)\|(\overline B_{r_0}^G(p_i)) \\
    & \leq \limsup_{j \to \infty}\left( \|\Phi(\pi(x))\| (\overline B_{r_0}^G(p_i)) + \mathbf{F}([\![ \Phi_j(x) ]\!], \Phi(\pi(x))) \right) < \eta
    \end{align*}

where we used Lemma \ref{smooth sweep}, part (iii), and \eqref{small mass in tubes}. A similar computation, using Lemma \ref{sweep_annulus}, proves (iv). Finally, (v) follows from properties (3) and (4) in the inductive choice of the radii $r_i(q)$.
\end{proof}
\noindent From now on, we fix a large $j \in \N$, as given by the previous proposition. Consider the set of all intersections between the surfaces $\Phi_j(x)$ and the boundaries of the $G$-tubes associated to the cells of $\tilde X[N]$ constructed above:
    \begin{align*}
        \mathscr{S} = &\left\{\Phi_j(x) \cap \partial B_{r_i(q)}^G(p_i) \ \colon \ q=1,\ldots, Q, \ i=1,\ldots, I, \ x \in \tilde X[N]_0\right\} \\
            &\qquad  \cup \ \left\{ \partial B^G_{r_i(q)}(p_i) \cap \partial B^G_{r_{i'}(q')}(p_{i'}) \ \colon \ q,q'=1,\ldots, Q, \ i,i'=1,\ldots, I \right\}
    \end{align*}
By the choice of the radii $r_i(q)$, it follows that each element of $\mathscr{S}$ is a closed, smooth, $G$-invariant, codimension 2 submanifold of $M$. Therefore, if $\mathcal{S} = \bigcup_{\Gamma \in \mathscr{S}}\Gamma$, then 
    \[
        \mathcal{H}^n\left( \{x \in M \colon \mathrm{dist}(x,\mathcal{S})=\rho\} \right) \leq C\rho
    \]
for all sufficiently small $\rho$, where $C=C_\mathcal{S}>0$. 

The final technical tool necessary in the proof of Proposition \ref{almost smooth sweepout} is the map between Caccioppoli sets:
    \[\Omega(K,K'; B) := (K\cap \overline{M \setminus B}) \cup (K' \cap \overline B),\]
for $K,K'\in \mathcal{C}(M)$ and a $G$-invariant, normal tube $B$ with finite perimeter. Intuitively, this map replaces the part of $K$ inside $B$ by the corresponding piece of $K'$. It follows from its definition that $\Omega(K,K';B')$ is \emph{closed} (respectively, $G$-invariant) provided the sets $K, K'$ are also closed (respectively, $G$-invariant).

The proof of Proposition \ref{almost smooth sweepout} now follows precisely as in the Proof of 3.6, pages 1009-1006 in \cite{dey2022comparison}, taking into account that all the sets that appear in the construction are $G$-invariant. 

\subsection{Computing equivariant widths}

In this section, we compute $\omega_p^G(M, g)$ for specific manifolds $(M, g)$ and cohomogeneity $2$ group actions, $G$. We rely on the computations of \cite{ChodoshMantoulidisWidths, marx2025p, marx2026p} who demonstrated that 
\begin{theorem}[\cite{ChodoshMantoulidisWidths}] \label{thm.CM.sphere}
For $(S^2, g_{round})$ the standard metric on the sphere, we have 
\[
\omega_p(S^2, g_{round}) = 2 \pi \lfloor \sqrt{p} \rfloor
\]
\end{theorem}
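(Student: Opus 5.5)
The plan is to prove the two inequalities $\omega_p(S^2,g_{round}) \le 2\pi\lfloor\sqrt p\rfloor$ and $\omega_p(S^2,g_{round}) \ge 2\pi\lfloor\sqrt p\rfloor$ separately. The upper bound comes from an explicit algebraic sweepout. The lower bound comes from the Allen--Cahn characterization of the widths with the Sine--Gordon potential, in the spirit of Theorem \ref{comparison} with $G$ trivial.

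\textbf{Upper bound.} Fix $d\ge 1$ and let $\mathcal{P}_d$ be the restrictions to $S^2$ of polynomials on $\R^3$ of degree at most $d$. Modulo $|x|^2=1$, this is the span of spherical harmonics of degree $\le d$, so $\dim \mathcal{P}_d=(d+1)^2$.

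1. The map $[P]\in \P(\mathcal{P}_d)\cong \R\P^{(d+1)^2-1} \mapsto \partial\{P>0\}$ is continuous in the flat topology. It detects $\bar\lambda^{k}$ for every $k\le (d+1)^2-1$, by the standard Guth / Marques--Neves argument: through any $k$ points passes a nonzero $P$.
2. Each zero set $\{P=0\}$ is a real algebraic curve of degree $\le d$. A generic great circle $\{\langle x,v\rangle =0\}\cap S^2$ meets it in at most $2d$ points, by B\'ezout on the conic $\{|x|^2=1, \langle x,v\rangle=0\}$.
3. Crofton's formula on $S^2$, $\mathrm{length}(\gamma)=\frac14\int_{S^2}\#(\gamma\cap v^\perp)\,dv$, then gives $\mathrm{length}\le \frac14\cdot 4\pi\cdot 2d=2\pi d$.
4. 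Hence $\omega_p\le 2\pi d$ whenever $p\le (d+1)^2-1$. Taking $d=\lfloor\sqrt p\rfloor$ gives the upper bound.

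\textbf{Lower bound.} By the comparison of Allen--Cahn and Almgren--Pitts widths (Dey, the $G=\{\mathrm{id}\}$ case of Theorem \ref{comparison}), $\omega_p=\frac1{2\sigma}\lim_\eps \omega_{p,\eps}$. By the min-max theorem of \S\ref{phase transitions spectrum}, $\omega_{p,\eps}$ is achieved by a solution $u_\eps$ with $\mathrm{Ind}(u_\eps)\le p\le \mathrm{Ind}(u_\eps)+n(u_\eps)$.

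1. With the Sine--Gordon potential, the Chodosh--Mantoulidis regularity theory (built on Liu--Wei) shows that the limit varifold is a union of closed geodesics with integer multiplicity. On the round sphere these are great circles, so $\omega_p=2\pi k$, where $k$ counts great circles with multiplicity.
2. It then suffices to prove the key estimate: Sine--Gordon solutions whose energy concentrates on $k$ great circles satisfy $\mathrm{Ind}(u_\eps)+n(u_\eps)\le (k+1)^2-1$ for small $\eps$.
3. Granting this, $p<(k+1)^2$. Since $\lfloor\sqrt p\rfloor^2\le p$, this forces $k\ge\lfloor\sqrt p\rfloor$, which is the lower bound.

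\textbf{Main obstacle.} The hard step is the bound on $\mathrm{Ind}(u_\eps)+n(u_\eps)$. My first approach is as follows.

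1. Blow up at the finitely many intersection points of the great circles. There the rescaled solutions converge to the integrable Liu--Wei family of Sine--Gordon solutions on $\R^2$, whose kernel of the linearized operator is explicitly known.
2. Away from the intersection points, the sheets are close to heteroclinics. The eigenfunctions with nonpositive eigenvalue there should be close to Jacobi fields of the great circles, with coupling at the junctions.
3. Count the nonpositive eigenvalues by a spectral convergence argument that glues these two descriptions.
4. Compare the resulting count with the dimension of the space of degree-$\le k$ harmonics, $(k+1)^2$, minus one for the trivial direction. The heuristic is that the configurations of $k$ great circles are exactly zero sets of products of $k$ linear forms, a family of dimension $2k$ inside $\P(\mathcal{P}_k)$.
5. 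Make the count rigorous, handling multiplicity (coincident circles) via the Wang--Wei curvature estimates, which rule out stacking beyond what the index allows.
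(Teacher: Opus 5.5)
Your upper bound is correct. It is the standard polynomial sweepout that the cited source also relies on: the restriction of $P$ to a great circle is a trigonometric polynomial of degree $\le d$, so it has at most $2d$ zeros, and Crofton then gives length $\le 2\pi d$.

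The lower bound is not proved. Everything rests on the estimate $\mathrm{Ind}(u_\eps)+n(u_\eps)\le (k+1)^2-1$, and your sketch does not establish it, for three reasons.

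\textbf{Spectral convergence does not reach the exact count.} Blow-up and spectral-convergence arguments only control eigenvalues of $J_{u_\eps}$ in a shrinking window. To bound the exact index and nullity you would need a limiting spectral problem on the geodesic network: Jacobi operators on the arcs, coupled through the bounded kernels of the Liu--Wei junction solutions. You would also need to compute that problem's index and nullity and show that no other eigenvalues of $J_{u_\eps}$ tend to $0$. None of this is formulated.

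\textbf{Step 4 does not produce the number.} The $2k$-dimensional family of products of linear forms can at best account for null directions. The target $k^2+2k$ also needs an index part: heuristically one negative direction per circle and one per crossing, i.e.\ $k+k(k-1)=k^2$ for circles in general position. As written, $(k+1)^2-1$ is read off from the upper bound rather than derived.

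\textbf{Degenerate configurations and multiplicity are left open.} This includes many circles through one point. The Wang--Wei estimates give curvature bounds but do not exclude stacked sheets; the Chodosh--Mantoulidis limit theorem explicitly allows multiplicities. Stacked sheets bring in a Toda-type interaction whose spectral contribution would need separate analysis.

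The missing idea is that no spectral count is needed. The paper only quotes this result from \cite{ChodoshMantoulidisWidths}. There, as far as I recall, the lower bound comes from a Lusternik--Schnirelmann argument, carried out on the phase-transition side where the regularity theorem identifies the critical configurations.

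\textbf{The inputs.} By that regularity theorem, $\omega_p\in 2\pi\mathbb{Z}_{>0}$ and the sequence is nondecreasing. Every limit configuration at level $2\pi j$ is a union of $j$ great circles counted with multiplicity. This family is parametrized by $\mathrm{Sym}^j(\R\P^2)$, so it has dimension $2j$ and $\Z_2$-cohomological index at most $2j+1$.

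\textbf{The inequality.} If $\omega_p=\omega_{p+N}$, the critical set at that level must have index at least $N+1$; compare part (3) of the min-max theorem in \S\ref{phase transitions spectrum}. Hence at most $2j+1$ consecutive widths can equal $2\pi j$.

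\textbf{The count.} Summing gives $\#\{p\ge 1:\omega_p\le 2\pi k\}\le\sum_{j=1}^k(2j+1)=(k+1)^2-1$. So $\omega_{(k+1)^2}\ge 2\pi(k+1)$, which is exactly $\omega_p\ge 2\pi\lfloor\sqrt p\rfloor$.

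The $2k$-dimensional family you identified is the right object. It enters through the topology of the critical set, not through the Morse index and nullity of $u_\eps$.
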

\begin{theorem}[\cite{marx2025p}] \label{thm.MK.rptwo}
For $(\R \P^2, g_{round})$ the standard metric on the real projective plane, we have 
\[
\omega_p(\R \P^2, g_{round}) = 2 \pi \left\lfloor \frac{1}{4} \left(1 + \sqrt{1 + 8p} \right) \right\rfloor
\]
\end{theorem}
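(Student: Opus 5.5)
The natural route is to combine an explicit algebraic upper bound with a rigidity statement for the limiting geodesic networks, exactly parallel to the proof of Theorem \ref{thm.CM.sphere}. First we record what the widths can be. By the Chodosh--Mantoulidis regularity theory (Sine--Gordon Allen--Cahn min-max together with the comparison of Allen--Cahn and Almgren--Pitts widths, cf.\ \cite{dey2022comparison} and Theorem \ref{comparison} with trivial $G$), each $\omega_p(\R\P^2,g_{round})$ equals the total length of a union of closed immersed geodesics counted with integer multiplicity. On round $\R\P^2$ every closed geodesic is a projective line of length $\pi$ or an iterate of one. Moreover the min-max cycles are mod $2$ boundaries, while a single projective line is not. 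Hence the limit must consist of an even number of lines counted with multiplicity, and
\[
\omega_p(\R\P^2,g_{round}) \in 2\pi\,\Z_{>0}.
\]
This reduces the theorem to locating the jumps of $p\mapsto\omega_p$.

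\textbf{Upper bound.} We lift to $S^2$ and use even polynomials. Let $P_{2k}$ be the space of homogeneous polynomials of degree $2k$ on $\R^3$; it has dimension $(k+1)(2k+1)$. Each nonzero $f\in P_{2k}$ descends to $\R\P^2$, and its zero set there is a real algebraic curve of degree $2k$. By Crofton's formula on $\R\P^2$, such a curve has length at most $2k\cdot\pi$. The projectivized family $f\mapsto \partial\{f>0\}$ is parametrized by $\R\P^{(k+1)(2k+1)-1}$, and it is a $p$-sweepout for every $p\le 2k^2+3k$ (the standard argument from Marques--Neves and Guth). Hence $\omega_p\le 2\pi k$ whenever $p\le 2k^2+3k$.

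A direct computation shows
\[
\Big\lfloor \tfrac14\big(1+\sqrt{1+8p}\big)\Big\rfloor = k \iff 2k^2-k\le p\le 2k^2+3k.
\]
So the upper bound matches the claimed formula on the right endpoint of each block.

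\textbf{Lower bound, and the main obstacle.} We must show that $\omega_p>2\pi(k-1)$ as soon as $p\ge 2(k-1)^2+3(k-1)+1=2k^2-k$. Since the values are quantized in $2\pi\Z$, it suffices to show that the level $2\pi(k-1)$ cannot support more than $4(k-1)+1$ consecutive widths, i.e.\ at most as many as the block length.

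The plan is to argue by Lusternik--Schnirelmann, as in part (3) of the equivariant min-max theorem with $G$ trivial. If too many consecutive widths coincided at $2\pi m$, the set of critical configurations at that level would have $\Z_2$-cohomological index at least the number of coincident widths. Passing to the $\eps\to0$ limit, the relevant critical set is the space of unions of $2m$ projective lines, i.e.\ essentially $\mathrm{Sym}^{2m}(\R\P^2)$, which has dimension $4m$. One then needs an index or dimension estimate for the Allen--Cahn critical sets near such configurations. Following \cite{ChodoshMantoulidisWidths}, I would use the index and nullity bound $\mathrm{Ind}\le p\le \mathrm{Ind}+\mathrm{nullity}$ together with the Liu--Wei classification of finite-index Sine--Gordon solutions near crossings to bound the nullity. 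This should show that the cohomological index of the critical set at level $2\pi m$ is at most $4m+1$.

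This step, controlling the index and nullity of Allen--Cahn solutions concentrating on $2m$ lines in $\R\P^2$ (including lines with multiplicity and the crossing points), is the expected obstacle. If the direct count fails, I would instead lift to $S^2$ with the antipodal $\Z_2$ action. There $\omega_p(\R\P^2)$ becomes half of a $\Z_2$-equivariant width of $S^2$, and I would adapt the parity-refined version of the Chodosh--Mantoulidis counting for $S^2$ to antipodally invariant configurations of great circles.
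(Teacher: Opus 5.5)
The paper gives no proof of this statement; it is quoted from \cite{marx2025p}. So the comparison here is with the standard route, the one used in \cite{ChodoshMantoulidisWidths} for $S^2$.

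Your architecture is the right one and your numerology is correct. Even forms of degree $2k$ give an $\R\P^{2k^2+3k}$-family of curves of length $\le 2k\pi$. The blocks $2k^2-k\le p\le 2k^2+3k$ have length $4k+1=\dim \mathrm{Sym}^{2k}(\R\P^2)+1$. Induction on $k$ then reduces everything to showing that $4m+2$ consecutive widths cannot all equal $2\pi m$.

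One small repair in your first step. ``Min-max cycles are mod $2$ boundaries'' does not by itself control the multiplicities of a varifold limit, since the flat limit can miss mass that the varifold limit carries. What you need is the Hutchinson--Tonegawa fact \cite{HutchinsonTonegawa}: the odd-density part of the limit varifold is the reduced boundary of the limit phase $\{u_\infty=1\}$. Hence the odd-multiplicity lines form a mod $2$ boundary, there are an even number of them, and $\omega_p\in 2\pi\Z$.

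The genuine gap is the lower bound, which is the real content of the theorem. The mechanism you sketch for it would not work as stated, for two reasons.

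\emph{First}, part (3) of the min-max theorem needs $\omega_{p,\eps}=\omega_{p+k,\eps}$ at a fixed $\eps$, whereas you only know that the $\eps\to0$ limits coincide. You must instead run the deformation at a fixed small $\eps$, in a shrinking energy window around $2\sigma\cdot 2\pi m$:
\begin{itemize}
\item show that every critical point in that window with index $\le p+4m+1$ lies in a prescribed symmetric set $\mathcal N_\eps$ with $\Ind_{\Z_2}(\mathcal N_\eps)\le 4m+1$;
\item remove $\mathcal N_\eps$ from an almost optimal family in $\cF_{p+4m+1}$, which leaves a family in $\cF_p$;
\item push that family strictly below $\omega_{p,\eps}$, which is a contradiction.
\end{itemize}

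\emph{Second}, the set $\mathrm{Sym}^{2m}(\R\P^2)$ of limit configurations lives in varifold space, where $u$ and $-u$ give the same varifold. It carries no free $\Z_2$-action, so ``its'' cohomological index is not defined. The bound has to be proved on the function side, and it comes from topology, not from a nullity count:
\begin{itemize}
\item Chodosh--Mantoulidis compactness together with Hutchinson--Tonegawa puts those critical points $L^1$-close to the compact free $\Z_2$-set $\mathcal S$ of functions $\pm(\mathbf{1}_U-\mathbf{1}_{\R\P^2\setminus U})$, where $\partial U$ is a mod $2$ sum of at most $2m$ whole projective lines.
\item $\mathcal S/\Z_2$ is a finite union of strata of configurations of at most $2m$ distinct lines, so it has dimension $\le 4m$ and $\Ind_{\Z_2}(\mathcal S)\le 4m+1$.
\item The odd inclusion $W^{1,2}\to L^1$, monotonicity of the index under odd maps, and continuity of the index on small neighborhoods then give the bound for $\mathcal N_\eps$.
\end{itemize}

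Bounding the nullity of each $\eps$-solution by $4m$ via Liu--Wei \cite{liu2021classification} would indeed bound the local dimension of the $\eps$-critical set, by Lyapunov--Schmidt reduction. But it presupposes a kernel and nondegeneracy analysis for Sine--Gordon solutions concentrating on crossing, and possibly coincident, lines. You do not have that analysis and the argument does not need it, and it would not fix the first issue anyway. Your fallback of lifting to $S^2$ with the antipodal action is only a reformulation and leaves the same step open.
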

\begin{theorem}[\cite{marx2026p}] \label{thm.MK.hemisphere}
For $(S^2_+, g_{round})$ the round metric on the hemisphere, we have 
\[
\omega_p(S^2_+, g_{round}) =  \pi \left\lfloor \frac{1}{2} \left(-1 + \sqrt{1 + 8p} \right) \right\rfloor
\]
\end{theorem}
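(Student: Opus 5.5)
The plan is to prove the formula as two matching inequalities: an upper bound from explicit polynomial sweepouts and Crofton's formula, and a lower bound from the structure of critical configurations combined with a Lusternik--Schnirelmann counting argument. Throughout, $\omega_p(S^2_+)$ is understood in the relative (free boundary) sense. We double along the equator $\partial S^2_+$ and identify relative $\Z_2$-cycles on $S^2_+$ with cycles on $S^2$ invariant under the reflection $\tau(x,y,z)=(x,y,-z)$. Put $d_p=\lfloor \tfrac12(-1+\sqrt{1+8p})\rfloor$, the largest $d$ with $d(d+1)/2\le p$.

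\emph{Upper bound.} Let $\mathcal P_d$ be the space of restrictions to $S^2$ of polynomials in $(x,y,z)$ of degree $\le d$ that are even in $z$. Decomposing into spherical harmonics, the $z$-even harmonics of degree $k$ form a $(k+1)$-dimensional space, so $\dim \mathcal P_d=\sum_{k=0}^d(k+1)=(d+1)(d+2)/2$. The map $[P]\mapsto \{P=0\}\cap S^2_+$ on $\P(\mathcal P_d)$ is a relative $((d+1)(d+2)/2-1)$-sweepout, as in the Guth/Marques--Neves argument, since $\P(\mathcal P_d)$ is a projective space and the $\Z_2$-cohomology generator pulls back nontrivially. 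By Crofton's formula on $S^2$, the nodal set of a degree-$d$ polynomial has length $\le 2\pi d$, and $\tau$-symmetry halves this to $\le \pi d$ on $S^2_+$. Hence $\omega_p(S^2_+)\le \pi d$ whenever $p\le (d+1)(d+2)/2-1$. Applied with $d=d_p$, this gives $\omega_p\le \pi d_p$.

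\emph{Structure of widths.} Via Dey's comparison (the $G=\{\mathrm{id}\}$ case of Theorem~\ref{comparison}, in its free boundary version), $\omega_p$ is $\tfrac1{2\sigma}\lim\omega_{p,\e}$. With the sine-Gordon potential and Neumann data, the Chodosh--Mantoulidis regularity theorem shows that each $\omega_p$ equals the length of a union, with multiplicities, of free boundary geodesics of $S^2_+$. On the round hemisphere these are the great semicircles through antipodal boundary points, each of length $\pi$. The equator itself, counted as a doubled boundary configuration, needs separate care; I would rule it out or absorb it using the $\tau$-symmetric picture. It follows that $\omega_p\in\pi\Z_{\ge 0}$.

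\emph{Lower bound and main obstacle.} The critical set at level $\pi k$ is the space of unordered $k$-tuples of semicircles. Each semicircle is determined by a point of $\R\P^1$, so this space is $\mathrm{Sym}^k(\R\P^1)\cong \R\P^k$, whose $\Z_2$-cohomological index is $k+1$.

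The Lusternik--Schnirelmann statement (part (3) of the min-max theorem above, with $G$ trivial) says that if $\omega_p=\omega_{p+m}=\pi k$, then the set of critical points at that level has index $\ge m+1$. To use it for lengths rather than energies, I would transport it to the $\e\to0$ limit by an upper-semicontinuity argument for the cohomological index of critical sets of Allen--Cahn solutions near these configurations. That transfer, which requires nondegeneracy modulo the rotational symmetry, is the step I expect to be hardest.

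Granting it, at most $k+1$ consecutive indices $p$ can share the value $\pi k$. The values $0,\pi,\dots,\pi(d-1)$ therefore account for at most $\sum_{k=0}^{d-1}(k+1)=d(d+1)/2$ indices. Since the widths are nondecreasing in $p$, this forces $\omega_p\ge \pi d$ once $p\ge d(d+1)/2$. Taking $d=d_p$ gives $\omega_p\ge \pi d_p$, which matches the upper bound and proves the formula.
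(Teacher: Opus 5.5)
The paper does not prove this theorem; it only quotes it from \cite{marx2026p}. Judged on its own, your upper bound is fine: the $z$-even polynomial sweepouts with $\dim\mathcal P_d=(d+1)(d+2)/2$, Crofton, and $\tau$-symmetry give $\omega_p\le\pi d_p$. The conclusion $\omega_p\in\pi\Z$ is also true.

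\textbf{The gap: the lower bound is not proved.} You explicitly \emph{grant} the passage of the Lusternik--Schnirelmann statement to the $\e\to0$ limit. The topological input you plan to feed into it is also wrong.
\begin{itemize}
\item $\mathrm{Sym}^k(\R\P^1)$ is not $\R\P^k$. It is the real-rooted locus inside $\P(\text{binary forms of degree }k)\cong\R\P^k$, a proper compact subset homotopy equivalent to a circle for $k\ge2$.
\item The cohomological index is an invariant of a free $\Z_2$-space (signed limits, with $u\mapsto-u$), not of the unsigned configuration space.
\item The possible limits at level $\pi k$ are not only $k$-tuples of free boundary semicircles. The clean way to get quantization is to double. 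The even reflection of a Neumann solution is a $\tau$-invariant solution on $S^2$ whose Morse index is at most the Neumann plus the Dirichlet index, hence at most twice the Neumann index. So Chodosh--Mantoulidis applies on the closed round sphere, and the limit is a $\tau$-invariant union of great circles. Its trace on $S^2_+$ can contain vertical semicircles and copies of the equator. It can also contain pairs $C\cap S^2_+$, $\tau C\cap S^2_+$ for a tilted great circle $C$: two semicircles meeting the equator at the same antipodal points at non-right angles, stationary with free boundary only as a pair.
\end{itemize}
Every piece still has length in $\pi\Z$, so quantization survives. But your description of the critical set, and hence your index count, is not correct.

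\textbf{How to close it.} The counting only needs the upper bound $\mathrm{index}\le k+1$, and no nondegeneracy is required.
\begin{itemize}
\item[(i)] The deformation argument behind part (3) of the min-max theorem, with Ghoussoub's Morse-index refinement, gives a window version. If $\omega_p=\omega_{p+N}$, the set $K_\e$ of critical points with energy in $[c_\e(p)-\delta,c_\e(p+N)]$ and Morse index $\le p+N$ has index $\ge N+1$.
\item[(ii)] By Chodosh--Mantoulidis (after doubling) and Hutchinson--Tonegawa, any sequence in $K_{\e_i}$ subconverges in $L^1$ to a $\tau$-invariant $\pm1$-valued function. Its jump set is the odd-multiplicity part of a $\tau$-invariant union of great circles of total length $2\pi k$. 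The equator is never in the jump set of a $\tau$-invariant function. The set $\mathcal K_k$ of such limits is compact in $L^1$, and $u\mapsto-u$ acts freely on it. Its covering dimension is $\le k$: one parameter per vertical semicircle (cost $\pi$), two per tilted pair (cost $2\pi$).
\item[(iii)] By the continuity property of the index, some closed symmetric $L^1$-neighbourhood $\mathcal N$ of $\mathcal K_k$ has index equal to $\mathrm{index}(\mathcal K_k)\le k+1$. By (ii), the odd inclusion $W^{1,2}\hookrightarrow L^1$ maps $K_\e$ into $\mathcal N$ for small $\e$. Hence $N\le k$, which is exactly the count your last paragraph uses.
\end{itemize}

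You also only assume the free boundary comparison $\omega_p=\frac1{2\sigma}\lim\omega_{p,\e}$. Doubling reduces it to the $\tau$-equivariant setting on $S^2$, for example Theorem~\ref{comparison} with $G=\{1,\tau\}$. This requires checking that boundaries of $\tau$-invariant Caccioppoli sets carry no mass on the equator.
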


\begin{example}
    Consider the classical Hopf fibration $S^{1}\curvearrowright S^{3}$, and let $\pi: S^3 \rightarrow S^2(1 / 2)$ be the map defined as 
\[
\pi\left(z_1, z_2\right)=\left(z_1 \bar{z}_2, \frac{\left|z_1\right|^2-\left|z_2\right|^2}{2}\right)
\]
onto the $2$ -sphere of radius $1/2$. Then $\pi$ is a Riemannian submersion with respect to the standard round metric and its fibers are closed geodesics in $(S^3,g_{round})$ of length $2 \pi$.

Since all the fibers have the same volume, by Corollary \ref{const.vol}, we conclude that 
\[\omega_{p,AC}^{S^{1}}(S^{3},g_{S^{3}})=\omega_{p,AC}(S^{2}(1/2),\tilde{g}_{S^{2}(1/2)}),\]
where the conformal metric $\tilde{g}_{S^{2}(1/2)}= \mathcal{V}^{2/\ell}\cdot g_{S^{2}(1/2)}=(2\pi)^2 g_{S^2(1/2)}$. Consequently, using Theorem \ref{thm.CM.sphere} we obtain that
\[\omega_{p,AC}^{S^{1}}(S^{3},g_{S^{3}}) = \omega_{p,AC}(S^{2}(1/2),(2\pi)^{2}g_{S^{2}(1/2)}) = 2\pi \omega_{p,AC}(S^{2}(1/2),g_{S^{2}(1/2)}) = 2 \pi^2 \lfloor\sqrt{p}\rfloor .\]
\end{example}

\begin{example}
Consider the action $O(2) \curvearrowright SO(3)$, where $SO(3)$ is endowed with its bi-invariant metric. Then the standard projection
$\pi: SO(3) \to SO(3)/O(2) \cong \mathbb{RP}^2,$
given by
$$\pi(A) = [A e_3],$$
where $[A e_3]$ denotes the unoriented line determined by the third column of $A$, is a Riemannian submersion. Here 
$SO(3)/O(2)$ is equipped with the induced homogeneous quotient metric.
Moreover, all fibers of this map have constant volume $4\pi$.

Similarly to the previous example, applying Corollary \ref{const.vol}, we have that 
\[\omega_{p,AC}^{O(2)}(SO(3),g_{SO(3)})=\omega_{p,AC}(\mathbb{RP}^{2},\tilde{g}_{\mathbb{RP}^{2}}),\]
where $\tilde{g}_{\mathbb{RP}^{2}} = (4\pi)^{2}g_{\mathbb{RP}^{2}}$. Therefore, by Theorem \ref{thm.MK.rptwo}, we conclude that for every $p \in \N^{+}$,
\[\omega_{p,AC}^{O(2)}(SO(3),g_{SO(3)})= \omega_{p,AC}(\mathbb{RP}^{2},(4\pi)^{2}g_{\mathbb{RP}^{2}})= 4\pi\omega_{p,AC}(\mathbb{RP}^{2},g_{\mathbb{RP}^{2}}) = 8\pi^{2} \cdot\left\lfloor\frac{1}{4}(1+\sqrt{1+8 p})\right\rfloor.\]
\end{example}
\begin{example}
Any trivial principal $G$-bundle over $S^2$, $S^{2}_{+}$, or $\R \P^2$ will have $G$-equivariant widths equal to the $p$-widths of the corresponding surface times the volume of $G$.
\end{example}

\begin{remark}
We observe that the examples of $3$-dimensional $S^1$-bundles and $O(2)$-bundles over the orbit spaces $B=S^2$ or $B=\R\P^2$ described above can be characterized more broadly as Seifert fibrations $\pi\colon (M^3,g) \to (B,g_{round})$ endowed with a Riemannian metric $g$ for which $\pi$ is a submersion onto those surfaces and such that the fibers $\pi^{-1}(b)$ have constant length $=\ell$. Assuming $g_{round}$ has constant curvature $=1$ and that $S^1$ acts on $M^3$ isometrically on the fibers, we have (here $G=S^1$ or $G=O(2) = S^1 \dot \cup S^1$)
    \[\omega_p^{G}(M,g) = \ell \cdot \omega_p(B,g_0) = \left \{ \begin{array}{cl} 2\pi\ell \cdot \left\lfloor \sqrt p\right\rfloor,  & \text{if} \ B=S^2, \\ 2 \pi \ell \cdot \left\lfloor \frac{1}{4} \left(1 + \sqrt{1 + 8p} \right) \right\rfloor,& \text{if} \ B=\R\P^2.   \end{array} \right. \]
We refer to \cite{orlik2006seifert} and \cite{scott1983geometries} for the general classification of these spaces.
\end{remark}

%% file: Regularity.tex
\section{Regularity} \label{sec.regularity}
The goal of this section is to prove Theorem \ref{thm.AC.regularity} concerning the regularity of $G$-equivariant solutions to \eqref{ACEquation} with bounded index and energy 
\begin{theorem*} 
Let $\{u_{\eps_i}\} \subseteq W^{1,2}_G(M, g)$ be a sequence of Allen--Cahn solutions with $\eps_i \to 0^+$, and $E_{\eps_i}(u_{\eps_i}) \leq \Lambda$ and $\mathrm{Ind}_{\eps_i}^G(u_{\eps_i}) \leq N$ for fixed $\Lambda, N > 0$. 
\begin{itemize}
\item If $\cohom(G) \geq 3$, then up to subsequence, the corresponding varifolds $V(u_{\eps_i})$ converge (in varifold sense) to a disjoint union of embedded, minimal, $G$-invariant hypersurface (possibly having integer multiplicities) which are smooth up to a closed singular set.
\item If $\cohom(G) = 2$, further assume that the action has no exceptional orbits. Then the corresponding varifolds $V(u_{\eps_i})$ converge to a union of immersed, minimal $G$-equivariant minimal hypersurface (possibly having integer multiplicities) which are smooth up to a closed singular set.
\end{itemize}
In both cases, the singular set is dimension at most $n-7$ and contained in the union of non-principal orbits.
\end{theorem*}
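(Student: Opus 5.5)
We follow the classical strategy of Guaraco and Tonegawa--Wickramasekera, localized to $G$-invariant tubes. By Hutchinson--Tonegawa, after passing to a subsequence, $V(u_{\eps_i})$ converges to a stationary integral $G$-varifold $V$ with $\|V\|(M)\leq \Lambda/2\sigma$.

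The first step is a covering argument for the $G$-index. We claim that there are at most $N$ orbits $G\cdot p_1,\dots,G\cdot p_N$ such that every $G\cdot p\notin\{G\cdot p_k\}$ has an $r>0$ with $u_{\eps_i}$ $G$-stable in $B^G_r(p)$ for all large $i$. Otherwise we would find $N+1$ pairwise disjoint $G$-tubes, each carrying a $G$-invariant negative direction for $D^2E_{\eps_i}$. Since these directions have disjoint supports, they span an $(N+1)$-dimensional negative subspace of $W^{1,2}_G(M)$, which contradicts $\mathrm{Ind}^G_{\eps_i}\leq N$. This is the equivariant analogue of the argument in \cite{GuaracoMinmax}.

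Next we treat $\cohom(G)\geq 3$. The goal is to upgrade $G$-stability in $B^G_r(p)$ to genuine stability, so that \cite{TWStable} applies. On the principal part $B^G_r(p)\cap M^{reg}$, for a non-invariant test function $\varphi$ we decompose $\varphi$ into its $G$-average plus the remainder orthogonal to invariant functions. Because $u$ is invariant, the Jacobi operator commutes with the $G$-action, so the quadratic form splits along this decomposition. The invariant part is nonnegative by $G$-stability. The difficulty is the non-invariant modes; here we would instead quote a $G$-stable version of the Tonegawa--Wickramasekera regularity, noting that their proof only uses stability through the stability inequality for $G$-invariant vector fields or functions obtained by averaging, as in Wang's equivariant Schoen--Simon theory. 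This gives that $V\llcorner B^G_r(p)$ is smooth and embedded away from a set of dimension at most $n-7$.

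Across the finitely many bad orbits $G\cdot p_k$, we use $\cohom(G)\geq 3$: each orbit has codimension at least $3$ in $M$, hence codimension at least $2$ in the support of $V$. It therefore has zero $2$-capacity in the support, and the stability/$\alpha$-structural hypotheses of \cite{Wickramasekera} extend across it, as in \cite{hiesmayr2018spectrum}. To show that the singular set lies in $M\setminus M^{reg}$, we pass to the quotient near a principal orbit. There $V$ corresponds to a minimal hypersurface of the $(\ell+1)$-dimensional quotient with the Hsiang--Lawson metric \eqref{eqn.hsiang.lawson.metric}. Since $\ell+1\leq 7$, stable minimal hypersurfaces of that quotient are smooth, so no singularities occur in $M^{reg}$.

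Finally, for $\cohom(G)=2$ with no exceptional orbits, the nonprincipal orbits are singular orbits of codimension at least $3$, so the same removability argument applies to them. The main obstacle is a bad orbit $G\cdot p_k$ that is principal. Near it, $M/G$ is a smooth surface and $G$-invariant solutions descend to the drift equation
\[
\eps \Delta_{g_{M/G}} u + \eps\langle\nabla\ln\mathcal V,\nabla u\rangle - \eps^{-1}W'(u)=0,
\]
whose stability form carries the weight $\mathcal V$. We would prove a drift version of the Wang--Wei curvature estimates for stable solutions. After blowing up at scale $\eps$, the drift term has size $O(\eps)$, so the estimates of \cite{wang2019finite,mantoulidis2021allen} go through with lower-order error terms. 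We would then run \cite[Theorem 3.1]{ChodoshMantoulidisWidths} with the Sine--Gordon potential, which yields that the limit near $\pi(p_k)$ is a union of smooth, possibly crossing, geodesics for the weighted metric. Lifting these geodesics gives immersed minimal hypersurfaces that are smooth in $M^{reg}$. I expect the drift-adapted stability estimate to be the hardest step.
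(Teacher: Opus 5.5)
Your overall architecture matches the paper's: the equivariant covering argument giving at most $N$ bad orbits, Wickramasekera's removability across orbits of dimension $\le n-2$, the quotient argument to keep singularities out of $M^{reg}$, and the drift version of Wang--Wei plus Chodosh--Mantoulidis at bad principal orbits in cohomogeneity $2$. The gap is the step where you need \emph{genuine} stability in the good tubes $B^G_r(p)$ so that \cite{TWStable} applies. You correctly observe that $D^2E_\eps(u)[\varphi,\varphi]=D^2E_\eps(u)[\bar\varphi,\bar\varphi]+D^2E_\eps(u)[\varphi^\perp,\varphi^\perp]$. However, you never control the non-invariant part. Instead you appeal to a ``$G$-stable version'' of Tonegawa--Wickramasekera, on the grounds that their proof only uses stability against $G$-invariant test functions. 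That claim is not right. The stability inequality in \cite{TWStable}, and Wickramasekera's theorem \cite{Wickramasekera} that it feeds into, are used with arbitrary compactly supported test functions localized to small balls. A $G$-invariant function can only be localized to a tube around a whole orbit, and no off-the-shelf $G$-stable version of that theory exists.

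The missing idea is short; it is the first thing the paper proves, following Lemma 2.8 of \cite{WangJGA}: on a $G$-invariant domain $\Omega$, $G$-stability already implies stability. Let $v_1\ge 0$, $v_1\not\equiv 0$, be a first Dirichlet eigenfunction of $J_u=-\eps\Delta+\eps^{-1}W''(u)$ on $\Omega$ with eigenvalue $\lambda_1$. Since $u$ is $G$-invariant and $G$ acts by isometries, $J_u$ commutes with $\psi\mapsto\psi\circ\phi$ for every $\phi\in G$. Hence the average $\bar v_1(x)=\frac{1}{|G|}\int_G v_1(\phi x)\,dV_G(\phi)$ is $G$-invariant, lies in $W^{1,2}_0(\Omega)$, is nonzero because $v_1\ge 0$, and satisfies $J_u\bar v_1=\lambda_1\bar v_1$. $G$-stability then forces $\lambda_1\ge 0$, so $u$ is stable in $\Omega$ in the usual sense; equivalently, $D^2E_\eps(u)[\varphi^\perp,\varphi^\perp]\ge\lambda_1\|\varphi^\perp\|_{L^2}^2\ge 0$, which closes your decomposition. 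With this lemma, \cite{TWStable} applies verbatim in every good tube, and the rest of your argument goes through. Your cohomogeneity-$2$ case relies on the same upgrade, both in the good tubes and when removing singular orbits via Wickramasekera's theorem, which needs genuine stability. Near bad principal orbits no upgrade is needed: stability for the weighted functional on the quotient is literally $G$-stability on $M$.
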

\noindent One of our main tools is the following regularity theory of Tonegawa--Wickramasekera
\begin{theorem}[Theorem 2.1 \cite{TWStable}, Theorem 3.8 \cite{GuaracoMinmax}] \label{thm.TW.stable}
Let $\{u_{\eps_i}\} \subseteq W^{1,2}(M^{n+1})$ be a sequence of stable Allen--Cahn solutions with $\e_i \to 0^+$ and $E_{\eps_i}(u_{\eps_i}) \leq \Lambda$ for fixed $\Lambda > 0$. Then up to subsequence, the corresponding varifolds $V(u_{\eps_i})$ converge to a stationary, integral $n$-varifold $V$. Moreover, if $\mathrm{Ind}_{\e_i}^G(u_{\eps_i})\leq N$, for some $N>0$ independent of $i$, then $\mathrm{supp}\|V\|$ is a smooth, embedded, stable, minimal hypersurface away from a singular set $\mathrm{sing} \; V$ with Hausdorff dimension $\leq n-7$. 
\end{theorem}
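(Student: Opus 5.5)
This statement is a citation of Tonegawa--Wickramasekera's regularity theorem together with Guaraco's index extension, so my plan is to assemble it from the literature rather than prove it from scratch. There are two parts. The first is compactness plus integrality for stable (or bounded-index) solutions. The second is the smooth-structure statement for the support of the limit.

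\textbf{Compactness and integrality.} I would start from the energy bound $E_{\eps_i}(u_{\eps_i})\le \Lambda$. Together with $|u_{\eps_i}|\le 1$ (maximum principle after truncating $W$), it gives uniform mass bounds for the varifolds $V(u_{\eps_i})$. Allard compactness then yields a subsequential varifold limit $V$. Hutchinson--Tonegawa show that $V$ is stationary and, for solutions of \eqref{ACEquation}, integral. The key ingredients are the vanishing of the discrepancy $\frac{\eps|\n u|^2}{2}-\frac{W(u)}{\eps}$ and the monotonicity formula.

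\textbf{Regularity in the stable case.} Here I would invoke Tonegawa--Wickramasekera. Stability of $u_{\eps_i}$ passes, via the test functions $\phi|\n u_{\eps_i}|$ in the second variation, to a uniform weighted $L^2$ bound on the second fundamental form of the level sets. This in turn gives stability of the regular part of $V$ and the Schoen--Simon type estimates needed to verify the hypotheses of Wickramasekera's regularity theorem. Those hypotheses are stationarity, stability of the $C^{1,\alpha}$ part, and absence of classical singularities, i.e.\ no points near which $V$ is a union of $C^{1,\alpha}$ sheets meeting only along their common boundary. The absence of classical singularities is the main obstacle. It uses the structure of diffuse interfaces: near such a junction the solutions would have to produce a triple-junction-like configuration, which is excluded by the monotonicity and the sheeting analysis. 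With these hypotheses in hand, Wickramasekera's theorem gives that $\mathrm{supp}\|V\|$ is smooth and embedded away from a closed set of Hausdorff dimension at most $n-7$.

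\textbf{Bounded index.} For the case $\mathrm{Ind}\le N$, I would follow Guaraco and use a covering argument. If index $N$ solutions failed to be stable on $N+1$ disjoint open sets, one could glue the negative directions into an $(N+1)$-dimensional negative subspace, a contradiction. Hence, after passing to a subsequence, the $u_{\eps_i}$ are stable on the complement of at most $N$ points. Applying the stable theory locally away from these points, regularity holds there. Across the finitely many points I would use the removable singularity theory: points have dimension $0\le n-7$ when $n\ge 7$, and otherwise Wickramasekera's theorem applied on punctured balls (a point has zero $2$-capacity) extends stability and regularity across them. In the equivariant setting the same covering would be done with $G$-tubes around orbits instead of points. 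Those orbits are then handled separately in the later sections.
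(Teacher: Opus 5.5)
Your plan matches the paper's, which gives no argument of its own and simply cites Tonegawa--Wickramasekera and Guaraco for exactly this chain: Hutchinson--Tonegawa for stationarity and integrality, Tonegawa's stability estimate plus Wickramasekera's theorem in the stable case, and a covering/removable-point argument for bounded \emph{ordinary} Morse index. You also handle the index clause correctly: as literally stated the $u_{\eps_i}$ are already stable, so $\mathrm{Ind}^G_{\eps_i}(u_{\eps_i})=0$ is automatic, whereas a bound on $\mathrm{Ind}^G$ alone would only give regularity away from finitely many orbits.

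One correction to your description of Tonegawa--Wickramasekera's argument: monotonicity cannot exclude classical singularities, since triple junctions and transverse pairs of hyperplanes are stationary cones that satisfy monotonicity, and no sheeting analysis is used there. As far as I recall, that step instead rests on the $L^2$ control on the second fundamental form of the level sets that Tonegawa extracts from stability via the test functions $\phi|\nabla u_{\eps_i}|$.
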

\noindent We begin with a proposition inspired by work of Wang \cite[Lemma 2.8]{WangJGA}:
\begin{proposition}
Suppose $u_{\eps}$ is a G-stable Allen--Cahn solution in $\Omega \subseteq M$, a $G$-invariant set. Then $u_{\eps}$ is stable in $\Omega$.
\end{proposition}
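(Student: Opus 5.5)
Since $\Omega$ is $G$-invariant and $G$ acts by isometries, we will show that the bottom of the spectrum of the Jacobi operator $J_{u_\eps}=-\eps\Delta+\eps^{-1}W''(u_\eps)$ on $\Omega$ is attained by a $G$-invariant function. Then $G$-stability, which controls only $G$-invariant test functions, controls all of them.

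\textbf{Step 1: $G$ commutes with $J_{u_\eps}$.} Because $u_\eps$ is $G$-invariant, the potential $W''(u_\eps)/\eps$ is $G$-invariant. The Laplacian commutes with isometries, so $\phi\mapsto\phi\circ h$ commutes with $J_{u_\eps}$ for every $h\in G$. It also preserves $W^{1,2}_0(\Omega)$, since $h(\Omega)=\Omega$. Consequently the quadratic form $Q(\psi)=D^2E_\eps(u_\eps)[\psi,\psi]$ satisfies $Q(\psi\circ h)=Q(\psi)$, and each eigenspace of $J_{u_\eps}$ on $\Omega$ with Dirichlet condition is a $G$-representation.

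\textbf{Step 2: the first eigenfunction is invariant.} Let $\lambda_1$ be the lowest Dirichlet eigenvalue of $J_{u_\eps}$ on each connected component of $\Omega$. If $\Omega$ has infinitely many components, we use an exhaustion by $G$-invariant subdomains, taking finite unions of components or $G$-invariant smooth subdomains built from tubes. Stability of $u_\eps$ means $Q(\psi)\ge 0$ for all $\psi\in C_c^\infty(\Omega)$, so it suffices to show $\lambda_1\ge0$ on each $G$-invariant smooth subdomain $\Omega'\subset\subset\Omega$ whose components are permuted by $G$.

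The direct route is to take a minimizer $\phi_1$ of the Rayleigh quotient. Since $Q(|\phi|)=Q(\phi)$, we may assume $\phi_1\ge0$. Then we average:
\[
\bar\phi(x)=\frac{1}{|G|}\int_G\phi_1(hx)\,dV_G(h).
\]
This function is $G$-invariant, lies in $W^{1,2}_0(\Omega')$, and is nonzero, because a nonnegative nonzero function cannot average to zero. Since each $\phi_1\circ h$ lies in the $\lambda_1$-eigenspace, which is a linear space, $\bar\phi$ is a $\lambda_1$-eigenfunction with $Q(\bar\phi)=\lambda_1\|\bar\phi\|_{L^2}^2$. This avoids needing simplicity of the eigenvalue.

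Some care is needed if $G$ permutes components, since then $\lambda_1$ could differ between components. This does not happen: $G$ acts isometrically and preserves the potential, so the first eigenvalues of $h$-related components coincide. It is cleaner to define $\lambda_1$ as the infimum of $Q(\psi)/\|\psi\|^2$ over all of $W^{1,2}_0(\Omega')$ and take a global minimizer. Its absolute value is again a minimizer, and averaging preserves the minimizing eigenspace.

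\textbf{Step 3: conclude.} Applying $G$-stability to $\bar\phi$ gives $\lambda_1\|\bar\phi\|^2=Q(\bar\phi)\ge0$, so $\lambda_1\ge0$. Hence $Q(\psi)\ge\lambda_1\|\psi\|^2\ge0$ for every $\psi\in W^{1,2}_0(\Omega')$. Exhausting $\Omega$ by such $\Omega'$ shows that $u_\eps$ is stable on $\Omega$.

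The main technical point is making sure the averaged function is genuinely nonzero and in the right space. Positivity of the first eigenfunction, or simply replacing $\phi_1$ by $|\phi_1|$, handles the first part. The second needs $\Omega'$ to be $G$-invariant with enough regularity for the variational problem to have a minimizer, which holds for $G$-invariant tubes or open $G$-sets with compact closure via Rellich compactness.
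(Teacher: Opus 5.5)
Your proposal is correct and is essentially the paper's argument: average the first Dirichlet eigenfunction of $J_{u_\epsilon}$ on $\Omega$ over $G$, observe that the average is a nonzero $G$-invariant eigenfunction with the same eigenvalue $\lambda_1$, and let $G$-stability force $\lambda_1\ge 0$. Replacing $\phi_1$ by $|\phi_1|$ is a worthwhile refinement over the paper's appeal to strict positivity of the first eigenfunction, which requires $\Omega$ to be connected. The exhaustion step is unnecessary: for any open $\Omega$ in the closed manifold $M$, $W^{1,2}_0(\Omega)\subset W^{1,2}(M)$ already embeds compactly into $L^2$, so a global minimizer exists directly.
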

\begin{proof}
Note that stability is equivalent to showing that the first eigenfunction of the second variation corresponds to a positive eigenvalue. Let $v_1(x) \in C_c^1(\Omega)$ be the first eigenfunction for the second variation of $u_{\eps}$ corresponding to $\lambda_1$. Since this is the first eigenfunction, it is strictly positive everywhere, and hence 
\[
v_G(x) = \frac{1}{|G|} \int_G v(g \cdot x) dV_g
\]
is non-zero. Moreover, by an elementary computation, we have that 
\[
J_{u_{\eps}} v_G(x) = \lambda_1 v_G(x)
\]
and hence $\lambda_1 < 0$ by $G$-stability, and hence $u_{\eps}$ is stable in $\Omega$. 
\end{proof}
We can also propagate finite $G$-equivariant morse index to regularity of the hypersurface outside of finitely many points
\begin{proposition} \label{prop.index.converge}
Let $\{u_{\eps_i}\} \subset W^{1.2}_G(M)$ be a sequence of Allen--Cahn solutions with $\eps_i \to 0^+$ and with uniformly bounded energy and Morse index, $\mathrm{Ind}_{\eps_i}^G(u_{\eps_i})\leq p$. Let $V$ be a (subsequential) limiting stationary varifold obtained from the sequence $V(\epsilon_i)$, and $\Sigma = \mathrm{supp}(V)$. Then $\Sigma$ is an embedded minimal hypersurface, smooth up to a set of dimension at most $n-7$, away from finitely many orbits of the form $\Q_i = G \cdot q_i$ where $q_i \in M$.
\end{proposition}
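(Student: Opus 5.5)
The approach is the classical covering argument for bounded-index sequences, as in Guaraco's treatment and Marques--Neves, carried out with $G$-invariant tubes in place of geodesic balls. By Hutchinson--Tonegawa we may pass to a subsequence with $V(u_{\eps_i}) \to V$, a stationary integral $G$-varifold. We say an orbit $G\cdot q$ is \emph{good} if there is $r>0$ with $u_{\eps_i}$ $G$-stable on $B^G_r(q)$ for all large $i$, and \emph{bad} otherwise. The goal is to show that there are at most $p$ bad orbits. Then we apply the previous proposition, which upgrades $G$-stability on $G$-invariant sets to genuine stability, together with Theorem \ref{thm.TW.stable}, near every good orbit.

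The key step is the finiteness of bad orbits. Suppose $G\cdot q_1,\dots,G\cdot q_{p+1}$ are distinct bad orbits. Since distinct orbits are disjoint compact sets, we can choose $r>0$ so that the tubes $B^G_r(q_j)$ are pairwise disjoint. Passing to a further subsequence (a diagonal argument over $r\to 0$ if needed), each $u_{\eps_i}$ fails to be $G$-stable on every $B^G_r(q_j)$. So for each $j$ there is a $G$-invariant $\phi_j \in W^{1,2}_{0,G}(B^G_r(q_j))$ with $D^2E_{\eps_i}(u_{\eps_i})[\phi_j,\phi_j]<0$. If only some non-invariant test function is available, we first use the first Dirichlet eigenfunction on the $G$-invariant tube and average it over $G$, exactly as in the proof of the previous proposition. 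Because the $\phi_j$ have disjoint supports, they span a $(p+1)$-dimensional space of $G$-invariant functions on which $D^2E_{\eps_i}$ is negative definite. This contradicts $\mathrm{Ind}^G_{\eps_i}(u_{\eps_i})\le p$, so there are at most $p$ bad orbits $\mathcal{Q}_i=G\cdot q_i$. A standard diagonal/subsequence argument makes the choice of subsequence uniform.

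Next comes the local regularity. Let $x\in\Sigma\setminus\bigcup_i\mathcal{Q}_i$. Then $G\cdot x$ is good, so $u_{\eps_i}$ is $G$-stable on the $G$-invariant open set $B^G_r(x)$. By the preceding proposition it is stable there in the usual sense. The local version of Theorem \ref{thm.TW.stable}, which is local in nature and applies in any open set with uniformly bounded energy, then shows that $\mathrm{supp}\|V\|\cap B^G_r(x)$ is a smooth embedded stable minimal hypersurface away from a singular set of Hausdorff dimension at most $n-7$. Covering $\Sigma\setminus\bigcup_i\mathcal{Q}_i$ by such tubes gives the claim.

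The main obstacle I expect is the passage from "not $G$-stable on a tube" to a $G$-invariant negative direction supported in that tube, and making the good/bad dichotomy uniform along one subsequence. For the first point, I would minimize the Rayleigh quotient of $J_{u_{\eps_i}}$ over $W^{1,2}_{0,G}(B^G_r(q))$ directly, which yields a $G$-invariant minimizer, rather than averaging. For the second point, I would use the monotonicity of stability under shrinking the domain together with a diagonal choice over radii $r=1/k$.
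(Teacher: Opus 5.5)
Your strategy is the paper's own: disjoint $G$-tubes plus the $G$-index bound, then the upgrade from $G$-stability to stability on $G$-invariant sets, then Tonegawa--Wickramasekera in each good tube. The paper states this in a few lines and leaves the subsequence selection implicit. However, the step where you derive a contradiction from $p+1$ bad orbits fails as written.

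\textbf{Why the step fails.} With your definition, badness of $G\cdot q_1$ and of $G\cdot q_2$ can be witnessed along disjoint sets of indices. So you cannot pass to a subsequence on which $u_{\eps_i}$ is simultaneously not $G$-stable on all $p+1$ tubes. Once you restrict to the indices where $B^G_r(q_1)$ is unstable, $G\cdot q_2$ may become good. For instance, with $p=1$ nothing in your argument prevents the unstable direction from sitting near $G\cdot q_1$ for even $i$ and near $G\cdot q_2$ for odd $i$. Then both orbits are bad, yet there is no conflict with $\mathrm{Ind}^G_{\eps_i}\le 1$. If the unstable regions wander through a dense family of orbits, every orbit is bad for the original sequence. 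So ``at most $p$ bad orbits'' is false unless the subsequence is chosen in advance. Your proposed remedy, a diagonal over $r=1/k$ plus monotonicity in the domain, does not fix this by itself. The problem is the lack of simultaneity across different orbits, not the dependence on $r$.

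\textbf{The missing ingredient.} You need a compactness selection of where the instability sits.
\begin{enumerate}
\item Fix $r$. For each $i$, take a maximal family of orbits $G\cdot y^i_1,\dots,G\cdot y^i_{m_i}$, pairwise at distance $\ge 2r$ (so the tubes are disjoint), with $u_{\eps_i}$ not $G$-stable on each $B^G_r(y^i_l)$.
\item Your disjoint-support argument gives $m_i\le p$. Maximality gives that $u_{\eps_i}$ is $G$-stable on $B^G_r(x)$ whenever $\mathrm{dist}(G\cdot x,G\cdot y^i_l)\ge 2r$ for all $l$.
\item By compactness of $M/G$, pass to a subsequence with $m_i\equiv m$ and $\pi(y^i_l)\to\pi(y_l)$. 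Then $u_{\eps_i}$ is $G$-stable on $B^G_r(x)$ for all large $i$ whenever $\mathrm{dist}(G\cdot x,G\cdot y_l)>3r$ for all $l$.
\item Do this for $r=1/k$ and diagonalize. Then pass to a subsequence in $k$ along which the number of orbits $G\cdot y_l$ is constant and they converge in $M/G$.
\end{enumerate}
The resulting at most $p$ limit orbits are the $\mathcal{Q}_l$, and every other orbit is good along the diagonal subsequence. Since any further subsequence still converges to $V$, the rest of your argument then goes through unchanged: $G$-stability $\Rightarrow$ stability, local Tonegawa--Wickramasekera, and covering.
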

\noindent Note that in the above proposition,  we do not distinguish between principal, exceptional, and singular orbits.
\begin{proof}
This follows from a standard argument, as if we choose any $(p+1)$ disjoint, $G$-invariant tubes around orbits $\{G\cdot q_i\}_{i = 1}^{p+1}$ for $q_i \in M$, then by nature of having bounded $G$-equivariant index by $p$, $u_\eps$ must be $G$-equivariantly stable and hence stable at least one such tube. By Theorem \ref{thm.TW.stable}, $\Sigma$ is necessarily regular in that tube. Choosing a finite covering of $\Sigma$ with balls of smaller and smaller radius, we conclude the theorem. 
\end{proof}
\noindent We can also upgrade finite index and regularity away from finitely many points to being fully smooth when the principal orbits are sufficiently small. We note that in the original setting, the index of Allen--Cahn solutions may accumulate around single \textit{points} in the limiting varifold, which can be smoothened out using the condition of being stable in annuli (see e.g. \cite[Theorem A, Theorem 3.8]{GuaracoMinmax}). In our setting, we must use $G$-equivariant annuli, which are much larger  when $\cohom(G) \neq n+1$. However, we circumvent this difficulty by relying on Wickramasekera's $\alpha$-structural hypothesis, which allows stable varifolds with singular sets, $Z$, such that $\mathcal{H}^{n-1}(Z) = 0$ to be actually smooth. We summarize this as follows 
\begin{corollary}[Theorem 3.1 \cite{Wickramasekera}] \label{cor.wick.regular}
Suppose $V$ is a stationary, stable varifold in an open set $U \cap \text{reg}(V)$ and $\mathcal{H}^{n-1}(\mathrm{sing}(V) \cap U) = 0$, then $V$ is supported along a minimal hypersurface in $U$, which is smooth up to a set of dimension at most $n-7$.
\end{corollary}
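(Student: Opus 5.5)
The plan is to deduce the statement directly from Wickramasekera's regularity theorem for stable codimension one integral varifolds \cite{Wickramasekera}. The work consists of checking that his three structural hypotheses hold in $U$, and in particular that the $\alpha$-structural hypothesis follows from the assumption $\mathcal{H}^{n-1}(\mathrm{sing}(V)\cap U)=0$. Throughout, $V$ is the integral varifold produced as a limit of $V(u_{\eps_i})$. Integrality is guaranteed by Hutchinson--Tonegawa and Theorem \ref{thm.TW.stable}, so I will take as given that $V$ is an integral $n$-varifold in $U$.

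\textbf{Step 1: stationarity and stability.} Stationarity of $V$ in $U$ is part of the hypothesis; in our application it comes from the Allen--Cahn limit, since $G$-stationary is equivalent to stationary. Stability is assumed on the regular part: for every $C^1_c$ function $\varphi$ supported on $\mathrm{reg}(V)\cap U$, we have $\int |A|^2\varphi^2+\Ric(\nu,\nu)\varphi^2\le \int|\nabla\varphi|^2$. The only care needed is that Wickramasekera phrases stability for orientable portions of the regular part. Since stability is local, I would restrict to small balls where $\mathrm{reg}(V)$ is two-sided.

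\textbf{Step 2: the $\alpha$-structural hypothesis.} This hypothesis asks that no point of $\mathrm{supp}\|V\|\cap U$ have a neighbourhood in which $V$ is a sum of at least three $C^{1,\alpha}$ hypersurfaces-with-boundary, meeting only along a common $C^{1,\alpha}$ boundary of dimension $n-1$. Suppose such a classical singularity $Z$ existed. Then near that point, the common boundary is an embedded $(n-1)$-dimensional $C^{1,\alpha}$ submanifold contained in $\mathrm{sing}(V)$, because $V$ is not a single smooth sheet there. Such a submanifold has positive $\mathcal{H}^{n-1}$ measure, which contradicts $\mathcal{H}^{n-1}(\mathrm{sing}(V)\cap U)=0$. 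Hence the hypothesis holds for every $\alpha\in(0,1)$.

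\textbf{Step 3: conclusion.} Wickramasekera's theorem now applies and gives the result. Away from a closed set of Hausdorff dimension at most $n-7$, $\mathrm{supp}\|V\|\cap U$ is a smooth embedded minimal hypersurface. Since $V$ is integral, it carries locally constant integer multiplicity on each connected component of the regular part. In particular the original singular set is smoothed over except on a set of dimension at most $n-7$, which is empty when $n\le 6$.

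The main obstacle is Step 2 together with the precise form of the stability hypothesis. One must make sure that the measure-zero assumption genuinely excludes classical cones, including those whose junction set is only $C^{1,\alpha}$ rather than smooth. One must also handle orientability, so that stability of the regular part is in the form Wickramasekera requires. I would handle the latter by working in small balls, where the regular part is two-sided, and by noting that stability only enters through localised test functions.
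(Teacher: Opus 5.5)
Your proposal is correct and follows the paper's route. The paper only remarks that $\mathcal{H}^{n-1}(\mathrm{sing}(V)\cap U)=0$ makes Wickramasekera's $\alpha$-structural hypothesis automatic, since a classical singularity would force an $(n-1)$-dimensional $C^{1,\alpha}$ junction inside $\mathrm{sing}(V)$, and then applies his Theorem 3.1; your extra points on integrality (left implicit in the paper's statement) and on localizing stability to small balls fill in details rather than change the argument.
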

The condition of $\mathcal{H}(\text{sing}(V) \cap U) = 0$ means that the ``$\alpha$-structural hypothesis" of Wickramasekera is automatically true (see also the discussion at the beginning of \cite[\S 3]{TWStable}). We can now prove an upgraded regularity statement:
\begin{proposition} \label{proposition.cohom.three}
Suppose in the above that $\text{Cohom}(G) \geq 3$, then $\Sigma^{reg} = \Sigma \cap M^{reg}$ is embedded and smooth up to a set of dimension at most $n-7$. 
\end{proposition}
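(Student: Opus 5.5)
By Proposition \ref{prop.index.converge}, $\Sigma = \mathrm{supp}\|V\|$ is already an embedded minimal hypersurface, smooth up to a set of dimension at most $n-7$, away from finitely many orbits $\mathcal{O}_j = G\cdot q_j$, $j = 1,\dots,p$. So the task is local: we must remove each orbit $\mathcal{O}_j$ from the singular set, up to a codimension-$7$ set. The plan is to use Corollary \ref{cor.wick.regular} in a small $G$-invariant tube around each $\mathcal{O}_j$. The key observation is that when $\cohom(G) \geq 3$, every orbit has dimension at most $(n+1) - 3 = n-2$. Hence $\mathcal{H}^{n-1}(\mathcal{O}_j) = 0$, and the exceptional set is too small to obstruct Wickramasekera's theory.

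\textbf{Step 1: stability on punctured tubes.} Fix $j$ and choose $r>0$ so small that $B^G_r(q_j)$ contains no other $\mathcal{O}_k$. By the covering argument in the proof of Proposition \ref{prop.index.converge}, there is a sequence of radii $s_m \downarrow 0$ such that, after passing to a subsequence, $u_{\eps_i}$ is $G$-stable on the annulus $\an(q_j, s_m, r)$ for all large $i$. Concretely, among $p+1$ disjoint nested annuli, at least one carries no $G$-unstable direction, and a diagonal argument then produces the sequence. By the proposition that $G$-stability on a $G$-invariant set implies stability, each $u_{\eps_i}$ is honestly stable on these annuli. Theorem \ref{thm.TW.stable} then shows that $V$ restricted to $B^G_r(q_j)\setminus \mathcal{O}_j$ is a stable, stationary integral varifold whose support is smooth and embedded away from a set of dimension $\leq n-7$.

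\textbf{Step 2: apply the $\alpha$-structural hypothesis.} In $U = B^G_r(q_j)$, the singular set satisfies $\mathrm{sing}(V)\cap U \subset \mathcal{O}_j \cup Z$ with $\dim Z \leq n-7$. Therefore $\mathcal{H}^{n-1}(\mathrm{sing}(V)\cap U) = 0$, using $\dim \mathcal{O}_j \leq n-2$. $V$ is stationary in all of $U$, since stationarity of a varifold limit of Allen--Cahn solutions is global. It is stable on $\mathrm{reg}(V)\cap U$: any compactly supported variation of $\mathrm{reg}(V)$ avoiding $\mathcal{O}_j$ lies in some annulus $\an(q_j,s_m,r)$, where stability holds. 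Corollary \ref{cor.wick.regular} then gives that $V$ is supported on an embedded minimal hypersurface in $U$, smooth away from a set of dimension $\leq n-7$. Doing this for every $j$ and combining with Proposition \ref{prop.index.converge} yields the embeddedness and regularity claims. The proposition's statement focuses on $\Sigma\cap M^{reg}$, but the argument gives the full statement.

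\textbf{Main obstacle.} The delicate step is the stability input in Step 2. Theorem \ref{thm.TW.stable} gives stability of the limit only on open sets where the $u_{\eps_i}$ themselves are stable, and the annuli $\an(q_j,s_m,r)$ depend on $m$. I would handle this by a diagonal subsequence, checking that a test function $\phi\in C^1_c(\mathrm{reg}(V)\cap U)$ is supported in a single annulus for $m$ large. Alternatively, one can invoke the stability inequality for limits of stable Allen--Cahn solutions (Tonegawa's second-variation passage) on each annulus, which suffices. A secondary point is ensuring that Wickramasekera's hypothesis excludes classical singularities with $\mathcal{H}^{n-1}$-small singular set; this is exactly the content of Corollary \ref{cor.wick.regular}. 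The stronger location claim for the singular set, that it lies in $M\setminus M^{reg}$, is deferred, as the outline indicates, to passing to the quotient when $\cohom(G)\leq 7$.
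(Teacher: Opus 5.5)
Your proposal is correct and follows the paper's own argument: $\cohom(G)\geq 3$ forces every orbit to have dimension at most $n-2$, so $\mathcal{H}^{n-1}(\mathrm{sing}(V)\cap U)=0$ near each $G\cdot q_j$ and Corollary \ref{cor.wick.regular} applies; your Step 1 also spells out the stability on punctured tubes, which the paper asserts without proof.

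One repair is needed in Step 1. $G$-stability on one of $p+1$ disjoint annuli does not by itself give stability on $\an(q_j,s_m,r)$ with the outer radius fixed. Argue by contradiction instead: if every $r$ admitted some $s<r$ with $u_{\eps_i}$ $G$-unstable on $\an(q_j,s,r)$ for all large $i$, iterating would produce $p+1$ disjoint $G$-unstable annuli. So there is an $r$, possibly smaller than your initial choice, for which the index sets $\{i : u_{\eps_i}\text{ is }G\text{-stable on }\an(q_j,s,r)\}$ are infinite and nested in $s$. You can then diagonalize over these sets.
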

\begin{proof}
In the previous proof, if $\text{Cohom}(G) \geq 3$, then we have that $\dim(G \cdot q_i) \leq n - 2$, and hence we see that $\Sigma^{reg}$ is a stable, smooth minimal hypersurface outside of a set of dimension $n-2$. By Corollary \ref{cor.wick.regular}, we conclude that $\Sigma^{reg}$ is smooth up to a set of dimension at most $n-7$. 
\end{proof}
We can also show that the potentially non-smooth orbits from Proposition \ref{prop.index.converge} are contained in the union of all non-principal orbits of the action of $G$ on $M$
\begin{proposition} \label{prop.singularity.localize}
Suppose that $3 \leq \cohom(G) \leq 7$. Let $\{\Q_i = G \cdot q_i\}$ be the finite set of orbits contained in $\Sigma$ from Proposition \ref{prop.index.converge}. If for some $i$, $\Q_i \subseteq M^{reg}$, then $\Sigma$ is smooth and embedded in an open neighborhood of $\Q_i$.
\end{proposition}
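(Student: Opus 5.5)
Near a principal orbit $\Q_i \subseteq M^{reg}$ I will work on the quotient, where the $G$-invariant problem becomes an honest problem on a small manifold. By the slice theorem, a small $G$-tube $U = B^G_r(q_i)\subset M^{reg}$ is a Riemannian submersion $\pi\colon U \to \pi(U) \subset M^{reg}/G$ onto a smooth $(\ell+1)$-dimensional base with $3 \le \ell+1 \le 7$. By Proposition \ref{prop.index.converge} and Proposition \ref{proposition.cohom.three}, $\Sigma$ is already smooth, embedded and minimal in $U\setminus \Q_i$; being $G$-invariant, it is $\pi^{-1}(\gamma)$ for a hypersurface $\gamma = \pi(\Sigma\cap U)$ in $\pi(U)$. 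By Hsiang--Lawson, $\gamma$ is minimal for the conformal metric $\overline g = \mathcal{V}^{2/\ell} g_{M/G}$ of \eqref{eqn.hsiang.lawson.metric}. Thus $\gamma$ is a minimal hypersurface of the smooth Riemannian manifold $(\pi(U),\overline g)$ away from the single point $\pi(q_i)$.

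\textbf{Step 1 (stationarity across the point).} The limit varifold $V$ is stationary in $M$. Pushing forward through the submersion, weighted by $\mathcal V$, gives an integral varifold $\overline V$ on $(\pi(U),\overline g)$ that is stationary there. The simplest route is to note that a point has zero $1$-capacity in dimension $\ell+1 \ge 3$, so stationarity in $\pi(U)\setminus\{\pi(q_i)\}$ extends across $\pi(q_i)$ by the standard cutoff argument, using the monotonicity-induced area bound.

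\textbf{Step 2 (stability across the point).} The index of $u_{\eps}$ can concentrate at only finitely many orbits. Hence for small $s<t$ the solutions are $G$-stable, and therefore stable, in the $G$-annuli $\an(q_i,s,t)$, so $\gamma$ is stable in punctured balls around $\pi(q_i)$. I will then extend stability across the point by a log-cutoff (or capacity) argument in the base, valid because $\dim \pi(U)=\ell+1\ge 3$ and a point has zero $2$-capacity there. This yields stability of $\overline V$ in a full neighborhood of $\pi(q_i)$.

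\textbf{Step 3 (regularity and lifting).} The singular set of $\overline V$ near $\pi(q_i)$ is contained in a point, which has $\mathcal H^{\ell-1}$-measure zero since $\ell \ge 2$. So Corollary \ref{cor.wick.regular}, applied on $(\pi(U),\overline g)$ with ambient dimension $\ell+1\le 7$, shows that $\overline V$ is smooth and embedded near $\pi(q_i)$: the singular set has dimension at most $\ell-7<0$ and is therefore empty. This is exactly where the hypothesis $\cohom(G)\le 7$ enters. Lifting by $\pi^{-1}$ gives that $\Sigma$ is smooth and embedded in a neighborhood of $\Q_i$. Multiplicity is handled sheet by sheet, since Wickramasekera's theorem gives smooth embedded support with constant integer multiplicity on each connected component.

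\textbf{Main obstacle.} I expect the hardest point to be the passage between $V$ on $M$ and $\overline V$ on $(\pi(U),\overline g)$, namely checking that integrality, stationarity and stability all transfer correctly under the weighted pushforward. If that transfer proves delicate, the fallback is to avoid the quotient varifold and work directly on $M$. Stability of $V$ in $U\setminus \Q_i$ extends across $\Q_i$ by a $G$-invariant log-cutoff, because $\Q_i$ has codimension $\ell+1\ge 3$. Wickramasekera's theorem, used as in Corollary \ref{cor.wick.regular}, then gives smoothness away from a set of dimension at most $n-7$. To remove even that set, one uses $G$-invariance: any singular set would be a union of orbits, hence of dimension at least $n-\ell\ge n-6 > n-7$, so it must be empty.
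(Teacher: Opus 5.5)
Your main argument is the paper's proof: pass to $(U/G,\overline g)$, use stability of the quotient varifold on the punctured neighborhood together with $\mathcal H^{\ell-1}(\{\pi(q_i)\})=0$, apply Wickramasekera in ambient dimension $\ell+1\le 7$ so that the singular set is empty, and lift back. Your Steps 1--2 only supply details the paper asserts, and extending stability across the point is not needed because Wickramasekera's stability hypothesis concerns only the regular part. Your fallback is also a valid and somewhat cleaner route that avoids transferring stationarity and stability to the quotient: apply Wickramasekera directly on $M$, then note that a nonempty $G$-invariant singular set in $M^{reg}$ would contain an orbit of dimension $n-\ell\ge n-6>n-7$.
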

\begin{proof}
Let $U$ be a small open neighborhood of $\Q_i$. Let $\tilde{V}$ denote the corresponding quotient varifold on $U/G$, which has support $\tilde{\Sigma}: = (\Sigma \cap U) / G$ lying in $U/G \subseteq M^{reg} / G$. $\tilde{V}$ is now a stationary varifold on $(U/G, \ov{g})$ with finite index. By potentially taking $U$ smaller, we conclude $\tilde{V}$ is stable on $(U \backslash \Q_i) / G = (U/G) \backslash \{q_i\}$ where $q_i = \Pi(\Q_i)$. Let $m = \dim(U/G)$, so that by assumption $3 \leq m \leq 7$. Since $\cohom(G) \geq 3$, we see that $\mathcal{H}^{m-2}\{q_i\} = 0$ Applying Corollary \ref{cor.wick.regular} to $\tilde{V}$ on $U/G$, we conclude that $\tilde{\Sigma}$ is smooth and embedded everywhere on $U/G$. Lifting back to $M$, we conclude that $\Sigma$ is smooth and embedded in $U$.
\end{proof}
Combining propositions \ref{prop.index.converge}, \ref{proposition.cohom.three}, and \ref{prop.singularity.localize} finishes the proof of Theorem \ref{thm.AC.regularity} when $\cohom(G) \geq 3$.

\section{Regularity for Cohomogeneity $2$} \label{section.cohom.two}
We now show that if there are no exceptional orbits then we can conclude regularity everywhere for actions with Cohomogeneity $2$. Note that from Montgomery \cite{montgomery1956exceptional}, the union of the singular orbits is \textit{at most} $n-1$ dimensional, so that a priori, the union may correspond to a set of codimension $1$ within a minimal hypersurface may. However, the \textit{individual} non-principal orbits are still at most $(n-2)$-dimensional in this setting (see \S \ref{section.appendix.examples} for examples). \nl 
\indent Recall that in their seminal work \cite{ChodoshMantoulidisWidths}, the authors used the \emph{Sine--Gordon potential}
\[
W(t) = \frac{1 + \cos(\pi t)}{\pi^2}
\]
to define the energy functional \eqref{eqn.AC.energy}. Using this potential, they showed the following:
\begin{theorem}[Theorem 3.1, Chodosh--Mantoulidis] \label{thm.cho.man.reg}
Let $(U, g)$ be a relatively compact open submanifold of a two dimensional Riemannian manifold. Consider a sequence $\{u_{\eps_i}\}$ of solutions to equation \eqref{eqn.AC.energy} with the Sine--Gordon potential such that 
\[
\|u_{\eps_i}\|_{\infty}+ \mathrm{Ind}_{\eps_i}(u_{\eps_i}; U) + E_{\eps_i}(u_{\eps_i}, U) \leq \Lambda
\]
for $\Lambda > 0$ independent of $i$. Let $V$ be a (subsequential) limiting stationary varifold obtained from the sequence $V(\epsilon_i)$ and assume $\|V\|(U)>0$. Then
\[
V \Big|_U  = \sum_{j = 1}^N V_{\sigma_j}
\]
where each $\sigma_j$ is a smoothly immersed geodesic on $U$, and $V_{\sigma_j}$ is the varifold induced by $\sigma_j$ (we allow for repetition among the $\{\sigma_j\}$).
\end{theorem}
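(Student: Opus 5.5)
The plan is to argue in two stages: first regularity away from finitely many points, then an analysis at the points where the index concentrates. The Sine--Gordon potential only matters in the second stage.

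\emph{Stage one: away from the concentration points.} Since $\mathrm{Ind}_{\eps_i}(u_{\eps_i};U)\le \Lambda$, the covering argument of Proposition \ref{prop.index.converge} applies: among any $\Lambda+1$ disjoint balls, $u_{\eps_i}$ is stable in at least one. Passing to a subsequence, we obtain at most $\Lambda$ points $y_1,\dots,y_k\in U$ such that every compact $K\subset U\setminus\{y_1,\dots,y_k\}$ has a neighborhood on which $u_{\eps_i}$ is stable for all large $i$. On such a region we use the Wang--Wei curvature estimates for stable solutions in dimension $2$ (in the Riemannian form of Mantoulidis \cite[Theorem 4.13]{mantoulidis2021allen}). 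These bound the curvature of the level sets $\{u_{\eps_i}=t\}$, $|t|\le 1-\delta$, uniformly in $i$. Hence the level sets converge smoothly, as graphs, to finitely many sheets. Stationarity of $V$ (Hutchinson--Tonegawa) forces each sheet to be a geodesic. Consequently $V$ restricted to $U\setminus\{y_j\}$ is a finite sum of smooth geodesic arcs with integer multiplicity, and crossings are allowed there because sheets of different directions may meet transversally.

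\emph{Stage two: the concentration points.} Fix some $y_j$. Blowing $V$ up at $y_j$ and using stationarity together with monotonicity, any tangent cone is a stationary integral $1$-varifold in $\R^2$, hence a finite union of rays with multiplicities and balanced directions. The task is to show that these rays pair up into full straight lines through $0$, each ray with an exactly opposite partner of the same multiplicity, rather than forming junction configurations such as a triple junction. Once this holds, the geodesic arcs from Stage one entering $y_j$ extend smoothly across it, since a geodesic is determined by its tangent direction. Gluing over all $y_j$ then gives $V|_U=\sum_j V_{\sigma_j}$ with immersed geodesics $\sigma_j$.

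To get the pairing, rescale at scale $\eps_i$ around points converging to $y_j$ where the index concentrates. This produces entire solutions of $\Delta u=\sin(\pi u)/\pi$ on $\R^2$ with Morse index at most $\Lambda$ and quadratic energy growth. Finite Morse index together with the stability outside a compact set gives finitely many ends, each asymptotic to a straight line. Here we would invoke the integrable structure of the Sine--Gordon equation and the Liu--Wei classification \cite{liu2021classification}: such finite-end solutions arise from multi-soliton families, and their ends come in antipodal pairs with matching multiplicity. This is unlike the standard potential, whose ends can in principle be arranged in non-paired ways.

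Next, the picture at scale $\eps_i$ has to be connected to the tangent cone at scale $1$. Using the stable-annulus curvature estimates on dyadic annuli $\{r\eps_i\le |x-y_j|\le \rho\}$, we would show that the ray structure of the blow-up solution persists out to macroscopic scale with no additional sheets appearing in the neck region. The pairing then transfers to the tangent cone of $V$ at $y_j$.

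This neck analysis, matching the $\eps$-scale classification with the macroscopic tangent cone in the presence of multiplicity, is the main obstacle. I would handle it with a Lipschitz-graph, no-bubbling argument on the annuli: energy and index are bounded, the level sets are curvature-controlled at every intermediate scale, and so the number and directions of the sheets are constant across scales. If multiplicity makes this delicate, a fallback is to count crossings of the level sets with small circles $\partial B_r(y_j)$ across scales. Together with stationarity, this count should force the balanced, antipodally paired configuration.
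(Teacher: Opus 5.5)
The paper quotes this result from \cite{ChodoshMantoulidisWidths} without proof. Your overall architecture is the right one: Wang--Wei estimates on stable regions, $\eps$-scale blow-ups classified via Liu--Wei, then a neck analysis. The gap is exactly the step you flag as the main obstacle. Applying the basic Wang--Wei estimate on dyadic annuli around a concentration point $x_i$ gives only the scale-invariant bound $|B(x)|\lesssim |x-x_i|^{-1}$ on $\{R\eps_i\le|x-x_i|\le\rho\}$.

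That bound allows turning of order one per dyadic annulus. Compactness improves this to $o(1)$ as $\eps_i/r\to 0$, but with no rate. Summed over the $\sim\log(1/\eps_i)$ annuli, nothing prevents the sheets from slowly rotating; logarithmic spirals satisfy the same bound. So ``curvature-controlled at every intermediate scale'' does not give ``directions constant across scales.''

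Your fallback does not close the gap either. Counting intersections with $\partial B_r(x_i)$ controls only the number of sheets, i.e.\ the density. Stationarity gives only balancing, which does not force pairing. For example, rays at angles $\{0,\tfrac{2\pi}{3},\tfrac{4\pi}{3}\}\cup\{\theta,\theta+\tfrac{2\pi}{3},\theta+\tfrac{4\pi}{3}\}$ are balanced and have the same density for every $\theta$. They are antipodally paired only when $\theta\equiv\tfrac{\pi}{3}\pmod{\tfrac{2\pi}{3}}$. Hence no soft continuity-across-scales argument can carry the Liu--Wei pairing out to the tangent cone.

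The missing ingredient is the \emph{improved} curvature decay for stable solutions: $|B|\le C_*\eps^{1/7}$ given an a priori bound $|B|\le C$ (Wang--Wei \cite{wang2019finite}, Mantoulidis \cite[Theorem 4.13]{mantoulidis2021allen}). This is precisely what Theorem \ref{thm.drift.stability} re-proves in the drift setting so that \cite[Proposition 3.8]{ChodoshMantoulidisWidths} can be run. Apply it on $B_{r/2}(x)$ with $r=|x-x_i|$, where the effective parameter is $\eps_i/r$ and the scale-invariant estimate supplies the a priori bound. This gives $|B(x)|\lesssim\eps_i^{1/7}r^{-8/7}$, whose integral over $r\in[R\eps_i,\rho]$ is at most $7R^{-1/7}$. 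With this summable decay, the sheets stay uniformly close to the rays of the blow-up solution across the whole neck. The antipodal pairing with multiplicity then transfers to the tangent cone of $V$ at $y_j$.

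Two further repairs are needed. First, the neck need not be a stability region. Index can concentrate at several points converging to $y_j$ whose mutual distances $\rho_i$ satisfy $\eps_i\ll\rho_i\ll 1$. Also, the annuli should be centered at the blow-up points $x_i$ rather than at $y_j$, since $|x_i-y_j|$ may be much larger than $\eps_i$. You must organize the concentration into at most $\Lambda$ scales and induct on their number. At each intermediate scale, the rescaled limit is a bounded-index limit in $\R^2$ whose concentration points fall under the inductive hypothesis, so it is a union of straight lines. The decay estimate is then applied only on the genuinely stable necks between consecutive scales.

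Second, your remark in Stage one that transverse crossings can occur in the stable region is incorrect. There the Wang--Wei bound makes the transition layers a curvature-bounded lamination of non-crossing curves, so the limit is embedded. Crossings between the $\sigma_j$ can only occur at index-concentration points. This does not break the rest of your argument, but it should be stated correctly.
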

\indent We are unfortunately unable to apply Theorem \ref{thm.cho.man.reg} directly to $U = M / G$, as $M / G$ is not a closed surface due to the presence of exceptional and singular orbits. Moreover, even in the case when $M/G$ is a smooth closed surface, it is not true that $G$-equivariant solutions descend to solutions to the Allen--Cahn equation on $(M/G, g)$ unless the fibers are constant volume. \nl 
\indent Indeed, let $u_{\eps}$ be a $G$-equivariant solution on $M$ and let $\ov{u}_{\eps}$ denote the corresponding projected function on $U/G$. Suppose $\overline{v} \in C^{\infty}_c(U/G)$ and let $v$ denote its lift to $C^{\infty}_c(U)$, then we see that weakly )
\begin{align*}
\frac{d}{dt}\bigg|_{t=0} E_{\eps}(\overline{u}_{\eps} + t\overline{v}, U/G, g_{M/G}) &= \int_{U/G} \left( \eps \langle \n^{g_{M/G}} \overline{u}_{\eps}, \n^{g_{M/G}} \overline{v} \rangle + \frac{W'(\ov{u}_{\eps})}{\eps} \ov{v} \right) dV_{g_{M/G}} \\
&= \int_{U/G} \int_{\pi^{-1}([p])} \left( \eps \langle \n^{g_{M/G}} \overline{u}_{\eps}, \n^{g_{M/G}} \overline{v} \rangle + \frac{W'(\ov{u}_{\eps})}{\eps} \right) \mathcal{V}^{-1}([p]) dV_{g_{M/G}} dV_{\pi^{-1}([p])}\\
&= \int_{U} \left( \eps \langle \n^{g} u_{\eps}, \n^{g} v \rangle + \frac{W'(u_{\eps})}{\eps} \right) \mathcal{V}^{-1} dV_{g}\\
&= \int_{U} \eps \langle \n^g u_{\eps}, \n \ln(\mathcal{V}) \rangle v \mathcal{V}^{-1} dV_g \\
&= \int_{U/G} \int_{\pi^{-1}([p])} \eps \langle \n^g u_{\eps}, \n \ln(\mathcal{V}) \rangle v \mathcal{V}^{-1} dV_{g_{M/G}} dV_{\pi^{-1}([p])} \\
&= \int_{U/G} \eps \langle \n^{g_{M/G}} \overline{u}_{\eps}, \n^{g_{M/G}} \ln(\mathcal{V}) \rangle \overline{v} dV_{g_{M/G}}
\end{align*}
where we recall that $\pi^{-1}([p])$ is the fiber of the action corresponding to a point $[p] \in M/G$, which has volume $\mathcal{V}([p])$. This means that $\overline{u}_{\eps}$ weakly solves 
\begin{equation} \label{eqn.weak.one}
-\eps \Delta_g \overline{u}_{\eps} + \frac{W'(u)}{\eps} - \eps \langle \n^g \overline{u}_{\eps}, \n \ln(\mathcal{V}) \rangle = 0
\end{equation}
We may refer to equation \eqref{eqn.weak.one} as an Allen--Cahn with drift equation, and we remark that solutions to equation \eqref{eqn.weak.one} are critical points of a weighted Allen--Cahn energy functional (see equation \eqref{eqn.energy.weighted}). While equation \eqref{eqn.weak.one} differs from the original Allen--Cahn equation \eqref{ACEquation}, solutions to equation \eqref{eqn.weak.one} blow up to $\eps = 1$ solutions to equation \eqref{ACEquation} on the plane. Thus one may expect similar regularity results for solutions to the drift equation as with solutions to the original Allen--Cahn equation. \nl 
\indent Our first step to proving the analogous result of Theorem \ref{thm.cho.man.reg} for equation \eqref{eqn.weak.one}  is to establish regularity for equation \eqref{eqn.weak.one}.
\begin{theorem} \label{thm.drift.stability}
Let $(U, g)$ be a smooth open two dimensional manifold with boundary. Suppose that $u_{\eps_i}$ are a sequence of stable solutions in $U$ with respect to the energy functional 
\begin{equation} \label{eqn.energy.weighted}
E_{\eps}(u, U, \mathcal{V}, g): =\int_U \mathcal{V} \left( \frac{\eps}{2} |\n^g u|^2 + \frac{W(u)}{\eps} \right) dV_g
\end{equation}
let $B = \frac{|\n^2 u|^2 - |\n |\n u||^2}{|\n u|^2} $ denote the enhanced second fundamental form and fix $0 < \beta < 1$. There exists $C, \eps_0 > 0$ such that if $\eps < \eps_0$ and 
\[
|B(x)| \leq C \qquad \forall x \in U \cap \{|u| \leq 1 - \beta \}
\]
then for all $U' \subset \subset U$
\begin{equation} \label{eqn.stability.estimate}
|B(x)| \leq C_* \eps^{1/7} \qquad \forall x \in U' \cap \{|u| \leq 1 - \beta\}
\end{equation}
where $C_*$, $\eps_0$ depend on $g$, $\text{dist}(\p U, U')$ but not $\eps$.
\end{theorem}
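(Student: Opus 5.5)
The plan is to follow the proof of Wang--Wei \cite[Theorem 3.6]{wang2019finite} in the Riemannian form of Mantoulidis \cite[Theorem 4.13]{mantoulidis2021allen}, treating the drift term $\eps\langle \n \ln \mathcal{V}, \n u\rangle$ as a lower-order perturbation. Throughout, the weight $\mathcal{V}$ is assumed smooth and bounded above and below on $U$, as it is on the quotient of the principal part. After rescaling by $\eps$ around a point $x_0 \in U'\cap\{|u|\le 1-\beta\}$, the rescaled function $\tilde u(y)=u(x_0+\eps y)$ solves $\Delta \tilde u - W'(\tilde u) + \eps\langle \n\ln\mathcal{V}, \n\tilde u\rangle = 0$. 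The rescaled metric is $C^k$-close to Euclidean with errors $O(\eps)$, and the drift coefficient is also $O(\eps)$ in $C^k$. Every step of Wang--Wei that uses the equation exactly therefore acquires an additive error of size $O(\eps)$. Since the target rate is only $\eps^{1/7}$, such errors are admissible.

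\textbf{Step 1 (weighted stability and the curvature inequality).} Stability for \eqref{eqn.energy.weighted} reads
\[
\int_U \mathcal{V}\Big(\eps|\n\phi|^2 + \tfrac{W''(u)}{\eps}\phi^2\Big)\ge 0 .
\]
Test it with $\phi=|\n u|\eta$ and use the Bochner formula for the drift Laplacian $\Delta_{\mathcal V}=\Delta+\langle\n\ln\mathcal V,\n\cdot\rangle$. The linearized drift equation for $\n u$ then produces the usual Sternberg--Zumbrun inequality
\[
\int \mathcal{V}\, B^2|\n u|^2\eta^2 \le \int \mathcal{V}\, |\n u|^2|\n\eta|^2 + C\eps^{-1}\!\int \mathcal{V}\,\big(|\Ric|+|\n^2\ln\mathcal V|\big)\eps^2|\n u|^2\eta^2 .
\]
The extra terms come from the commutator $[\n,\Delta_{\mathcal V}]$, which is controlled by the Bakry--Émery tensor $\Ric+\n^2(-\ln\mathcal V)$. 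This is a bounded zeroth-order error, so it is of the same type as the curvature error already present in \cite{mantoulidis2021allen}.

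\textbf{Step 2 (Fermi coordinates, Toda-type system, and the $\eps^{1/7}$ rate).} Under the hypothesis $|B|\le C$, the level sets $\{u=0\}$ near $x_0$ are graphs $\Gamma_\alpha$ with bounded curvature at scale $\eps$. I would write $u$ in Fermi coordinates as a sum of shifted one-dimensional heteroclinics plus an error $\phi$. Then I would derive the Toda-type system for the curvatures,
\[
\eps H_\alpha = \text{(exponential interaction terms } e^{-\sqrt{2}d_{\alpha,\alpha\pm1}/\eps}) + O(\eps^2) + \text{drift term},
\]
where the drift contributes $\eps\,\p_\nu \ln\mathcal{V}$ times $\sigma$. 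This is a smooth, bounded forcing. The orthogonality conditions and the $\phi$-estimates of Wang--Wei then go through with an extra $O(\eps)$ on the right side. The decisive step is Wang--Wei's combination of the stability inequality from Step 1 with the Toda system. Integrating against suitable test functions gives smallness of $\sum_\alpha \int |H_\alpha|^2$ and of the interaction terms at an intermediate scale. A scaling/bootstrap argument then converts this into a pointwise bound. The same iteration as in \cite{wang2019finite} outputs the exponent $1/7$, provided every error term is $O(\eps^{1+\gamma})$ for some $\gamma>0$; the drift terms are $O(\eps)$ in the unrescaled Toda equation, which is what is needed.

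\textbf{Main obstacle.} I expect the main difficulty to be verifying that the drift term does not break the monotonicity and sign structure used in Wang--Wei's reduction. In the Toda system, the drift appears as a non-variational forcing $\p_\nu\ln\mathcal V$ on each $\Gamma_\alpha$. It has the \emph{same} sign for adjacent sheets with opposite orientations. For Wang--Wei's ODE/Harnack arguments on the distances $d_\alpha$, it is then not automatically negligible compared with the interaction terms when those terms are small.

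My first attempt would be to absorb it geometrically. Sheets solving the drift equation are, to leading order, geodesics of the conformal metric $\mathcal V^{2}g$. I would work in that metric, where the weighted energy is, up to $O(\eps)$ corrections, the unweighted Allen--Cahn energy of Lemma \ref{lem.energy.compare} with $\ell=1$. In that metric the forcing disappears from the leading-order curvature equation, and only $O(\eps^2)$ errors remain.

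Finally, the constants depend on $\|\mathcal V\|_{C^3}$, on $\inf\mathcal V$, and on $g$ through $\mathrm{dist}(\p U,U')$, which gives \eqref{eqn.stability.estimate}.
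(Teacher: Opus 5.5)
Your overall route (run Wang--Wei in Mantoulidis's Riemannian form and absorb the drift into the curvature of the conformal metric) is the paper's route. However, the step that carries the adaptation is mis-argued.

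In rescaled coordinates the normal drift $\p_z R=\eps\,\p_\nu\ln\mathcal V$ has the same order as $H^\alpha$ itself (both are $O(\eps)$ under $|B|\le C$). The target, by contrast, is $O(\eps^{1+1/7})$. So your own bookkeeping violates your Step 2 criterion ``every error term is $O(\eps^{1+\gamma})$''. An ``extra $O(\eps)$ on the right side'' would destroy the estimate rather than be absorbed by it.

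Your fix in the last paragraph does not repair this as justified. In dimension two a conformal change cannot remove a first-order term, since $\Delta_{\tilde g}=\mathcal V^{-2}\Delta_g$ for $\tilde g=\mathcal V^2g$. The weighted energy is exactly the $\tilde g$-Allen--Cahn energy with the \emph{variable} parameter $\eps\mathcal V(x)$, and its Euler--Lagrange equation still contains $\eps\langle\n\ln\mathcal V,\n u\rangle$. Freezing $c=\mathcal V(x_0)$ as in Lemma \ref{lem.energy.compare} costs a relative error $|\mathcal V/c-1|\approx\eps s$ at rescaled distance $s$. That error is already critical at $s\sim1$, and it grows along the long stretches of interface over which Wang--Wei integrate the Toda system and test stability. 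So ``only $O(\eps^2)$ errors remain'' is asserted, not derived.

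The missing idea is the explicit cancellation the paper carries out:
\begin{itemize}
\item Keep the drift Laplacian $\Delta+\langle\n R,\n\cdot\rangle$ in the equation for $\phi=u-g_*$.
\item The term $\langle\n R,\n g_\alpha\rangle$ equals, up to sign, $\p_zR\,g_\alpha'$ plus a tangential term. Its value at $z=0$ combines with $H^\alpha$ into $H^\alpha-\p_zR(y,0)$, which is the curvature of $\Gamma_\alpha$ in the conformal metric.
\item One must then check that the leftovers are $O(\eps^2)$ or quadratic in $(\n h,\phi,\n\phi)$, using $|\p^kR|\lesssim\eps^k$. The leftovers are $\p_zR(y,z)-\p_zR(y,0)$, $\langle\n^zR,\n^zh_\alpha\rangle$ and $\int g_\alpha'\langle\n R,\n\phi\rangle\,dz$.
\item With that in hand, Sections 9--14 of Wang--Wei go through with $H\to H-\p_zR$ and $\Delta_0\to\Delta_{0,R}$.
\end{itemize}

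Two further corrections.
\begin{itemize}
\item The stability input in Wang--Wei is not the Sternberg--Zumbrun inequality of your Step 1. It is stability tested with functions built from $g_\alpha'$ (their Sections 17 and 19). With the weight $\mathcal V$, the vertical integration by parts produces $\mathcal V_z$-terms that must be shown to be $O\big((\eps^2+\eps/L)\int\eta^2e^{-2\sqrt2\rho_\alpha^{\pm}}\big)$.
\item By your own analysis the sheets approach $\tilde g$-geodesics, whose $g$-curvature is $\p_\nu\ln\mathcal V$ and is generally nonzero. So the argument yields the $\eps^{1/7}$ bound for the curvature measured in $\tilde g$, equivalently for $H-\p_\nu\ln\mathcal V$. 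That is what the paper's adjustment of Wang--Wei's Section 20 uses. It does not give the bound for $B$ computed in $g$, so your final sentence does not follow as written.
\end{itemize}
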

We delay the proof of this result (and related ones) to section \S \ref{section.WW.drift}. We now prove our main cohomogeneity $2$ regularity result:
\begin{proposition} \label{prop.principal.immersion}
Suppose $\cohom(G) = 2$, then Proposition \ref{prop.index.converge} yields $\Sigma$, such that $\Sigma$ is embedded away from finitely orbits and $\Sigma \cap M^{reg}$ is a smoothly immersed minimal hypersurface.
\end{proposition}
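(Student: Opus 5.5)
Proposition \ref{prop.index.converge} already shows that $\Sigma$ is a smooth embedded minimal hypersurface (up to a codimension-$7$ set) away from finitely many orbits $\Q_1,\dots,\Q_k$. The claim is local, so only the orbits $\Q_i \subseteq M^{reg}$ need treatment. Fix such a $\Q_i$ and choose a small $G$-invariant tube $U = B^G_r(q_i) \subset\subset M^{reg}$ whose other points lie on principal orbits. Then $U/G$ is a smooth open surface, carrying the submersion metric $g_{M/G}$ and a smooth, positive fiber-volume function $\mathcal{V}$.

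By the computation leading to \eqref{eqn.weak.one}, each $u_{\eps_i}$ descends to a function $\ov{u}_{\eps_i}$ on $U/G$. This function is a critical point of the weighted energy \eqref{eqn.energy.weighted}. By Fubini for Riemannian submersions, the weighted energy of $\ov{u}_{\eps_i}$ equals $E_{\eps_i}(u_{\eps_i};U)$. Moreover, $G$-invariant test functions on $U$ correspond exactly to test functions on $U/G$, and the second variations agree. So the weighted index of $\ov{u}_{\eps_i}$ on $U/G$ is exactly $\mathrm{Ind}^G_{\eps_i}(u_{\eps_i};U) \le N$. We may also assume $|u_{\eps_i}|\le 1$.

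The plan is to rerun the proof of Theorem \ref{thm.cho.man.reg} (Chodosh--Mantoulidis, Theorem 3.1) for the drift equation on $(U/G, g_{M/G}, \mathcal{V})$. Their argument needs three ingredients. The first is a blow-up classification. Rescaling by $\eps_i$ around any point, the drift term $\eps\langle\n\ln\mathcal{V},\n u\rangle$ becomes $\eps_i^2\langle \n \ln\mathcal{V},\n u\rangle \to 0$ in $C^\infty_{loc}$, and the weight becomes constant. Hence blow-ups are entire finite-index Sine--Gordon solutions on $\R^2$ with the usual energy growth, and their classification (Liu--Wei) applies verbatim. The second is curvature decay for stable pieces. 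This is exactly Theorem \ref{thm.drift.stability}: away from finitely many points where index concentrates, $|B|\le C_*\eps^{1/7}$ on the transition region. It makes the level sets $\{\ov u_{\eps_i}=0\}$ nearly geodesic for the weighted problem. The third is the analysis at the finitely many index-concentration points. There, the blow-up classification shows that the transition layers are locally a finite union of graphs of curves crossing transversally, with no genuine junctions. The limit is therefore locally a finite sum of smooth curves.

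These curves are stationary for the length functional of the weighted metric $\mathcal{V}^2 g_{M/G} = \ov g$ (here $\ell=1$). They are thus $\ov g$-geodesics, and by the Hsiang--Lawson correspondence their preimages under $\pi$ are immersed minimal hypersurfaces in $U$. The immersed structure of $\Sigma \cap U$ follows by lifting the limit $\ov V|_{U/G}=\sum_j V_{\sigma_j}$. Away from the $\Q_i$, $\Sigma$ is embedded by Proposition \ref{prop.index.converge}, which gives the stated dichotomy.

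The main obstacle is the third ingredient: making sure the Chodosh--Mantoulidis junction analysis and multiplicity bookkeeping survive the drift term. I would handle this by working throughout with the weighted measure $\mathcal{V}\,dV_g$. I would check that the monotonicity and energy-identity steps acquire only error terms of order $O(r\|\n\ln\mathcal{V}\|_\infty)$, which vanish at the scales used, and that all of their curvature inputs are supplied by Theorem \ref{thm.drift.stability}.
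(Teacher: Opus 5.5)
Your argument is essentially the paper's proof. You descend to the weighted (drift) Allen--Cahn problem on $U/G$ and transfer energy and $G$-index by Fubini. You then rerun Chodosh--Mantoulidis using Theorem \ref{thm.drift.stability} and weighted monotonicity (the paper's Lemma \ref{lem.linear.energy.growth}), and finally lift the resulting $\ov g$-geodesics. The one procedural difference is how stationarity of the projected limit is obtained. The paper applies the Chodosh--Mantoulidis varifold-convergence proposition upstairs on $U\cdot G$, and then projects the resulting stationary $G$-invariant varifold to a $\ov g$-stationary $1$-varifold. That spares you the weighted first-variation and discrepancy bookkeeping you propose to redo downstairs.

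There is one step you skip. Proposition \ref{prop.index.converge} only gives smoothness away from the $Q_i$ up to a closed singular set of dimension at most $n-7$. So "away from the $Q_i$, $\Sigma$ is embedded" does not yet show that $\Sigma\cap M^{reg}$ is smooth there. You must rule out singular points in $M^{reg}\setminus\bigcup_i Q_i$. The singular set is closed and $G$-invariant, so a singular point $p\in M^{reg}$ would make the entire principal orbit $G\cdot p$ singular. That orbit has dimension $n-1$, well above $n-7$, which is impossible. This is how the paper opens its proof. Alternatively, your quotient argument works verbatim on any precompact $G$-invariant $U\subset\subset M^{reg}$, not only on tubes around the $Q_i$; the paper sets it up that way, for arbitrary precompact $U\subset\subset M^{reg}$.
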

\begin{proof}
Suppose that $\{u_{\eps_i}\}$ are $G$-invariant Allen--Cahn solutions with bounded energy and $G$-equivariant index. Applying Proposition \ref{prop.index.converge}, we obtain a limit varifold, $V$, $\Sigma := \text{supp}(V)$, and finitely many orbits of the form $\{Q_j = G \cdot q_j\}$. Away from these orbits, our limiting stationary varifold is supported a hypersurface which is embedded and smooth up to a closed singular set of dimension at most $n-7$. \nl 
\indent Let $p \in \text{Sing}(V) \backslash \cup_{i} \Q_i$. If $p \in M^{reg}$, then noting that the singular set is $G$-invariant, we have that $V$ is singular all along $G \cdot p$, which is a set of dimension $n-2$, a contradiction to the singular set being at most dimension $n-7$. Thus any singular points either lie inside $\cup_i \Q_i$ or inside the union of the non-principal orbits of the action of $G$ on $M$. \nl 
\indent Let $\overline{u}_{\eps}: M / G \to \R$ be the corresponding function on the quotient via $\overline{u}_{\eps}(\overline{x}) = u_{\eps}(x)$ where $x$ is any element of the orbit $\pi^{-1}(\overline{x})=G\cdot x$. For any precompact $G$-equivariant open subset $U \subset\!\subset  M^{reg}$, we have that $u_{\eps_i}$ being uniformly bounded in energy and index implies that $\overline{u}_{\e_i}$ is uniformly bounded in energy and index on $U/G$ as follows:

By the same argument of Fubini's theorem as in lemma \ref{lem.energy.compare}, we have 
\begin{align*}
E_{\eps}(u, U) & = \int_U \frac{\eps}{2} |\nabla^g u|^2 + \frac{W(u)}{\eps} dV_g \\
&= \int_{U/G} \mathcal{V} \left( \frac{\eps}{2} |\nabla^g \ov{u}|^2 + \frac{W(\ov{u})}{\eps}  \right) dV_{U/G}.
\end{align*}
Noting that $||\mathcal{V}||_{\infty}$ is finite, we conclude that  equation \eqref{eqn.energy.weighted} is bounded in $U$. Moreover, a simple integration by parts shows that the finite $G$-equivariant index of $u_{\eps}$ implies that $\ov{u}_{\eps}$ has finite index on $U$ with respect to \eqref{eqn.energy.weighted}. \nl 
\indent Now, the analogous statement of \cite[Theorem 3.1, Proposition 3.8]{ChodoshMantoulidisWidths} holds via the improved estimates of stable solutions to the Allen--Cahn with drift equation \ref{thm.drift.stability}, and the following remarks
\begin{itemize}
\item Chodosh--Mantoulidis \cite[Lemma C.6]{ChodoshMantoulidisWidths} holds for $E_{\eps}(u,U, \mathcal{V}, g)$ as above (see lemma \ref{lem.linear.energy.growth}).

\item The usage of \cite[Proposition C.1]{ChodoshMantoulidisWidths} on a two dimensional set can be replaced by using the same proposition on $U \cdot G$, concluding that one obtains a stationary $G$-equivariant varifold on $U \cdot G$, which then projects to a stationary $1$-varifold on $U$ with respect to the Hsiang--Lawson conformal metric, $\tilde{g} = \mathcal{V} g$.
\end{itemize}
The rest of the proof \cite[Proposition 3.8]{ChodoshMantoulidisWidths} now adopts to the drift setting, and hence we conclude that $\Sigma$ is smoothly immersed at any $\Q_i \subseteq M^{reg}$.
\end{proof}
\noindent We now improve the regularity under the assumptions listed at the beginning of this section:
\begin{proposition} \label{smallNonPrincipal}
Suppose that all of the non-principal orbits have dimension at most $n-2$. Let $\{u_{\eps_i}\}$ be a sequence of $G$-invariant Allen-Cahn solutions with uniformly bounded energy and $G$-equivariant index bounded by $p$. Let $V$ the limiting varifold of $V(u_{\eps_i})$, and $\Sigma = \text{supp}(V)$. Then $\Sigma$ is a $G$-invariant minimal hypersurface, smooth up to a singular set of dimension at most $n-7$, which lies in the union of all non-principal orbits.
\end{proposition}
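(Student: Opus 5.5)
The plan is to combine the finite-orbit regularity of Proposition \ref{prop.index.converge} with three local arguments, one for each kind of orbit in $\Sigma$. Throughout we are in the setting of this section, $\cohom(G)=2$, so principal orbits have dimension $n-1$. By hypothesis every non-principal orbit has dimension at most $n-2$.

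\emph{Step 1 (global picture).} Proposition \ref{prop.index.converge} gives finitely many orbits $\Q_1,\dots,\Q_k$ with the following property: on $M\setminus\bigcup_i\Q_i$, the support $\Sigma$ is an embedded minimal hypersurface, smooth up to a closed set of dimension at most $n-7$. This set is $G$-invariant, because $V$ is a $G$-varifold and $G$ acts by isometries.

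\emph{Step 2 (singular points off the $\Q_i$ lie in non-principal orbits).} Take $p\in\mathrm{sing}(V)\setminus\bigcup_i\Q_i$ and suppose $p\in M^{reg}$. By $G$-invariance the whole orbit $G\cdot p$ is singular. This orbit has dimension $n-1>n-7$, which contradicts Step 1. So such $p$ lies in $M\setminus M^{reg}$.

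\emph{Step 3 (principal $\Q_i$).} If $\Q_i\subseteq M^{reg}$, Proposition \ref{prop.principal.immersion} applies. It rests on Theorem \ref{thm.drift.stability} and the drift version of Chodosh--Mantoulidis, and shows that near $\Q_i$ the support $\Sigma$ is a smoothly immersed minimal hypersurface. It is the lift of finitely many immersed geodesics of $(U/G,\tilde g)$. Hence $\Q_i$ contributes no singular points.

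\emph{Step 4 (non-principal $\Q_i$).} If $\Q_i\not\subseteq M^{reg}$, I use the removable-singularity argument of Proposition \ref{proposition.cohom.three}. First choose a $G$-tube $B^G_r(q_i)$ so small that it meets no other $\Q_j$. Then show that $u_{\eps_k}$ is $G$-stable on $B^G_r(q_i)\setminus\overline{B}^G_s(q_i)$ for every $s<r$, for $k$ large. This follows from the covering argument in Proposition \ref{prop.index.converge}: index that concentrates only at $\Q_i$ cannot live in an annulus, or $\Q_i$ would not be isolated. By the earlier proposition, $G$-stability on a $G$-invariant set implies genuine stability there. Theorem \ref{thm.TW.stable} then shows that $V$ is stable and regular on $B^G_r(q_i)\setminus\Q_i$ away from a set of dimension at most $n-7$. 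Since $\dim\Q_i\le n-2$, we get
\[
\mathcal{H}^{n-1}\big(\mathrm{sing}(V)\cap B^G_r(q_i)\big)=0 .
\]
Corollary \ref{cor.wick.regular} then gives that $V$ is smooth across $\Q_i$ up to a set of dimension at most $n-7$. By Step 2, that set again lies in $M\setminus M^{reg}$.

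Combining Steps 1--4, $\Sigma$ is a $G$-invariant minimal hypersurface. Its singular set is closed, being the complement of the open regular set, has dimension at most $n-7$, and is contained in the union of the non-principal orbits.

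\emph{Main obstacle.} I expect the delicate point to be Step 4, specifically the stability of $V$ on the punctured tube. Stability on annuli must pass to stability of the regular part of $V$ on all of $B^G_r(q_i)\setminus\Q_i$. When $\Sigma$ is only immersed nearby (for instance if a principal $\Q_j$ were close), one also has to check that Wickramasekera's theorem applies sheet by sheet. I would handle this by shrinking $r$ so that only embedded pieces from Step 1 meet the tube. I would then pass stability to the limit via Tonegawa--Wickramasekera on each annulus $B^G_r\setminus\overline{B}^G_s$, letting $s\to0$.
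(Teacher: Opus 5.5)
Your proposal follows the paper's proof: principal $\Q_i$ are handled by Proposition \ref{prop.principal.immersion}, whose first part is exactly your Step 2. Non-principal $\Q_i$, which have dimension at most $n-2$, are smoothed over by combining Tonegawa--Wickramasekera on the punctured tube with Wickramasekera's regularity under $\mathcal{H}^{n-1}(\mathrm{sing}\,V)=0$. One correction in Step 4: $r$ must come from the usual nested-annuli count, not merely be small enough that the tube avoids the other $\Q_j$, because $G$-index can spread over a fixed-size region without concentrating at any orbit (if every $r$ admitted some $s<r$ with $u_{\eps_k}$ $G$-unstable on $\mathrm{An}^G(q_i,s,r)$ for all large $k$, iterating would give $p+1$ disjoint such annuli, contradicting $\mathrm{Ind}^G\le p$). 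This yields $G$-stability on each annulus for infinitely many $k$, which is all Tonegawa--Wickramasekera needs.
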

\begin{proof}
\indent For any points $q_i \in M^{reg}$, the argument remains the same as Proposition \ref{prop.principal.immersion}, and we can conclude that $\Sigma \cap M^{reg}$ is a union of smooth (potentially immersed) geodesics on $M^{reg}/G$. If $q_i \in (M \backslash M^{reg})$, then $Q_i = G \cdot q_i$ is necessarily dimension $n-2$ or smaller, meaning that we can apply Tonegawa--Wickramasekera to smooth across $Q_i$ up to a singular set of dimension at most $n-7$.
\end{proof}
\begin{remark}
We emphasize that in the hypothesis of Proposition \ref{smallNonPrincipal}, we require that the dimension of any \textit{individual} non-principal orbit has dimension $\leq n - 2$. We compare this with Wang who has established regularity under different conditions: his initial work \cite{wang2022min} assumed that the union of the non-principal orbits is at most $n-2$ dimensional, while  later work \cite{WangJGA} \cite{WangDensity} \cite{WangIndex} removed this assumption and established regularity when all orbits have codimension between $3$ and $7$ (in the closed setting). See also the work of Liu \cite{LiuCVPDE}. In \S \ref{section.appendix}, we provide examples in which the union of the non-principal orbits is $n-1$ dimensional, though each non-principal orbit is $\leq n-2$ dimensional.  \nl 
\indent We also note that because the action has $Cohom(G) = 2$, the requirement of non-principal orbits having dimension at most $n-2$ is equivalent to saying \textit{there are no exceptional orbits} (see \S \ref{section.appendix} for definitions).
\end{remark}
Combining propositions \ref{prop.index.converge}, \ref{prop.principal.immersion}, and \ref{smallNonPrincipal}, we conclude the proof of Theorem \ref{thm.AC.regularity} when $\cohom(G) = 2$.

%% file: Index_Bounds.tex
\section{Index Bounds} \label{section.index}
We consider $\{u_{\eps_i}\}$ a sequence of G-equivariant Allen--Cahn solutions with $|u_{\eps}| \leq 1$, $E_{\eps}(u_{\eps}) \leq \Lambda_0$, and $\text{Ind}_{G}(u_{\eps}) \leq p$. Let $V(u_{\eps})$ be the corresponding measures for the these solutions and 
\[
V = \lim_{i \to \infty} V(u_{\eps_i})
\]
the stationary integral varifold which arises as a limit of a subsequence of these measures. The goal of this section is to prove the following theorem:
\begin{theorem} \label{thm.index.bound}
For $V$ as above, suppose that the support of $V$ is induced by a union of minimal surfaces (potentially with multiplicity), which are smooth, disjoint, and embedded away from a set of at most dimension $n-7$, i.e. $\text{supp}(V) = \cup_{i = 1}^M \Sigma_i$, then 
\[
\sum_{i = 1}^M \text{Ind}_G(\text{Reg}(\Sigma_i)) \leq p
\]
\end{theorem}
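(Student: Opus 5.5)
The plan is to adapt the argument of \cite[Theorem A]{gaspar2020second} within the class of $G$-invariant functions and $G$-vector fields. We argue by contradiction. Suppose $\sum_i \mathrm{Ind}_G(\mathrm{Reg}(\Sigma_i)) \geq p+1$. Since the surfaces are disjoint and the singular set has dimension at most $n-7$, there are $G$-invariant functions $\phi_1,\dots,\phi_{p+1}$ on the regular parts with the following properties. Each is compactly supported in $\mathrm{Reg}(\Sigma_{i(k)})$. The supports are pairwise disjoint, or at least the functions span a $(p+1)$-dimensional space on which the Jacobi form $Q_\Sigma(\phi,\phi)=\int_\Sigma |\n\phi|^2-(|A|^2+\Ric(\nu,\nu))\phi^2$ is negative definite. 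We obtain them by averaging over $G$ and using a cutoff near $\mathrm{Sing}$. The cutoff is harmless because a set of codimension at least $7$ in $\Sigma$ has zero $2$-capacity, so the $G$-index of $\mathrm{Reg}(\Sigma_i)$ is realized by compactly supported $G$-invariant test functions. Two-sidedness of each $\Sigma_i$ near the supports, needed to write normal variations as functions, is local and holds in small $G$-tubes.

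The main step is to transplant each $\phi_k$ into a $G$-invariant test function $\psi_k^{\eps}$ for $D^2E_\eps(u_\eps)$ such that
\[
\limsup_{\eps\to 0} \frac{1}{2\sigma} D^2E_{\eps}(u_\eps)[\psi^\eps,\psi^\eps] \leq \theta\, Q_\Sigma(\phi,\phi)
\]
for $\psi^\eps$ in the span. Here $\theta$ is the (integer) multiplicity of the corresponding sheet. Following \cite{gaspar2020second}, we take $\psi^\eps = \langle X,\n u_\eps\rangle$ or, more precisely, $\eps|\n u_\eps|\,\tilde\phi$, where $\tilde\phi$ is a $G$-invariant extension of $\phi$ constant along normal geodesics and $X=\tilde\phi\,\nu$ is a $G$-vector field. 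These are $G$-invariant because $u_\eps$ and $X$ are. The second variation of $E_\eps$ along $u_\eps\circ F_t$, with $F_t$ the flow of $X$, converges to the second variation of $\|V\|$ along $X$. This uses the convergence of the first and second variation formulas for the energy–momentum tensor, together with the vanishing of the discrepancy $\frac{\eps}{2}|\n u|^2-\frac{W(u)}{\eps}\to 0$ in $L^1$ from Hutchinson–Tonegawa. The limit second variation equals $\sum \theta_i Q_{\Sigma_i}(\phi_i,\phi_i)$ because $V$ is smooth with constant multiplicity on the support of $X$.

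The two estimates of the main step combine as follows. Since $\phi\mapsto\psi^\eps$ is linear and the negative-definiteness is an open condition on a finite-dimensional space, for small $\eps$ the span of $\{\psi_k^\eps\}$ is a $(p+1)$-dimensional subspace of $W^{1,2}_G(M)$ on which $D^2E_\eps(u_\eps)$ is negative definite. Linear independence persists because the supports separate. This contradicts $\mathrm{Ind}^G_\eps(u_\eps)\leq p$. Disjointness of the $\Sigma_i$ lets the contributions add.

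I expect the main obstacle to be justifying the convergence of second variations in the presence of multiplicity and near $\mathrm{Sing}(V)$. Near the singular set we rely on the cutoff having support in $\mathrm{Reg}$. Where the multiplicity is at least $2$, we need only the inequality $\leq$, not equality. For this I would use Gaspar's argument, which controls the tangential part of the energy–momentum tensor. No new difficulty arises from $G$, since all constructions are averaged and the $G$-index is defined with $G$-vector fields.
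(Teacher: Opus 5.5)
Your overall route is the paper's: test $D^2E_\eps(u_\eps)$ on the inner variations $\langle X,\n u_\eps\rangle$ for $G$-vector fields $X$, pass to the limit, and conclude with the finite-dimensional argument of \cite[Theorem A]{gaspar2020second}. The gap is in the central convergence step.

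It is not true that $\frac{1}{2\sigma}\delta^2E_{\eps}(u_\eps,X)\to\delta^2V(X)$ for a general $G$-vector field $X$, and vanishing discrepancy plus varifold convergence does not give this. Every $\eps$-term in Proposition \ref{prop.gaspar.var} has the form $\eps\int_M T(\n u_\eps,\n u_\eps)$. By Proposition \ref{prop.normal.converge}, each such term converges to the normal--normal quantity $2\sigma\sum_j m_j\int_{\Gamma_j}T(n_j,n_j)$. By contrast, $\delta^2V(X)$ is built from tangential traces. The mismatch is the extra term in Proposition \ref{prop.second.var.converge}:
\[
\frac{1}{2\sigma}\lim_{\eps\to0}\delta^2E_{\eps}(u_\eps,X)=\delta^2V(X)+\sum_j m_j\int_{\Gamma_j}\Big(\langle\n_{n_j}X,n_j\rangle^2+R(X,n_j,X,n_j)\Big).
\]
For $X$ normal along $\Gamma_j$ the curvature term vanishes, but the first term is $\geq 0$. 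So without further care, the inequality you get for free goes the wrong way for a contradiction.

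What rescues your argument is the extension you happened to choose. Take $X$ to be the normal-exponential extension of the normal section $\phi\,\nu$, so that $\n_{\nu}X=0$ along normal geodesics, and cut it off by a $G$-invariant function of the distance that is $\equiv 1$ near $\Gamma_j$. Then $\langle\n_{n_j}X,n_j\rangle=0$ on $\Gamma_j$, and the limit is exactly $\sum_j m_jQ_{\Gamma_j}(\phi,\phi)$. This computation is the step you must supply. It also shows that multiplicity needs no separate ``$\leq$'' argument controlling tangential parts: the limit is an identity with weights $m_j$.

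Three smaller corrections:
\begin{itemize}
\item Drop ``or, more precisely, $\eps|\n u_\eps|\tilde\phi$''. It is a different test function: when $m_j\geq 2$, $\langle\nu,\n u_\eps\rangle$ changes sign between consecutive transition layers while $|\n u_\eps|$ does not. Using it would require the Tonegawa--Hiesmayr lower semicontinuity of the enhanced second fundamental form together with two-sidedness, rather than the inner-variation convergence.
\item Two-sidedness is not needed here at all. A $G$-invariant section of the normal bundle of $\mathrm{Reg}(\Sigma_i)$, extended by the normal exponential map, is a well-defined $G$-vector field on $M$ even if $\Sigma_i$ is one-sided, and $\langle X,\n u_\eps\rangle$ is then a global $G$-invariant function. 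Localizing to small tubes would not preserve negative definiteness anyway.
\item Linear independence of the $\langle X_k,\n u_\eps\rangle$ does not come from disjoint supports, which fail within a single $\Sigma_i$. It comes from negative definiteness of the limit form on the $(p+1)$-dimensional space of fields: a nonzero kernel element would give $\delta^2E_\eps=0$.
\end{itemize}
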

\noindent Recall that $\text{Reg}(\Sigma_i)$ denotes the regular part of $\Sigma_i$. Much of the work of the second author \cite{gaspar2020second} now translates to the equivariant setting, including his computations of the Allen--Cahn first and second \emph{inner variations}, namely $\delta E_\e(u,X) = DE_\e(u)(\langle \nabla u,X\rangle)$ and $\delta^2E_\e(u,X) = D^2E_\e(u)[\langle \nabla u,X\rangle,\langle \nabla u,X\rangle]$ with a geometric description:
\begin{proposition}[Prop 3.2, Gaspar \cite{gaspar2020second}] \label{prop.gaspar.var}
It holds that 
\[
\delta E_{\eps}(u, X) = \int_M \left[ \frac{\eps |\n u|^2}{2} + \frac{W(u)}{\eps}  \div(X) - \eps \langle \n_{\n u} X, \n u \rangle \right] 
\]
and 
\begin{align*}
\delta^2 E_{\eps}(u, X) &= \int_M \Big(\div (\n_X X) - \Ric(X, X) + \tr_g S_x - \frac{1}{2} |h_X|^2 + (\div X)^2 \Big) d e_{\eps} \\
& + \eps \int_M \Big( T_X(\n u, \n u) + 2 \langle \n_{\n_{\n_u} X} X, \n u \rangle - \langle \n_{\n u} \n_X X, \n u \rangle \\
& \qquad - 2 \langle \n_{\n u} X, \n u \rangle \div X + R(X, \n u, X, \n u) \Big) 
\end{align*}
where 
\begin{align*}
S_X(Y_1, Y_2) &=  \langle \n_{Y_1} X, \n_{Y_2} X\rangle  \\
h_X(Y_1, Y_2) &= \langle \n_{Y_1} X, Y_2 \rangle + \langle Y_1, \n_{Y_2} X \rangle \\
T_X(Y_1, Y_2) &= \tr_g((Z_1, Z_2) \mapsto \langle \n_{Z_1} X, Y_1\rangle \cdot \langle \n_{Z_2} X, Y_2 \rangle )
\end{align*}
\end{proposition}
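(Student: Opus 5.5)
The statement is Gaspar's first and second inner variation formulas (Proposition 3.2 of \cite{gaspar2020second}). The plan is a direct computation along the flow of $X$. It does not depend on the group action, so it holds verbatim for $G$-vector fields $X$ and $G$-invariant $u$. The only point to check for the equivariant setting is that when $X \in \mathfrak{X}^G(M)$ and $u \in W^{1,2}_G(M)$, the function $\langle \nabla u, X\rangle$ is again $G$-invariant. This is immediate, since $G$ acts by isometries and $g_*X = X$. Consequently the inner variations are genuine variations within $W^{1,2}_G(M)$, which is what later makes them compatible with $\mathrm{Ind}^G_\e$.

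\textbf{First variation.} Let $\Phi_t$ be the flow of $X$ and set $u_t = u\circ\Phi_t^{-1}$. Changing variables $y=\Phi_t(x)$ gives
\[
E_\e(u_t)=\int_M \Big(\tfrac{\e}{2}\,|(D\Phi_t)^{-T}\nabla u|^2+\tfrac{W(u)}{\e}\Big)\,J\Phi_t\,dV_g .
\]
At $t=0$ we have $\frac{d}{dt}J\Phi_t=\div X$ and $\frac{d}{dt}(D\Phi_t)^{-T}\nabla u=-(\nabla X)^T\nabla u$. Differentiating therefore produces
\[
\Big(\tfrac{\e|\nabla u|^2}{2}+\tfrac{W(u)}{\e}\Big)\div X-\e\langle\nabla_{\nabla u}X,\nabla u\rangle ,
\]
which is the first formula. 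The bracket in the displayed statement is presumably meant to multiply the whole energy density by $\div X$.

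\textbf{Second variation.} Differentiate twice at $t=0$ in the same representation. Two expansions are needed:
\begin{itemize}
\item $\frac{d^2}{dt^2}J\Phi_t=\div(\nabla_XX)+(\div X)^2-\tr\big((\nabla X)^2\big)$;
\item the second derivative of $|(D\Phi_t)^{-T}\nabla u|^2$, written in terms of $\nabla_{\nabla_{\nabla u}X}X$, $\nabla_{\nabla u}\nabla_XX$, and a curvature term $R(X,\nabla u,X,\nabla u)$.
\end{itemize}
The curvature term arises from commuting covariant derivatives when $\nabla(\nabla_XX)$ is compared with $\nabla^2X(X,\cdot)$. Collecting terms, everything multiplying the energy density is rewritten using
\[
\tr\big((\nabla X)^2\big)=\tfrac12|h_X|^2-\tr_gS_X
\]
together with the identity $\tr\big(R(\cdot,X)X\big)=\Ric(X,X)$. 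These account for the terms $-\Ric(X,X)+\tr_gS_X-\tfrac12|h_X|^2$. The cross terms, which carry the factor $\e$ and are quadratic in $\nabla u$, organize into $T_X(\nabla u,\nabla u)$, $2\langle\nabla_{\nabla_{\nabla u}X}X,\nabla u\rangle$, $-\langle\nabla_{\nabla u}\nabla_XX,\nabla u\rangle$, $-2\langle\nabla_{\nabla u}X,\nabla u\rangle\div X$, and the curvature term.

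\textbf{Main obstacle.} The hardest part is the bookkeeping in the second derivative of the gradient term, namely identifying the curvature contribution and the $T_X$ term correctly. To organize it, I would work in normal coordinates at a point, write $(D\Phi_t)^{-1}=I-t\nabla X+\tfrac{t^2}{2}\big(2(\nabla X)^2-\nabla(\nabla_XX)+R\text{-terms}\big)+O(t^3)$, and use $\frac{d}{dt}\Phi_t=X\circ\Phi_t$ to expand $D\Phi_t$ to second order. As a consistency check, in flat space with $X$ linear the curvature terms vanish and the formula reduces to an elementary matrix computation.

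Finally, because $u$ is a critical point, the relation $\delta^2E_\e(u,X)=D^2E_\e(u)[\langle\nabla u,X\rangle,\langle\nabla u,X\rangle]$ is obtained by expanding $E_\e(u\circ\Phi_{-t})$ to second order. The $t^2$ term involving $DE_\e(u)$ vanishes, so only the quadratic form $D^2E_\e(u)$ survives.
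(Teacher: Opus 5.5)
\textbf{Gap in the Jacobian term.} Your route is the standard direct computation: pull back by the flow $\Phi_t$ and differentiate $\int_M(\frac{\e}{2}|du|^2_{\Phi_t^*g}+\frac{W(u)}{\e})J\Phi_t$ twice. This is the computation behind Gaspar's Prop.~3.2; the paper only cites it. Your first variation, the remark on the misplaced bracket, the $G$-invariance of $\langle\nabla u,X\rangle$, and the reduction to $D^2E_\e(u)$ at a critical point are all fine.

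The problem is your formula for $\frac{d^2}{dt^2}J\Phi_t$. By Liouville, $\frac{d}{dt}J\Phi_t=(\div X\circ\Phi_t)\,J\Phi_t$, so $\frac{d^2}{dt^2}J\Phi_t|_{t=0}=X(\div X)+(\div X)^2$. The commutation identity $\div(\nabla_XX)=X(\div X)+\tr((\nabla X)^2)+\Ric(X,X)$ then gives
\[
\frac{d^2}{dt^2}\Big|_{t=0}J\Phi_t=\div(\nabla_XX)-\Ric(X,X)-\tr\big((\nabla X)^2\big)+(\div X)^2 .
\]
Your version omits $-\Ric(X,X)$. For a Killing field on the round sphere, $J\Phi_t\equiv 1$, yet your expression equals $\Ric(X,X)\neq 0$.

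This breaks your bookkeeping. The potential term $W(u)/\e$ is multiplied only by $J\Phi_t$. The second derivative of the gradient term is a quadratic form in $\nabla u$; it produces $R(X,\nabla u,X,\nabla u)$ but never $\Ric(X,X)|\nabla u|^2$. So, as written, nothing supplies the $-\Ric(X,X)$ in the coefficient of $de_\e$, and your claim that collecting terms accounts for it does not follow. The trace $\tr(R(\cdot,X)X)=\Ric(X,X)$ you mention belongs exactly in the commutation identity above. With the corrected formula and your (correct) identity $\tr((\nabla X)^2)=\frac12|h_X|^2-\tr_gS_X$, the $de_\e$-coefficient comes out as stated.

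\textbf{The gradient term.} In normal coordinates centered at $x$, the moving point also matters: $g_{\Phi_t(x)}=\delta+O(t^2)$ with curvature coefficients. So your schematic ``$R$-terms'' have two sources, not only the expansion of $D\Phi_t$. A cleaner route uses $|\nabla(u\circ\Phi_t^{-1})|^2\circ\Phi_t=|du|^2_{\Phi_t^*g}$, together with $\frac{d}{dt}\Phi_t^*g|_0=\mathcal{L}_Xg=h_X$ and $\frac{d^2}{dt^2}\Phi_t^*g|_0=\mathcal{L}_Xh_X$. Then
\[
\frac{d^2}{dt^2}\Big|_{t=0}|du|^2_{\Phi_t^*g}=2\sum_k h_X(\nabla u,e_k)^2-(\mathcal{L}_Xh_X)(\nabla u,\nabla u),
\]
and
\[
(\mathcal{L}_Xh_X)(Y,Y)=2\langle\nabla_Y\nabla_XX,Y\rangle+2\langle R(X,Y)X,Y\rangle+2|\nabla_YX|^2 .
\]
The curvature enters through $\nabla^2_{X,Y}X-\nabla^2_{Y,X}X=R(X,Y)X$, as you anticipated. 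Expanding $2\sum_k h_X(Y,e_k)^2=2|\nabla_YX|^2+4\langle\nabla_{\nabla_YX}X,Y\rangle+2T_X(Y,Y)$ and multiplying by $\e/2$ gives the four $\e$-terms. These are the $T_X$ term, $2\langle\nabla_{\nabla_{\nabla u}X}X,\nabla u\rangle$, $-\langle\nabla_{\nabla u}\nabla_XX,\nabla u\rangle$, and the curvature term, since $-\langle R(X,\nabla u)X,\nabla u\rangle=R(X,\nabla u,X,\nabla u)$ in the statement's convention. Adding the cross term $-2\e\langle\nabla_{\nabla u}X,\nabla u\rangle\div X$ completes the formula.
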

We will be interested in apply Proposition \ref{prop.gaspar.var} when $X$ is a G-equivariant vector field.  We now assume that the limit varifold, $V$, takes the form 
\begin{equation} \label{eqn.varifold.decomp}
V = \sigma \sum_{i = 1}^N v(\Gamma_j, m_j)
\end{equation}
where $\Gamma_j$ are connected minimal surfaces, $m_j \in \Z^+$ and $\sigma = \int_{-1}^1\sqrt{W(s)/2} ds$ is a normalizing constant. We now conclude a $G$-equivariant version of \cite[Prop 2.2]{gaspar2020second}:
\begin{proposition} \label{prop.normal.converge}
Let $\{u_{\eps_k}\}$ be a sequence of G-equivariant Allen--Cahn solutions satisfying equation \eqref{eqn.varifold.decomp}. Then up to subsequence
\[
\lim_{k \to \infty}\eps_k \int_M T(\n u_{\eps_k}, \n u_{\eps_k}) = 2 \sigma \sum_{j = 1}^N m_j \int_{\Gamma_j} T(n_j, n_j)
\]
where $T$ is any $G$-equivariant $(0,2)$ tensor on $M$. Here $n_j$ denotes a measure choice of a unit normal vector field defined on $\Gamma$.
\end{proposition}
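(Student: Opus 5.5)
We reduce the statement to the non-equivariant convergence result of Gaspar \cite[Prop 2.2]{gaspar2020second}. That result holds for any sequence of Allen--Cahn solutions whose associated varifolds converge to a limit of the form \eqref{eqn.varifold.decomp}, regardless of index. Its content is twofold. First, the discrepancy measure $\xi_{\eps} = \left(\frac{\eps|\n u_\eps|^2}{2} - \frac{W(u_\eps)}{\eps}\right) dV_g$ tends to zero, by Hutchinson--Tonegawa \cite{HutchinsonTonegawa}. Second, the normalized gradient directions $\n u_{\eps}/|\n u_\eps|$ align with the normals of the limit varifold in the sense of the varifold tangent planes $I - \nu\otimes\nu$. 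The plan is therefore to run this argument verbatim and observe that the $G$-equivariance of $T$ and $u_{\eps_k}$ plays no role except to ensure that both sides are well defined and $G$-invariant.

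\textbf{Step 1: rewrite the integrand in terms of the varifold.} For the tensor $T$, define the continuous function $\Phi_T$ on the Grassmannian bundle by
\[
\Phi_T(x, P) = \tr_g T(x) - \tr_P T(x) = T(x)(\nu_P,\nu_P),
\]
where $\nu_P$ is a unit normal to the hyperplane $P$. This expression is independent of the choice of sign of $\nu_P$, so $\Phi_T$ is well defined and continuous. On $\{|\n u|>0\}$ we then have
\[
\eps T(\n u,\n u) = \eps|\n u|^2\, \Phi_T\!\left(x, I - \tfrac{\n u}{|\n u|}\otimes \tfrac{\n u}{|\n u|}\right).
\]

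\textbf{Step 2: split the gradient term.} Write
\[
\eps|\n u|^2 = \left(\tfrac{\eps|\n u|^2}{2}+\tfrac{W(u)}{\eps}\right) + \left(\tfrac{\eps|\n u|^2}{2}-\tfrac{W(u)}{\eps}\right).
\]
The first part gives $2\sigma\, V(u_{\eps_k})(\Phi_T)$, which converges to $2\sigma\, V(\Phi_T)$ by varifold convergence. The second part is bounded by $\|T\|_\infty\, |\xi_{\eps_k}|(M)$. We need $|\xi_{\eps_k}|(M)\to 0$. This is the main analytic input: Hutchinson--Tonegawa give $\xi_\eps \le 0$ asymptotically and $\xi_\eps \to 0$ as measures for solutions with bounded energy. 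For total variation convergence on the compact manifold $M$, we use the standard estimate $\frac{\eps|\n u|^2}{2} \le \frac{W(u)}{\eps} + o(1)$ in the closed setting, together with vanishing of the weak limit. This is exactly as in \cite{gaspar2020second}, and we simply cite it since no symmetry is involved.

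\textbf{Step 3: evaluate the limit.} By \eqref{eqn.varifold.decomp}, the limit varifold has tangent plane $T_x\Gamma_j$ with multiplicity $m_j$. Hence
\[
V(\Phi_T) = \sum_j m_j \int_{\Gamma_j} T(n_j,n_j).
\]
Here the normalization must be matched: $\sigma$ in \eqref{eqn.varifold.decomp} should be reconciled with the $1/2\sigma$ in the definition of $\mu_\eps$, yielding the constant $2\sigma$. Passing to a subsequence only serves to ensure the varifold convergence.

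\textbf{Main obstacle.} The expected obstacle is Step 2, namely upgrading weak vanishing of the discrepancy to vanishing of $\int |\xi_{\eps_k}|$ near the lower-dimensional singular set and near the finitely many orbits where the index concentrates. Since the statement is global and $T$ is merely continuous, however, only total mass convergence is needed, and the cited closed-manifold results already suffice. The $G$-equivariance of $T$ is not actually used. It is retained only because the proposition will later be applied with $T$ built from $G$-vector fields $X$ in Proposition \ref{prop.gaspar.var}.
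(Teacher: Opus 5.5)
Your proposal is correct and follows the paper's route. The paper's proof only says that Gaspar's argument for \cite[Prop 2.2]{gaspar2020second} carries over, and that argument is the one you give: split $\eps|\n u|^2 = e_\eps + \xi_\eps$ (with $e_\eps$ the energy density), test varifold convergence against the continuous function $(x,P)\mapsto T(\nu_P,\nu_P)$, and use the vanishing of the discrepancy, none of which involves equivariance or index.

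Two small points. The only load-bearing input is $|\xi_{\eps_k}|(M)\to 0$ for bounded-energy solutions on a closed manifold, which should be cited from \cite{HutchinsonTonegawa} (closed-manifold version in \cite{GuaracoMinmax}) rather than derived from the pointwise bound you sketch. And you are right that the constant in \eqref{eqn.varifold.decomp} has to be read so that $\|V\|=\lim_k \mu_{\eps_k}$ with $\mu_\eps = e_\eps/(2\sigma)$.
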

\begin{proof}
The same proof as in \cite{gaspar2020second} holds.
\end{proof}
With propositions \ref{prop.gaspar.var} and \ref{prop.normal.converge}, we can conclude the geometric convergence of the Allen--Cahn second inner variation to the variation of the limiting varifold plus an error term
\begin{proposition} \label{prop.second.var.converge}
Let $\{u_{\eps_k}\}$ our $G$-equivariant sequence of solutions as above. We have that up to subsequence
\[
\frac{1}{2 \sigma} \lim_{k \to \infty} \delta^2 E_{\eps_k}(u_{\eps_k}, X) = \delta^2 V(X) + \sum_{j = 1}^N m_j \int_{\Gamma_j} \Big( \langle \n_{n_j} X, n_j \rangle^2 + R(X, n_j, X, n_j) \Big)
\]
for any G-equivariant vector field $X$.
\end{proposition}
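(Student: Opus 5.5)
The plan is to pass to the limit term by term in the formula of Proposition \ref{prop.gaspar.var} for $\delta^2 E_{\eps_k}(u_{\eps_k},X)$. Then I would compare the result with the classical second variation formula of the area functional for the varifold $V$ along the flow of $X$. The formula splits naturally into two parts:
\begin{itemize}
\item an integral against the energy density measure $de_{\eps}$ of the purely ambient quantity $Q_1(X) := \div(\n_X X) - \Ric(X,X) + \tr_g S_X - \tfrac12|h_X|^2 + (\div X)^2$;
\item an integral $\eps \int_M \mathcal{T}_X(\n u,\n u)$, where $\mathcal{T}_X$ collects the quadratic-in-$\n u$ terms $T_X(\n u,\n u) + 2\langle \n_{\n_{\n u}X}X,\n u\rangle - \langle \n_{\n u}\n_X X,\n u\rangle - 2\langle \n_{\n u}X,\n u\rangle \div X + R(X,\n u,X,\n u)$.
\end{itemize}
Since $X$ is a $G$-vector field and $G$ acts by isometries, $Q_1(X)$ is a $G$-invariant smooth function and $\mathcal{T}_X$ is a $G$-equivariant smooth symmetric $(0,2)$-tensor. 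So both parts are within reach of the convergence results already stated.

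\textbf{First part.} The measures $\frac{1}{2\sigma}de_{\eps_k}$ converge weakly to $\|V\| = \sum_j m_j \mathcal{H}^n\lfloor\Gamma_j$, by the varifold convergence and \eqref{eqn.varifold.decomp}, with the normalization $\sigma$ absorbed. Hence this term tends to $\sum_j m_j\int_{\Gamma_j} Q_1(X)$.

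\textbf{Second part.} I would apply Proposition \ref{prop.normal.converge} with $T = \mathcal{T}_X$. This gives the limit $2\sigma \sum_j m_j \int_{\Gamma_j} \mathcal{T}_X(n_j,n_j)$. The orientation of $n_j$ is irrelevant because $\mathcal{T}_X$ is quadratic. One technical point is that Proposition \ref{prop.normal.converge} is stated for tensors and uses only weak convergence against continuous test objects. Since $\mathcal{T}_X$ is smooth on the closed manifold $M$, no issue arises even near the singular set, which has $\mathcal{H}^n$-measure zero.

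\textbf{Main obstacle: the algebraic identification.} I must show that
\[
Q_1(X) + \mathcal{T}_X(n,n) = \big(\text{integrand of } \delta^2 V(X)\big) + \langle \n_n X,n\rangle^2 + R(X,n,X,n)
\]
pointwise on $\mathrm{Reg}(\Gamma_j)$, modulo terms that integrate to zero. The integrand of $\delta^2V(X)$ is the standard one (Simons/Allard): $\div_\Gamma(\n_X X) + (\div_\Gamma X)^2 + \sum_{i}|(\n_{e_i}X)^\perp|^2 - \sum_{i,j}\langle e_i,\n_{e_j}X\rangle\langle e_j,\n_{e_i}X\rangle - \sum_i R(e_i,X,X,e_i)$, where $\{e_i\}$ is an orthonormal tangent frame and $\perp$ denotes the normal component.

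To check the identity, I would fix an orthonormal frame $\{e_1,\dots,e_n,n\}$ at a regular point and expand each ambient trace as a tangential trace plus the $n$-direction:
\begin{itemize}
\item $\div X = \div_\Gamma X + \langle \n_n X,n\rangle$;
\item $\Ric(X,X) = \sum_i R(e_i,X,X,e_i) + R(n,X,X,n)$;
\item similarly for $\tr_g S_X$, $|h_X|^2$ and $T_X(n,n)$.
\end{itemize}
Substituting these, the cross terms involving $\langle\n_nX,n\rangle\div_\Gamma X$ should cancel against $-2\langle \n_n X,n\rangle\div X$. The remainder should reorganize into $\delta^2V(X)$ plus $\langle\n_nX,n\rangle^2 + R(X,n,X,n)$.

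The terms $\div(\n_XX)$ and $-\langle \n_n\n_XX,n\rangle$ combine to $\div_\Gamma(\n_XX)$. This integrates to $-\int_\Gamma \langle \n_XX, \vec H\rangle = 0$ because $\Gamma_j$ is minimal. To justify this integration by parts across the singular set, I would use that the singular set has codimension at least $7$ in $\Gamma_j$, so a standard cutoff argument applies; alternatively, stationarity of $V$ gives it directly.

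This bookkeeping is where I expect errors to hide, since $|h_X|^2$, $T_X$ and the term $\langle \n_{\n_{\n u}X}X,\n u\rangle$ all mix normal and tangential components. I would verify the identity first in the simplest case where $X$ is normal to $\Gamma$ and $\Gamma$ is totally geodesic, to fix signs, before doing the general expansion. Finally, I would pass to a common subsequence for both limits.
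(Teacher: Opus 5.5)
Your route is exactly what the paper intends: take the limit term by term in Proposition~\ref{prop.gaspar.var}, using weak convergence of the energy measures for $Q_1(X)$ and Proposition~\ref{prop.normal.converge} for $\mathcal{T}_X$, then identify the integrand pointwise on the regular part. The paper offers nothing beyond citing those two propositions and Gaspar, and both of your limit steps are fine. \textbf{The gap is the algebraic identity you set out to verify: it is false, and your own decomposition shows it.}

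The curvature term in Proposition~\ref{prop.gaspar.var} must be read as $R(X,\nabla u,X,\nabla u)=K(X,\nabla u)\,|X\wedge\nabla u|^2$, with $K$ the sectional curvature. This is forced: for a Killing field the whole second inner variation vanishes, and that happens only with this sign. The curvature contributions to $Q_1(X)+\mathcal{T}_X(n,n)$ are then
\[
-\mathrm{Ric}(X,X)+K(X,n)|X\wedge n|^2=-\sum_i K(X,e_i)|X\wedge e_i|^2 .
\]
This is precisely the curvature term of $\delta^2V(X)$, so nothing curvature-dependent survives.

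Write $A_{ab}=\langle\nabla_{e_a}X,e_b\rangle$, with $N$ the normal index. The remaining terms reduce to $\mathrm{div}_\Gamma(\nabla_XX)+(\mathrm{div}_\Gamma X)^2+\sum_iA_{iN}^2-\sum_{i,j}A_{ij}A_{ji}+A_{NN}^2$. So the correct pointwise identity is
\[
Q_1(X)+\mathcal{T}_X(n,n)=\big(\text{integrand of }\delta^2V(X)\big)+\langle\nabla_nX,n\rangle^2 .
\]
Your argument therefore proves
\[
\frac{1}{2\sigma}\lim_k\delta^2E_{\eps_k}(u_{\eps_k},X)=\delta^2V(X)+\sum_jm_j\int_{\Gamma_j}\langle\nabla_{n_j}X,n_j\rangle^2 ,
\]
which is Gaspar's formula.

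A convention-free check: let $\phi_t$ be the flow of $X$ and $J_t$ its Jacobian. Then $E_\eps(u\circ\phi_t^{-1})=\int_M\big(\frac{\eps}{2}|du|^2_{\phi_t^*g}+\frac{W(u)}{\eps}\big)J_t$. By equipartition the limit integrand is $\frac12\big[(a_tJ_t)''+J_t''\big]_{t=0}$, where $a_t$ is the squared $\phi_t^*g$-norm of the covector $g(n,\cdot)$. The Jacobian of $\phi_t|_{\Gamma}$ is $J_t\sqrt{a_t}$. The two second derivatives differ by exactly $\frac14(a_0')^2=\langle\nabla_nX,n\rangle^2$. Incidentally, $\mathrm{div}_\Gamma(\nabla_XX)$ occurs on both sides, so no cutoff argument is needed for it.

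Consequently no bookkeeping can produce the extra $\int_{\Gamma_j}R(X,n_j,X,n_j)$, and the statement as written is false. Take the Hopf action of $G=S^1$ on the round $S^3$, which is free of cohomogeneity $2$. Let $X(z)=iz$ be its generator, a unit Killing $G$-vector field tangent to the orbits.
\begin{itemize}
\item For $G$-invariant $u$ one has $\langle\nabla u,X\rangle\equiv 0$, so the left side is $0$ for every $k$.
\item $\delta^2V(X)=0$, because the $\phi_t$ are isometries.
\item $\langle\nabla_nX,n\rangle=0$, because $\nabla X$ is skew.
\item But $X$ is tangent to every $G$-invariant $\Gamma_j$, so $R(X,n_j,X,n_j)=|X|^2=1$.
\end{itemize}
Hence the stated right side equals $\sum_jm_j\mathcal{H}^2(\Gamma_j)>0$ for any nontrivial limit, e.g.\ that of the equivariant mountain-pass solutions. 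You should instead prove the corrected formula without the curvature term. This does not affect Theorem~\ref{thm.index.bound}, whose argument only uses fields normal along the $\Gamma_j$, for which $R(X,n_j,X,n_j)=0$ anyway.
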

\noindent The same proof of Theorem A in \cite[\S 4]{gaspar2020second} now gives the conclusion of Theorem \ref{thm.index.bound}.

%% file: Ricci_Pos.tex
\section{Positive Ricci for embedded $G$-invariant minimal surface of cohomogeneity $\geq 3$} \label{section.ricci}
The purpose of this section is to prove the following result:
\begin{theorem} \label{thm.Ric.pos.mult.one}
For $(M^{n+1}, g)$, $3 \leq n+1 \leq 7$ and $G$ as above, assume that $\text{Cohom}(G) \geq 3$ and $\Ric_g > 0$. Then 
\[
\omega_{1,AC}^G(M,g) = \text{Area}(\Sigma_1)
\]
i.e. the first $G$-invariant width occurs with multiplicity one.
\end{theorem}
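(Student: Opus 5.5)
The plan is to adapt Bellettini's multiplicity-one argument \cite{bellettini2024multiplicity} to the $G$-invariant setting. By Proposition \ref{1 parameter min-max}, for each small $\eps$ there is a $G$-invariant solution $u_\eps$ with $E_\eps(u_\eps)=\omega^G_{AC,\eps}(M,g)$, $|u_\eps|\le 1$ and $\mathrm{Ind}^G_\eps(u_\eps)\le 1$, with energies bounded above and below uniformly in $\eps$. Since $\cohom(G)\ge 3$ and $n+1\le 7$, Theorem \ref{thm.AC.regularity} (through Propositions \ref{prop.index.converge}, \ref{proposition.cohom.three} and \ref{prop.singularity.localize}) gives a subsequential varifold limit $V=\sum_j m_j|\Sigma_j|$ with $\frac{1}{2\sigma}\lim \omega^G_{AC,\eps}=\|V\|(M)$. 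Here the $\Sigma_j$ are smooth, closed, embedded, $G$-invariant minimal hypersurfaces; the singular set has dimension $\le n-7<0$, so it is empty. Because $\Ric_g>0$, Frankel's theorem makes any two minimal hypersurfaces intersect, so the support is a single connected $G$-invariant minimal hypersurface $\Sigma$ with multiplicity $m$. It is also two-sided: a one-sided limit with odd multiplicity is excluded by the Allen--Cahn structure, and with even multiplicity we pass to the double cover, which is again handled below. The goal is then to show $m=1$, after which we set $\Sigma_1=\Sigma$ and absorb the normalizing constant $2\sigma$ as in the statement.

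The key step is an upper bound on the width, which we construct directly. We build an explicit $G$-invariant path $\gamma\in\Gamma_G$ from $-1$ to $1$ with $\limsup_{\eps\to0}\max_t E_\eps(\gamma(t))\le 2\sigma\,\mathrm{Area}(\Sigma)$ and strict inequality unless equality is forced. Following Bellettini, use that $\Sigma$ separates $M$ (two-sided and connected in positive Ricci; $H_{n}$ considerations aside). Take the $G$-invariant signed distance $d_\Sigma$, which is $G$-invariant because $\Sigma$ is. Sweep by the level sets $\{d_\Sigma=s\}$: these are $G$-invariant, and by $\Ric>0$ their areas are strictly less than $\mathrm{Area}(\Sigma)$ for $s\ne0$. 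Compose with the truncated heteroclinic $\tilde{\mathbb H}_\eps$, exactly as in the upper bound in the proof of Proposition \ref{1 parameter min-max}. Near the two ends, where the level sets shrink to the focal sets, connect linearly to $\pm1$ as there. This gives $\omega^G_{AC,\eps}\le 2\sigma\,\mathrm{Area}(\Sigma)+o(1)$. Combined with $\lim\omega^G_{AC,\eps}=2\sigma m\,\mathrm{Area}(\Sigma)$, this forces $m=1$.

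Every object used is $G$-invariant, so no new equivariant interpolation is needed. If the limit were a one-sided $\Sigma$ with even multiplicity, the analogous sweepout by the boundaries of tubular neighborhoods $\{d_\Sigma<s\}$ has area tending to $2\,\mathrm{Area}(\Sigma)$ as $s\to0$. We must then instead use Bellettini's comparison: either a one-sided $\Sigma$ gives $m\ge2$ with mass $\ge 2\mathrm{Area}(\Sigma)$, and the tube sweepout gives width $\le 2\mathrm{Area}(\Sigma)$, reading the conclusion as area of the two-sided double cover; or it is ruled out by the Allen--Cahn sign structure (solutions change sign across the interface).

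The main obstacle is showing the limit support is connected and two-sided in the equivariant setting, and controlling the sweepout near the focal sets. I would first try to verify that Frankel's argument applies verbatim, since the $\Sigma_j$ are smooth and closed. I would then check that the focal-set endpoints of the distance sweepout have vanishing area contributions, so that $\max_t E_\eps(\gamma(t))$ is attained near $s=0$. The index bound $\mathrm{Ind}^G\le1$ together with Theorem \ref{thm.index.bound} would serve as a backup to exclude the degenerate configurations.
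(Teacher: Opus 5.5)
Your two-sided argument is correct, and it is more elementary than the paper's route. Since $\Ric_g>0$ gives $H^1(M;\Z)=0$, a connected two-sided $\Sigma$ separates, and the Heintze--Karcher comparison gives $\mathcal{H}^n(\{d_\Sigma=s\})\le\mathcal{H}^n(\Sigma)$ for all $s$. The $G$-invariant path $s\mapsto\tilde{\mathbb{H}}_\eps\circ(d_\Sigma-s)$ then yields $\frac{1}{2\sigma}\limsup_{\eps\to0}\omega^G_{AC,\eps}\le\mathcal{H}^n(\Sigma)$. Compared with $\frac{1}{2\sigma}\lim\omega^G_{AC,\eps}=m\,\mathcal{H}^n(\Sigma)$, this forces $m=1$, and no strict inequality is needed.

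\textbf{The gap is the one-sided case.} The Allen--Cahn structure forces a one-sided component of the limit to carry \emph{even} multiplicity, but it does not exclude it. A function that is $\approx+1$ in a thin tube around a one-sided $\Sigma$ and $\approx-1$ outside changes sign across an interface that double-covers $\Sigma$. So your sign-change remark rules nothing out, and $V=2|\Sigma|$ remains possible. For that configuration your tube sweepout gives only $\frac{1}{2\sigma}\limsup\omega^G_{AC,\eps}\le 2\mathcal{H}^n(\Sigma)$. Its supremum is approached exactly as the tube collapses onto $\Sigma$, so it is fully compatible with $m=2$. Reading the conclusion as ``area of the double cover'' changes the statement, which asserts multiplicity one. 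The index backup does not help either: Theorem \ref{thm.index.bound} only bounds the $G$-index, and low-index one-sided minimal hypersurfaces occur in positive Ricci curvature (the equatorial $\R\P^2\subset\R\P^3$ is stable).

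\textbf{The missing ingredient is a strict gap, uniform in $\eps$:} $\frac{1}{2\sigma}\limsup_{\eps\to0}\omega^G_{AC,\eps}<2\mathcal{H}^n(\Sigma)$ for every smooth embedded $G$-invariant minimal hypersurface $\Sigma$, one-sided or not. This is Bellettini's theorem, and the paper's proof (Theorem \ref{thm.belletini.G.invar}) runs his path inside $W^{1,2}_G(M)$:
\begin{itemize}
\item The distance-based profiles are automatically $G$-invariant.
\item The excised ball becomes a $G$-tube. This requires a $G$-invariant $\tilde\phi\ge0$ vanishing near the tube and making $\int|\nabla\tilde\phi|^2-\int\tilde\phi^2(|A|^2+\Ric(\nu,\nu))$ negative (Lemma \ref{lem.unstable.region}). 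This is where $\cohom(G)\ge 3$ enters, via the vanishing $2$-capacity of thin $G$-tubes.
\item The last part of the path uses the perturbed parabolic Allen--Cahn flow, which preserves $G$-invariance (Lemma \ref{lem.stays.G.invar}).
\end{itemize}
The instability coming from $\Ric_g>0$ produces the gap, and no level-set sweepout of $d_\Sigma$ can detect it. Applied to the limit support, this gives $m\,\mathcal{H}^n(\Sigma)<2\mathcal{H}^n(\Sigma)$ in all cases, hence $m=1$ and, a posteriori, $\Sigma$ is two-sided. Without this ingredient your argument proves only the Almgren--Pitts-type dichotomy: multiplicity one if $\Sigma$ is two-sided, possibly multiplicity two if it is one-sided.
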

\noindent The idea behind Theorem \ref{thm.Ric.pos.mult.one} is an adaptation of the following result of Bellettini: \cite{bellettini2024multiplicity}
\begin{theorem} \label{thm.belletini}
Let $N$ be a compact Riemannian manifold of dimension $3 \leq n+1$ with positive Ricci curvature. Let $M \subseteq N$ be any smooth minimal hypersurface such that for every $x \in M$ there exists a geodesic ball in $N$ centered at $x$ in which $M$ is stable. Then the mountain pass Allen--Cahn min-max value, $c_{\eps}$ satisfies
\[
\limsup_{\eps \to 0} c_{\eps} < 2 \mathcal{H}^n(M)
\]
\end{theorem}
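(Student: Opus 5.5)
The plan is to exhibit, for every small $\eps$, an explicit path $\gamma_\eps\in\Gamma$ from $-1$ to $+1$ with
\[
\max_t E_\eps(\gamma_\eps(t))\le 2\sigma\bigl(2\mathcal{H}^n(M)-\delta\bigr)+o(1),
\]
where $\delta>0$ is independent of $\eps$. Here I normalize $c_\eps$ by $2\sigma$, as in Theorem \ref{comparison}. I first dispose of the easy case. If $M$ is two-sided and separates $N$, then the signed distance $d_M$ and its level sets give a path. Composing $\tilde{\mathbb{H}}_\eps$ with (a truncation of) $d_M-s$ and interpolating to $\pm1$ as in the proof of Proposition \ref{1 parameter min-max} gives energies $\approx 2\sigma\,\mathcal{H}^n(\{d_M=s\})$. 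By the Heintze--Karcher comparison under $\Ric>0$ these are $\le 2\sigma\,\mathcal{H}^n(M)+o(1)$, which is strictly less than $2\cdot 2\sigma\,\mathcal{H}^n(M)$.

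In the remaining case, where $M$ is one-sided or non-separating, the relevant competitor is the boundary $\{d_M=t\}$ of the tube $\{d_M<t\}$. This is a two-sheeted hypersurface of area $A(t)$ with $A(t)\to 2\mathcal{H}^n(M)$ as $t\to0$. By the Riccati/Heintze--Karcher comparison, $A''(0)=-\int_M(\Ric(\nu,\nu)+|A_M|^2)<0$ and $A(t)<A(0)$ for $t>0$. The associated functions are double layers $u_t=\tilde{\mathbb{H}}_\eps(d_M-t)$, equal to $-1$ inside the thin tube and $+1$ outside. For $t\ge t_0$ (fixed) they cost at most $2\sigma A(t_0)+o(1)$, which has a definite gain. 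As $t\to t_0$ increases to fill $N$ they reach $+1$. The naive continuation to $-1$, collapsing the two sheets as $t\downarrow0$, passes through energies $\approx 2\sigma\cdot 2\mathcal{H}^n(M)$ at $t\sim\eps|\log\eps|$. So the symmetric collapse gives no gain, and this is the step that must be replaced.

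To get past $t\sim\eps|\log\eps|$, I would instead \emph{open a hole} in the double layer while its width is still $t_0$, rather than collapsing it uniformly. This is where local stability enters. Cover $M$ by finitely many geodesic balls $B_j$ in which $M$ is a stable graph. In a chosen ball $B_1$, perturb the two sheets of $\{d_M=t_0\}$ toward each other, so that they pinch and annihilate along a small disc. Then expand the resulting hole over $M$ through the cover, ball by ball. At each stage the configuration is the region between two sheets of the tube of radius $t_0$ with a growing neck-free hole. Its area is bounded by $A(t_0)$ plus the area of the transition region, and the stable-graph structure in each $B_j$ controls the transition region via curvature/area estimates. This is uniform in $\eps$ since everything is at a fixed geometric scale, and by Lemma \ref{lem.linear.energy.growth}-type linear energy bounds the Allen--Cahn energy tracks $2\sigma$ times area up to $o(1)$.

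The main obstacle is making the gain $2\mathcal{H}^n(M)-A(t_0)\gtrsim t_0^2$ dominate the extra area created while opening and moving the hole. For this I would choose $t_0$ and the hole's transition width comparable to each other, and bound the extra area by $C t_0\cdot\mathcal{H}^{n-1}(\partial(\text{hole}))$. I would then take the cover fine enough, with each move of the hole confined to a stable ball where the cost is $o(t_0^2)$ relative to the gain, exactly in the spirit of Bellettini's construction. If this balance fails, the fallback is to first push the double layer by the positive first Jacobi eigenfunction in each stable ball to get a strictly smaller area before cutting.
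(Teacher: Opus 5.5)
Your treatment of the two-sided case is correct, and so is your diagnosis that the symmetric collapse of the double layer $\{d_M=t\}$ gives no gain. Opening a hole is the right idea; the gap is in \emph{how} you open it.

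With the sheets held at height $\pm t_0$ and a transition of width comparable to $t_0$, the excess area of a hole $H\subset M$ relative to $A(t_0)$ is of order $t_0\,\mathcal{H}^{n-1}(\partial H)-2\mathcal{H}^n(H)$. At the nucleation scale $r\sim t_0$ this is of order $t_0^{n}$, however fine your cover is. For $n=2$ this is the same order as your gain. With vertical walls the excess is $4\pi t_0 r-2\pi r^2$, which peaks at $2\pi t_0^2$, whereas $2\mathcal{H}^2(M)-A(t_0)=t_0^2\int_M(|A|^2+\Ric(\nu,\nu))+O(t_0^3)$. So your estimate closes only if $\int_M(|A|^2+\Ric(\nu,\nu))>2\pi$, and nothing forces this. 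The quantity is scale invariant, and it can be made as small as you like: take the totally geodesic $\R\P^2$ in the middle of a long, slightly concave neck $dr^2+f(r)^2g_{S^2}$ of a positively Ricci curved $\R\P^3$.

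For $n\ge 3$ the nucleation cost is $o(t_0^2)$ by scaling. Even there, your bookkeeping ``$A(t_0)$ plus the transition region'' fails once $\partial H$ is macroscopic, because the rim then costs $\sim t_0\gg t_0^2$; you must also subtract $2\mathcal{H}^n(H)$.

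Your fallback has the wrong sign. Stability of $M$ in a small ball means the Jacobi form of the double cover $\tilde M$ is nonnegative on functions supported over that ball, so such pushes \emph{increase} area. The instability is global: it comes from $\Ric>0$ through functions like the constant $1$ on $\tilde M$. For smooth $M$ the local stability hypothesis is vacuous anyway, as the paper remarks.

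The missing ingredient is the capacity argument behind Lemma \ref{lem.unstable.region}, on which the paper's sketch of Bellettini's proof relies. Small balls in $M^n$, $n\ge 2$, have small $2$-capacity (use a logarithmic cutoff when $n=2$). So there is $\rho$ with $\rho=1$ on $B_\delta\subset M$, $\operatorname{supp}\rho\subset B_{2\delta}$ and $\int|\n\rho|^2$ small, such that for all $\lambda\in[0,1]$
\[
Q_{\tilde M}(1-\lambda\rho)=\int_{\tilde M}\Big(\lambda^2|\n\rho|^2-\big(|A|^2+\Ric(\nu,\nu)\big)(1-\lambda\rho)^2\Big)\le -c<0 .
\]
Deforming the tube widths from $t_0$ to $t_0(1-\lambda\rho)$ then keeps the area of the two-sheeted graph below $2\mathcal{H}^n(M)-\tfrac{c}{2}t_0^2+O(t_0^3)$, for $t_0$ small depending on $\delta$. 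At $\lambda=1$ the sheets already coincide over $B_\delta$, so the hole opens at scale $\eps$ at no extra cost. From then on, the absent double layer over $B_\delta$ gives a fixed gain $\approx 2\mathcal{H}^n(B_\delta)\gg t_0^2$, which absorbs all later errors while the tube is removed. The paper's sketch, following Bellettini, instead finishes with the forced flow $\partial_t u=\eps\Delta u-W'(u)/\eps+\mu_\eps$ toward $+1$. In short, the sheets must be brought together gently over a region of fixed size $\delta\gg t_0$, not cut through at scale $t_0$.

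Minor points:
\begin{itemize}
\item With $-1$ inside the tube, enlarging it leads to $-1$, not $+1$.
\item $A''(0)=-2\int_M(|A|^2+\Ric(\nu,\nu))$, not $-\int_M$.
\item Lemma \ref{lem.linear.energy.growth} concerns solutions of the drift equation, not your test functions.
\end{itemize}
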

\noindent Since the idea of theorem \ref{thm.Ric.pos.mult.one} is a short adaptation of Theorem \ref{thm.belletini}, we only sketch the details below for the following analogous theorem, where we impose G-equivariance and require an upper bound on the dimension of the ambient manifold.
\begin{theorem} \label{thm.belletini.G.invar}
Let $N$ be a compact Riemannian manifold of dimension $3 \leq n+1 \leq 7$ with positive Ricci curvature and isometric action given by $G$. Let $M \subseteq N$ be any embedded smooth $G$-invariant minimal hypersurface. Then the $G$-invariant mountain pass Allen--Cahn min-max value defined in Proposition \ref{1 parameter min-max}, $\omega^G_{AC,\eps}$, satisfies
\[
\frac{1}{2\sigma}\limsup_{\eps \to 0} \omega_{AC,\e}^G < 2 \mathcal{H}^n(M)
\]
\end{theorem}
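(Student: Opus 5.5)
Following Bellettini, the plan is to build an explicit $G$-equivariant path $\gamma\in\Gamma_G$ from $-1$ to $+1$ whose Allen--Cahn energy stays uniformly below $2\sigma\cdot 2\mathcal{H}^n(M)$ for small $\eps$. Then $\omega^G_{AC,\eps}\le\max_t E_\eps(\gamma(t))$ gives the claim. Every building block of Bellettini's path is a function of geometric data attached to $M$: the signed or unsigned distance to $M$, level sets of a first eigenfunction of the Jacobi operator, and the one-dimensional heteroclinic $\mathbb{H}_\eps$. Since $M$ is $G$-invariant and $G$ acts by isometries, these quantities are $G$-invariant. The only exception is the Jacobi eigenfunction, which we average over $G$.

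\textbf{Step 1 (case split on two-sidedness).} Since $\Ric_g>0$, the hypersurface $M$ is unstable: the constant function (when $M$ is two-sided) or a suitable test function on the double cover (when one-sided) gives negative second variation. If $M$ is one-sided, we start from the path of $G$-invariant functions $\mathbb{H}_\eps\circ(\text{distance to }M - s)$, which already sweeps out with energy close to $2\sigma\mathcal{H}^n(M)$ at its worst level. The real work is therefore the two-sided case, where the naive double-layer construction $\mathbb{H}_\eps(d_M - s)$ carries energy $\approx 2\sigma\cdot 2\mathcal{H}^n(M)$ near $s=0$, and we must push it strictly below.

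\textbf{Step 2 ($G$-invariant Jacobi direction).} Let $\phi_1>0$ be the first eigenfunction of $L_M=\Delta_M+|A|^2+\Ric(\nu,\nu)$ with eigenvalue $\lambda_1<0$. By uniqueness of the positive first eigenfunction and the $G$-invariance of $L_M$, $\phi_1$ is automatically $G$-invariant, exactly as in the averaging argument of the stability proposition in \S\ref{sec.regularity}. We then deform $M$ into the $G$-invariant hypersurfaces $M_t=\{\exp_x(t\phi_1(x)\nu(x))\}$, which have area $\mathcal{H}^n(M_t)\le \mathcal{H}^n(M)-ct^2$ for small $|t|$. We replace the double layer at $M$ by two layers placed at $M_{t}$ and $M_{-t}$, and then follow Bellettini's reparametrized path (depending on $\eps$ and $t$). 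The layer profiles are the $G$-invariant functions $\mathbb{H}_\eps$ composed with signed distances to $M_{\pm t}$, truncated away from these hypersurfaces. At the endpoints, the path is joined to $\pm1$ by the $G$-invariant sweepouts obtained from the distance to $M$, using the positive Ricci condition (Frankel-type area decrease of equidistant hypersurfaces). This ensures the maximum of the energy is attained near the two-layer configurations, which have energy $\le 2\sigma(2\mathcal{H}^n(M)-ct^2)+O(\eps^{\alpha})$. Choosing $t$ fixed and small and letting $\eps\to0$ yields a strict inequality.

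\textbf{Step 3 (main obstacle).} Bellettini's key estimate is the energy of the configuration where the two layers interact, i.e. when they are at distance $\sim\eps|\log\eps|$, and along the passage through $M$ itself. There he uses exponential decay of $\mathbb{H}_\eps$ together with the sharp area comparison of parallel hypersurfaces coming from $\Ric>0$. These are pointwise, local computations in Fermi coordinates, so they carry over verbatim to our setting. The point to check is that the Fermi-coordinate neighbourhood and all distance functions are $G$-equivariant and Lipschitz, which holds because $G$ preserves $M$ and $\nu$ up to sign. The dimension restriction $n+1\le 7$ will be used only to guarantee that $M$ and its parallel surfaces are smooth, so that the computation applies globally. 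I expect the most delicate issue to be ensuring the constructed functions are genuinely in $W^{1,2}_G(M)$ near non-principal orbits meeting $M$. Since everything is defined through $d_M$ and $\phi_1$, which are $G$-invariant and smooth near $M$, I would handle this by simply noting that $G$-invariance is preserved under composition with these quantities.
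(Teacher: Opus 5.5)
\textbf{The main gap is in Step 2: it gives no $\eps$-independent gap.} Because $\phi_1>0$ on all of $M$, the path you describe (distance sweepouts, or increasing the push parameter up to $t$) opens the $+1$ slab over all of $M$ at once. So it passes through double layers at $M_{\pm s}$ for every $s\in(0,t]$. For $\eps|\log\eps|\ll s\ll 1$ these have energy $2\sigma\bigl(2\mathcal{H}^n(M)-O(s^2)\bigr)+o(1)$. Hence $\sup_t E_\eps(\gamma(t))\geq 2\sigma\cdot 2\mathcal{H}^n(M)-o(1)$, and the strict inequality is lost as $\eps\to0$. Your bound $2\sigma(2\mathcal{H}^n(M)-ct^2)+O(\eps^\alpha)$ holds only at the terminal configuration with $t$ fixed, not along the path. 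The claim that the maximum is attained there is false, and interaction estimates at distance $\eps|\log\eps|$ do not fix this.

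The missing idea is what the paper imports from Bellettini in Lemma \ref{lem.unstable.region}. You need a test function $\tilde\phi\geq 0$ with negative Jacobi quadratic form that \emph{vanishes on a fixed region} $D\subset M$. Over $D$ the two layers are never separated, which gives a deficit of roughly $2\sigma\mathcal{H}^n(D)$, independent of $\eps$, throughout the nucleation. The path is then continued to $+1$ by the perturbed parabolic Allen--Cahn flow, which stays $G$-invariant by uniqueness (Lemma \ref{lem.stays.G.invar}).

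In the equivariant setting $D$ must be a $G$-tube around an orbit $G\cdot p\subset M$. Building $\tilde\phi=1-\rho$ with $\int|\nabla\rho|^2\to 0$ requires the orbit to have zero $2$-capacity in $M$, i.e.\ codimension at least $2$ in $M$, i.e.\ $\cohom(G)\geq 3$. Your argument never uses this hypothesis, which should have been a warning sign: Remark \ref{codim.two.remark} explains that exactly this step fails in cohomogeneity $2$. By contrast, the issue you single out as most delicate ($W^{1,2}_G$-membership near non-principal orbits) is harmless.

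\textbf{The case split in Step 1 rests on a miscount.} If $M$ is one-sided, then for small $s>0$ the level set $\{\mathrm{dist}(\cdot,M)=s\}$ is the boundary of a tubular neighbourhood. That boundary is the two-sheeted orientation cover of $M$, with area tending to $2\mathcal{H}^n(M)$. So the sweepout $\mathbb{H}_\eps(\mathrm{dist}(\cdot,M)-s)$ has worst energy close to $2\sigma\cdot 2\mathcal{H}^n(M)$, which is precisely the borderline value, not $2\sigma\mathcal{H}^n(M)$. The one-sided case is therefore exactly as hard as the two-sided one. This is why Bellettini's construction, and the paper's lemma, is phrased on the double cover $\iota\colon\tilde M\to M$, with $\tilde\phi\in C^2(\tilde M)$ supported in $\tilde M\setminus\iota^{-1}(D)$.
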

\noindent We remark that Bellettini's condition of ``for every $x \in M$ there exists a geodesic ball in $N$ centered at $x$ in which $M$ is stable" is automatically true for smooth embedded minimal surfaces. Moreover, by the dimension restriction of $n+1 \leq 7$, any embedded minimal surface is automatically smooth, and hence this condition is redundant. From Theorem \ref{thm.belletini.G.invar}, Theorem \ref{thm.Ric.pos.mult.one} follows, so we sketch the proof of Theorem \ref{thm.belletini.G.invar}. 
\begin{proof}[Proof of Theorem \ref{thm.belletini.G.invar}]
Bellettini constructs a one parameter family of functions, $\varphi: [0,1] \to H^1(M)$ such that $\varphi(0) = -1$, $\varphi(1) = +1$, and 
\begin{equation} \label{eqn.varphi}
\frac{1}{2\sigma}\sup_{t \in [0,1]} E_{\eps}(\varphi(t)) \leq 2 \mathcal{H}^n(M) - \delta
\end{equation}
$\varphi\Big|_{[0,1/2]}$ is constructed via explicit functional constructions which rely on the signed and unsigned distances to $M$ (after rescaling the time domain), as well as an excised small ball $B \subseteq M$. When $M$ happens to be a $G$-invariant minimal hypersurface, these distance functions are also $G$-invariant. Furthermore, we replace $B$ with its $G$-invariant counterpart $G \cdot B$ and find his analogous perturbing function as follows:
\begin{lemma} \label{lem.unstable.region}
	There exists a geodesic, $G$-invariant, tube $D \subset M$ and $\tilde{\phi} \in C^2(\tilde{M})$ with $\tilde{\phi} \geq 0$ such that the support of $\tilde{\phi}$ is contained in $\tilde{M} \backslash \iota^{-1}(D)$ and 
	\[
	\int_{\tilde{M}} |\n \tilde{\phi}|^2 - \int_{\tilde{M}} \tilde{\phi}^2 (|A|^2 + \Ric_N(\nu,\nu)) < 0
	\]
\end{lemma}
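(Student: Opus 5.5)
The plan is to take the constant test function $1$, which already makes the quadratic form negative because $\Ric_N > 0$, and then cut it off near a single $G$-orbit at arbitrarily small energy cost.

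Throughout, $\tilde M$ denotes the two-sided (orientation) double cover of $M$, or $M$ itself if $M$ is two-sided, and $\iota\colon \tilde M \to N$ the induced immersion. The unit normal $\nu$ is globally defined on $\tilde M$. Write
\[
Q(\phi) = \int_{\tilde M} |\n \phi|^2 - \int_{\tilde M} \phi^2 (|A|^2 + \Ric_N(\nu,\nu)).
\]
For $\phi \equiv 1$ we get
\[
Q(1) = -\int_{\tilde M} (|A|^2 + \Ric_N(\nu,\nu)) \leq -\min_N \Ric_N \cdot \mathcal{H}^n(\tilde M) =: -2c_0 < 0,
\]
since $\Ric_N > 0$ and $M$ is compact.

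\textbf{Step 1: choosing the tube.} Fix any $x \in M$ and let $\mathcal{O} = G \cdot x \subset M$. Since $\cohom(G) \geq 3$, the orbit $\mathcal{O}$ has dimension at most $n-2$. It is a compact smooth submanifold of $N$ (possibly non-principal), so its codimension inside the hypersurface $M$ is $k \geq 2$. Let $\rho(y) = \mathrm{dist}_N(y, \mathcal{O})$. This function is $G$-invariant and smooth on $B^G_{r_0}(x) \setminus \mathcal{O}$ for $r_0$ smaller than the normal injectivity radius of $\mathcal{O}$. Because $M$ is smooth, compact and embedded and $\mathcal{O} \subset M$, we have the volume bound
\[
\mathcal{H}^n(M \cap B^G_r(x)) \leq C r^k \quad \text{for } r < r_0.
\]
Up to a factor $2$ the same bound holds on $\tilde M$. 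We take $D = M \cap B^G_{r^2}(x)$ (or $M \cap B^G_{r}(x)$ in the simpler case below), which is a geodesic $G$-invariant tube in $M$.

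\textbf{Step 2: the cutoff.} Define $\psi_r = \eta_r \circ \rho$, where $\eta_r$ is a smoothed logarithmic cutoff: $\eta_r(s) = 0$ for $s \leq r^2$, $\eta_r(s) = 1$ for $s \geq r$, and $|\eta_r'(s)| \lesssim 1/(s|\log r|)$. Set $\tilde\phi = \psi_r \circ \iota \geq 0$. It is $C^2$ (indeed smooth), $G$-invariant, and supported in $\tilde M \setminus \iota^{-1}(D)$. Using $|\n^{\tilde M} \rho| \leq 1$ and the coarea formula with the volume bound from Step 1, we get
\[
\int_{\tilde M} |\n \tilde\phi|^2 \lesssim \frac{1}{|\log r|^2} \int_{r^2}^{r} \frac{s^{k-1}}{s^2}\, ds.
\]
For $k = 2$ this is $O(1/|\log r|)$. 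For $k \geq 3$ a linear cutoff between $r$ and $2r$ already gives $O(r^{k-2})$, and one can take $D = M \cap B^G_r(x)$. Meanwhile
\[
\int_{\tilde M} (1 - \tilde\phi^2)(|A|^2 + \Ric_N(\nu,\nu)) \leq C r^k \to 0.
\]
Hence $Q(\tilde\phi) \to Q(1) \leq -2c_0$, so $Q(\tilde\phi) < 0$ for $r$ small, which proves the lemma.

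\textbf{Main obstacle.} The delicate case is when the orbit has codimension exactly $2$ in $M$. There the linear cutoff fails and one needs the logarithmic capacity argument. One must also check that $\rho$ restricted to $M$ behaves like the intrinsic distance to $\mathcal{O}$, so that the coarea and volume estimates hold uniformly. This follows from the smoothness of $M$ near the compact submanifold $\mathcal{O} \subset M$: the ambient and intrinsic distances to $\mathcal{O}$ are comparable near $\mathcal{O}$. If the orbit is non-principal, tubes around it are still tubes around a compact smooth submanifold, so no additional issue arises.
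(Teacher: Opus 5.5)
Your proof is correct and is essentially the paper's argument: take $\tilde\phi = 1 - (\text{cutoff near a single orbit})\circ\iota$, use $\Ric_N>0$ to make the constant test function strictly negative, and use that an orbit of dimension at most $n-2$ has zero $2$-capacity in $\tilde M$.

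Your logarithmic cutoff for the case where the orbit has codimension exactly $2$ in $M$ (i.e.\ $\cohom(G)=3$) is the right way to carry this out. The paper's linear cutoff bound $\int_N|\n\rho|^2<K\delta$ controls the energy on $N$ rather than on $\tilde M$, and on $\tilde M$ such a cutoff only gives an $O(1)$ bound.
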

\begin{proof}
The construction is identical to that of \cite[\S 5.1]{bellettini2024multiplicity}, though we note that we are now constructing a $G$-invariant tube about a point, instead of a small ball about a point. Let $p \in N^{reg} \cap M$, which must exist since the union of all non-regular points of the action has finite $n-1$-Hausdorff dimension (and $M$ has non-zero $n$-Hausdorff dimension). Because the cohomogeneity of the action is at least $3$, we see that for any $p \in N$, $G \cdot p$ has finite $\mathcal{H}^{n-2}$ measure and hence there exists $\rho$, a $G$-invariant function which is $1$ on the open $G$-invariant tube of size $\delta$ about $G \cdot p$, $0$ outside of the $G$-invariant tube of size $2 \delta$, $D_{2\delta}$, about $G \cdot p$, satisfies $\int_N |\nabla \rho|^2 < K \delta$ for some $K$ independent of $\delta$. In Bellettini's language, the $2$-capacity of $D_{2\delta}$ is finite and tending to $0$ with $\delta \to 0$. The function $\tilde{\phi}(q) = 1 - \rho(\iota(q))$ now works for $\delta$ sufficiently small.
\end{proof}
\noindent The above lemma implies that the constructions in Bellettini \S 3 - \S 7.4 yield $G$-invariant functions. These sections are used to define (again, up to a reparameterization of the time interval $[0, t_0+1] \to [0, 1/2]$) $\varphi\Big|_{[0,1/2]}$ and hence $\varphi\Big|_{[0,1/2]}$ provides a map into $X_G(M)$. For $t > 1/2$, $\varphi(t)$ is defined via a regularization of the parabolic Allen--Cahn equation, given by 
\begin{equation} \label{eqn.perturbed.flow}
\frac{\partial u}{\partial t} = \eps \Delta_g u - \frac{W'(u)}{\eps} + \mu_{\eps}
\end{equation}
where $\mu_{\eps} > 0$ is a constant tending to $0$ as $\eps \to 0$. Hence it suffices to show that this flow preserves $G$-invariant initial data, as this means that $\varphi\Big|_{(1/2,1]}$ will also be $G$-invariant. 
\begin{lemma} \label{lem.stays.G.invar}
	Suppose $(N^{n+1}, g)$ is a closed Riemannian manifold with smooth isometric action from a Lie group, $G$. Consider the perturbed Allen--Cahn flow
	\begin{equation} \label{eqn.perturbed.ac.flow}
		\frac{\partial u}{\partial t} = \eps \Delta_g u - \frac{W'(u)}{\eps} + \mu_{\eps}
	\end{equation}
	corresponding to the gradient flow of the functional, $\mathcal{F}_{\eps, \mu_{\eps}}(u) = E_{\eps}(u) - \mu_{\eps} \int_N u$. For smooth initial condition $u_0 \in C^{\infty}(N)$, the corresponding flow, $u(x,t)$ exists for all time $t$. Moreover, if $u_0$ is $G$-invariant, then $u(x,t)$ is $G$-invariant for all $t \in [0, \infty)$
\end{lemma}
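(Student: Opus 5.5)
The plan has two parts: global existence for the perturbed flow \eqref{eqn.perturbed.ac.flow}, then uniqueness, which together with the isometric action gives $G$-invariance.

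\emph{Global existence.} Write the equation as $\partial_t u = \eps\Delta_g u - f(u)$ with $f(u) = W'(u)/\eps - \mu_\eps$. The nonlinearity $f$ is smooth. As in the paper, we may truncate $W$ outside a compact interval containing $[-1,1]$, so that $f$ is globally Lipschitz. Short-time existence of a smooth solution on the closed manifold $N$ then follows from standard semilinear parabolic theory: a fixed point for the Duhamel formulation with the heat semigroup of $\eps\Delta_g$, followed by parabolic Schauder bootstrapping.

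To extend to all times, we need an a priori $L^\infty$ bound. For this we use the maximum principle with constant barriers. Let $c_\pm$ be constants with $f(c_+) \geq 0$, $f(c_-) \leq 0$ and $c_- \leq u_0 \leq c_+$. Such constants exist for $\mu_\eps$ small and $c_\pm$ large in absolute value, since $W'(t) > 0$ for $t > 1$ and $W'(t) < 0$ for $t < -1$ near the wells; for the Sine--Gordon potential, use periodicity instead. The constants $c_\pm$ are then super- and subsolutions, so $c_- \leq u \leq c_+$ for as long as the solution exists. With $u$ bounded, $f(u)$ is bounded, and parabolic Schauder estimates give uniform $C^{k,\alpha}$ bounds on every interval $[t_0, t_0+1]$. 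The solution therefore continues for all $t \in [0,\infty)$. The truncation of $W$ is harmless, because the solution stays in the region where the original and truncated potentials agree.

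\emph{Uniqueness.} If $u$ and $v$ are two bounded smooth solutions with the same initial data, then $w = u - v$ satisfies $\partial_t w = \eps\Delta_g w - c(x,t) w$, where $c = \int_0^1 f'(v + s w)\,ds$ is bounded. Multiplying by $w$ and integrating over $N$ gives
\[
\frac{d}{dt}\int_N w^2 \leq 2\|c\|_\infty \int_N w^2,
\]
so Grönwall's inequality yields $w \equiv 0$. Alternatively, one can cite \cite[Lemma 2.3]{GasparGuaracoClosed}.

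\emph{$G$-invariance.} Fix $\phi \in G$ and set $u_\phi(x,t) = u(\phi x, t)$. Since $\phi$ is an isometry, $\Delta_g(u\circ\phi) = (\Delta_g u)\circ\phi$. The remaining terms of \eqref{eqn.perturbed.ac.flow} are pointwise in $u$ or constant, so $u_\phi$ solves the same equation. Its initial data is $u_0\circ\phi = u_0$, so uniqueness gives $u_\phi = u$ for all $t$ and every $\phi$, which is the claimed $G$-invariance.

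The main point requiring care is the global existence step, specifically the choice of barriers compatible with the perturbation $\mu_\eps$ and the chosen potential. Once $u$ is bounded, everything else is routine.
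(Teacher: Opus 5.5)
Your proposal is correct, and its core is the paper's argument. The paper's proof consists only of observing that $u(\phi x,t)$ solves the same equation, because isometries commute with $\Delta_g$, with the same initial data, and then invoking uniqueness. It takes global existence and uniqueness for granted, so your barrier/Schauder continuation (which needs some $c_+\ge \max u_0$ with $W'(c_+)\ge \eps\mu_\eps$, as you note) and your Gr\"onwall uniqueness step supply details the paper omits rather than a different route.
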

\begin{proof}
	For any $g \in G$, consider $u^g(t,x) = u(t, g \cdot x)$. Then both $u(t,x)$ and $u^g(t,x)$ solve equation \eqref{eqn.perturbed.ac.flow} with the same initial data $u(0,x) = u_0(x) = u^g(0,x)$ as $\Delta_g$ is also $G$-invariant. Hence by uniqueness of the flow, $u(t,x) = u^g(t,x)$ for all $(x,t)$.
\end{proof}
\noindent We remark that Bellettini employs various smoothing operators before applying his perturbed parabolic Allen--Cahn flow \eqref{eqn.perturbed.flow}. This is needed because in his setting, $M$ is not smooth a priori and has a singular set of codimension at least $7$. In our case, we assume that the ambient manifold is at most $7$ dimensional, meaning that the corresponding minimal hypersurfaces are smooth. Thus the constructions in \S 7.5 used to define $\varphi\Big|_{(1/2,1]}$ (with the smoothing operators omitted) now give $G$-invariant functions. \nl 
\indent Noting that the entire path, $\varphi(t)$, is now $G$-invariant but the inequality of equation \eqref{eqn.varphi} holds, we conclude the proof.
\end{proof}
\begin{remark} \label{codim.two.remark}
We remark that in cohomogeneity $2$, the minimal surfaces to consider can a priori be immersed, so that we lose regularity of the distance function in a tubular neighborhood of our hypersurface, preventing us from directly applying Bellettini's work \cite{bellettini2024multiplicity}. We also note that Lemma \ref{lem.unstable.region} does not hold in cohomogeneity $2$, as a $G$-invariant tube $D_{2\delta} \subseteq M$ has positive $1$-capacity inside of $M$ and hence one cannot construct such a $\tilde{\phi}$ with compact support in Lemma \ref{lem.unstable.region} via the same method. As of now, the authors conjecture that the multiplicity one result should still hold.
\end{remark}
\subsection{Smooth quotients with Cohomegeneity $2$ and $\Ric_{M/G} > 0$}
In this section, we make some simple yet fruitful remarks for the situation in which 
\begin{enumerate}
\item $M / G$ is itself a smooth manifold (i.e. no non-principal orbits).
\item The Hsiang--Lawson conformal metric (see equation \eqref{eqn.hsiang.lawson.metric}) on the quotient, $\overline g :=\overline{g}_{M/G}$, satisfies $\Ric_{\overline{g}} > 0$.
\end{enumerate}
As an example, any Berger metric on $S^3$ with its $S^1$ action will have a quotient as $S^2$ with positive Gaussian curvature.
\begin{theorem} \label{thm.cohom.2.pos}
Suppose that $Cohom(G) = 2$ and $K_{\overline{g}} > 0$. Then
\[
\omega_{1}^G(M,g)  = \omega_1(M/G,\bar g)= \text{Area}(\Sigma_1),
\]
where $\Sigma_1$ is an embedded $G$-equivariant minimal hypersurface in $(M,g)$.
\end{theorem}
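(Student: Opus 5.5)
We reduce everything to the quotient surface $(M/G,\bar g)$, which is a closed smooth surface since there are no non-principal orbits. First we identify the $G$-equivariant Almgren--Pitts width with the width of the quotient. For $\ell=1$ the Hsiang--Lawson metric is $\bar g=\mathcal V^{2}g_{M/G}$. For a $G$-invariant hypersurface $\Sigma=\pi^{-1}(\gamma)$ lifting a curve $\gamma\subset M/G$, the coarea formula for the Riemannian submersion $\pi$ gives
\[
\mathcal H^n_g(\Sigma)=\int_\gamma \mathcal V\, d s_{g_{M/G}}=\mathrm{Length}_{\bar g}(\gamma).
\]
Moreover, $G$-invariant mod $2$ cycles in $M$ correspond (via $\pi^{-1}$ and pushforward) to mod $2$ $1$-cycles in $M/G$, preserving mass in this weighted sense. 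This correspondence is continuous in the flat topology and respects the Almgren isomorphism, using Wang's \cite[Theorem 3.1]{WangDensity}. Hence $(G,1)$-sweepouts of $M$ and $1$-sweepouts of $(M/G,\bar g)$ are in bijection, so $\omega_1^G(M,g)=\omega_1(M/G,\bar g)$. This is the argument behind \cite[Theorem 4.4]{WangDensity}, which is exact here because there are no singular orbits. Alternatively, one could go through Theorem \ref{comparison} together with Lemma \ref{lem.energy.compare}, but the conformal factor there is not constant, so the direct cycle correspondence is cleaner.

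Second, we compute $\omega_1(M/G,\bar g)$ on a surface with $K_{\bar g}>0$. By the Almgren--Pitts/Allen--Cahn theory on surfaces, specifically Theorem \ref{thm.cho.man.reg} applied to the mountain pass solutions on $(M/G,\bar g)$ together with Dey's comparison, $\omega_1(M/G,\bar g)=\sum_j m_j L_{\bar g}(\sigma_j)$ for closed immersed geodesics $\sigma_j$ with integer multiplicities. Positive curvature then forces this sum to be a single simple closed geodesic with multiplicity one. To see this, we invoke the two-dimensional analogue of Bellettini's argument (Theorem \ref{thm.belletini}), or the known results of Chodosh--Mantoulidis \cite{ChodoshMantoulidisWidths} / Mazet--Rosenberg type for $\omega_1$ on positively curved surfaces.

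\begin{itemize}
\item Any two closed geodesics intersect, by Frankel-type arguments using $K>0$.
\item A union of geodesics with total multiplicity $\ge 2$ can be cut and pushed to produce a sweepout of strictly smaller maximal length, using the negative second variation $-\int K f^2$ for constants $f$ along a geodesic.
\item An immersed non-simple geodesic can be desingularized at a crossing to lower the width.
\end{itemize}

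Thus $\omega_1(M/G,\bar g)=L_{\bar g}(\gamma_1)$ for an embedded closed geodesic $\gamma_1$. We set $\Sigma_1=\pi^{-1}(\gamma_1)$. It is embedded since $\gamma_1$ is simple and $\pi$ is a submersion. It is minimal because, by Hsiang--Lawson \cite{HsiangLawson}, $\bar g$-geodesics lift exactly to $G$-invariant minimal hypersurfaces. Its area is $L_{\bar g}(\gamma_1)$ by the formula above.

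The main obstacle is the second step: establishing multiplicity one and embeddedness for the first width on a positively curved surface. The regularity theory only yields immersed geodesics with multiplicity, and Remark \ref{codim.two.remark} shows the equivariant Bellettini argument fails. We therefore work directly on the smooth surface $(M/G,\bar g)$ with the unweighted Allen--Cahn (Sine--Gordon) energy. There Bellettini's construction applies verbatim in dimension $2$: the capacity obstruction disappears because we excise small geodesic balls around points, not tubes. This gives $\limsup c_\eps<2L(\gamma)$ for every closed geodesic $\gamma$, which excludes multiplicity $\ge 2$. Excluding a single immersed non-embedded geodesic would then use the known characterization of $\omega_1$ on convex surfaces as the length of a simple closed geodesic.
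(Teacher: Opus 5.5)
Your reduction to the quotient and your final lift $\Sigma_1=\pi^{-1}(\gamma_1)$ are fine. Your first step also spells out the identification $\omega_1^G(M,g)=\omega_1(M/G,\bar g)$, which the paper's proof uses without comment.

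\textbf{The gap is in your second step.} Bellettini's construction does not apply verbatim on $(M/G,\bar g)$; note that Theorem \ref{thm.belletini} is only stated for $n+1\ge 3$. Its key input is the analogue of Lemma \ref{lem.unstable.region}: a function $\tilde\phi\ge 0$ that vanishes on an excised piece $D$ of the minimal hypersurface and has negative Jacobi form. Such a $\tilde\phi$ exists because $D$ has small capacity \emph{inside the hypersurface}. On the quotient, the hypersurface is the one-dimensional geodesic $\gamma$ and $D$ is an arc, and points on a curve have positive capacity. This is exactly the obstruction of Remark \ref{codim.two.remark}: a $G$-tube in $\pi^{-1}(\gamma)$ is the preimage of an arc of $\gamma$, so passing to the quotient removes nothing.

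Concretely, you would need $\int_\gamma (\phi')^2-K\phi^2\,ds<0$ for some $\phi$ vanishing on an arc. Since $\gamma\setminus D$ is an interval of length $<L(\gamma)$, this is impossible whenever $\max_\gamma K<\pi^2/L(\gamma)^2$. For example, along the equator of the ellipsoid with semi-axes $r,r,c$ and $c>2r$, one has $K\equiv c^{-2}<(2r)^{-2}=\pi^2/L^2$.

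Your bullets on cutting and pushing and on desingularizing at a crossing are heuristics. A negative second variation only shows instability; lowering a width requires an explicit competing sweepout, which is the whole content of Bellettini's paper. The ``known characterization of $\omega_1$ on convex surfaces'' that you invoke for embeddedness is, on the quotient, precisely the statement being proved.

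Curvature alone also cannot force multiplicity one. The round $\R\P^2$ (e.g.\ $SO(3)/O(2)$ in the examples) has $K>0$, yet by Theorem \ref{thm.MK.rptwo} $\omega_1=2\pi$, twice the length of every primitive closed geodesic. So one genuinely needs $M/G\cong S^2$, as the paper's proof assumes.

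\textbf{The missing idea: no multiplicity-one or desingularization argument is needed.} The paper closes the argument with a sandwich.
\begin{itemize}
\item From the decomposition you already have, $\omega_1(M/G,\bar g)=\sum_i\tilde m_i\,\ell_{\bar g}(\tilde\gamma_i)$ with $\tilde m_i\in\Z^+$ (Theorem \ref{thm.cho.man.reg}), one gets immediately $\omega_1(M/G,\bar g)\ge\ell_{\bar g}(\gamma_0)$, where $\gamma_0$ is a shortest closed geodesic.
\item Every one-parameter sweepout by closed curves is a $1$-sweepout, so $\omega_1(M/G,\bar g)\le\Lambda_{AP,\bar g}$.
\item Calabi--Cao \cite{CalabiCao} show that on a $2$-sphere with $K_{\bar g}>0$ one has $\Lambda_{AP,\bar g}=\ell_{\bar g}(\gamma_0)$, and that a shortest closed geodesic is simple.
\end{itemize}
Hence equality holds throughout. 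Since every $\ell_{\bar g}(\tilde\gamma_i)\ge\ell_{\bar g}(\gamma_0)$, the decomposition must be a single shortest closed geodesic with multiplicity one. That geodesic is embedded, and lifting it as you do gives $\Sigma_1$.
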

\begin{remark}
Note that Theorem \ref{thm.cohom.2.pos} asserts both multiplicity one and also embeddedness for the first width in the cohomogeneity $2$. Neither of these conditions should hold in general for $\omega_{p, G}$. 
\end{remark}
\begin{proof}[Proof of Theorem \ref{thm.cohom.2.pos}]
We know that $(M/G, \overline{g})$ is a smooth topological sphere with $K_{\overline{g}} > 0$, and henceforth we refer to this as $(S^2, \overline{g})$. Applying the work of Calabi--Cao \cite[Thm 3.1]{CalabiCao}, we have directly that 
\begin{equation} \label{eqn.calabi.cao}
\Lambda_{AP, \overline{g}} = \ell(\gamma_0)
\end{equation}
where $\Lambda_{AP, \overline{g}}$ denotes the Almgren--Pitts one parameter width and $\gamma_0$ is the shortest geodesic on $(S^2, \overline{g})$, which happens to be embedded because of the $K_{\overline{g}} > 0$ condition and \cite[Theorem D]{CalabiCao}.
Note however, that the regularity theory of Chodosh-Mantoulidis \cite[Theorem 3.1]{ChodoshMantoulidisWidths} gives that 
\[
\omega_{1}(M/G,\overline{g}) = \sum_{i = 1}^{\tilde{N}} \tilde{m}_i \ell(\tilde{\gamma}_i)
\]
where $\tilde{m}_i \in \Z^+$ and $\tilde{\gamma}_i$ are also closed primitive geodesics. Thus, trivially we have $\omega_{1}(M/G,\overline{g}) \geq \ell(\gamma_0)$, and from the general inequality of $\omega_{1}(M/G,\overline{g}) \leq \Lambda_{AP, \overline{g}}$, we conclude that 
\[
\ell(\gamma_0) = \Lambda_{AP, \bar g} = \omega_{1}(M/G,\overline{g})
\]
\end{proof}

%% file: Appendix.tex
\section{Appendix} \label{section.appendix}
\subsection{Introduction to Group Actions and Examples} \label{section.appendix.examples}
In this section, we recall some basic information about Lie Group actions on closed manifolds. Let $G$ be a compact Lie group acting isometrically on $(M^{n+1}, g)$, a closed Riemannian manifold. \nl 
\indent For a closed subgroup $H \leq G$, we let $\langle H \rangle$ denote the conjugacy class of $H$ in $G$. Under this notation, $p \in M$ has orbit type $\langle H \rangle$ if $\langle G_p \rangle = \langle H \rangle$, for $G_p = \{g \in G \; | \; g \cdot p = p \}$ the stabilizer of $p$ (also known as the isotropy group of $p$). We let 
\[
M_{\langle H \rangle} = \{ p \in M \; | \; \langle G_p \rangle = \langle H \rangle \}
\]
to be the union of all points with orbit type $\langle H \rangle$. This is known to be a union of smooth embedded submanifolds of $M$ (see Wall \cite{wall2016differential} for more details), and there are only finitely many different orbit types on $M$. Recall that there are $3$ types of orbits:
\begin{itemize}
    \item Principal orbits, $P$, corresponding to a minimal conjugacy class so that the union of all principal orbits forms a dense open neighborhood of $M$. We define the cohomogeneity of $G$ as the codimension of a principal orbits, i.e. 
    \[
    \mathrm{Cohom}(G) = \text{codim}(P)
    \]
    \item Exceptional orbits, $Q$, which are not principal but $\text{codim}(Q) = \mathrm{Cohom}(G)$. 
    
    \item Singular orbits, $S$, which satisfy $\text{codim}(S) > \mathrm{Cohom}(G)$.
\end{itemize}
The orbits are themselves manifolds of potentially different dimensions leading to a stratification of the ambient manifold $M$.
\subsubsection{Examples of Group actions} \label{section.examples}
It is interesting to pose the following question: 
\begin{question}

Given a group action with $\mathrm{Cohom}(G) = \ell \geq 2$, let $k$ denote the Hausdorff dimension of $M \backslash M^{reg}$. In general, it is known that $k \leq n-1$, but do there exist group actions for every pair of $(\ell, k) \in \{2, \dots, n\} \times \{0, \dots, n-1\}$? 
\end{question}
\noindent Here are some examples
\begin{enumerate}
\item $SO(d) \curvearrowright S^{n+1}$ via acting on the first $d$ coordinates in $S^{n+1} = \{x_1^2 + \dots + x_{n+2}^2 = 1\} \subseteq \R^{n+2}$. Then the non-principal orbits are simply fixed points of the action of the form of the form 
\[
x = (0, \dots, 0, x_{d+1}, \dots, x_{n+2}) \; \st \; x_{d+1}^2 + \dots + x_{n+2}^2 = 1
\]
And hence the dimension of the non-principal orbits is $0$ but the union of the non-principal orbits is $k = n+1-d$. Moreover, the cohomogeneity of the action is $\ell = n+1 - (d-1) = n+2 - d$, so this demonstrates $\ell = k+1$. 
\item Let $S^1$ act on $S^{2n-1}$ via complex multiplication on the first $d$ components, i.e. 
\[
S^{2n-1} = \{|z_1|^2 + \dots + |z_n|^2 = 1\} \subseteq \C^n
\]
and 
\[
e^{i\theta} \cdot (z_1, \dots, z_n) = (e^{i \theta} z_1, \dots, e^{i \theta} z_d, z_{d+1}, \dots, z_n)
\]
the principal orbits are all one dimensional, and the non-principal orbits are all singular. In particular, they are points corresponding to when $z_1 = z_2 = \dots = z_d = 0$. In this case, the union of the singular orbits gives 
\[
\{(0,\dots, 0, z_{d+1}, \dots, z_n) \; | \; |z_{d+1}|^2 + \dots + |z_n|^2 = 1\}
\]
which is of dimension $2(n-d)-1$. This provides an example of $\ell = 2n - 2$ and $k = 2(n-d)-1$.

\item Let $G$ be a Lie group and $M$ an arbitrary manifold and consider the action of 
\[
G \curvearrowright G \times M, \qquad g \cdot (h, p) = (gh, p)
\]
i.e. trivial action on the product. If $G$ acts freely on itself (i.e. only one orbit) then each principal orbit is of the form $G \times \{p\}$ and there are no singular orbits. In this case, the cohomogeneity is $\ell = \dim(M)$ and the union of the singular orbits has dimension $k = 0$.

\item Let $\Z_2 \times S^1$ act on $\R^3$ via $(\pm 1, e^{i\theta}) \cdot (x,y, z) = (x \cos \theta - y \sin \theta, x \sin \theta + y \cos \theta, \pm z)$, i.e. $\Z_2$ acts via reflection about the $xy$ plane and $S^1$ acts via rotation about the $z$-axis. \nl \nl  
The cohomogeneity is $2$ and the principal orbits consist of any points (in polar coordinates) of the form $(r, \theta, z)$ where $r$ and $z$ are non-zero. The principal orbits are unions of circles at $(r,S^1, \pm z)$. There is an exceptional orbit at $z = 0$ and $r \neq 0$. And there are singular orbits at $r = 0$. If $r = 0$ and $z \neq 0$, the singular orbits are two points. If $r = 0 = z$, then the singular orbit is just the origin. \nl \nl 
This example shows an interesting stratification of the singular set. The fundamental domain is an orbifold, namely the closed upper right hand quadrant (see figure \ref{fig:action.example}). The boundary consists of a union of a point and two open rays (the $r$ and $z$ axis). Within the stratification of the quotient manifold, the rays correspond to the one dimensional component of the boundary and the origin corresponds to the $0$-dimensional part. \nl \nl 
The Hsiang--Lawson metric on the fundamental domain is given by 
\[
\overline{g} = V g_{euc} = 4 \pi r g_{euc} = 4 \pi r (dr^2 + dz^2)
\]
from which one can compute that the only geodesics are given by  $z = c$ (lifting to planes in $\R^3$) or the projection of the catenoid in the form $z = f(r)$. 
\begin{figure}[h!]
\centering
\includegraphics[scale=1]{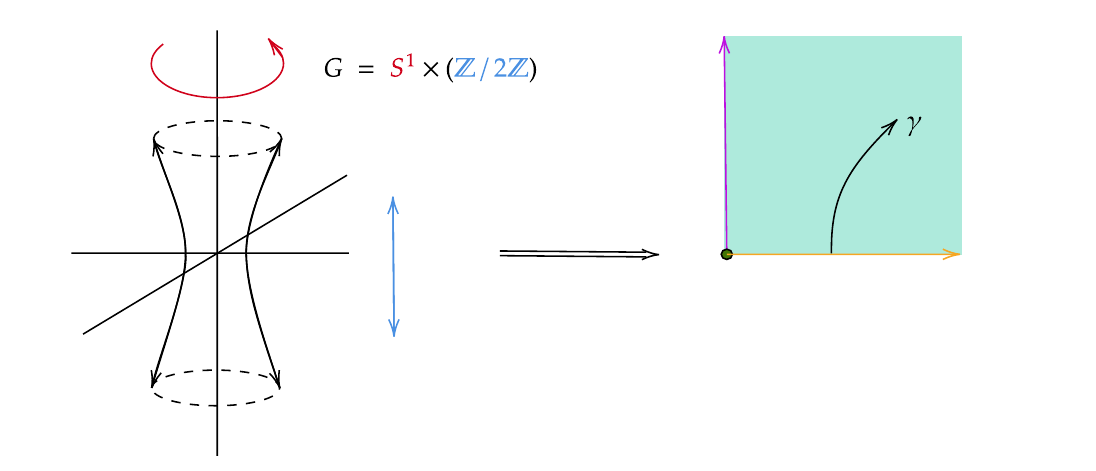}
\caption{Example of $S^1 \times (\Z/2\Z) \curvearrowright \R^3$ and $\gamma$, a geodesic on the fundamental domain corresponding to a catenoid in $\R^3$.}
\label{fig:action.example}
\end{figure}

\item Consider the following $S^1$ action on $S^3 = \{|z|^2 + |w|^2 = 1\} \subseteq \C^2$
\[
(z,w) \to (e^{i p \theta} z, e^{iq \theta} w)
\]
where $p,q > 1$ and $\text{gcd}(p,q) = 1$. The action is free except for points of the form $(z, 0)$ and $(0,w)$, which form two exceptional orbits, corresponding to orbifold points in the resulting $S^2$ quotient. See figure \ref{fig:spindle} below and we refer to Orlik \cite{orlik2006seifert}, Scott \cite{scott1983geometries} for the details of this construction.
\begin{figure}[h!]
\centering
\includegraphics[scale=0.75]{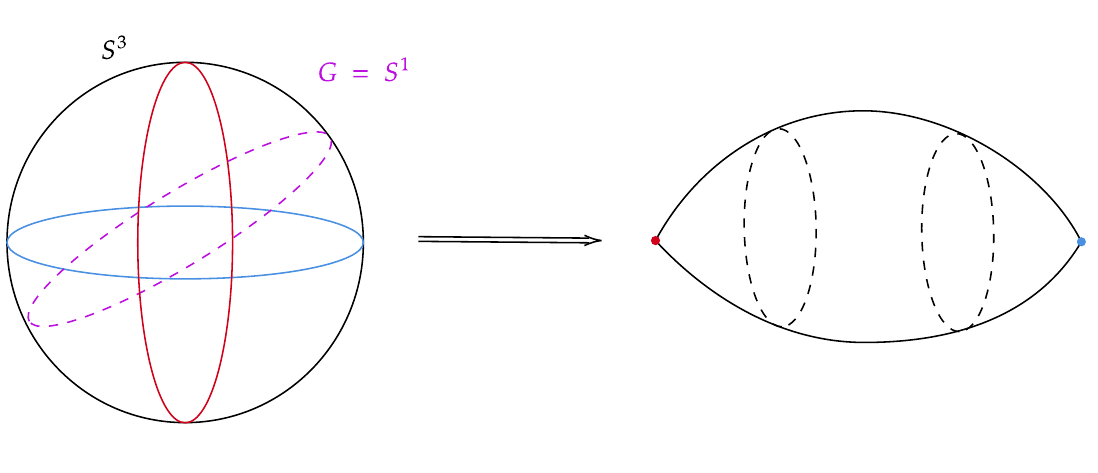}
\caption{Example of $S^1 \curvearrowright S^3$ to form an orbifold quotient $S^2$.}
\label{fig:spindle}
\end{figure}

\item Consider  $\R^8 = \R^4 \times \R^4 = \{(r, \theta), (\rho, \phi) \; | \; r, \rho \in \R^{\geq 0}, \;\; \theta, \phi \in S^3\}$ and the natural $S^3 \times S^3$ action given by 
\begin{align*}
(\theta_0, \phi_0) & \in S^3 \times S^3 \\
(\theta_0, \phi_0) \cdot ((r, \theta), (\rho, \phi)) &= ((r, (\theta_0 \cdot \theta)), (\rho, (\phi_0 \cdot \phi))
\end{align*}
where $\theta_0 \cdot \theta$ and $\phi_0 \cdot \phi$ are computed with the standard Lie group actions on $S^3$. The principal orbits correspond to $(r, \rho)$ such that $r, \rho > 0$. Singular orbits have at least one of $r = 0$ or $\rho = 0$, and there are no exceptional orbits. \nl \nl
With this action, $\R^8 / (S^3 \times S^3) \cong \R^{\geq 0} \times \R^{\geq 0}$, i.e. the quotient is isomorphic to the closed upper right quadrant. Moreover, the Hsiang--Lawson metric is given by $\overline{g} = 4 \pi^4 r^3 \rho^3 (dr^2 + d \rho^2)$. The Simon's cone is the lift of the straight line geodesic given by $r = \rho$ in this quotient space, and hence a cohomogeneity $2$ minimal hypersurface, with singularity set of dimension exactly equal to $n - 7 = 0$ at the origin. See also \cite[Ex 1.7]{HsiangLawson} which discusses the Simon's Cone as an equivariant minimal surface, with the related action of $G = SO(4) \times SO(4)$ on $\R^8$.

\end{enumerate}

\subsection{Stable Estimates for Allen--Cahn with drift} \label{section.WW.drift}
In this section, we prove estimates for the enhanced second fundamental form for stable solutions to the Allen--Cahn equation with drift (see Theorem \ref{thm.drift.stability}).
\begin{theorem*} 
Let $(U, g)$ be a smooth open two dimensional manifold with boundary. Suppose that $u_{\eps_i}$ are a sequence of stable solutions in $U$ with respect to the energy functional 
\begin{equation} \label{eqn.energy.weighted}
E_{\eps}(u, U, \mathcal{V}, g): =\int_U \mathcal{V} \left( \frac{\eps}{2} |\n^g u|^2 + \frac{W(u)}{\eps} \right) dV_g
\end{equation}
let $B = \frac{|\n^2 u|^2 - |\n |\n u||^2}{|\n u|^2} $ denote the enhanced second fundamental form and fix $0 < \beta < 1$. There exists $C, \eps_0 > 0$ such that if $\eps < \eps_0$ and 
\[
|B(x)| \leq C \qquad \forall x \in U \cap \{|u| \leq 1 - \beta \}
\]
then for all $U' \subset \subset U$
\begin{equation} \label{eqn.stability.estimate}
|B(x)| \leq C_* \eps^{1/7} \qquad \forall x \in U' \cap \{|u| \leq 1 - \beta\}
\end{equation}
where $C_*$, $\eps_0$ depend on $g$, $\text{dist}(\p U, U')$ but not $\eps$.
\end{theorem*}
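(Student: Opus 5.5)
The plan is to adapt the Wang--Wei curvature decay argument \cite[Theorem 3.6]{wang2019finite}, in the Riemannian form of Mantoulidis \cite[Theorem 4.13]{mantoulidis2021allen}, treating the drift term $\eps\langle \n \ln \mathcal{V}, \n u\rangle$ as a perturbation. This term is of lower order: it carries one extra factor of $\eps$ relative to the leading part of the equation. I would work at scale $\eps$ near any $x_0 \in U'\cap\{|u|\le 1-\beta\}$. Set $\tilde u(y)=u(\exp_{x_0}(\eps y))$. Then $\tilde u$ solves $\Delta_{\tilde g}\tilde u + \eps\langle \n \ln\tilde{\mathcal V},\n\tilde u\rangle = W'(\tilde u)$ with $\tilde g\to \delta$ in $C^k$. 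Moreover, $\eps\,\n\ln\tilde{\mathcal V}=O(\eps)$ together with all its derivatives, since $\mathcal V$ is smooth and positive on $U$; this uses that $U$ sits inside the principal part, where $\mathcal V$ is bounded below. Stability for the weighted energy \eqref{eqn.energy.weighted} says that
\[
\int \mathcal V\big(\eps|\n\phi|^2+\eps^{-1}W''(u)\phi^2\big)\ge 0 \quad \text{for all } \phi \in C^\infty_c(U).
\]
Since $\mathcal V$ is pinched between two positive constants on $U$, this is comparable to the unweighted stability inequality up to the multiplicative weight.

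The first step is the Sternberg--Zumbrun/Farina type inequality, with the weight. I would test stability with $\phi=|\n u|\eta$ and use the Bochner formula for the drift Laplacian $\Delta_f=\Delta+\langle\n \ln\mathcal V,\cdot\rangle$. Differentiating the equation gives $\eps\Delta_f \n u=\eps^{-1}W''(u)\n u$ plus commutator terms. These terms come from $\Ric$ and from $\n^2\ln\mathcal V$, a Bakry--\'Emery Ricci term, and they are of size $O(\eps)|\n u|^2$ after multiplying by $\eps$. The outcome is
\[
\int \mathcal V\,\eps|\n u|^2 B^2\eta^2\le C\int\mathcal V\,\eps|\n u|^2|\n\eta|^2 + C\int \mathcal V\,\eps|\n u|^2\eta^2 .
\]
The last term is harmless because it is controlled by the energy on the support of $\eta$.

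The second step repeats Wang--Wei's multi-sheet analysis. Under the assumption $|B|\le C$, the level set $\{u=0\}$ in a ball of size comparable to $1$ decomposes into $N$ graphs $\Gamma_\alpha$ over a line, with $N$ bounded by the energy bound (via the monotonicity/linear energy growth of Lemma~\ref{lem.linear.energy.growth}). Next, Fermi coordinates are built around each $\Gamma_\alpha$, and $u$ is written as the heteroclinic profile plus an error $\phi$. Projecting the equation onto $\mathbb H'$ then yields the Toda-type system for the sheets:
\[
\eps\,\mathrm{curv}(\Gamma_\alpha) = \text{(interaction terms } e^{-d_\alpha/\eps}) + O(\eps^2) + \text{drift}.
\]
The drift contributes $\eps\,\langle\n\ln\mathcal V,\nu_\alpha\rangle$, so the resulting equation is $H_\alpha=\langle \n\ln\mathcal V,\nu_\alpha\rangle+\dots$. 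This is exactly the geodesic equation for the conformal metric $\mathcal V g$. I would therefore absorb the drift by rewriting everything as a curvature equation in the metric $\tilde g=\mathcal V g$, where the drift disappears at the level of the limit. The remaining difference is an $O(\eps)$ error in the Toda system, smaller than the $O(\eps^{2})$-scale errors that Wang--Wei already absorb after rescaling.

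The main obstacle is the third step: combining the stability inequality with the Toda system to get the decay $\eps^{1/7}$. Wang--Wei use stability (through the second variation of the reduced Toda functional) to rule out sheets closer than $\eps|\log\eps|$, and then bootstrap using an integrated-curvature bound. I would check that every error term the drift introduces is $O(\eps)$ in the rescaled $C^{1,\theta}$ norms of $\phi$ and the sheet functions. These errors must fit within the threshold that produces the exponent $1/7$, and I would verify this inside the $\phi$-estimate (their Proposition on the error $\|\phi\|_{C^{2,\theta}}\lesssim \eps^2 + \text{interaction}$). If a difficulty arises, the fallback is to run the whole argument directly in the metric $\mathcal V g$ after a conformal change. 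In dimension $2$ the weighted Dirichlet energy is conformally invariant, so only the potential term acquires the smooth weight $\mathcal V$. That reduces matters to the Allen--Cahn equation with a smooth, positive, slowly varying coefficient in front of $W$, which Mantoulidis's Riemannian version handles with trivial modifications.
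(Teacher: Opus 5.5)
Your strategy matches the paper's: run Wang--Wei in Mantoulidis's Riemannian form, treat the drift perturbatively, and fold its leading part into a conformal curvature. But the quantitative bookkeeping, which is where all the work lies, is wrong as stated.

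After rescaling by $\varepsilon$, the hypothesis $|B|\le C$ only gives $H^\alpha=O(\varepsilon)$. The Toda system (Wang--Wei (10.2)) carries an $O(\varepsilon^2)$ error, and the whole point is to improve $O(\varepsilon)$ to $o(\varepsilon)$. So an $O(\varepsilon)$ drift error is not ``smaller than the $O(\varepsilon^2)$ errors''. It is as large as the quantity being estimated and gives nothing beyond the hypothesis.

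What has to be verified, and what the paper verifies, is the following. The only genuinely first-order drift term is $\partial_zR(y,0)\,g_\alpha'$, with $R=\ln\mathcal V(\varepsilon\,\cdot)$. It is absorbed through the substitutions $H^\alpha+\Delta_zh_\alpha\mapsto(H^\alpha-\partial_zR)+\Delta_{z,R}h_\alpha$ and $\Delta\phi\mapsto\Delta\phi+\langle\nabla R,\nabla\phi\rangle$. Every leftover term must then be $O(\varepsilon^2)$ or quadratic in the small quantities $\nabla h_\alpha,\phi$.

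This requires the graded bounds $|\partial^kR|\lesssim\varepsilon^k$, not just ``$O(\varepsilon)$ with all derivatives''. For instance:
\begin{itemize}
\item $\partial_zR(y,z)-\partial_zR(y,0)=O(\varepsilon^2|z|)$;
\item drift cross terms are bounded by $\varepsilon|\nabla h|\lesssim\varepsilon^2+|\nabla h|^2$ via AM--GM;
\item $\int g_\alpha'\langle\nabla R,\nabla\phi\rangle\,dz=O(\varepsilon^2+\|\phi\|_{C^1}^2)$;
\item in the \S19 interaction estimate the new terms are $\lesssim\varepsilon^{2-\sigma}$, using $\|\phi\|_{C^2}\lesssim\varepsilon^{1-\sigma}$, which is below the $\varepsilon^{3/2-2\sigma}$ threshold.
\end{itemize}
Similarly, pinching $\mathcal V$ between constants does not make weighted and unweighted stability comparable, because the quadratic form is indefinite. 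You must either carry $\mathcal V$ through the test-function computations of Wang--Wei \S\S17 and 19, as the paper does, or substitute $\varphi=\mathcal V^{-1/2}\psi$. The substitution converts weighted stability into unweighted stability with an extra bounded potential of size $O(\varepsilon^2)$ after rescaling.

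There are two further problems. First, your fallback does not exist. In dimension two the Dirichlet integral is conformally invariant, so for $\hat g=e^{2f}g$ one has $\mathrm{div}_{\hat g}(\mathcal V\nabla^{\hat g}u)=e^{-2f}\mathrm{div}_g(\mathcal V\nabla^gu)$. The drift therefore survives in every conformal metric; in $\mathcal Vg$ it is the potential, not the gradient term, that loses the weight. This is why the paper keeps the drift and uses the conformal metric only to interpret $H^\alpha-\partial_zR$.

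Second, your own Toda relation $H_\alpha=\partial_{\nu_\alpha}\ln\mathcal V+\cdots$ shows that the $g$-curvature of the level sets tends to $\partial_\nu\ln\mathcal V$. This is nonzero wherever $\nabla\mathcal V$ is not tangent to the limit curve. Since $B$ dominates the squared curvature of the level sets, $B$ computed in $g$ cannot decay like $\varepsilon^{1/7}$ in general. What the argument actually delivers, and what the paper's adjustment to Wang--Wei \S20 extracts, is decay of $H^\alpha-\partial_zR$, i.e.\ of the level-set curvature in the conformal metric. The conclusion should be stated for that quantity. Note also that $H=\partial_\nu\ln\mathcal V$ is the geodesic equation of $\mathcal V^2g$ (the Hsiang--Lawson metric with $\ell=1$), not of $\mathcal Vg$.
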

To prove Theorem \ref{thm.drift.stability}, we make a slight adaptation of the original proof of Wang--Wei \cite{wang2019finite}[Theorem 3.7]. We note that the adaptation of said theorem to the Riemannian setting was done by Mantoulidis \cite{mantoulidis2021allen}[Theorem 4.13], in which he carefully checks that the change to the Riemannian setting adds higher order error terms which do not affect the estimate \eqref{eqn.stability.estimate}. We follow the same strategy by showing that the drift terms are higher order as well.
\begin{proof}
We note the necessary adjustments section by section of Wang--Wei \cite{wang2019finite}. Throughout this section, we will use $A \lesssim B$ to mean that $A \leq K \cdot B$ where $K$ is a constant independent of any other relevant parameters in context, unless noted otherwise.
\begin{center}
\textbf{Adjustments to \cite{wang2019finite}[\S 7]} 
\end{center}
The results of this section are identical as they rely on blow up arguments, and as remarked in the introduction \S \ref{section.introduction}, a sequence of solutions, $\{\ov{u}_{\eps_i}\}$, to equation \eqref{eqn.weak.one} blow up to a solution to the Sine--Gordon equation on $\R^2$.
\begin{center}
\textbf{Adjustments to \cite{wang2019finite}[\S 8]} 
\end{center}
The same adjustments from Mantoulidis \cite{mantoulidis2021allen}[\S Appendix C], ``Adjustments to Section 8" are sufficient. We note that under rescaling of $(B_r(0), g) \to (B_{\eps^{-1} r}(0), g_{\eps})$ and the pulled back functions, $u = \ov{u}(\eps x)$, and $R = \ln(\mathcal{V}(\eps x))$. 
\begin{equation} \label{eqn.rescaled.AC.drfit}
\Delta_{g_{\eps}} u + \langle \n^{g_{\eps}} u, \n^{g_{\eps}} R \rangle - W'(u) = 0
\end{equation}
By abuse of notation, we will label $g_{\eps}$ as $g$ from here on, and in fermi coordinates about some curve $\gamma$, we let $z$ denote the normal coordinate and $y$ the tangential coordinate. In these coordinates, we have 
\[
g = \begin{pmatrix}
1 & 0 \\
0 & g^{yy}
\end{pmatrix}
\]
See \cite[Equation C.2]{mantoulidis2021allen} for corresponding curvature estimates. We record for future reference that in these rescaled coordinates,
\begin{align} \label{eqn.R.bounds}
||\p_{\alpha} R||_{\infty} &\lesssim \eps^{|\alpha|} \\ \nonumber
||\p_{\alpha} g^{yy}||_{\infty} & \lesssim \eps^{|\alpha|}
\end{align}
where $\p_{\alpha}$ denotes any combination of $|\alpha|$ copies of $\p_z$ and $\p_y$.
\begin{center}
\textbf{Adjustments to \cite{wang2019finite}[\S 9]} 
\end{center}
Following the notation of Wang--Wei \cite{wang2019finite}[\S 9] exactly, we plug in the ansatz of $\phi:= u - g_*$, where $u$ now solves equation \eqref{eqn.rescaled.AC.drfit}, and compute in Fermi coordinates with respect to $\Gamma_{\alpha}$. As in Wang--Wei \cite{wang2019finite}[\S 9],
\begin{align*}
\Delta g_{\alpha} &= (\p_z^2 - H_z \p_z + \Delta_z) g_{\alpha} \\
&= g_{\alpha}'' - (-1)^{\alpha - 1} g_{\alpha}' H^{\alpha} - (-1)^{\alpha - 1} g_{\alpha}' \Delta_z h_{\alpha} + g_{\alpha}'' |\n h_{\alpha}|^2 
\end{align*}
where we have notated $\Delta_z = g^{ss}(s,z) \p_s^2$. We now compute the drift term as 
\begin{align*}
\langle \n R, \n g_{\alpha} \rangle &= (-1)^{\alpha-1}\left( \p_z(R)  - g^{yy} \p_y(R) \p_y h_{\alpha} \right) g_{\alpha}' \\
&= (-1)^{\alpha-1} \p_z(R) g_{\alpha}' + (-1)^{\alpha}\langle \n^z R, \n^z h \rangle g_{\alpha}' 
\end{align*}
where $\n^z$ denotes the tangential gradient along the level set of $\text{dist} = z$. We now conclude the following revised version of \cite{wang2017some}[Equation 9.4]:
\begin{align} \label{eqn.phi.prelim.a}
\Delta_g \phi + \langle \n R, \n \phi \rangle &= W''(g_*)\phi + \mathcal{R}(\phi) + \left[W'(g_*) - \sum_{\beta = 1}W'(g_{\beta}) \right] \\ \nonumber
& \quad (-1)^{\alpha} g_{\alpha}' [H^{\alpha} + \Delta_{z,R} h_{\alpha} - \p_z(R)] - g_{\alpha}''|\nabla_z h_{\alpha}|^2 \\ \nonumber 
& \quad - \sum_{\beta \neq \alpha} \left[ (-1)^{\beta}g_{\beta}' \mathcal{R}_{\beta,1} + g_{\beta}'' \mathcal{R}_{\beta,2} \right] - \sum_{\beta} \xi_{\beta}
\end{align}
where 
\begin{align*}
\langle \n R, \n \phi \rangle &= g^{yy} R_y \phi_y + R_z \phi_z \\
\Delta_{z,R} h &:= \Delta_z h - \langle \n^z R, \n^z h \rangle \\
\mathcal{R}_{\beta,1} &:= (H^{\beta} - \p_{z^{\beta}}(R)) + \Delta_{z,R} h_{\beta} \\
\mathcal{R}_{\beta,2} &:= |\nabla_z h_{\beta}|^2 \\
R(\phi) &:= W'(g_* + \phi) - W'(g_*) - W''(g_*)\phi = O(\phi^2)
\end{align*}
where $z^{\beta}$ denotes the fermi coordinate with respect to $\Gamma_{\beta}$. Sometimes we will suppress the $\beta$ notation when it is implicit. Note that $\Delta_{z,R}$ denotes the surface drift Laplacian on $\gamma_z = \{p \in M \; | \; \text{dist}(p, \gamma) = z\}$ with respect to the drift function, $R \Big|_{\gamma_z}$. \nl 
\indent We remark that for each $\alpha$,
\[
H^{\alpha}(s) - \p_{z}(R)(y,0) =  H^{\alpha, \tilde{g}}
\]
where $\tilde{g} = e^{R(y,z) - R(y,0)} g = \frac{\mathcal{V}^{\alpha}(\eps y, \eps z)}{\mathcal{V}^{\alpha}(\eps y, 0)} g$ and $\mathcal{V}^{\alpha}(\tilde{y}, \tilde{z})$ denotes the fiber volume function in fermi coordinates about $\Gamma_{\alpha}$. We also remark that by equation Wang--Wei \cite[Equation 8.1]{wang2019finite} (cf. Mantoulidis \cite[Lemma 4.5]{mantoulidis2021allen} after rescaling):
\begin{equation} \label{eqn.mc.easy.bound}
H^{\alpha, \tilde{g}} \lesssim \eps \qquad \forall \alpha	
\end{equation}
And derivatives in the $y$ direction will decrease the upper bound in terms of powers of $\epsilon$. It is thus natural to carry out the analysis with respect to $\tilde{g}$ - however, the conformal invariance of the Laplacian in $2$ dimensions leaves the analysis unchanged, and we will not use this alternative perspective on $H^{\alpha} - \p_z R$ until the adjustments to \S 20. Thus, we opt to work with the metric $g$, and recreate the analysis of Wang--Wei with 
\begin{align} \label{eqn.MC.replace}
H^{\alpha} + \Delta_z h_{\alpha} &\to (H^{\alpha} - \p_{z^{\alpha}}(R)) + \Delta_{z,R} h \\ \label{eqn.laplace.replace}
\Delta_g \phi &\to \Delta_g \phi + \langle \n R, \n \phi \rangle
\end{align}
We remark that from here on, when we say "the analogous equation" or "appropriate modifications," this means symbolically making the changes denoted in equations \eqref{eqn.MC.replace} \eqref{eqn.laplace.replace} to the estimates and equations that Wang--Wei originally produced.
\begin{center}
\textbf{Adjustments to \cite{wang2019finite}[\S 10]} 
\end{center}
Most of the computations in this section remain the same (see \cite[Appendix C]{mantoulidis2021allen} for the adaptation to the Riemannian setting without drift), so we simply include the bounds on the extra error terms. The analogue of \cite[Equation 10.1]{wang2019finite} is now
\begin{align} \label{eqn.new.10.1}
\int_{-\delta R}^{\delta R} g_{\alpha}'&\left( \Delta_z \phi - H_z \p_z \phi + \p_z^2 \phi \right) \\ \nonumber
&= \int_{-\delta R}^{\delta R} g_{\alpha}' \left[W'(g_* + \phi) - \sum_{\beta = 1}W'(g_{\beta}) \right] \\ \nonumber
& + \int_{-\delta R}^{\delta R}(-1)^{\alpha} (g_{\alpha}')^2 [H^{\alpha} - \p_z(R) + \Delta_{z,R} h_{\alpha}] \\ \nonumber
& + \int_{-\delta R}^{\delta R} - g_{\alpha}' g_{\alpha}''|\nabla_z h_{\alpha}|^2 \\  \nonumber
& \quad - \sum_{\beta \neq \alpha} \int_{-\delta R}^{\delta R} g_{\alpha}'\left[ (-1)^{\beta}g_{\beta}' \mathcal{R}_{\beta,1} + g_{\beta}'' \mathcal{R}_{\beta,2} \right] - \sum_{\beta} \int_{-\delta R}^{\delta R} \xi_{\beta} g_{\alpha}' \\ \nonumber
& - \int_{-\delta R}^{\delta R} g_{\alpha}' \langle \n R, \n \phi \rangle 
\end{align}
In this section, Wang--Wei isolate the term on the third line of equation \eqref{eqn.new.10.1} and produce a coarse upper bound for $H^{\alpha}(y,0) + \Delta_{0} h_{\alpha}$ using orthogonality relations. We aim to produce the analogous upper bound for $(H^{\alpha}(y,0) - \p_z(R)(y,0)) + \Delta_{0,R} h$. As such, it suffices to bound
\begin{align*}
I + II &= \int_{-\delta R}^{\delta R} (-1)^{\alpha} (g_{\alpha}')^2 [- (\p_z(R)(y,z) - \p_z(R)(y,0)) + (\langle \n_z R, \n_z h \rangle - \langle \n_0 R, \n_0 h \rangle)] \\
III &= \int_{-\delta R}^{\delta R} g_{\alpha}' \langle \n R, \n \phi \rangle
\end{align*}
We the first two terms as:
\begin{align*}
|I| & \lesssim \int_{-\delta R}^{\delta R} (g_{\alpha}')^2 z||\p_z^2 R||_{\infty}\\ 
& \lesssim ||\p_z^2 R||_{\infty} \int_{-\delta R}^{\delta R} (g_{\alpha}')^2 z \\
& \lesssim \eps^2 \\
|II| &\lesssim \int_{-\delta R}^{\delta R} (g_{\alpha}')^2 \left( |\langle \nabla_z R - \nabla_0 R, \nabla_z h \rangle| + |\langle \nabla_0 R, \nabla_z h - \nabla_0 h \rangle|\right) \\
&\lesssim ||\p_z R|| \cdot \sup_{y} ||\nabla_0 h||  \\
& \lesssim \eps^2 + \sup_{y}||\nabla^0 h||^2 
\end{align*}
having used equation \eqref{eqn.R.bounds} to bound the appropriate derivatives of $R$ and the AM-GM inequality. For $III$, we have 
\begin{align*}
III &= \int_{-\delta R}^{\delta R} g_{\alpha}' (R_z \phi_z + g^{yy} R_y \phi_y)\\
&= \int_{-\delta R}^{\delta R} g_{\alpha}' (- R_{zz} \phi + g^{yy} R_y \phi_y) \\
|III| & \lesssim \int g_{\alpha}' (\eps^2 + |\nabla_y \phi|^2) \\
& \lesssim \eps^2 + \sup_{(-6 |\log \eps|, 6 |\log \eps|)} |\nabla_y \phi(y,z)|^2
\end{align*}
We similarly bound these differences in the error terms, i.e. 
\begin{align*}
I_{\beta} + II_{\beta} &= \int_{-\delta R}^{\delta R} g_{\alpha}' g_{\beta}'[- (\p_z(R)(y,z) - \p_z(R)(y,0)) + (\langle \n_z R, \n_z h \rangle - \langle \n_0 R, \n_0 h \rangle)]
\end{align*}
which satisfies similar error estimates also dampened by the distance between sheets
\begin{align*}
|I_{\beta}| & \lesssim |d_{\beta}(y,0)| e^{-\sqrt{2} d_{\beta}(y,0)}\eps^2 \\
& \lesssim \eps^2 \\
|II_{\beta}| &\lesssim \eps |d_{\beta}(y,0)| e^{-\sqrt{2} d_{\beta}(y,0)} \sup_{B_{\eps^{1/3}}(y)} |\nabla_0 h| \\
& \lesssim \eps^2 + |d_{\beta}(y,0)| e^{-\sqrt{2} d_{\beta}(y,0)} \sup_{B_{\eps^{1/3}}(y)} |\nabla_0 h|^2
\end{align*}
Doing the remaining analysis of Wang--Wei (and also Mantoulidis \cite[Appendix C]{mantoulidis2021allen}), we conclude the same equation of Wang--Wei \cite[Equation 10.2]{wang2019finite} with an extra $O(\sup_y|\nabla_0 h_{\alpha}|^2)$ term.
\begin{align} \label{eqn.10.2.modified}
(H^{\alpha}(y,0) &- \p_z(R)(y,0)) + \Delta_{0,R}(h_{\alpha})(y,0)  \\ \nonumber
&= \frac{4}{\sigma_0} \left[ A_{(-1)}^2 e^{-\sqrt{2} d_{\alpha-1}(y,0)} - A_{(-1)^{\alpha-1}}^2 e^{\sqrt{2} d_{\alpha+1}(y,0)}\right] + O(\eps^2) \\ \nonumber
& + O(|h_{\alpha}(y)| + |h_{\alpha-1}(\Pi_{\alpha-1}(y,z))| + \eps^{1/3})e^{-\sqrt{2} d_{\alpha-1}(y,0)} \\ \nonumber
& + O(|h_{\alpha}(y)| + |h_{\alpha+1}(\Pi_{\alpha+1}(y,z))| + \eps^{1/3})e^{-\sqrt{2} d_{\alpha+1}(y,0)} \\ \nonumber
& + O(e^{\frac{-3\sqrt{2}}{2} d_{\alpha-1}(y,0)} + e^{\frac{-3\sqrt{2}}{2} d_{\alpha+1}(y,0)}) \\ \nonumber
& + O(e^{-\sqrt{2} d_{\alpha-2}(y,0)} + e^{\sqrt{2} d_{\alpha+2}(y,0)})  \\ \nonumber
& + \sum_{\beta \neq \alpha} |d_{\beta}(y,0)| e^{-\sqrt{2} |d_{\beta}(y,0)|}  \\ \nonumber
& \qquad \cdot \left[ \sup_{B_{\eps^{1/3}(y)}} |H^{\beta}(y,0) - \p_z(R)(y,0) + \Delta_{0,R}^{\beta} h_{\beta}| + \sup_{B_{\eps^{1/3}}(y)} |\nabla h_{\beta}|^2 \right] \\ \nonumber
& + \sup_{(-6 |\log\eps|, 6 |\log \eps|)} \left( |\nabla_y^2 \phi (y,z)|^2 + |\nabla_y \phi (y,z)|^2 + |\phi(y,z)|^2 \right) \\ \nonumber
& + \sup_{B_{\eps^{1/3}}(y)} |\nabla_0 h|^2
\end{align}
Again, all lines in equation \eqref{eqn.10.2.modified} are identical to Wang--Wei \cite[Equation 10.2]{wang2019finite} except for the replacement of $H + \Delta_0 h \to H - \p_z R + \Delta_{0,R} h$ and the last line. \nl 
\indent However, for the purposes of deriving their Lemma 10.1 (specifically \cite[Equation 10.3]{wang2019finite}), we can use \cite[Equation 9.8]{wang2019finite}:
\[
\sup_y|\nabla_0 h_{\alpha}|^2 \lesssim \sup_y|\nabla_0 \phi(y,0)|^2 + o(e^{-2\sqrt{2} D_{\alpha}(y)})
\]
Combining this with \eqref{eqn.10.2.modified}, we conclude the analogous bound of Wang--Wei equation 10.3 
\begin{align*}
\sup_{B_r} |H^{\alpha}(y,0) &- \p_z(R)(y,0) + \Delta_{0,R} h_{\alpha}(y)|  \\
& \lesssim \sup_{B_{r+1}} e^{-\sqrt{2} D_{\alpha}} + \eps^2 + ||\phi||^2_{C^{2,\theta}(\mathcal{D}_{r+1})} \\
& + \sum_{\beta \neq \alpha} \sup_{B_{r+1}} \left[ |H^{\beta}(y,0) - \p_{z}(R)(y,0) + \Delta_{0,R}^{\beta} h_{\beta}|^2 + e^{-2\sqrt{2} D_{\beta}} \right]
\end{align*}

\begin{center}
\textbf{Adjustments to \cite{wang2019finite}[\S 11]} 
\end{center}
Wang--Wei's \cite[Prop 11.1]{wang2019finite} relies on their Lemma 11.2, which in turn relies on \cite[Equation 9.4]{wang2019finite}. Thus we must make the same chain of deductions, arguing with our equation \eqref{eqn.phi.prelim.a} instead. However, we note that 
\[
\Big|g_{\alpha}'[H^{\alpha} - \p_z(R) + \Delta_{z,R} h_{\alpha}]\Big| \lesssim e^{-cL} |H^{\alpha}(y,0) - \p_z(R)(y,0) + \Delta_{0,R} h_{\alpha}| + \eps^2 + |\nabla_0 h_{\alpha}|^2 + |\nabla_0^2 h_{\alpha}|^2
\]
This, along with the fact that 
\begin{align*}
|g_{\beta}'[H^{\beta} - \p_z R + \Delta_{z,R}h_{\beta}]| & \lesssim |g_{\beta}'|^2 + |H^{\beta} - \p_z R + \Delta_{z,R}h_{\beta}|^2 \\
& \lesssim e^{-\sqrt{2} d_{\beta}(y,z)} + |H^{\beta} - \p_z R + \Delta_{z,R}h_{\beta}|^2
\end{align*}
and applying \cite{wangsecond}[Lemma 3.6] we conclude a similar error bound of 
\begin{lemma} \label{lem.11.2.replace}
In $\mathcal{N}_{\alpha}^2(r)$, we have 
\[
\Delta_z \phi - H^{\alpha} \p_z \phi + \p_z^2 \phi + \langle \n R, \n \phi \rangle = [2 + O(e^{-cL})] \phi + E_{\alpha}^2
\]
where
\begin{align*}
|E_{\alpha}^2(y,z)| & \lesssim \eps^2 + e^{-\sqrt{2} D_{\alpha}(y)} + |\n_0^2 h_{\alpha}(y)|^2 + |\n_0 h_{\alpha}(y)|^2 \\
& + e^{-cL} |H^{\alpha}(y,0) -\p_z(R)(y,0) + \Delta_{0,R} h_{\alpha}(y)| \\
& + \sum_{\beta \neq \alpha} \sup_{B_{\eps^{1/3}}(y)} \left[ |H^{\beta}(y,0) - \p_{z}(R)(y,0) + \Delta_{0,R}^{\beta} h_{\beta}|^2 + |\nabla h_{\beta}|^4 + |\n^2 h_{\beta}|^4 \right]
\end{align*}
\end{lemma}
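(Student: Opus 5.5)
The plan is to follow the derivation of Wang--Wei's Lemma 11.2 in \cite{wang2019finite}, as adapted to Riemannian metrics by Mantoulidis \cite[Appendix C]{mantoulidis2021allen}. The only changes are the replacements \eqref{eqn.MC.replace}--\eqref{eqn.laplace.replace} and control of the extra drift errors. The starting point is equation \eqref{eqn.phi.prelim.a}, restricted to the region $\mathcal{N}_\alpha^2(r)$, where the nearest sheet is $\Gamma_\alpha$ and $|z|$ is at most a fixed large multiple of $L$. In this region write $g_* = g_\alpha + \sum_{\beta\neq\alpha} g_\beta$. Then linearize $W'(g_*) - \sum_\beta W'(g_\beta)$ and $W''(g_*)\phi$ about $g_\alpha$. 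Exactly as in Wang--Wei, this gives $W''(g_*) = 2 + O(e^{-cL})$, because away from the transition layer $g_\alpha \to \pm 1$ and $W''(\pm1)=2$ after normalization. The interaction terms are bounded by $e^{-\sqrt2 D_\alpha(y)}$ via \cite{wangsecond}[Lemma 3.6]. The quadratic remainder $\mathcal{R}(\phi)=O(\phi^2)$ is absorbed, as in the original, using the smallness of $\phi$.

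Next we treat the geometric terms. The $\alpha$-sheet term $g_\alpha'[H^\alpha - \p_z R + \Delta_{z,R}h_\alpha]$ is handled by the displayed bound preceding the lemma. Since $|z|\gtrsim L$ in $\mathcal{N}^2_\alpha$, the factor $g_\alpha'$ gives $e^{-cL}$. Moving from level $z$ to level $0$ costs $|z|\,\|\p_z^2R\|_\infty \lesssim \eps^2|\log\eps|$, which we absorb into $\eps^2$ up to adjusting constants. It also costs terms in $|\n_0 h_\alpha|^2$ and $|\n_0^2 h_\alpha|^2$, which come from the curvature of the Fermi coordinates and from the drift cross term $\langle \n^z R,\n^z h\rangle$. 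The latter we split with AM--GM into $\eps^2 + |\n_0 h_\alpha|^2$, using \eqref{eqn.R.bounds}. The term $g_\alpha''|\n_z h_\alpha|^2$ is bounded by $|\n_0 h_\alpha|^2$ plus higher-order terms. For $\beta\neq\alpha$, the terms $g_\beta'\mathcal{R}_{\beta,1}$ and $g_\beta''\mathcal{R}_{\beta,2}$ are bounded by AM--GM: $|g_\beta'|^2 \lesssim e^{-\sqrt2 d_\beta}\lesssim e^{-\sqrt2 D_\alpha}$, plus $|\mathcal{R}_{\beta,1}|^2+|\mathcal{R}_{\beta,2}|^2$. These we convert into the suprema over $B_{\eps^{1/3}}(y)$ appearing in $E^2_\alpha$, together with $|\n h_\beta|^4$ and $|\n^2 h_\beta|^4$. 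The $\xi_\beta$ cutoff errors are exponentially small, as in Wang--Wei.

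Finally, we rewrite $\Delta_g\phi$ in Fermi coordinates as $\Delta_z\phi - H^\alpha\p_z\phi + \p_z^2\phi$, up to errors of the form $\eps\cdot(\text{derivatives of }\phi)$. These are handled as in Mantoulidis, using \cite[Equation C.2]{mantoulidis2021allen}. The drift $\langle\n R,\n\phi\rangle$ we simply keep on the left-hand side, which is why it appears in the statement.

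We expect the main obstacle to be checking that no drift error enters linearly in $\eps$. A bare $O(\eps)$ error, or one of the form $\eps\,|\n\phi|$, would destroy the $\eps^{1/7}$ bootstrap later. Such a term comes from $\p_z R$ itself, which is only $O(\eps)$. It must therefore always appear combined with $H^\alpha$, as $H^{\alpha,\tilde g}$, or as a difference $\p_zR(y,z)-\p_zR(y,0) = O(\eps^2|z|)$. The cross terms $R_y h_y$ must be split quadratically. So our first check is to verify that every occurrence of $\p_z R$ in \eqref{eqn.phi.prelim.a} is paired in this way, using \eqref{eqn.mc.easy.bound} if needed.
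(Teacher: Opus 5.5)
Your proposal is correct and takes the same route as the paper: rerun Wang--Wei's derivation of their Lemma 11.2 from the drift equation \eqref{eqn.phi.prelim.a}, bound the $\alpha$-sheet term by the displayed estimate preceding the lemma (using $g_\alpha'=O(e^{-cL})$ where $|z|\gtrsim L$), split the $\beta\neq\alpha$ terms by AM--GM, and control the interaction terms via \cite[Lemma 3.6]{wangsecond}. Two small corrections: the cost $|z|\,\|\p_z^2R\|_\infty$ of moving to level $0$ is absorbed into $\eps^2$ because it carries the factor $g_\alpha'(z)$ and $|z|\,g_\alpha'(z)$ is bounded (as in the paper's bound on the term $I$), not by ``adjusting constants'' on $\eps^2|\log\eps|$; and in Fermi coordinates $\Delta_g\phi=\p_z^2\phi-H^\alpha(y,z)\p_z\phi+\Delta_z\phi$ holds exactly, so no $\eps\cdot\n\phi$ errors arise, which matters because such terms could not be placed in $E^2_\alpha$.
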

\noindent A similar analysis gives an analogous version of Wang--Wei equation \cite[Equation 11.5]{wang2019finite}. We note that
\begin{align*}
(\Delta_g (c_{\alpha}(y) g_{\alpha}') + \langle \n R, \n (c_{\alpha} g_{\alpha}') \rangle) &= \Delta_{0,R}( c_{\alpha}(y)) g_{\alpha}' \\
& + (- c_{\alpha} H_t + c_{\alpha}\langle \n^z R, \n^z h \rangle + \p_z(R) c_{\alpha}) g_{\alpha}'' + c_{\alpha} g_{\alpha}''' 
\end{align*}
And thus, we conclude:
\begin{align*}
\Delta_g \phi + \langle \n R, \n \phi \rangle &= W''(g_{\alpha}) \phi + \tilde{c}_{\alpha}(y) g_{\alpha}' + \tilde{E}_{\alpha} \\
\tilde{c}_{\alpha}(y) &= (-1)^{\alpha-1} [H^{\alpha}(y,0) - \p_z(R)(y,0) + \Delta_{0,R} h_{\alpha}] - [\Delta_{0,R} c_{\alpha}(y)]
\end{align*}
with the exact same form of the error term, noting that 
\[
|(- c_{\alpha} H_t + c_{\alpha}\langle \n^z R, \n^z h \rangle + \p_z(R) c_{\alpha}) g_{\alpha}''| \lesssim \eps [|c_{\alpha}| + |\nabla c_{\alpha}|] e^{-\sqrt{2} |z|}
\]
The remainder of this section goes through the same, though the proof of lemma 11.6 must be adjusted to account for the drift terms. We sketch the necessary modifications as follows:
\begin{align*}
\int_{\R} \phi \Delta_z \phi + H^{\alpha} \phi_z \phi + \phi \phi_{zz} &= \int_{\R} g^{yy} R_y \phi_y \phi + R_z \phi_z \phi + \tilde{E} \phi  
\end{align*}
Integrating by parts and applying \cite[Theorem A.2]{wang2019finite}, we have 
\begin{align*}
\int_{\R} \phi \Delta_z \phi - g^{yy} R_y \phi_y \phi &= \int_{\R} |\p_z \phi|^2 + W''(g_{\alpha}) \phi^2 + \tilde{E}_{\alpha} \phi + \frac{1}{2} \frac{\p (H^{\alpha} + R_t)}{\p z} \phi^2 \\
& \geq \frac{3 \mu}{4} \int_{\R} \phi^2 - C \int_{\R} \tilde{E}_{\alpha}^2
\end{align*}
Following the same manipulations as in the proof of \cite[Lemma 11.6]{wang2019finite} but the drift term, we have
\begin{align*}
\frac{1}{2} (\Delta_0 + (\p_y R) \p_y ) \int_{\R} \phi^2 & \geq \frac{\mu}{2} \int_{\R} \phi^2 - C \int_{\R} \ti{E}_{\alpha}^2 \\
& \quad - C \eps^2 \int_{\R} z^2 \left( |\n_y^2 \phi(y,z)|^2 + |\n_y \phi|^2 \right)
\end{align*}
noting that the Agmon-type exponential decay estimate (see e.g. \cite{agmonlectures}) still works for the operator $\Delta_0 + (\p_y R) \p_y = \p_y^2 + (\p_y R) \p_y$, we conclude \cite[Equation 11.6]{wang2019finite}. \nl 
\indent The rest of the section now follows with the appropriate modifications. 
\begin{center}
\textbf{Adjustments to \cite{wang2019finite}[\S 12]} 
\end{center}
With the replacements of Wang--Wei equation 9.4 by equation \eqref{eqn.phi.prelim.a}, we conclude the analogous version of their Lemma 12.1
\begin{lemma} \label{lemma.12.1.replace}
For any $(y,z) \in \mathcal{M}_{\alpha}^0$
\begin{align*}
||\Delta \phi &+ \langle \n R, \n \phi \rangle - W''(g_*) \phi||_{C^{\theta}(B_{2/3}(y,z))} \\
&\lesssim \eps^2 + \sup_{B_1(y)} e^{-\sqrt{2} D_{\alpha}} + ||\phi||_{C^{2,\theta}(B_1(y,z))}^2 \\
& + e^{-\sqrt{2} |z|} ||H^{\alpha} - \p_{z^{\alpha}} R + \Delta_{0,R} h_{\alpha}||_{C^{\theta}(B_1(y,0))} \\
& + \sum_{\beta \neq \alpha} e^{-\sqrt{2} |d_{\beta}(y,z)|}(||\phi||_{C^{2,\theta}(B_2^{\beta}(y,0))}^2+ \sup_{B_2(y)} e^{-2\sqrt{2} D_{\beta}}) \\
& + \sum_{\beta \neq \alpha} e^{-\sqrt{2} |d_{\beta}(y,z)|} ||H^{\beta} - \p_{z^{\beta}} R + \Delta_{0,R}^{\beta} h_{\beta}||_{C^{\theta}(B_2^{\beta}(y,0))}
\end{align*}
\end{lemma}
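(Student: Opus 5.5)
The plan is to follow the derivation of Wang--Wei's Lemma 12.1 \cite{wang2019finite}, in its Riemannian form from Mantoulidis \cite[Appendix C]{mantoulidis2021allen}. The starting point is the modified pointwise identity \eqref{eqn.phi.prelim.a}. I move $W''(g_*)\phi$ to the left-hand side. The quantity $\Delta\phi+\langle\n R,\n\phi\rangle-W''(g_*)\phi$ then equals a sum of explicit terms, and I estimate each one in $C^{\theta}(B_{2/3}(y,z))$ for $(y,z)\in\mathcal{M}^0_\alpha$. The terms, and their bounds, are as follows.
\begin{enumerate}
\item The nonlinear remainder $\mathcal{R}(\phi)=O(\phi^2)$ is bounded by $\|\phi\|^2_{C^{2,\theta}(B_1)}$, since $W$ is smooth and $C^\theta$ is an algebra.
\item The interaction term $W'(g_*)-\sum_\beta W'(g_\beta)$ gives $\sup_{B_1(y)}e^{-\sqrt2 D_\alpha}$, plus the $e^{-\sqrt2|d_\beta|}e^{-2\sqrt2 D_\beta}$ contributions. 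This is exactly as in \cite{wang2019finite}; the drift does not enter here.
\item The self term $(-1)^\alpha g_\alpha'[H^\alpha+\Delta_{z,R}h_\alpha-\p_zR]$ is handled by writing it as the value at $z=0$ plus a $z$-difference. The value at $z=0$ gives $e^{-\sqrt2|z|}\|H^\alpha-\p_{z^\alpha}R+\Delta_{0,R}h_\alpha\|_{C^\theta}$. The difference in $z$ is controlled through \eqref{eqn.R.bounds}, namely $|\p_z^2R|\lesssim\eps^2$, and the curvature bounds of \cite[C.2]{mantoulidis2021allen}. The extra drift piece $\langle\n^zR,\n^zh\rangle-\langle\n^0R,\n^0h\rangle$ is bounded, as in the estimate of $II$, by $\eps^2+|\n_0h_\alpha|^2$.
\item The term $g_\alpha''|\n_zh_\alpha|^2$ gives $|\n_0h_\alpha|^2$.
\item The cross terms $g_\beta'\mathcal{R}_{\beta,1}$ and $g_\beta''\mathcal{R}_{\beta,2}$ are treated the same way, with the weight $e^{-\sqrt2|d_\beta|}$ coming from $g_\beta'$.
\item The cutoff errors $\xi_\beta$ are unchanged from \cite{wang2019finite}.
\end{enumerate}

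Next I absorb the $|\n_0 h_\alpha|^2$ and $|\n_0 h_\beta|^2$ contributions, and their $C^\theta$ versions, into the terms $\|\phi\|^2_{C^{2,\theta}}$ and $e^{-2\sqrt2D}$. For this I use \cite[Eq.~9.8]{wang2019finite} and its $C^{1,\theta}$ analogue, $|\n_0h_\alpha|\lesssim|\n_0\phi(y,0)|+o(e^{-\sqrt2D_\alpha})$, which comes from the orthogonality condition defining $h_\alpha$. This was already done in the \S 10 adjustment, so no new term should appear in the final lemma.

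The step I expect to be the main obstacle is getting Hölder rather than sup control of the drift differences. This requires bounding $[\p_zR(\cdot,z)-\p_zR(\cdot,0)]_{C^\theta}$ and $[\langle\n^zR,\n^zh\rangle]_{C^\theta}$. The approach is to use that every derivative of $R$ carries a factor of $\eps$ in the rescaled coordinates, by \eqref{eqn.R.bounds}. Combining this with $|z|\lesssim|\log\eps|$ on the relevant region, the drift error is $O(\eps^2|\log\eps|)$ before it is multiplied by $g_\alpha'$. Integrating the decay of $g_\alpha'$ against $|z|$ then recovers $O(\eps^2)$. The gradient-of-$h$ piece is handled by AM--GM together with the absorption step above.
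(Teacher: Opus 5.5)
Your proposal is correct and follows the same route as the paper. The paper simply reruns Wang--Wei's proof of their Lemma~12.1 with their Equation~9.4 replaced by the drift identity \eqref{eqn.phi.prelim.a}, keeping $\langle \n R,\n\phi\rangle$ on the left, and you do the same while itemizing how the new drift terms are controlled by \eqref{eqn.R.bounds}, AM--GM, and Wang--Wei's bounds on $h_\alpha$ in terms of $\phi$. One small correction: the lemma is a pointwise H\"older estimate, so the factor $|z|$ in the drift differences is removed by $\sup_z |z|e^{-\sqrt{2}|z|}<\infty$ rather than by integrating in $z$, and no restriction $|z|\lesssim|\log\eps|$ is needed.
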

The analogous Schauder estimates lead to a version of Wang--Wei \cite[Equation 12.2]{wang2019finite} with $H + \Delta h \to H - \p_z R + \Delta_{0,R} h$. \nl
\indent We now replace the use of \cite[Equation 10.1]{wang2019finite} in \cite[Lemma 12.2]{wang2019finite} by \eqref{eqn.new.10.1}. The analogous analysis holds except we include an error term from the drift Laplacian on $\phi$. As such, it suffices to bound 
\begin{align*}
\Big|\int_{-\delta R}^{\delta R} \langle \n R, \n \phi \rangle g_{\alpha}'| & \lesssim \eps^2 + ||\phi||_{C^1(B_2(y) \times (-6 |\log \eps|, 6 |\log \eps|)}^2
\end{align*}
which is compatible with the bound on $||H^{\alpha} + \Delta_0 h_{\alpha}||$ at the end of the proof of \cite[Lemma 12.2]{wang2019finite}.
\begin{center}
\textbf{Adjustments to \cite{wang2019finite}[\S 13]} 
\end{center}
We adapt the improved horizontal estimates. \cite[Equation 13.1]{wang2019finite} must now be replaced by the result of applying $\p_y$ to equation \eqref{eqn.phi.prelim.a}, which we record as:
\begin{align*}
\Delta_z \phi_y + \p_z^2 \phi_y + \langle \n R, \n \phi_y \rangle &= W''(g_{\alpha}) \phi_y - (-1)^{\alpha} g_{\alpha}'[H^{\alpha}_{yz}(y,0) - R_{yz} + \Delta_{0,R} h_{\alpha,y}] + \tilde{E}_i
\end{align*}
where $\tilde{E}_i = E_i + F_i$ for $E_i$ as in \cite[Equation 13.1]{wang2019finite} with the analogous replacements, and $F_i$ has the following terms coming from the drift Laplacian
\begin{align*}
F_i &= g^{yy}_y R_y \phi_y + h^{yy} R_{yy} \phi_y + R_{zy} \phi_z  \\
& + (-1)^{\alpha} g_{\alpha}'[g^{yy}_y h_{\alpha, yy} - g^{yy}_y R_y h_{\alpha,y} - g^{yy} R_{yy} h_{\alpha, y}] \\
& - \sum_{\beta \neq \alpha} (-1)^{\beta}g_{\beta}'[g^{yy}_y h_{\beta, yy} - g^{yy}_y R_y h_{\beta,y} - g^{yy} R_{yy} h_{\beta, y}]
\end{align*}
Using the Cauchy inequality as well as our a priori bounds on $R$ via equation \eqref{eqn.R.bounds} we have
\begin{align*}
||F_i||_{C^{\theta}(M_{\alpha}^2(r))} &\lesssim \eps^2 + ||h||_{C^{2,\theta}(M_{\alpha}^2(r))}^2 + ||\phi||_{C^{2,\theta}(\mathcal{D}(r+1))}^2 \\
& \lesssim \eps^2 + \sup_{B_{r+1}}e^{-2\sqrt{2} D_{\alpha}(y)} + ||\phi||_{C^{2,\theta}(\mathcal{D}(r+1))}^2
\end{align*}
having used \cite[Equation 9.9]{wang2019finite}. Thus, up to a constant, $\tilde{E}_i$ and $E_i$ satisfy the same upper bounds as listed in \cite[Lemma 13.1]{wang2019finite}. \nl 
\indent The same analysis now leads to the analogous bound on $||H^{\alpha}_y - R_{yz} + \Delta_{0,R} h_{\alpha, y}||$ as in Wang--Wei \cite[Equation 13.3]{wang2019finite}, and the rest of the section follows.
\begin{center}
\textbf{Adjustments to \cite{wang2019finite}[\S 14]} 
\end{center}
This section follows after replacing $H^{\alpha} \to H^{\alpha} - \p_z R$ and $\Delta_0 h_{\alpha} \to \Delta_{0,R} h_{\alpha}$.
\begin{center}
\textbf{Adjustments to \cite{wang2019finite}[\S 17]} 
\end{center}
Combined with the modifications from Mantoulidis \cite[Appendix C]{mantoulidis2021allen}, we replace the linearized Allen--Cahn equation with the linearized Allen--Cahn equation with drift, so \cite[Equation 17.1]{wang2019finite} becomes
\begin{equation} \label{eqn.17.1.replace}
\eps \Delta v + \eps \langle \n \ln(\mathcal{V}), \n v \rangle = \frac{1}{\eps} W''(u_{\eps}) v
\end{equation}
Lemma 17.2 remains unchanged. As in \cite{mantoulidis2021allen}, the function $\varphi_{\eps} = \mathbf{1}_{\tilde{D}_{\alpha}} \frac{\p u_{\eps}}{\p x_2}$ is a solution to equation \eqref{eqn.17.1.replace} but with an error, i.e. 
\begin{equation} \label{eqn.17.1.replace.almost}
\eps \Delta \varphi_{\eps} + \eps \langle \n \ln(\mathcal{V}), \n \varphi_{\eps} \rangle = \frac{1}{\eps} W''(u_{\eps}) \varphi_{\eps} + O(\eps)
\end{equation}
for the same bump function, $\eta$, multiplying \eqref{eqn.17.1.replace.almost} by $\mathcal{V} \varphi_{\eps} \eta^2$ and integrating by parts yields
\begin{align*}
\int_{B_{1/100}(x_{\eps})} \mathcal{V}\Big[ \eps |\nabla (\eta \varphi_{\eps})|^2 &+ \eps^{-1} W''(u_{\eps}) \eta^2 \varphi_{\eps}^2 \Big]  \\
& \leq K \eps  + C \int \mathcal{V} \eps \varphi_{\eps}^2 \left[|\nabla \eta|^2 + \langle \n \mathcal{V}, \n \mathcal{\eta}\rangle \eta \right] \\
& \leq K \eps + C \eps \int \varphi_{\eps}^2 \\
& \leq C
\end{align*}
In the last line, we are using that one can also modify \cite[Lemma 17.3]{wang2019finite} to prove the result in the drift setting, using the same technique but with equation \eqref{eqn.17.1.replace.almost}. \nl 
\indent In section 17.2, we aim to solve
\[
\begin{cases} 
\eps \Delta \tilde{\varphi}_{\eps} + \eps \langle \n \ln(\mathcal{V}), \n \ti{\varphi}_{\eps}\rangle = \eps^{-1} W''(u_{\eps}) \varphi_{\eps} & \text{ in } \Omega_{\alpha, \eps} \\
\ti{\varphi}_{\eps} = \varphi_{\eps} & \text{ on } \partial \Omega_{\alpha, \eps}
\end{cases}
\]
which by stability for the weighted energy \eqref{eqn.energy.weighted}, exists and is unique. Following the computations in Mantoulidis \cite[Appendix C]{mantoulidis2021allen}, we have that 
\begin{align*}
\int_{\Omega_{\alpha, \eps}} \mathcal{V} \Big( \eps \langle \n \varphi_{\eps}, & \n \ti{\varphi}_{\eps} \rangle + \eps^{-1} W''(u_{\eps}) \varphi_{\eps} \ti{\varphi}_{\eps} \Big) \\
&= \int_{\p \Omega_{\alpha, \eps}} \eps \mathcal{V} \varphi_{\eps} \nu_{\p \Omega_{\alpha, \eps}}(\ti{\varphi}_{\eps}) + \int_{\Omega_{\alpha, \eps}} \varphi_{\eps}\left( -\eps \mathcal{V} \Delta(\ti{\varphi}_{\eps}) - \eps \langle \n \mathcal{V}, \n \ti{\varphi}_{\eps} \rangle + \eps^{-1} W''(u_{\eps}) \ti{\varphi}_{\eps} \right) \\
&= \int_{\p \Omega_{\alpha, \eps}} \eps \mathcal{V} \ti{\varphi}_{\eps} \nu_{\p \Omega_{\alpha, \eps}}(\ti{\varphi}_{\eps}) \\
&= \int_{\Omega_{\alpha, \eps}} \eps \mathcal{V} \ti{\varphi}_{\eps} (\Delta \ti{\varphi}_{\eps} + \langle \n \mathcal{V}, \n \ti{\varphi}_{\eps} \rangle) + \mathcal{V} \eps |\n \ti{\varphi}_{\eps}|^2 \\
&= \int_{\Omega_{\alpha, \eps}} \mathcal{V} \left(\eps |\n \ti{\varphi}_{\eps}|^2 + \eps^{-1} W''(u_{\eps}) \ti{\varphi}_{\eps}^2\right)
\end{align*}
From here, the rest of the section is the same. In particular, equation 17.5 also holds in our setting, with a potentially slightly worse exponent, as introduced by both the drift term and also the Riemannian metric (cf. \cite[Appendix C]{mantoulidis2021allen}). 
\begin{center}
\textbf{Adjustments to \cite{wang2019finite}[\S 18]} 
\end{center}
This section translates to the drift setting making the appropriate modifications and noting that the left hand side of equation 18.6 should be $H^{\alpha}(y,0) - \p_z(R)(y,0)$.
\begin{center}
\textbf{Adjustments to \cite{wang2019finite}[\S 19]} 
\end{center}
Again, the stability condition is now changed to 
\[
\int_{C_{5R/6}} \mathcal{V}(\eps y, \eps z) \left( |\n \varphi|^2 + W''(u) \varphi^2 \right) \geq 0
\]
As in Mantoulidis, there is no explicit formula for $\lambda(y,z)$ in the Riemannian setting, but it satisfies the relevant upper bounds to be of no issue. \nl 
\indent "The Horizontal part" (Section 19.1) is analogous in the drift setting yielding the same integrals in the upper bound but with factors of $\mathcal{V}$ included in the integrand. \nl 
\indent "The Vertical Part" (Section 19.2) requires an integration by parts in the expansion of $\int \varphi_z^2 \lambda dz dy$, which will affect the factor of $\mathcal{V}$ in the integrand. Thus, we can produce the same bounds on $\int \mathcal{V} \varphi_z^2 \lambda dz dy$, modulo an extra error term of the form:
\[
R = - \int_{-5R/6}^{5R/6} \eta(y)^2 \int_{-\delta R}^{\delta R} \mathcal{V}_z g_{\alpha}' g_{\alpha}'' \chi^2 \lambda 
\]
Integrating by parts in $t$ and using that $|\nabla \chi| \lesssim L^{-1}$ and $|\nabla \lambda| = O(\eps)$, we have 
\begin{align*}
R &= - \frac{1}{2}\int_{-5R/6}^{5R/6} \eta(y)^2 \int_{-\delta R}^{\delta R} \mathcal{V}_{zz} (g_{\alpha}')^2 \chi^2 \lambda  \\
& \qquad + \mathcal{V}_z (g_{\alpha}')^2 2 \chi_z \chi \lambda + \mathcal{V}_z (g_{\alpha}')^2 2 \chi^2 \lambda_z \\
& \lesssim \left(\eps^2 + \frac{\eps}{L}\right) \int_{-5R/6}^{5R/6} \eta(y)^2 \left( e^{-2\sqrt{2} \rho_{\alpha}^+(y)} + e^{2\sqrt{2} \rho_{\alpha}^-(y)} \right)
\end{align*}
which is compatible with the final error bound in \cite[Eq 19.2]{wang2019finite}. The integration by parts in the ``third term" of the vertical part also yields a higher order error, i.e.
\begin{align*}
\Big|\int_{-5R/6}^{5R/6} \eta(y)^2 \int_{-\delta R}^{\delta R} \mathcal{V} g_{\alpha}' g_{\alpha}'' \chi^2 \lambda_z\Big|  & \lesssim \left(\eps^2 + \frac{\eps}{L}\right) \int_{-5R/6}^{5R/6} \eta(y)^2 \left( e^{-2\sqrt{2} \rho_{\alpha}^+(y)} + e^{2\sqrt{2} \rho_{\alpha}^-(y)} \right)
\end{align*}
Now using that $\mathcal{V}$ is bounded, the replacement of \cite[Eq 19.2]{wang2019finite} becomes
\begin{align} \label{eqn.19.2.replace}
0 & \leq C \int_{-5R/6}^{5R/6} \eta'(y)^2 + C \eps^{2 - 2\sigma} \int_{-5R/6}^{5R} \eta(y)^2 \\ \nonumber
& + C \left( \frac{1}{L} + \eps \right) \int_{-5R/6}^{5R/6} \eta(y)^2 \left( e^{-2\sqrt{2} \rho_{\alpha}^+(y)} + e^{2\sqrt{2} \rho_{\alpha}^-(y)} \right) \\ \nonumber
& + \int_{-5R/6}^{5R/6} \int_{-\delta R}^{\delta R} \mathcal{V} \left(W''(u) - W''(g_{\alpha})\right) |g_{\alpha}'|^2 \chi^2 \lambda dz dy
\end{align}
Much of ``The Interaction Part", i.e. \cite[\S 19.3]{wang2019finite} holds the same though \cite[Equation 19.3]{wang2019finite} must be modified to include the differentiated drift term: 
\[
\frac{\p}{\p z} \langle \n R, \n \phi \rangle,
\]
in addition to the differentiated terms coming from replacing $H^{\alpha} + \Delta_z h_{\alpha} \to H^{\alpha} - \p_z R + \Delta_{z,R} h_{\alpha}$. The goal of this section is to now show that the appropriate modification of \cite[Equation 19.5]{wang2019finite} holds, i.e.
\begin{align} \label{eqn.19.5.replace}
\int_{-5R/6}^{5R/6} \eta(y)^2 \int_{-\delta R}^{\delta R} \mathcal{V} \Big[ W''(u) &- W''(g_{\alpha})\Big] |g_{\alpha}'|^2 \chi^2 \lambda \\ \nonumber
&= -4\int_{-5R/6}^{5R/6} \mathcal{V}(y,0) \eta(y)^2 [ A_{(-1)}^2 e^{-\sqrt{2} d_{\alpha-1}(y,0)} + A_{(-1)^{\alpha}}^2 e^{\sqrt{2} d_{\alpha + 1}(y,0)}] \lambda(y,0) \\ \nonumber
& + O(\eps^{4/3}) \int_{-5R/6}^{5R/6} \eta(y)^2 \mathcal{V}(y,0).
\end{align}
Thus, it suffices to show that the new terms coming from the drift Laplacian in the analogous version of \cite[Equation 19.3]{wang2019finite} are smaller than the error threshold of $O(\eps^{3/2 - 2 \sigma})$. We remark that 
\begin{align*}
\Big|g_{\alpha}'\frac{\p}{\p z} \langle \n R, \n \phi \rangle\Big| &= |g^{yy}_z R_y \phi_y + g^{yy} R_{yz} \phi_y + g^{yy} R_y \phi_{yz} + R_{zz} \phi_z + R_z \phi_{zz}| |g_{\alpha}'| \\
& \lesssim \eps^2 ||\phi||_{C^1(\mathcal{M}_{\alpha}(r)} + \eps ||\phi||_{C^2(\mathcal{M}_{\alpha}(r)} \\
& \lesssim \eps^{2 - \sigma}
\end{align*}
having used \cite[Equation 18.4]{wang2019finite} to bound $||\phi||_{C^2} \lesssim \eps^{1-\sigma}$. We also remark
\begin{align*}
|\p_z \mathcal{V}| + \eps^{-1} |\p_z^2 \mathcal{V}| & \lesssim \eps \\
|\mathcal{V}(y,z) - \mathcal{V}(y,0)| & \lesssim \eps |z| \\
\Big|\frac{\p}{\p z} \left( H^{\alpha} - \p_z R\right) \Big| &\lesssim \eps^2 \\
\Big|\frac{\p}{\p z} \Delta_{z,R} h_{\alpha}\Big| &\leq \Big| \p_z(\Delta_z h_{\alpha})\Big| + \Big| \p_z\left( g^{yy} R_y h_{\alpha,y} \right)\Big| \\
& \lesssim \eps^{2 - \sigma} + \eps^{3 - \sigma} + \eps^{2 - \sigma} = \eps^{2-\sigma} 
\end{align*}
having also used \cite[Lemma 9.6]{wang2019finite}. The rest of this section now follows by using the drift analogue of \cite[Equation 19.3]{wang2019finite}, multiplying by $\mathcal{V}(y,z) \eta^2 g_{\alpha}' \chi^2 \lambda$ and integrating in $y,z$. \nl 
\indent Equation 19.6 now becomes 
\begin{align*}
\int_{-5R/6}^{5R/6} \mathcal{V}(y,0) &\eta(y)^2 [ A_{(-1)}^2 e^{-\sqrt{2} d_{\alpha-1}(y,0)} + A_{(-1)^{\alpha}}^2 e^{\sqrt{2} d_{\alpha + 1}(y,0)}] \lambda(y,0) \\
& \leq C \int_{-5R/6}^{5R/6} \eta'(y)^2 + C \eps^{4/3} \int_{-5R/6}^{5R/6} \eta(y)^2 dy \\
& + C \left(\frac{1}{L} + \eps \right) \int_{-5R/6}^{5R/6} \eta(y)^2 \left[ e^{-2\sqrt{2} \rho_{\alpha}^+(y)} + e^{2\sqrt{2} \rho_{\alpha}^-(y)}\right]
\end{align*}
Again noting that $\mathcal{V}(y,0)$ is bounded from above and below by positive numbers, we can change the constant slightly and conclude exactly the same result as \cite[Equation 19.6]{wang2019finite}:
\begin{align} \label{eqn.19.6}
\int_{-5R/6}^{5R/6} &\eta(y)^2 [ A_{(-1)}^2 e^{-\sqrt{2} d_{\alpha-1}(y,0)} + A_{(-1)^{\alpha}}^2 e^{\sqrt{2} d_{\alpha + 1}(y,0)}] \lambda(y,0) \\ \nonumber
& \leq C \int_{-5R/6}^{5R/6} \eta'(y)^2 + C \eps^{4/3} \int_{-5R/6}^{5R/6} \eta(y)^2 dy \\ \nonumber
& + C \left(\frac{1}{L} + \eps \right) \int_{-5R/6}^{5R/6} \eta(y)^2 \left[ e^{-2\sqrt{2} \rho_{\alpha}^+(y)} + e^{2\sqrt{2} \rho_{\alpha}^-(y)}\right]
\end{align}
The rest of the section is now exactly the same.
\begin{center}
\textbf{Adjustments to \cite{wang2019finite}[\S 20]} 
\end{center}
In this section, we use that for the conformal metric $\tilde{g} = \mathcal{V} g$, 
\[
H_{\tilde{g}} \sim H_g - \p_z(R)
\]
This follows via the formula for mean curvature under a conformal change of metric
\begin{align*}
H_{\tilde{g}} &= \mathcal{V}^{-1/2}\left(H_g - 2\p_z \ln(\mathcal{V}^{1/2})\right) \\
&= \mathcal{V}^{-1/2}\left(H_g - \frac{\p_z \mathcal{V}}{\mathcal{V}}\right)
\end{align*}
The proof of theorem 3.6 is the same following the appropriate modifications. The rest of the section follows after the adjustment to the Riemannian setting (using $\tilde{g}_{\eps} = \eps^{-2} \tilde{g}$) as described in \cite{mantoulidis2021allen}.
\end{proof}

\subsection{Monotonicity for the Allen--Cahn equation with drift}
In this section, we prove the following adaptation of \cite[Prop 3.4]{HutchinsonTonegawa}, \cite[Appendix B]{GuaracoMinmax} to the drift setting: 
\begin{lemma} \label{lem.linear.energy.growth}
Suppose that $u$ satisfies equation \eqref{eqn.weak.one} on $U \subseteq M^2$ with 
\[
E_{\eps}(u, U, \mathcal{V}, g) \leq E_0
\]
then for all $U' \subset \subset U$, there are $C, \eps_0, r_0 > 0$ depending on $E_0$, $\text{dist}(U', \p U)$, $\text{inj}(U, g)$, and $||g||_{C^2}$ (taken with respect to a fixed smooth background metric) so that if $\eps \in (0, \eps_0)$, then 
\[
E_{\eps}(u, B_r(p), \mathcal{V}, g) \leq C r, \qquad \forall r \in (0, r_0), \; \forall p \in U'
\]
\end{lemma}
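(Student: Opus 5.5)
I will adapt the classical monotonicity argument of Hutchinson--Tonegawa \cite[Prop 3.4]{HutchinsonTonegawa} as carried out on manifolds in \cite[Appendix B]{GuaracoMinmax}, and track the weight $\mathcal{V}$ throughout. Since $U$ here is a region of $M^{reg}/G$, the weight $\mathcal{V}$ is smooth and bounded above and below by positive constants on $U$, with $\|\n \ln \mathcal{V}\|_{C^1(U)}$ bounded. The first step is the weighted inner-variation (Pohozaev) identity. Equation \eqref{eqn.weak.one} is exactly the Euler--Lagrange equation of the weighted energy \eqref{eqn.energy.weighted}, because $\mathcal{V}\big(\Delta \ov{u} + \langle \n \ln \mathcal{V}, \n \ov{u}\rangle\big) = \div(\mathcal{V}\n \ov{u})$. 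So for any compactly supported vector field $X$ we get
\[
\int_U \mathcal{V}\Big[\Big(\tfrac{\eps}{2}|\n u|^2 + \tfrac{W(u)}{\eps}\Big)\div X - \eps \langle \n_{\n u}X, \n u\rangle\Big] + \int_U \Big(\tfrac{\eps}{2}|\n u|^2 + \tfrac{W(u)}{\eps}\Big)\langle \n \mathcal{V}, X\rangle = 0,
\]
which is Proposition \ref{prop.gaspar.var}'s first variation with the extra term coming from differentiating the weight. Equivalently, this is the first variation of the projected $G$-invariant solution on $\pi^{-1}(U)$ under the lifted field, so the identity can also be read off directly on $M$.

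Next I take $X = \eta(r)\, r\,\p_r$ in geodesic polar coordinates about $p\in U'$, with $r<r_0\le \min\{\mathrm{inj}, \mathrm{dist}(U',\p U)\}/2$, and let $\eta$ approximate $\mathbf{1}_{[0,\rho]}$. In dimension $2$, $\div X = 2 + O(r^2)$ and $\n X = \mathrm{id} + O(r^2)$, with constants controlled by $\|g\|_{C^2}$. As in the unweighted case, this gives the differential inequality
\[
\frac{d}{d\rho}\Big(\rho^{-1} e^{C\rho} \mathcal{E}(\rho)\Big) \ge -\rho^{-1}e^{C\rho}\int_{B_\rho}\mathcal{V}\frac{\xi_\eps}{\eps}\,,\qquad \mathcal{E}(\rho)=E_\eps(u,B_\rho(p),\mathcal{V},g),
\]
where $\xi_\eps = \frac{\eps}{2}|\n u|^2 - \frac{W(u)}{\eps}$ is the discrepancy. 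The drift contribution is $\int (\ldots)\langle \n\mathcal{V}, X\rangle$, which is bounded by $\|\n \ln\mathcal{V}\|_\infty\,\rho\,\mathcal{E}(\rho)$ and is therefore absorbed into the factor $e^{C\rho}$, just like the curvature errors. So the drift only changes the constant $C$.

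The main obstacle is controlling the discrepancy term. Following \cite[Appendix B]{GuaracoMinmax}, I first need $|u|\le 1$, which holds by the maximum principle since the drift is first order, and a gradient bound $\eps|\n u|\le C$ from interior Schauder estimates on the rescaled equation \eqref{eqn.rescaled.AC.drfit}, whose drift coefficient is $O(\eps)$. Then I would adapt the Modica-type estimate $\xi_\eps \le C\eps^{\beta}$, or more simply the bound $\xi_\eps^+ \lesssim \eps^{-1}\cdot\eps^{\beta}$ on compact subsets. This is done by applying the maximum principle to $\xi_\eps$ (suitably modified) in the rescaled picture. There the drift adds terms of order $\eps$ times a gradient, which are harmless for the Bochner computation after shrinking $\eps_0$. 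This is the step I expect to require the most care; if the pointwise estimate is awkward, I would instead use the weaker integral bound $\int_{B_\rho}\xi_\eps^+/\eps \le C\rho^{1+\beta}$ from Hutchinson--Tonegawa, which suffices.

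With that bound, the right-hand side is integrable in $\rho$ near $0$. Integrating from $r$ to $r_0$ gives $r^{-1}\mathcal{E}(r)\le C\big(r_0^{-1}\mathcal{E}(r_0)+ r_0^{\beta}\big)\le C(E_0/r_0+1)$, which is the claimed linear growth $E_\eps(u,B_r(p),\mathcal{V},g)\le Cr$.
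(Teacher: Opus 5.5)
Your skeleton is the paper's. The weighted first-variation identity you write is correct and agrees with the one the paper derives. Bounding the drift term by $\|\nabla\ln\mathcal{V}\|_\infty\,\rho\,\mathcal{E}(\rho)$ and absorbing it into the exponential weight is exactly the paper's ``choose $m\geq 3(\sqrt{k}+C)$'' step. The gap is in the discrepancy step, which you rightly single out but whose bookkeeping is wrong. In dimension two the inequality one gets is
\[
\frac{d}{d\rho}\Big(e^{C\rho}\rho^{-1}\mathcal{E}(\rho)\Big)\;\geq\;-\,e^{C\rho}\rho^{-2}\int_{B_\rho}\mathcal{V}\,\xi_\eps,\qquad \xi_\eps=\frac{\eps}{2}|\nabla u|^2-\frac{W(u)}{\eps},
\]
not $-\rho^{-1}e^{C\rho}\int_{B_\rho}\mathcal{V}\xi_\eps/\eps$; your version is off by a factor $\rho/\eps$. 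With the correct scaling, either of two bounds closes the argument. One is an $\eps$-independent pointwise bound $\xi_\eps^+\le C$. The other is $\int_{B_\rho}\mathcal{V}\xi_\eps^+\le C\rho^{1+\beta}$, without your extra $\eps^{-1}$; this is the bound that actually produces your final $r_0^\beta$.

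The option you call simpler does not work. The local Hutchinson--Tonegawa type bound $\xi_\eps^+\lesssim \eps^{\beta-1}$ only gives $\rho^{-2}\int_{B_\rho}\mathcal{V}\xi_\eps^+\lesssim\eps^{\beta-1}$, whose integral over $(r,r_0)$ is of order $r_0\eps^{\beta-1}\to\infty$. Turning that pointwise estimate into an integrable error is the nontrivial part of \cite[Prop.~3.4]{HutchinsonTonegawa} and of Guaraco's Riemannian version. It does not follow by integrating, and it is what you must check for the drift equation. (The paper also defers this to Guaraco.)

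Two remarks on filling this in. For the pointwise input you need not redo Modica's argument for the drift operator. On $M^{reg}$, $\pi$ is a Riemannian submersion and $u=\overline{u}\circ\pi$ is basic, so $|\nabla^g u|=|\nabla^{g_{M/G}}\overline{u}|\circ\pi$. Hence the discrepancy of $\overline{u}$ at $\pi(x)$ equals that of a genuine Allen--Cahn solution on $\pi^{-1}(U)$ at $x$, and unweighted estimates transfer directly. Alternatively, in the weighted Bochner formula the drift enters only through the Bakry--\'Emery term $\nabla^2\ln\mathcal{V}$, which is $O(\eps^2)$ after rescaling.

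Also, $|u|\le 1$ does not follow from the maximum principle for a solution on an open set $U$. For the Sine--Gordon potential it fails even on a closed manifold, since $u\equiv 3$ solves the equation. You must inherit $|u|\le 1$ from the global min--max solution, or assume an $L^\infty$ bound as in \cite[Theorem 3.1]{ChodoshMantoulidisWidths}, before using $\eps|\nabla u|\le C$.
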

\begin{proof}
As in \cite[\S 10.2]{GuaracoMinmax}, it suffices to establish a local monotonicity formula. Multiplying equation \eqref{eqn.weak.one} by $(g \cdot \n u)$ (where $g$ is a smooth vector field and integrating over all of $M$ gives 
\begin{equation} \label{eqn.IBP.one}
\int \mathcal{V} \eps |\n u|^2 (\text{div}(g) - \n_{\nu} g(\nu) + (1/2) \langle \n V, g \rangle) = \int \mathcal{V}\left(\frac{\eps}{2} |\n u|^2 - \frac{W(u)}{\eps}\right) \text{div}(g) - \frac{W(u)}{\eps} \langle \n \mathcal{V}, g \rangle
\end{equation}
We now use the same hessian comparison theorem and test function of $g = r \psi(r) \n r$ (where $\psi(s) = \varphi((s - \rho)/\delta)$ and $\varphi$ is a smooth bump function). We note that as $\delta \to 0$, we have
\[
\lim_{\delta \to 0} \Big| \int_{M} (\langle \n \mathcal{V}, g \rangle) \left(\frac{\eps}{2} |\n u|^2 - \frac{W(u)}{\eps} \right)\Big| \leq C \int_M r \mathcal{V} e_{\eps}(u)
\]
Thus, following the computations in \cite[Equation 17]{GuaracoMinmax}, we have that as $\delta \to 0$, %
\begin{align*}
-(n-1) \int_{B_{\rho}} \mathcal{V} e_{\eps}(u) + \rho \int_{\p B_{\rho}} e_{\eps}(u) & \geq \int_{B_{\rho}} \mathcal{V}(- \xi_{\eps}) + \eps \rho \int_{\p B_{\rho}}(\n u \cdot \n r)^2 \\
& - (\sqrt{k} + C)\int_{B_{\rho}} \mathcal{V} r(e_{\eps}(u) + \eps |\n u|^2)
\end{align*}
The proof now concludes the same way after choosing larger constants (i.e. $m \geq 3 (\sqrt{k} + C)$ in Guaraco's notation).
\end{proof}